\journal{Journal of Multivariate Analysis}
\theoremstyle{plain}
\newtheorem{theorem}{Theorem}
\newtheorem{lemma}{Lemma}
\newtheorem{corollary}{Corollary}
\theoremstyle{definition}
\newtheorem{definition}{Definition}
\newtheorem{remark}{Remark}
\newcommand{\x}{{\bf x}}
\newcommand{\be}{\begin{equation*}}
\newcommand{\ee}{\end{equation*}}
\newcommand{\E}{{\mathbb{E}}}
\newcommand{\y}{{\bf y}}
\newcommand{\Y}{{\bf Y}}
\newcommand{\K}{{\bf K}}
\newcommand{\w}{{\bf w}}
\newcommand{\R}{\mathbb{R}}
\newcommand{\Sp}{\mathbb{S}^{p-1}}
\newcommand{\Bc}{\ensuremath{\mathcal{B}}}
\newcommand{\Ec}{\ensuremath{\mathcal{E}}}
\newcommand{\Nc}{\ensuremath{\mathcal{N}}}
\newcommand{\Ic}{\ensuremath{\mathcal{I}}}
\newcommand{\tr}{\textrm{Tr}}
\newcommand{\Tr}{\tr}
\newcommand{\norm}[1]{\left|\left| #1 \right|\right|}
\DeclareMathOperator*{\argmin}{\arg\!\min}
\DeclareMathOperator*{\argmax}{\arg\!\max}
\newcommand{\T}{\top}
\newcommand{\wTRE}{w^{\mbox{\tiny{TRE}}}}
\newcommand{\wMRE}{w^{\mbox{\tiny{MRE}}}}
\newcommand{\wTyl}{w^{\mbox{\tiny{Tyl}}}}
\newcommand{\wMar}{w^{\mbox{\tiny{Mar}}}}
\newcommand{\SigmaTyl}{\hat{\Sigma}_{\mbox{\tiny{Tyl}}}}
\newcommand{\SigmaMar}{\hat{\Sigma}_{\mbox{\tiny{Mar}}}}
\newcommand{\SigmaMRE}{\hat{\Sigma}_{\mbox{\tiny{MRE}}}}
\newcommand{\SigmaTRE}{\hat{\Sigma}_{\mbox{\tiny{TRE}}}}
\newcommand{\abs}[1]{\left \vert #1 \right \vert}
\newcommand{\Id}{\mathrm{I}_{p \times p}}
\newcommand{\Idn}[1]{\mathrm{I}_{#1 \times #1}}
\newcommand{\nb}{\bar{n}}
\newcommand{\trueCov}{\ensuremath{\Sigma_p}}
\newcommand{\trCov}{\tau_p}
\newcommand{\trLB}{\tau}
\newcommand{\trInvUB}{\underline{\tau}}
\newcommand{\AssumIndep}{\textbf{[SG-IND]}}
\newcommand{\AssumCCPSB}{\textbf{[CCP-SBP]}}
\newcommand{\AssumLC}{\textbf{[LC]}}
\newcommand{\oneVec}{{\bf 1}}
\newcommand{\indic}[1]{ \mathbbm{1}_{\left\{ #1 \right\} }}
\newcommand{\Cheeger}{\Psi_p}
\newcommand{\mh}{\widehat{m}}
\newcommand{\wh}{\widehat{w}}
\newcommand{\bwh}{\widehat{\w}}
\newcommand{\TODO}[1]{ 
	\ifx\NOTES\undefined\else
	{\tt \color{red} [TODO:#1] } 
	\fi
}
\renewcommand{\epsilon}{\varepsilon}
\newcommand{\Revision}[1]{{#1}}
\let\today\relax
\def\ps@pprintTitle{%
	\let\@oddhead\@empty
	\let\@evenhead\@empty
	\def\@oddfoot{\footnotesize\itshape
		{} \hfill\today}%
	\let\@evenfoot\@oddfoot
}
\begin{document}

\begin{frontmatter}

\title{Tyler's and Maronna's M-estimators: Non-Asymptotic Concentration Results}

\author[1]{Elad Romanov\corref{mycorrespondingauthor}}
\author[2]{Gil Kur}
\author[3]{Boaz Nadler}

\address[1]{Department of Statistics, Stanford University}
\address[2]{Department of Electrical Engineering and Computer Science, Massachusetts Institute of Technology}
\address[3]{Faculty of Mathematics and Computer Science, Weizmann Institute of Science}

\cortext[mycorrespondingauthor]{Corresponding author. Email address: \url{elad.romanov@gmail.com}}

\begin{abstract}
Tyler's and Maronna's M-estimators, as well as their regularized variants, 
	are popular robust methods to estimate the scatter or covariance matrix of a multivariate distribution.
	In this work, we study the non-asymptotic behavior of these estimators, for data sampled from
	a distribution that satisfies one of the following properties: 1) independent sub-Gaussian entries, up to a linear transformation; 2) log-concave distributions; 3) distributions satisfying a convex concentration property. 
	Our main contribution is the derivation of tight non-asymptotic concentration bounds of these M-estimators around a suitably scaled version of the data sample covariance matrix.
	Prior to our work, non-asymptotic bounds were
derived only for Elliptical and Gaussian distributions.
Our proof uses a variety of tools from non asymptotic random matrix theory and high dimensional geometry.
	Finally, we illustrate the utility of our results on two examples of practical interest: sparse covariance and sparse precision matrix estimation.  
\end{abstract}


\end{frontmatter}

\section{Introduction}
        \label{sect:intro}
         

 Let $\x_1,\ldots,\x_n$ be $n$ i.i.d. samples from a $p$-dimensional random variable $X$. 
 The $p\times p$ covariance matrix $\Sigma$ of $X$ is a central quantity of interest in multiple applications \cite{anderson,muirhead2009aspects}. In the classical regime with $n\gg p$,  if the random variable $X$ is not heavily tailed and there are no outliers, the empirical covariance matrix yields a relatively accurate estimator for $\Sigma$. 

To deal with heavy tails and potential outliers, 
several robust estimators were proposed and studied theoretically. Two popular procedures, applicable when $p<n$, include Maronna's and Tyler's M-estimators \cite{maronna1976robust,tyler1987distribution}.
Regularized variants, applicable also when $p>n$,
were also proposed and studied \cite{abramovich,ollila2014regularized,chen,pascal}. These estimators have 
found use in multiple applications, ranging from signal processing and radar detection to finance, see for example
\cite{couillet2014large,pascal,ollila2014regularized}.
We remark that in addition to the above, many other robust covariance estimators have been proposed and analyzed, see for example
\cite{hubert2008high,wiesel2015structured,dumbgen2016new, maronna2019robust,catoni2016pac,chen2018robust,ke2019user,minsker2022robust,mendelson2020robust,diakonikolas2019recent}
and references therein. 

In this work we study the properties of 
Tyler's and Maronna's M-estimators under 
several families of multivariate distributions. Our analysis is non-asymptotic and generalizes previous results, which were either asymptotic or limited to elliptical distributions. Before presenting our results, we first briefly describe these estimators and related prior work. For simplicity, 
we describe the estimators assuming 
$X$ has zero mean, and discuss how to relax this assumption later on. 

\paragraph{Maronna's M-estimator (ME)} 
One of the first
proposals for a robust covariance estimator, introduced by Maronna  \cite{maronna1976robust}, is defined as follows.  
Let  $u:[0,\infty) \to (0,\infty)$ be a function that is strictly positive, non-increasing, continuous, and such that the accompanying function $\phi(x)=x u(x)$ is non-decreasing and bounded. 
Then, for $n\ge p$,  Maronna's M-estimator (if exists) is a solution to the non-linear matrix equation
\begin{equation}
\label{eq:def_maronna}
\SigmaMar = \frac{1}{n}\sum_{i=1}^n \wMar_i \x_i \x_i^{\top}\,,
\quad
\textrm{where }
\quad
\wMar_i  = u \left( \frac{1}{p}\x_i^{\top} \SigmaMar^{-1}\x_i \right) \,.
\end{equation}
Maronna \cite{maronna1976robust} proved that under certain deterministic conditions on the samples $\x_i$, 
Eq. (\ref{eq:def_maronna}) has
a unique solution. 
Couillet {\it et al.} \cite{couillet2014robust} considered a high dimensional asymptotic framework, wherein \(n,p\to\infty\) 
with their ratio converging to a constant.
Assuming that $X$ has i.i.d. entries with sufficiently many finite moments, and that $\phi_\infty:=\sup_{x}\phi(x)>1$,
they proved that
Eq. (\ref{eq:def_maronna})
has a unique solution with probability tending to one. 

 \paragraph{Maronna's regularized M-estimator (MRE)} For \(p>n\), Ollila and Tyler \cite{ollila2014regularized} proposed the following generalization of Maronna's M-estimator:
 For regularization parameter $\alpha>0$,
 \begin{equation}
 \label{eq:def_maronna_reg}
 \SigmaMRE = \frac{1}{1+\alpha}\frac{1}{n}\sum_{i=1}^{n} \wMRE_i \x_i \x_i^{\top} + \frac{\alpha}{1+\alpha} \Id \,,
 \quad
 \textrm{where }
 \quad 
 \wMRE_i = u \left( \frac{1}{p}\x_i^{\top} \SigmaMRE^{-1} \x_i \right) \,.
 \end{equation}
 As proven in \cite[Theorem 1]{ollila2014regularized}, for any $\x_1,\ldots,\x_n$
 and $\alpha>0$, 
 Eq. (\ref{eq:def_maronna_reg})
 has a unique solution.

\paragraph{Tyler's M-estimator (TE)}
Introduced by Tyler
in \cite{tyler1987distribution}, TE is defined as the solution (if exists) of 
\begin{equation}\label{eq:def_tyler}
\SigmaTyl = \frac{1}{n}\sum_{i=1}^{n}\wTyl_i \x_i\x_i^\top, 
\quad
\textrm{where}
\quad
\wTyl_i = \left(\frac1p \x^\top_i\SigmaTyl^{-1}\x_i\right)^{-1},
\quad
\textrm{and subject to }
\quad
\Tr(\SigmaTyl) = p\,.
\end{equation}
While Tyler's estimator may seem like a special case of Maronna's with $u(x)=1/x$, this is not so since $u(\cdot)$ is singular at $x=0$.
For $n>p$, 
Kent and Tyler \cite[Theorems 1 and 2]{kent1988maximum} 
proved 
existence and uniqueness of 
TE
under 
the condition that any linear subspace of \(\mathbb{R}^p\) of dimension $1\leq d\leq p-1$  contains less than $nd/p$ samples. 
For i.i.d. samples from a
random vector $X$ with a proper density in $\mathbb{R}^p$, this condition is satisfied with probability one. 



  \paragraph{Tyler's regularized M-estimator (TRE)} Similarly to 
  ME, 
  Tyler's M-estimator 
  does not exist for \(p>n.\) In recent years, several regularized variants were proposed \cite{abramovich,chen,pascal,sun2014,ollila2014regularized}. 
  We focus on the estimator proposed in \cite{pascal}.
Given a regularization parameter $\alpha>0$, it is defined by
  \begin{equation}
 \label{eq:def_tyler_reg}
 \SigmaTRE = \frac{1}{1+\alpha} \frac{1}{n} \sum_{i=1}^{n} \wTRE_i \x_i \x_i^{\top} + \frac{\alpha}{1+\alpha} \Id \,,
 \quad
 \textrm{and }
 \quad
 \wTRE_i = \left(\frac1p \x_i^{\top} \SigmaTRE^{-1} \x_i\right)^{-1}\,.
 \end{equation}
By
\cite[Theorem 2]{ollila2014regularized}, when $\alpha>p-1$, 
Eq. (\ref{eq:def_tyler_reg})
always admits a unique solution. When $\alpha\le p-1$, \cite[Theorem 3]{ollila2014regularized}
gave a deterministic sufficient and {almost} necessary condition for existence and uniqueness; for  $\x_1,\ldots,\x_n$ in {general position}, the condition holds for $\alpha > \max\left\{ 0,\frac{p}{n}-1 \right\}$. When $X$ has a density, this condition appeared earlier in \cite{pascal}.

\paragraph{Prior work}
Maronna's and Tyler's M-estimators and their variants, have been studied extensively, 
with a particular focus under elliptical distributions; see  \cite{wiesel2012geodesic,soloveychik2015performance,abramovich,ollila2014regularized,chen,pascal,wiesel2015structured,couillet2014robust,couillet2015random,couillet2014large,ollila2020shrinking,zhang2016marvcenko,goes2017robust,kent1988maximum,morales2015large,auguin2018large,ollila2012complex,couillet2016second}. 
The present paper extends 
several works that studied these estimators 
in a high-dimensional regime, where the number of samples $n$ and the dimension $p$ are both large and comparable. 
 {\it Couillet et al.} \cite{couillet2014robust} studied the asymptotic behavior of ME in the joint limit $p,n\to\infty$ with their ratio 
 tending to a constant. 
 Assuming that 
 $X$ has independent entries with zero mean, unit variance and sufficiently many finite moments, they proved that after a suitable scaling, ME converges asymptotically in spectral norm to the sample covariance matrix. 
In \cite{couillet2015random}, this analysis was extended to $X$ having an elliptical distribution with a general scatter matrix.  A similar asymptotic analysis for MRE
appeared in \cite{auguin2018large}.
Two variants of TRE were studied in \cite{couillet2014large}, assuming $X$ has an elliptical distribution. A key result of their analysis is that asymptotically, these M-estimators behave similarly to regularized sample covariance matrices with Gaussian measurements. 
{\it Zhang et al.}
 \cite{zhang2016marvcenko}, 
 studied TE assuming $X$ is Gaussian distributed with identity covariance. 
 They
 showed
 that as $n,p\to\infty$, 
 the limiting
 the spectral distribution 
 of a properly scaled TE 
 is
 a
 Mar\v{c}enko-Pastur 
 law.
 Moreover,
 similar to our own results in the present paper,
 they 
proved a non-asymptotic deviation bound for the weights (\ref{eq:def_tyler}), showing that they are concentrated around some particular value\footnote{
 The proof of \cite[Lemma 3.3]{zhang2016marvcenko}  contains an error,
 which we remedy in the present paper; see Lemma~\ref{lem:Tyl-inv-bound} and the ensuing discussion.
 }.
Relying on their results, \cite{goes2017robust} extended 
the analysis
to cover TRE, assuming $X$ is elliptically distributed.
We remark that the proofs in 
\cite{zhang2016marvcenko,goes2017robust} rely on properties specific to the Gaussian and elliptical distributions, and do not extend easily to other distributions.

Another recent work is \cite{louart2020robust}, which derived nonasymptotic concentration results for the Stieltjes transform of the spectral distribution of certain \Revision{regularized} M-estimators.
\Revision{
In the context of this work, they derived results only for the regularized variants of Maronna's M-estimator.
}
Their results apply under rather broad distributional assumptions, requiring only a concentration of measure property; see also their related papers \cite{louart2018concentration,louart2020equations}. 
\Revision{In constrast, our analysis of Tyler's M-estimators and the unregularized Maronna's M-estimator requires an additional \emph{anti-concentration} property (the small ball property) for the random vector $X$. It is an interesting question whether it is possible to derive similar results without the SBP assumption.
}

\paragraph{Our contributions}
As mentioned above, 
most of the literature on Maronna's and Tyler's M-estimators has focused on asymptotic results,  establishing convergence as $n,p\to\infty$
without specifying rates.
Other works,
that derived non-asymptotic finite-$n$ bounds,  mostly considered Gaussian or elliptical distributions. 
This paper extends and
generalizes these works in several directions. 
We present a non-asymptotic analysis of both Tyler's and Marrona's M-estimators, and their regularized variants, 
under three broad families of multivariate data distributions: 1) independent sub-Gaussian entries, up to a linear transformation; 2) log-concave distributions; 3) distributions satisfying a convex concentration property (CCP).   
\Revision{Our main results are given in terms of non-asymptotic concentration bounds for the weights of the M-estimators \eqref{eq:def_maronna}-\eqref{eq:def_tyler_reg}
around some particular deterministic value. They imply that 
for these three families of distributions,
Maronna's and Tyler's M-estimators behave similarly to a rescaled sample covariance matrix. In Section~\ref{sec:app-covariance-est} we illustrate the utility of these results for two concrete examples of practical interest: sparse covariance and sparse precision matrix estimation. 
}



\section{Main results}
\label{sect:main-results}

Let $\x_1,\ldots,\x_n \in \R^p$ be $n$ samples of the form
\begin{equation}
\label{eq:X-Y}
        \x_i = \trueCov^{1/2} \cdot \y_i \,,
\end{equation}
where $\trueCov$ is a 
{strictly positive} $p$-by-$p$ matrix, 
and $\y_1,\ldots,\y_n \in \R^p$ are $n$ { i.i.d.} realizations of a zero mean isotropic random vector $Y$, namely
\begin{equation} \label{eq:Y-moments}
        \E[Y] = \mathbf{0}\,, \quad \E[Y Y^\T] = \Id \,.
\end{equation}
We assume that $Y=(Y_1,\ldots,Y_p)^\top$ is a continuous 
random vector, and
satisfies \emph{one} of the following distributional assumptions, with the precise definitions deferred to Section~\ref{sec:prelims}:
\begin{enumerate}[nosep]
    \item \AssumIndep: 
    The coordinates of $Y$ are independent, sub-Gaussian (Definition~\ref{def:sub-gaussian-linear})
    and have a bounded density. The constant $K>0$ denotes a uniform bound on sub-Gaussian constants, and $C_0>0$ a bound on the densities. 

    \item \AssumCCPSB: 
    $Y$ satisfies the convex concentration property (CCP) with constant $K>0$ and also the small-ball property (SBP) with constant $C_0>0$ (Definition~\ref{def:small-ball}).
    
    \item \AssumLC: $Y$ has a log-concave distribution (Definition~\ref{def:logconcave}).
\end{enumerate}

\begin{remark}
    While the assumption that $X$ has zero mean may not hold in practice, it has been used extensively in previous studies of Tyler's and Maronna's M-estimators, cf. \cite{couillet2014robust,zhang2016marvcenko,goes2017robust,auguin2018large}. In Section~\ref{sec:discussion} we discuss how this restriction may be removed for some of our results.

\end{remark}

\Revision{
\begin{remark}
     We emphasize that the three families of distributions considered above are all distinct in the sense that neither one is contained in another. In particular, it is known that an i.i.d. sub-Gaussian vector does not necessarily satisfy the CCP, see for example \cite{gozlan2018characterization,huang2021dimension}. Furthermore, below (after the statement of Theorem~\ref{thm:main_tyler}) we mention examples of two distributions that satisfy one of \AssumCCPSB,~\AssumLC~ but not the other.
\end{remark}
}

We consider Maronna's and Tyler's M-estimators $\SigmaMar$ and $\SigmaTyl$, as well as their regularized variants $\SigmaMRE$ and $\SigmaTRE$, all computed from $\x_1,\ldots,\x_n$.   
 The latter two estimators depend on the regularization parameter $\alpha>0$, which we omit to simplify notation. 
%
We study
the nonasymptotic properties of 
these estimators 
in the {high-dimensional regime}, where the number of samples $n$ and the dimension $p$ are both large and comparable. Their ratio is denoted by
\begin{equation}
    \gamma = 
    \frac{p}{n} \in (0,\infty) \,.
\end{equation}
Similarly to \cite{zhang2016marvcenko,goes2017robust}, while our results are nonasymptotic in $n$ (in the form of finite-$n$ deviation bounds), they involve constants that may depend on $\gamma$, often in a complicated manner that we do not keep track of explicitly. 

\subsection{Concentration for the weights of Maronna's and Tyler's Estimators}

Our first two results show concentration for the weights of Maronna's and Tyler's M-estimators
for
$\gamma=p/n < 1$, hence $n-p=\Omega(p)$.
Below, $\Cheeger$ denotes (up to a universal constant) the \emph{Cheeger constant} of the family of $p$-dimensional log-concave distributions; it is known that $\Cheeger\ge 1/\mathrm{polylog}(p)$ and conjectured that $\Cheeger=\Theta(1)$; see Section~\ref{sec:prelims}, Eq. (\ref{eq:cheeger-bound-main-paper}).
 
We start with Maronna's estimator. For $\epsilon>0$, let $\Ec_{ME}(\epsilon)$ be the event that (\ref{eq:def_maronna}) has a unique solution
whose weights satisfy 
\begin{equation}\label{eq:main-maronna-weights}
    \max_{1\le i \le n} \left|\wMar_i-1/\phi^{-1}(1)\right| \le \epsilon \,.
\end{equation}
\begin{theorem}[The weights of Maronna's  estimator]
\label{thm:main_maronna}
        Assume $\gamma<1$, and that the functions $u(x)$, $\phi(x)=xu(x)$ 
        of Maronna's M-estimator further satisfy:
        \begin{enumerate}[label=(\roman*)]
            \item $\phi_\infty := \lim_{x\to\infty}\phi(x)>1$.
            \item There is a unique $d_0$ such that $\phi(d_0)=1$, namely, $\phi$ is strictly increasing at $\phi^{-1}(1)$. Moreover, the inverse map $\phi^{-1}$ is locally Lipshitz at $1$.
            \item $u(\cdot)$ is locally Lipschitz at $\phi^{-1}(1)$.
        \end{enumerate}
                Then, there are constants $c,C,\epsilon_0>0$ that depend on the distribution of $Y$ and on $\gamma$, such that for any  $\epsilon<\epsilon_0$,
                the following holds.
               
                \begin{itemize}
                     \item
            %
            Assume that $Y$ satisfies~\AssumLC. Then      $
                    \Pr(\Ec_{ME}(\epsilon)^c) \le Cn^2 e^{-c(\Cheeger\sqrt{n})\epsilon} 
                    $.
                    \item 
     Assume that $Y$ satisfies either~\AssumIndep~or \AssumCCPSB. Then 
                    $
                    \Pr(\Ec_{ME}(\epsilon)^c) \le Cn^2 e^{-cn\epsilon^2} 
                    $.
                \end{itemize}  
\end{theorem}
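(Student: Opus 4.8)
The first move is to exploit the affine equivariance of Maronna's M-estimator: replacing $\x_i$ by $A\x_i$ for invertible $A$ replaces $\SigmaMar$ by $A\SigmaMar A^\top$ and leaves each quadratic form $\tfrac1p\x_i^\top\SigmaMar^{-1}\x_i$, hence each weight $\wMar_i$, unchanged; so we may assume $\trueCov=\Id$ and work with $\x_i=\y_i$. The natural candidate for the common value of the weights is $w_\star := u(d_0)$ with $d_0:=\phi^{-1}(1)$; note $\phi(d_0)=d_0u(d_0)=1$ forces $w_\star=1/d_0$. The underlying heuristic: if $\SigmaMar\approx w_\star\hat S$ with $\hat S:=\tfrac1n\sum_i\x_i\x_i^\top$, then --- since $\sum_i\x_i^\top\hat S^{-1}\x_i=\tr(\hat S^{-1}n\hat S)=np$ exactly and the individual quadratic forms concentrate about their common average --- $\tfrac1p\x_i^\top\SigmaMar^{-1}\x_i\approx \tfrac1{w_\star}=d_0$, so $\wMar_i=u(d_0)=w_\star$. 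The plan is to make this rigorous via a quantitative contraction in the weight coordinates.

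\emph{Random-matrix inputs.} For $\gamma<1$ and under each of \AssumIndep,~\AssumCCPSB,~\AssumLC\ I would establish, with high probability: (a) $\hat S$ and every leave-one-out matrix $\hat S_{(i)}:=\tfrac1n\sum_{j\ne i}\x_j\x_j^\top$ are well conditioned, with spectrum in a fixed interval $[\lambda_-,\lambda_+]\subset(0,\infty)$ depending only on $\gamma$ and the distributional constants --- the lower eigenvalue bound being where the anti-concentration hypotheses enter (bounded density under \AssumIndep, SBP under \AssumCCPSB, known smallest-singular-value bounds for log-concave rows under \AssumLC); (b) for $M\succ 0$ independent of $\x_i$ and well conditioned, $\tfrac1p\x_i^\top M^{-1}\x_i=\tfrac1p\tr(M^{-1})+\oo$, via Hanson--Wright under \AssumIndep, the convex-concentration inequality under \AssumCCPSB, and the Cheeger-constant (thin-shell) concentration of Lipschitz functions of $Y$ under \AssumLC\ --- the last being responsible for the weaker exponent $\Cheeger\sqrt n\,\epsilon$ in place of $n\epsilon^2$; and (c) $\tfrac1p\tr(\hat S_{(i)}^{-1})=\tfrac1{1-\gamma}+\oo$, from convergence of the empirical spectral distribution of $\hat S_{(i)}$ to the Mar\v{c}enko--Pastur law together with (a), whence by Sherman--Morrison $\tfrac1p\x_i^\top(w_\star\hat S)^{-1}\x_i=d_0+\oo$ uniformly in $i$ on the good event.

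\emph{Contraction, existence, uniqueness.} Set $\Psi_i(\w):=u\!\big(\tfrac1p\x_i^\top M(\w)^{-1}\x_i\big)$ with $M(\w):=\tfrac1n\sum_j w_j\x_j\x_j^\top$, so the weights of $\SigmaMar$ are exactly the fixed points of $\Psi=(\Psi_1,\dots,\Psi_n)$. On the good event I would show that for a suitable radius $\delta\asymp\epsilon$ the map $\Psi$ carries the ball $B:=\{\w:\|\w-w_\star\oneVec\|_\infty\le\delta\}$ into itself and is a contraction there. ``Into itself'' combines (c) --- giving $\|\Psi(w_\star\oneVec)-w_\star\oneVec\|_\infty=\oo$ --- with local Lipschitzness of $u$ at $d_0$ and of $\phi^{-1}$ at $1$ (assumptions (ii),(iii)) and the perturbation bound $\big|\tfrac1p\x_i^\top M(\w)^{-1}\x_i-\tfrac1p\x_i^\top M(w_\star\oneVec)^{-1}\x_i\big|\lesssim\|\w-w_\star\oneVec\|_\infty$ on $B$, which follows from (a). For the contraction, differentiating Sherman--Morrison gives $\partial_{w_j}\big(\tfrac1p\x_i^\top M(\w)^{-1}\x_i\big)=-\tfrac1{pn}(\x_i^\top M(\w)^{-1}\x_j)^2$; summing over $j$ and using (a) bounds $\sum_j|\partial_{w_j}\Psi_i|$ by $|u'(d_0)|$ times an $O(1)$ Mar\v{c}enko--Pastur-type factor, and the monotonicity of $\phi$ --- which forces $0\le-u'(d_0)\le u(d_0)/d_0$ --- keeps this product strictly below $1$ on $B$, uniformly on the good event; assumption (i), $\phi_\infty>1$, is what guarantees $d_0=\phi^{-1}(1)$, hence the fixed point, exists at all, cf.~\cite{couillet2014robust}. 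Banach's theorem then yields a unique fixed point $\bwh\in B$, and that it is the \emph{only} solution of \eqref{eq:def_maronna} follows either from the a priori identity $\tfrac1n\sum_i\phi(\tfrac1p\x_i^\top\SigmaMar^{-1}\x_i)=1$ (trace of \eqref{eq:def_maronna} against $\SigmaMar^{-1}$) plus the concentration of the $d_i:=\tfrac1p\x_i^\top\SigmaMar^{-1}\x_i$, which confines any solution to $B$, or by invoking the deterministic uniqueness of Maronna's estimator for samples in general position (an almost sure event). On this event \eqref{eq:def_maronna} thus has a unique solution with $\max_i|\wMar_i-w_\star|\le\delta\le\epsilon$, i.e.\ $\Ec_{ME}(\epsilon)$ holds.

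\emph{Probability bookkeeping and the main difficulty.} Each estimate above is a fixed-$i$ statement --- the leave-one-out splitting is what makes $\tfrac1p\x_i^\top\hat S_{(i)}^{-1}\x_i$ and its variants independent of the matrix being tested --- failing with probability at most $Cn\,e^{-c\Cheeger\sqrt n\,\epsilon}$ under \AssumLC, resp.\ $Cn\,e^{-cn\epsilon^2}$ under \AssumIndep~or~\AssumCCPSB; a union bound over $i=1,\dots,n$ (and over a $\delta$-net of $B$, if one wants the perturbation and contraction bounds uniformly on $B$ rather than at a fixed $\w$, which affects only constants) produces the $Cn^2$ prefactor. I expect the principal obstacle to be the contraction step: securing a contraction factor bounded away from $1$ \emph{uniformly} over $B$ and over the good event --- forcing one to use assumptions (i)--(iii) and the Mar\v{c}enko--Pastur value $\tfrac1{1-\gamma}$ in concert --- and, in the log-concave regime, pushing the weaker $\Cheeger\sqrt n$-rate concentration through the fixed-point argument without further loss. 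Making the leave-one-out decoupling fully rigorous, and excluding far-away solutions of \eqref{eq:def_maronna}, are the remaining delicate points.
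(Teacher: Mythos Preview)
Your approach is sound in spirit but takes a genuinely different route from the paper. Both rely on the same random-matrix input --- concentration of $p^{-1}\x_i^\top S^{-1}\x_i$ around $1$ (the paper's Lemma~\ref{lem:main-technical}) --- but the paper does \emph{not} set up a Banach contraction in the weights. Instead it works in the $d$-coordinates via the map $h_j(\bm d)=\tfrac1p\x_j^\top\big(\tfrac1n\sum_i u(d_i)\x_i\x_i^\top\big)^{-1}\x_j$, and exploits only coordinatewise monotonicity: for \emph{any} fixed point $\bm d$, comparing $h(\bm d)$ with $h(d_{j_{\min}}\oneVec)$ and $h(d_{j_{\max}}\oneVec)$ yields the deterministic sandwich
\[
\max_{j}|\phi(d_j)-1|\ \le\ \max_{j}\bigl|p^{-1}\x_j^\top S^{-1}\x_j-1\bigr|,
\]
which immediately localizes every fixed point once the right-hand side is small. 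Existence then comes from the interference-function framework (find $\bm d_0$ with $h(\bm d_0)\le\bm d_0$), and uniqueness from a short scalability argument using only strict monotonicity of $\phi$ near $d_0$. This is considerably lighter than your contraction: no derivative of $u$, no Jacobian control on a ball, no $\delta$-net.

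Two concrete points where your sketch is shaky. First, the contraction factor: monotonicity of $\phi$ alone gives $|u'(d_0)|d_0^2\le 1$, not $<1$; a strict bound requires the \emph{lower} bound on the slope of $\phi$ encoded in assumption~(ii) (local Lipschitzness of $\phi^{-1}$), which you invoke but do not connect explicitly --- and since only local Lipschitzness of $u$ is assumed, not differentiability, you would need a non-differentiable version of the argument. Second, your global-uniqueness claim via the trace identity $\tfrac1n\sum_i\phi(d_i)=1$ ``plus concentration of the $d_i$'' is circular: the $d_i$ here depend on the putative far-away solution $\SigmaMar$ itself, so their concentration is exactly what is in question. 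The paper's sandwich inequality sidesteps this entirely, since it bounds $\phi(d_j)$ for \emph{every} fixed point in terms of the data-only quantity $p^{-1}\x_j^\top S^{-1}\x_j$.
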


\begin{remark}
    When saying that a constant $C$ \emph{  depends on the distribution of $Y$}, we mean that it may depend on the parameters $K,C_0>0$ above, whenever $Y$ is distributed according to  \AssumIndep~ or \AssumCCPSB. \Revision{
    Specifically, in the bound above in Theorem~\ref{thm:main_maronna}, the constants $c,C$ can be taken to be monotonic in the parameters $K,C_0$. The reason is that as one increases $K$ and decreases $C_0$, the class of distributions considered (\AssumIndep~or~\AssumLC) becomes larger.
    However, we do not keep track of this dependence explicitly.
    }
\end{remark}

It is interesting to compare Theorem~\ref{thm:main_maronna} to the results of \cite{couillet2014robust}.
Assuming that 
the random vector $Y$ has i.i.d. entries with sufficiently many finite moments,
they proved that asymptotically, as $n,p\to\infty$ 
with their ratio tending to a constant, almost surely $\max_{1\le i \le n}\left|\wMar_i-1/\phi^{-1}(1)\right|\to 0$, see \cite[Eq. (6)]{couillet2014robust}.
Our analysis extends theirs in two aspects: (i) It holds also for random vectors $Y$ that do not have independent entries, but instead satisfy certain multivariate concentration properties;  (ii) Our results are non-asymptotic, in the form of concentration inequalities for the weights.

Our next theorem regards Tyler's M-estimator. 
To this end, denote by $\trCov = \frac{1}{p}\tr\trueCov$ the normalized trace of the population covariance matrix. For any  $\epsilon>0$, let  $\Ec_{TE}(\epsilon)$ be the event that Tyler's estimator (\ref{eq:def_tyler}) exists uniquely, 
with weights that satisfy 
\begin{equation}\label{eq:main-tyler-weights}
    \max_{1\le i \le n} \left|\trCov \cdot \wTyl_i-1\right| \le \epsilon .
\end{equation}

\begin{theorem}[The weights of Tyler's estimator]
        \label{thm:main_tyler}
        There are constants $c,C,\epsilon_0>0$, that depend on the distribution of $Y$ and on $\gamma<1$, such that for any $\epsilon<\epsilon_0$ the following hold.
       \begin{itemize}
            \item 
            Assume that $Y$ satisfies ~\AssumLC. Then 
            $
            \Pr\left( \Ec_{TE}(\epsilon)^c  \right) \le Cn^2 e^{-c \Cheeger\min\left\{ \sqrt{n}\epsilon,\,  n^{1/4}\right\} } 
            $.
            \item 
            Assume that $Y$ satisfies either~\AssumIndep~or \AssumCCPSB. Then
            $
            \Pr\left( \Ec_{TE}(\epsilon)^c \right) \le Cn^2 e^{-c \min\left\{ n\epsilon^2,\, n^{1/2}\right\}} 
            $.
        \end{itemize}
\end{theorem}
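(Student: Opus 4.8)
The plan is to reduce the statement to a quantitative control of the random bilinear forms defining the weights, and then to pin down their common limiting value by a self-consistency (bootstrap) argument. Existence and uniqueness of $\SigmaTyl$ hold almost surely because $Y$, and hence $X$, has a density, by the Kent--Tyler criterion \cite{kent1988maximum}. For the quantitative part, the key observation is that the normalization $\Tr(\SigmaTyl)=p$ forces the exact identity $\SigmaTyl=\frac1n\sum_i\wTyl_i\x_i\x_i^\top$, and in particular $\frac1n\sum_i\wTyl_i\|\x_i\|^2=p$. Writing $\SigmaTyl=\SigmaTyl^{(-i)}+\frac1n\wTyl_i\x_i\x_i^\top$ with $\SigmaTyl^{(-i)}:=\frac1n\sum_{j\ne i}\wTyl_j\x_j\x_j^\top$, the Sherman--Morrison formula together with $\wTyl_i=(\frac1p\x_i^\top\SigmaTyl^{-1}\x_i)^{-1}$ yields the clean deterministic relation
\begin{equation*}
  \tfrac1p\,\x_i^\top\SigmaTyl^{-1}\x_i=(1-\gamma)\,t_i\,,\qquad t_i:=\tfrac1p\,\x_i^\top(\SigmaTyl^{(-i)})^{-1}\x_i\,,
\end{equation*}
so that $\wTyl_i=\bigl((1-\gamma)t_i\bigr)^{-1}$. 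The factor $(1-\gamma)^{-1}$ here is precisely the Mar\v{c}enko--Pastur correction, and is the reason the constants blow up and the argument fails as $\gamma\uparrow1$. The problem is now to show $t_i\approx\trCov/(1-\gamma)$ uniformly over $i$.

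First I would establish \emph{a priori} spectral bounds of the form $c_1\trueCov\preceq\trCov\,\SigmaTyl\preceq c_2\trueCov$ on an event of the claimed probability. This is the content of Lemma~\ref{lem:Tyl-inv-bound}, whose earlier incarnation in \cite{zhang2016marvcenko} is the one we correct: the upper bound follows from a non-asymptotic operator-norm bound on $\frac1n\sum_i\x_i\x_i^\top$, while the lower bound requires a covering-net argument on the unit sphere combined with the small-ball / bounded-density hypothesis, which rules out degeneracy of any quadratic form $\x_i^\top\SigmaTyl^{-1}\x_i$. These bounds transfer to $\SigmaTyl^{(-i)}$ and control $\|\trueCov^{1/2}(\SigmaTyl^{(-i)})^{-1}\trueCov^{1/2}\|$, which is the variance proxy governing the concentration of $t_i$.

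The heart of the argument is a two-step treatment of each $t_i$. Step one is a \emph{leave-one-out decoupling}: since adding a single sample perturbs Tyler's estimator only mildly, $\SigmaTyl^{(-i)}$ is close --- uniformly over $i$, in the norm $\|\trueCov^{-1/2}(\cdot)\trueCov^{-1/2}\|$ --- to the Tyler estimator $M_{(i)}$ built from the remaining $n-1$ samples, which is \emph{independent} of $\x_i$; hence $t_i$ differs by a controlled error from $\frac1p\,\y_i^\top\trueCov^{1/2}M_{(i)}^{-1}\trueCov^{1/2}\y_i$, a quadratic form in the isotropic vector $\y_i$ against a matrix independent of $\y_i$. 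Step two is a \emph{concentration} estimate: this quadratic form concentrates around its conditional mean $m_{(i)}:=\frac1p\Tr(\trueCov M_{(i)}^{-1})$, via the Hanson--Wright inequality under \AssumIndep, via the convex concentration property under \AssumCCPSB, and via the Cheeger-constant concentration inequality for log-concave measures under \AssumLC; the a priori bounds of the previous paragraph supply the operator- and Frobenius-norm quantities entering the tails. In the log-concave case $t_i$ is essentially a squared Euclidean norm of a linear image of $\y_i$, whose fluctuations obey a two-regime (thin-shell) bound; combined with the inherent accuracy limit of the leave-one-out decoupling, this is what yields the $\min\{\sqrt{n}\,\epsilon,\,n^{1/4}\}$ appearing in the exponent, as opposed to the plain $\sqrt{n}\,\epsilon$ of Maronna's estimator. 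Taking a union bound over $i=1,\dots,n$, and over the auxiliary leave-one-out events (another factor of order $n$), produces the $Cn^2$ prefactor.

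Finally I would close the loop with a short bootstrap to identify the limiting value. The steps above show that all $t_i$ lie within $o(1)$ of a single value $v$, whence $\wTyl_i\approx\bigl((1-\gamma)v\bigr)^{-1}$; substituting into the exact constraint $\frac1n\sum_i\wTyl_i\|\x_i\|^2=p$ and using $\|\x_i\|^2=\y_i^\top\trueCov\y_i\approx\Tr\trueCov=p\,\trCov$ gives $\bigl((1-\gamma)v\bigr)^{-1}p\,\trCov\approx p$, i.e. $v\approx\trCov/(1-\gamma)$, and therefore $\trCov\,\wTyl_i=\trCov\bigl((1-\gamma)t_i\bigr)^{-1}\approx1$. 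I expect the main obstacle to be precisely the interaction between Lemma~\ref{lem:Tyl-inv-bound} (the a priori bound) and the leave-one-out stability estimate of step one: one must bound the perturbation $\SigmaTyl^{(-i)}-M_{(i)}$ uniformly in $i$ using only the spectral bounds already available, without circular reasoning --- which is exactly the point at which the argument in \cite{zhang2016marvcenko} is deficient.
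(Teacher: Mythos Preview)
Your approach differs substantially from the paper's, and the central step you propose is not justified. The paper does \emph{not} work directly with $\SigmaTyl$ and its leave-one-out perturbations. Following \cite{zhang2016marvcenko}, it uses a variational characterization: the Tyler weights are, up to a scalar, the minimizer $\bwh$ of a functional $F(\w)$ subject to $\sum_i w_i=n$, equivalently the unique stationary point of $G_\beta(\w)=F(\w)+\tfrac{\beta}{2}(\sum_i w_i-n)^2$. One computes $(\nabla G_\beta(\oneVec))_\ell=-1+p^{-1}\x_\ell^\T S^{-1}\x_\ell$, where $S$ is the \emph{ordinary} sample covariance, so by Lemma~\ref{lem:main-technical} the vector $\oneVec$ is almost a zero of $g_\beta:=\nabla G_\beta$. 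A Newton--Kantorovich-type perturbation lemma (Lemma~\ref{lem:Tyl-perturb}) then pins $\bwh$ near $\oneVec$, given control of $\|(\nabla g_\beta(\oneVec))^{-1}\|_{\infty,\infty}$ and of the Lipschitz constant of $\nabla g_\beta$. Lemma~\ref{lem:Tyl-inv-bound} supplies the former---and it is \emph{not} a spectral bound on $\SigmaTyl$, contrary to what you write. Rather, $\nabla g_\beta(\oneVec)=I-(A-\beta\oneVec\oneVec^\T)$ with $A_{j,\ell}=\gamma(p^{-1}\y_j^\T T^{-1}\y_\ell)^2$, and the lemma shows $\|A-\beta\oneVec\oneVec^\T\|_{\infty,\infty}<1-c$ for a carefully chosen $\beta=\beta_0/n$. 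The error in \cite{zhang2016marvcenko} is a false inequality $(\y_j^\T T^{-1}\y_\ell)^2\ge(\y_j^\T\y_\ell)^2/\lambda_{\max}(T)^2$; the paper's fix partitions the column indices into blocks of size $k\asymp\sqrt{n}$ and uses the Woodbury identity to decouple, and it is this block size that produces the $n^{1/2}$ (resp.\ $n^{1/4}$) in the exponent---not any ``accuracy limit of the leave-one-out decoupling.''

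Your plan, by contrast, rests on two assertions that are as hard as the theorem itself. First, the a priori sandwich $c_1\trueCov\preceq\trCov\,\SigmaTyl\preceq c_2\trueCov$: since $\SigmaTyl$ is a weighted sample covariance with \emph{random, data-dependent} weights, a net/SBP argument for $S$ does not transfer, and this bound is essentially a consequence of the weight concentration you are trying to prove. Second, and more seriously, the leave-one-out stability $\SigmaTyl^{(-i)}\approx M_{(i)}$: Tyler's estimator is an implicit fixed point, and bounding its sensitivity to deleting one sample requires precisely the Hessian control the paper obtains through $G_\beta$. You flag the circularity at the end but offer no mechanism to break it. The paper breaks it by never touching $\SigmaTyl$ until the very last step: all the hard analysis happens with the ordinary sample covariance $S$ and $T$, whose spectral properties are standard, and the perturbation argument lives in $\w$-space rather than in matrix space. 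Only after $\max_i|\wh_i-1|\le\epsilon$ is established does one pass, via the trace normalization (\ref{eq:what-to-wTyl}) and a simple bound on $p^{-1}\tr S$, to $|\trCov\wTyl_i-1|\le C\epsilon$.
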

%
%
%
%
\Revision{We remark that Zhang {\it et. al.} \cite{zhang2016marvcenko} provided a proof of Theorem~\ref{thm:main_tyler} in the specific case where $Y \sim \mathcal{N}(\bm{0},\Id)$.  They 
stated a non-asymptotic bound for the 
weights\footnote{Note that their definition of the weights differs from ours by a constant factor, and therefore does not depend on $\trCov$.}: 
$
 \Pr\left(\max_{1\leq i \leq n}\left|w_i -1\right| \geq \epsilon \right) \leq Cne^{-cn\epsilon^2}
 $.
This bound should be compared to Theorem~\ref{thm:main_tyler} under Assumption~\AssumIndep. Importantly, their proof contains a non-trivial gap, which we correct in the present paper (see Section~\ref{sec:proof-Tyler}). }
\newline

Next,
we illustrate  Theorems~\ref{thm:main_maronna} and \ref{thm:main_tyler} by the following numerical experiment. We generated i.i.d. samples according to 
either of the following 
two distributions for $Y$:  
\begin{enumerate}[noitemsep,nolistsep]
    \item {\it Laplace:} All coordinates $Y_i$ are i.i.d. Laplace, with density  $\mathrm{Lap}(y)= {\frac{1}{\sqrt{2}}}\exp(-\sqrt{2}|y|)$. Hence, $Y$ is isotropic, log-concave, but not sub-Gaussian. 
    \item {\it Permuted smoothed}: $Y = \frac{1}{\sqrt{1+\sigma^2}} A + \frac{\sigma}{\sqrt{1+\sigma^2}} Z$, where $A \in \{\pm 1\}^p$ is uniform in the set $\left\{a\in \{\pm 1\}^p \,:\,\sum_{i=1}^p a_i=0\right\}$, $Z\sim \mathcal{N}(\bm{0},\Id)$ and $\sigma=0.01$ is the smoothing level. The entries of $A$ are clearly dependent; nonetheless, by a classical result of Maurey \cite{maurey1979construction}, it can be shown to satisfy the CCP. Consequently, $Y$ satisfies the CCP and SBP.
\end{enumerate}
\vspace{1em}

We compute Tyler's and Maronna's M-estimators from $n=2p$ samples, for the latter using $u(x)=2/(1+x)$. Figure~\ref{fig:1} shows the deviation of the corresponding weights from their limiting value 
$w^*=1$.  
We present on a log-log scale both the $\ell_\infty$ deviation $\max_{1\le i \le n}|\hat{w}_i-w^*|$, and the root mean squared error (RMSE) ${\small \sqrt{\frac1n \sum_{i=1}^n (\hat{w}_i-w^*)^2}}$, as a function of the dimension $p$. The slope of either line is approximately $\frac12$, consistent with  Theorems~\ref{thm:main_maronna} and \ref{thm:main_tyler}. 

\begin{figure}
    \centering
    \includegraphics[width=0.45\textwidth]{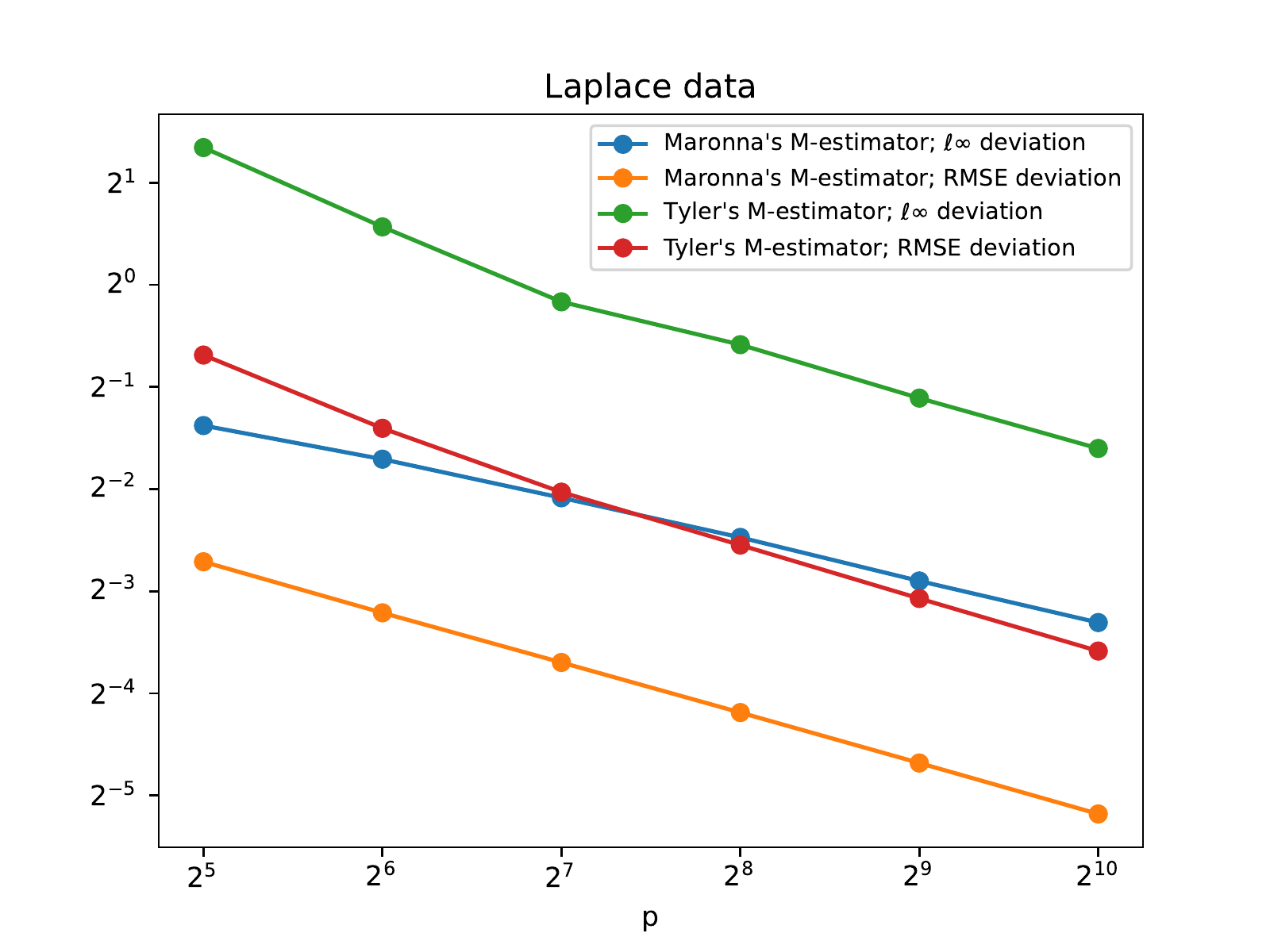}
    \vspace{5pt}
    \includegraphics[width=0.45\textwidth]{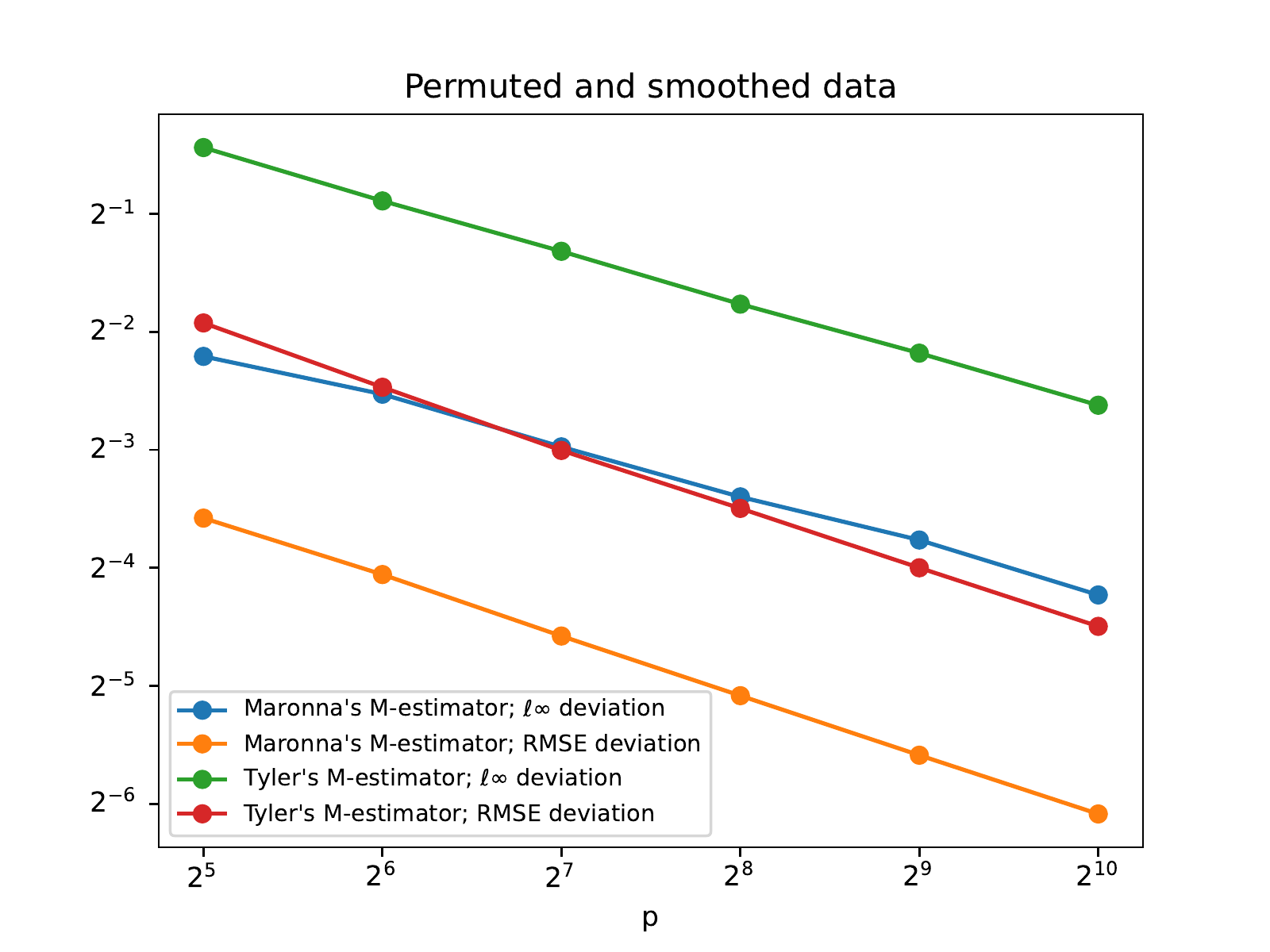}
    \caption{The empirical deviation of the weights, plotted in log-log scale. Each point on the graph corresponds to the average of $50$ random repetitions.}
    \label{fig:1}
\end{figure}

\begin{remark}
    Observe that in Theorem~\ref{thm:main_maronna}, the weights of Maronna's estimator do not depend on
    the population covariance $\trueCov$. 
    This is because the weights are preserved under an arbitrary linear full-rank transformation of the data. Let $\x_1,\ldots,\x_n$ and consider the transformed measurements $\tilde{\x}_i=A\x_i$, where $A\in \R^{p\times p}$ is full-rank. Denote the corresponding Marrona's estimators (\ref{eq:def_maronna}) by
    $\hat{\Sigma} = \frac1n \sum_{i=1}^n w_i \x_i\x_i^\T$ and 
    $\tilde{\Sigma}=\frac1n \sum_{i=1}^n \tilde{w}_i \tilde{\x}_i\tilde{\x}_i^\T$.
    One may verify that $\tilde{\Sigma}=A\hat{\Sigma}A^\T$, hence $w_i=\tilde{w}_i$; setting $A=\trueCov^{-1/2}$, we deduce that the weights $\wMar_i$ do not depend on the covariance matrix of $X$. 
    
    In contrast, consistent with Theorem~\ref{thm:main_tyler}, the weights of Tyler's estimator \emph{do} depend on $\trueCov$. The reason is that while the linearly transformed estimator $A\hat{\Sigma}A^\T$ does solve the {\it unconstrained} Eq. (\ref{eq:def_tyler}), corresponding to the transformed measurements (similarly to Maronna's estimator),  one needs to rescale the weights due to the constraint ${\Tr(\tilde{\Sigma})=p}$.
\end{remark}

\subsection{Regularized Estimators}

We next consider the regularized variants of Maronna's and Tyler's M-estimators. 
As mentioned in Section~\ref{sect:intro}, MRE exists uniquely for all $\gamma,\alpha>0$, whereas for any $\gamma>0$, TRE is only guaranteed to exist uniquely when $\alpha=\alpha(\gamma)>0$ is sufficiently large.

The regularization term $\frac{\alpha}{1+\alpha}\Id$ in Eqs. (\ref{eq:def_maronna_reg}) and (\ref{eq:def_tyler_reg}) shrinks the solution towards the identity matrix. 
As a result, the weights of MRE and TRE, and our deviation bounds 
for them, 
depend on the underlying population  matrix $\trueCov$.
Accordingly, 
throughout this section we operate under the following additional constraints on $\trueCov$, so to ensure that it has the same scale as the identity matrix. Specifically,
for constants $s_{\max}\ge \trLB > 0$, we assume:
\begin{itemize}
    \item Bounded operator norm:
    \begin{equation}\label{eq:cond-smax}
        \|\trueCov\| \le s_{\max} \,.
    \end{equation}
    \item Lower bound on total energy:
    \begin{equation}\label{eq:cond-trLB}
        \trCov = \frac{1}{p}\tr\trueCov \ge \trLB \,.
    \end{equation}
\end{itemize}

Towards stating our results, 
given a function $\phi(x)$, we 
define the following two functions ${Q,F:\R_{+} \to \R_+}$:
\begin{equation}
    Q(d)=\frac1p \E \tr \trueCov\left( \phi(d)\sum_{i=1}^{n-1}\x_i\x_i^\T + \alpha d \Id\right)^{-1} \,,\quad
    F(d) = (1+\alpha)\frac{Q(d)}{1+\gamma \phi(d) Q(d)}\,.
\end{equation} 
As we shall see below, the functions $Q$ and $F$ play a decisive role in determining the weights of MRE and TRE. A key quantity is the solution $d^*>0$ to the following ``master equation'':
\begin{equation}\label{eq:main-master-equation}
    F(d^*)=1 \,.
\end{equation}
Later, we show that 
if $u(\cdot)$ is non-increasing and
$\phi(x)=xu(x)$ is non-decreasing, 
then \eqref{eq:main-master-equation}
admits a unique solution. 

The theorem below regards Maronna's regularized M-estimator. For $\epsilon>0$, let $\Ec_{MRE}(\epsilon)$ be the event that MRE exists uniquely with weights that satisfy 
        \begin{equation}
            \max_{1\le i \le n} |\wMRE_i-u(d^*)| \le \epsilon\,.
        \end{equation}

\begin{theorem}[The weights of MRE.]
    \label{thm:main_maronna_reg}
    Let $\alpha>0$ and $u$ be a Lipschitz continuous on any compact sub-interval of $[0,\infty)$. Then,
    Eq. (\ref{eq:main-master-equation}) has a unique solution which satisfies $d^*\in [\underline{d},\overline{d}]$, where 
    the constants $0<\underline{d}\le \overline{d}$ depend on the distribution of $Y$, $\gamma$, $\alpha$, $s_{\max}$ and $\trLB$. 

    Furthermore, there are constants $c,C,\epsilon_0>0$, that depend on the distribution of $Y$, $\gamma$, $\alpha$, $s_{\max}$ and $\trLB$, such that for all $\epsilon< \epsilon_0$,
        \begin{itemize}
            \item Assume that $Y$ satisfies~\AssumLC. Then $\Pr(\Ec_{MRE}(\epsilon)^c) \le Cn^2e^{-c(\Cheeger\sqrt{n})\epsilon}$.  
            \item Assume that $Y$ satisfies either~\AssumIndep~or \AssumCCPSB. Then ${\Pr(\Ec_{MRE}(\epsilon)^c) \le Cn^2e^{-cn\epsilon^2}}$.  
        \end{itemize}
    \end{theorem}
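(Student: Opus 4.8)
The plan is to treat separately the deterministic claim about $d^*$ and the concentration of the weights.

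\textbf{The master equation.} Write $M(d):=\phi(d)\sum_{i=1}^{n-1}\x_i\x_i^\T+\alpha d\,\Id$ for the matrix appearing inside the inverse in the definition of $Q(d)$. Since $u$ is positive non-increasing and $\phi(x)=xu(x)$ is non-decreasing and bounded, $\tfrac{d}{dd}M(d)$ dominates $\alpha\,\Id$ in the positive semidefinite order, so $d\mapsto M(d)^{-1}$ is strictly decreasing and hence $Q(d)$ is continuous and strictly decreasing on $(0,\infty)$. Writing $F(d)=(1+\alpha)\bigl(1/Q(d)+\gamma\phi(d)\bigr)^{-1}$ then makes $F$ continuous and strictly decreasing too. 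From $\alpha d\,\Id\preceq M(d)$, operator convexity of $M\mapsto M^{-1}$, and the scale conditions \eqref{eq:cond-smax}--\eqref{eq:cond-trLB}, I get $Q(d)\le s_{\max}/(\alpha d)$ and a matching lower bound forcing $F(d)\to\infty$ as $d\downarrow0$ and $F(d)\to0$ as $d\to\infty$; hence the root $d^*$ of \eqref{eq:main-master-equation} exists, is unique, and lies in an interval $[\underline{d},\overline{d}]$ with the stated dependencies. I would also record the quantitative bound $F(d)-F(d')\ge c_0(d'-d)$ for $\underline{d}\le d<d'\le\overline{d}$, from $Q'(d)\le-\alpha\,\frac1p\E\tr(\trueCov M(d)^{-2})$ and crude operator-norm control of $M(d)$; this is used at the end.

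\textbf{Reduction to a scalar problem.} The estimator $S:=\SigmaMRE$ exists uniquely by \cite[Theorem~1]{ollila2014regularized}, so $\Ec_{MRE}(\epsilon)^c$ is the event that not all $\wMRE_i$ lie within $\epsilon$ of $u(d^*)$. A crude first step, using $S\succeq\frac{\alpha}{1+\alpha}\Id$ and operator-norm control of $\frac1n\sum_j\x_j\x_j^\T$ (available under each of \AssumIndep,~\AssumCCPSB,~\AssumLC), confines all scores $d_i:=\frac1p\x_i^\T S^{-1}\x_i$, hence all $\wMRE_i=u(d_i)$, to a fixed compact interval with high probability, and from there it suffices to prove $\max_i|d_i-d^*|\le C\epsilon$. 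For each $i$ put $S_{-i}:=S-\frac{\wMRE_i}{(1+\alpha)n}\x_i\x_i^\T\succeq\frac{\alpha}{1+\alpha}\Id$ and $t_i:=\frac1p\x_i^\T S_{-i}^{-1}\x_i$; Sherman--Morrison gives the exact identity $d_i+\frac{\gamma}{1+\alpha}t_i\phi(d_i)=t_i$, which determines $d_i$ from $t_i$ through a $1$-Lipschitz map. So it remains to pin down the $t_i$.

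\textbf{Concentration of the $t_i$.} Let $S^{(-i)}$ be the regularized Maronna estimator built from $\{\x_j\}_{j\ne i}$ alone, so that $S^{(-i)}$ is independent of $\x_i$. The crucial (and most delicate) point is that although deleting $\x_i$ may move $S$ by $\Theta(1)$ in operator norm, it moves the relevant trace and bilinear functionals by only $O(1/n)$: I would show $\max_{j\ne i}|\wMRE_j-\wMRE_j^{(-i)}|=O(1/n)$ self-consistently, exploiting that $\x_i$ is a generic perturbation direction relative to each $\x_j$ so that $\x_j^\T S_{-i}^{-1}\x_i$ and $\x_j^\T(S^{(-i)})^{-1}\x_i$ are $O(\sqrt p)$ rather than $O(p)$, and then deduce $t_i=\frac1p\x_i^\T(S^{(-i)})^{-1}\x_i+O(1/n)$ and $\frac1p\tr\trueCov S_{-i}^{-1}=\frac1p\tr\trueCov S^{-1}+O(1/n)=:\hat q+O(1/n)$, all with high probability. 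Now $M_i:=\trueCov^{1/2}(S^{(-i)})^{-1}\trueCov^{1/2}$ is independent of $\y_i$ with $\|M_i\|\le\frac{(1+\alpha)s_{\max}}{\alpha}$, so conditioning on $\{\x_j\}_{j\ne i}$, the concentration of the $p$-dimensional, $O(1/\sqrt p)$-Lipschitz quadratic form $\frac1p\y_i^\T M_i\y_i$ around its mean $\frac1p\tr M_i=\frac1p\tr\trueCov(S^{(-i)})^{-1}$ is where the distributional hypothesis enters: Hanson--Wright under \AssumIndep; the convex concentration property after localizing $\|\y_i\|\lesssim\sqrt p$ (with SBP supplying the lower bounds) under \AssumCCPSB; and the Cheeger-type inequality with constant $\Cheeger$ under \AssumLC. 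With $p=\gamma n$ these give deviation $\lesssim\epsilon$ with failure probability $\le Ce^{-cn\epsilon^2}$ in the first two cases and $\le Ce^{-c\Cheeger\sqrt{n}\,\epsilon}$ in the third. Combining, $\max_i|t_i-\hat q|\le C\epsilon$, hence $\max_i|d_i-\hat{d}|\le C\epsilon$, where $\hat{d}$ solves $\hat{d}+\frac{\gamma\hat q}{1+\alpha}\phi(\hat{d})=\hat q$.

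\textbf{Matching $\hat{d}$ with $d^*$, and bookkeeping.} Since all $d_j$ lie within $C\epsilon$ of $\hat{d}$, we have $\|S-(\tfrac{u(\hat{d})}{(1+\alpha)n}\sum_j\x_j\x_j^\T+\tfrac{\alpha}{1+\alpha}\Id)\|\le C\epsilon$, so $\hat q$ differs by $\le C\epsilon$ from the trace resolvent functional of the matrix on the right; the latter has $O(1/n)$ bounded differences in the $\x_j$, hence concentrates around its mean $Q(\hat{d})$ at a scale far finer than $\epsilon$. Substituting $\hat q\approx Q(\hat{d})$ into the equation for $\hat{d}$ gives $|F(\hat{d})-1|\le C\epsilon/\underline{d}$, and the quantitative monotonicity of $F$ yields $|\hat{d}-d^*|\le C\epsilon/(c_0\underline{d})$. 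Hence $\max_i|\wMRE_i-u(d^*)|\le C\epsilon$ on the intersection of all the high-probability events; rescaling $\epsilon$ by a constant and union-bounding over the $O(n^2)$ events involved (the $n$ leave-one-out subproblems, each with $n$ coordinates) produces the stated bounds, and the hypothesis $\epsilon<\epsilon_0$ is exactly what keeps the $d_i$, and the $C\epsilon$-neighborhood of $d^*$, inside the compact interval on which $u,\phi$ are Lipschitz and $F$ is quantitatively strictly monotone. The main obstacle I anticipate is the leave-one-out estimate $\max_{j\ne i}|\wMRE_j-\wMRE_j^{(-i)}|=O(1/n)$ holding uniformly over the three distributional classes --- proving it needs the contraction structure behind the existence of $\SigmaMRE$ together with uniform control of the bilinear forms $\x_i^\T M\x_j$ --- with the log-concave quadratic-form concentration (the origin of the $\Cheeger\sqrt{n}$ rate) a close second.
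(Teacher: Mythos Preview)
Your treatment of the master equation is essentially the paper's Lemma~\ref{lem:MRE-2}: show $F$ is continuous, strictly decreasing, with the correct limits at $0$ and $\infty$, and record a quantitative lower bound on $|F'|$ on $[\underline{d},\overline{d}]$. That part is fine.

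The substantive divergence is in how you decouple $\x_i$ from the rest. You work with the M-estimator $\SigmaMRE$ itself, remove the $i$-th summand, and then try to replace the result by the leave-one-out M-estimator $S^{(-i)}$, which requires the stability estimate $\max_{j\ne i}|\wMRE_j-\wMRE_j^{(-i)}|=O(1/n)$. You correctly flag this as the main obstacle, and your sketch (``self-consistently'' via contraction structure and bilinear-form control) is not a proof; making it rigorous uniformly over the three distributional classes would be a nontrivial project in its own right.

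The paper avoids this entirely by a monotonicity sandwich. Write the fixed-point map $\bar{h}_j(\bm{d})=\frac{1+\alpha}{p}\x_j^\T\bigl(\frac1n\sum_i u(d_i)\x_i\x_i^\T+\alpha\Id\bigr)^{-1}\x_j$. Since $u$ is non-increasing, $\bar{h}$ is coordinatewise non-decreasing, so comparing the fixed point $\hat{\bm{d}}$ with the constant vectors $\hat d_{j_{\min}}\bm{1}$ and $\hat d_{j_{\max}}\bm{1}$ gives
\[
\hat F_{j_{\min}}(\hat d_{j_{\min}})\le 1,\qquad \hat F_{j_{\max}}(\hat d_{j_{\max}})\ge 1,
\]
where $\hat F_j(d)=(1+\alpha)p^{-1}\x_j^\T(\phi(d)S+\alpha d\,\Id)^{-1}\x_j$ and $S=\frac1n\sum_i\x_i\x_i^\T$ is the \emph{unweighted} sample covariance. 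Now Sherman--Morrison on $S=S_{-j}+\frac1n\x_j\x_j^\T$ produces $\hat Q_j(d)=p^{-1}\x_j^\T(\phi(d)S_{-j}+\alpha d\,\Id)^{-1}\x_j$, and $S_{-j}$ is \emph{automatically independent} of $\x_j$. So the quadratic-form concentration you describe (Hanson--Wright / CCP / Cheeger) applies directly to $\hat Q_j(d)$ around its mean $Q(d)$, and a resolvent bounded-differences argument handles the fluctuation of $Q$; no leave-one-out stability of the M-estimator weights is ever needed. The finish --- evaluate at $d^*\pm\epsilon$, use monotonicity of $\hat F_j$ to trap $\hat F_j^{-1}(1)$, then Lipschitz-transfer to $\wMRE_j=u(\hat d_j)$ --- is essentially what you wrote.

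In short: your route could perhaps be pushed through, but the leave-one-out step is a genuine gap as written, and the paper's monotonicity trick makes it unnecessary by reducing directly to the ordinary sample covariance.
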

We note that the weights of MRE were previously studied in \cite{auguin2018large}, 
assuming $Y$ has i.i.d. entries.
They proved that asymptotically, as $n,p\to\infty$ at a fixed ratio $p/n=\gamma$,
one has
$\max_{1\le i \le n}\|\wMRE_i-u(d^*)\| \to 0$ w.p. $1$. 

Lastly, we consider Tyler's regularized M-estimator (TRE). TRE is superficially a special case of MRE, corresponding to $u(x)=1/x$ and $\phi(x)=1$; a crucial difference, however, is that $u(\cdot)$ is singular at $x=0$.
Accordingly, to carry out our analysis, we 
require
an additional constraint on $\trueCov$: there exists a constant $\trInvUB>0$ such that
\begin{equation}
    \frac{1}{p}\tr\trueCov^{-1} \le \trInvUB \,.
\end{equation}

For $\epsilon>0$, let $\Ec_{TRE}(\epsilon)$ be the event that TRE exists uniquely, and that furthermore its weights satisfy 
        \begin{equation}
            \max_{1\le i \le n} |\wTRE_i-1/d^*| \le \epsilon\,.
        \end{equation}

\begin{theorem}
    [The weights of TRE.]
    \label{thm:main_tyler_reg}
    Let $u(x)=1/x$, $\phi(x)=1$ and ${\alpha>\max\{0,\gamma-1\}}$. 
    Then,
    Eq. (\ref{eq:main-master-equation}) has a unique solution which satisfies $d^*\in [\underline{d},\overline{d}]$, where 
    the constants $0<\underline{d}\le \overline{d}$ depend on the distribution of $Y$, $\gamma$, $\alpha$, $s_{\max}$, $\trLB$ and $\trInvUB$.


        Furthermore, there are constant $c,C,\epsilon_0>0$, that depend on the distribution of $Y$, $\gamma$, $\alpha$, $s_{\max}$, $\trLB$ and $\trInvUB$, such that for all $\epsilon< \epsilon_0$,
        \begin{itemize}
            \item Assume that $Y$ satisfies~\AssumLC. Then $\Pr(\Ec_{TRE}(\epsilon)^c) \le Cn^2e^{-c(\Cheeger\sqrt{n})\epsilon}$.  
            \item Assume that $Y$ satisfies either~\AssumIndep~or \AssumCCPSB. Then ${\Pr(\Ec_{TRE}(\epsilon)^c) \le Cn^2e^{-cn\epsilon^2}}$.  
        \end{itemize}

\end{theorem}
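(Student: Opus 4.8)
The plan is to adapt the argument used to prove Theorem~\ref{thm:main_maronna_reg}, viewing TRE as the formal specialization $u(x)=1/x$, $\phi(x)\equiv 1$ of MRE. Relative to that proof there are two genuinely new ingredients, both forced by the singularity of $u(x)=1/x$ at $x=0$: a \emph{quantitative} lower bound $d^*\ge\underline d>0$ (so that the target weight $1/d^*$ stays bounded), and a priori bounds confining all the weights $\wTRE_i$ and all the quadratic forms $\frac1p\x_i^\T\SigmaTRE^{-1}\x_i$ to a fixed compact subinterval of $(0,\infty)$; on such an interval $u(x)=1/x$ is Lipschitz and the remainder of the MRE analysis transfers with only cosmetic changes.

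\emph{The deterministic claim.} Since $\phi\equiv1$, $F(d)=(1+\alpha)Q(d)/(1+\gamma Q(d))$ is the composition of the strictly increasing map $z\mapsto(1+\alpha)z/(1+\gamma z)$ with $Q$, and $Q$ is continuous and strictly decreasing because the matrix inside its defining resolvent is strictly increasing in $d$; hence $F$ is strictly decreasing, with $\lim_{d\to\infty}F(d)=0$. Computing $\lim_{d\to0^+}F(d)$, using that the leave-one-out sample matrix $\sum_{i=1}^{n-1}\x_i\x_i^\T$ is invertible when $\gamma<1$ and rank-deficient when $\gamma\ge1$, shows that $F(d^*)=1$ has a solution, necessarily unique by monotonicity, exactly when $\alpha>\max\{0,\gamma-1\}$ --- which is the hypothesis. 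For the two-sided bound: $F(d^*)=1$ forces $Q(d^*)=1/(1+\alpha-\gamma)$, so $\overline d$ follows from $Q(d)\le\trCov/(\alpha d)\le s_{\max}/(\alpha d)$. For $\underline d$, using $\x_i=\trueCov^{1/2}\y_i$ and cyclicity one rewrites $Q(d)$ with $\trueCov^{-1}$ appearing explicitly, of the form $\frac1p\E\tr\big(\sum_{i=1}^{n-1}\y_i\y_i^\T+\alpha d\,\trueCov^{-1}\big)^{-1}$; controlling the extreme eigenvalues of $\sum_{i=1}^{n-1}\y_i\y_i^\T$ by standard random matrix bounds, one sees that near $d=0$ the function $Q$ varies at a rate $O(\trInvUB)$, which prevents $Q$ from dropping from its $d\to0^+$ value down to $1/(1+\alpha-\gamma)$ too quickly --- this is exactly where the assumption $\frac1p\tr\trueCov^{-1}\le\trInvUB$ is used, and it pins $d^*$ away from $0$.

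\emph{A priori bounds and the self-consistent identity.} On an event of probability $1-Ce^{-cn}$ we have: TRE exists uniquely (automatic for continuous $Y$ when $\alpha>\max\{0,\gamma-1\}$, by \cite{ollila2014regularized,pascal}); $\|\tfrac1n\sum_i\x_i\x_i^\T\|\le C$; and $c\le\|\x_i\|^2/p\le C$ for all $i$ (norm concentration from the distributional assumption on $Y$). On this event $\SigmaTRE\succeq\frac{\alpha}{1+\alpha}\Id$ immediately gives $\frac1p\x_i^\T\SigmaTRE^{-1}\x_i\le C$, hence $\wTRE_i\ge\underline w>0$; the matching bound $\wTRE_i\le\overline w$ is equivalent to an $O(1)$ bound on $\lambda_{\max}(\SigmaTRE)$, which we obtain --- exactly as for the unregularized Tyler estimator underlying Theorem~\ref{thm:main_tyler} --- from the small-ball/anti-concentration property of $Y$, ruling out that too large a fraction of the $\x_j$ cluster near any proper subspace. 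Next, fix $i$ and let $M_i=\SigmaTRE-\frac{\wTRE_i}{(1+\alpha)n}\x_i\x_i^\T\succeq\frac{\alpha}{1+\alpha}\Id$ be the leave-one-out matrix; Sherman--Morrison applied to $\SigmaTRE^{-1}$, together with the defining relation $\frac1p\x_i^\T\SigmaTRE^{-1}\x_i=1/\wTRE_i$ and $\phi\equiv1$, yields after elementary algebra the \emph{exact} identity $\wTRE_i=\dfrac{1+\alpha}{1+\alpha-\gamma}\Big(\dfrac1p\x_i^\T M_i^{-1}\x_i\Big)^{-1}$, where the division by $1+\alpha-\gamma$ is precisely the step requiring $\alpha>\gamma-1$ (indeed, this computation re-derives why $\alpha>\max\{0,\gamma-1\}$ is the right condition).

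\emph{Concentration and the main obstacle.} By the identity above, it suffices to show that $t_i:=\frac1p\x_i^\T M_i^{-1}\x_i$ concentrates around $\frac{(1+\alpha)d^*}{1+\alpha-\gamma}$. We split this into (a) \emph{quadratic-form concentration} of $t_i$ around $\frac1p\tr(\trueCov M_i^{-1})$, the bound $\|M_i^{-1}\|\le\frac{1+\alpha}{\alpha}$ controlling the relevant Frobenius/operator norms --- giving rate $e^{-cn\epsilon^2}$ under \AssumIndep~or~\AssumCCPSB~(Hanson--Wright, resp.\ convex concentration after truncation) and rate $e^{-c\Cheeger\sqrt n\epsilon}$ under \AssumLC~(concentration of quadratic forms of isotropic log-concave vectors); and (b) \emph{replacement of $M_i$ by its deterministic equivalent}: if every weight $\wTRE_j$, $j\ne i$, were $O(\epsilon)$-close to $1/d^*$, then $\frac1p\tr(\trueCov M_i^{-1})$ would be close to the value it takes when all weights equal $1/d^*$, which concentrates around $(1+\alpha)d^*Q(d^*)$ and hence, by the master equation $F(d^*)=1$, around exactly $\frac{(1+\alpha)d^*}{1+\alpha-\gamma}$. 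Step (b) is circular as stated --- $M_i$ depends on the $\wTRE_j$, $j\ne i$, which themselves depend on $\x_i$ --- and I expect this circularity to be the main obstacle. It is closed, as in the MRE proof, by a leave-one-out stability estimate comparing each $\wTRE_j$ with the corresponding weight of the TRE built from $\{\x_k\}_{k\ne i}$ (which is genuinely independent of $\x_i$), run inside a fixed-point/bootstrap argument showing that, on the a priori event, the whole weight vector is $O(\epsilon)$-close to $(1/d^*,\dots,1/d^*)$; the delicate point is that a perturbation of any single weight propagates through $\SigmaTRE^{-1}$ to every other weight, so the stability estimate must be quantitative and uniform. A union bound over $i$ and over the pairwise stability estimates produces the $n^2$ prefactor, and combining with the failure probability of the a priori event gives the stated bounds. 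The singularity of $u$ at $0$ is, by contrast, a secondary difficulty, fully absorbed into the lower bound on $d^*$ and the a priori bounds above.
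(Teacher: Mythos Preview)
Your deterministic discussion of $d^*$ is broadly aligned with the paper's Lemma~\ref{lem:TRE-1} (the paper additionally splits into $\gamma<1$ and $\gamma\ge 1$, reducing the latter to the former). The concentration argument, however, takes a substantially harder route than the paper, and the circularity you flag as ``the main obstacle'' is in fact entirely \emph{avoidable}; the paper does not resolve it by a leave-one-out stability or bootstrap as you suggest.

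The key idea you are missing is a \emph{monotonicity sandwich}, which is already the engine of the MRE proof and carries over verbatim to TRE (the paper literally says so; only Lemma~\ref{lem:MRE-2} is replaced by Lemma~\ref{lem:TRE-1}). Write $\hat d_j=1/\wTRE_j$ as a fixed point of $\bar h_j(\bm d)=\frac{1+\alpha}{p}\x_j^\T\big(\frac1n\sum_i u(d_i)\x_i\x_i^\T+\alpha\Id\big)^{-1}\x_j$. Since $u$ is non-increasing, $\bar h$ is coordinatewise non-decreasing, so $\bar h(\hat d_{j_{\min}}\oneVec)\le \hat{\bm d}\le \bar h(\hat d_{j_{\max}}\oneVec)$. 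Reading off coordinates $j_{\min},j_{\max}$ traps every $\hat d_j$ in $[\min_i\hat F_i^{-1}(1),\max_i\hat F_i^{-1}(1)]$, where $\hat F_i(d)=(1+\alpha)p^{-1}\x_i^\T(\phi(d)S+\alpha d\Id)^{-1}\x_i$ is a function of a \emph{scalar} $d$: all weights have been replaced by the common value $u(d)$. After one Sherman--Morrison step, $\hat F_i(d)$ depends on $\x_i$ only through $\hat Q_i(d)=p^{-1}\x_i^\T(\phi(d)S_{-i}+\alpha d\Id)^{-1}\x_i$, and $S_{-i}$ is genuinely independent of $\x_i$ --- so there is no circularity whatsoever. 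Concentration of $\hat Q_i(d)$ around $Q(d)$ (Lemma~\ref{lem:MRE-1}) combined with the quantitative inverse bound $|F(d_1)-F(d_2)|\ge\eta|d_1-d_2|$ on $[\underline d,\overline d]$ (Lemma~\ref{lem:TRE-1}) gives $\hat F_i^{-1}(1)\in(d^*-\epsilon,d^*+\epsilon)$ for all $i$, and $|\wTRE_j-1/d^*|\le L\epsilon$ follows since $x\mapsto 1/x$ is Lipschitz on $[\underline d,\overline d]$.

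Consequently, neither the a priori upper bound $\wTRE_i\le\overline w$ you obtain from small-ball considerations, nor any stability comparison between the full TRE and a leave-one-out TRE, is needed; and your proposed bootstrap, which you do not fully specify, can be dispensed with.
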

For the special case where $X$ follows a Gaussian or an elliptical distribution, similar non-asymptotic concentration bounds for the weights of TRE were provided in \cite{goes2017robust}.
%
Lastly, we remark that as noted in \cite{goes2017robust}, the dependence on $\trInvUB$ can be removed when $\alpha$ is sufficiently large, i.e  $\alpha \ge Cs_{\max}$ where $C$ is a suitable constant (see Remark~\ref{remark:goes-alpha} in 
\ref{sec:proof-lem:TRE-1} for further details). 

        
        \paragraph{Paper outline} 
        The rest of the manuscript is organized as follows.
        In Section~\ref{sec:prelims} we provide definitions and technical background related to the distributional assumptions from Section~\ref{sect:main-results} above. In Section~\ref{sec:app-covariance-est} we describe some applications of our results to robust covariance and precision matrix estimation.
        Section~\ref{sec:proofs} is devoted to the proofs of Theorems \ref{thm:main_maronna}-\ref{thm:main_tyler_reg}, with some technical details deferred to the Appendix. Finally, we offer some concluding remarks in Section~\ref{sec:discussion}. 
        
\section{Preliminaries and Technical Background}
\label{sec:prelims}

We provide a brief background and definitions for the distributions considered in Section~\ref{sect:main-results}. 
Recall that $Y\in\R^p$
is an isotropic random vector. Throughout this text, $\|\cdot\|$ denotes the Euclidean norm in the suitable dimension $p$.

\begin{definition}
    \label{def:sub-gaussian-linear}
            $Y$ is a {\bf sub-Gaussian} vector with constant $K>0$ if for any 
            $\|u\|=1$,
            \[
            \Pr\left( \left|u^\T Y -\E[u^\T Y]\right| \ge t \right) \le 2\exp \left[ -(t/2K)^2\right]  \,.                  
            \] 
    \end{definition}
Sub-Gaussianity implies that linear functions of $Y$ concentrate.
For our analysis we shall also need concentration of some sufficiently well-behaved non-linear functions.
Following
\cite{adamczak2011sharp,meckes2012concentration}, we consider the following property:
\begin{definition}
    \label{def:CCP-property}
        $Y$ satisfies the  {\bf convex concentration property  (CCP)} with constant  $K>0$ if for every $1$-Lipschitz {convex} $f:\R^p\to\R$, the random variable $f(Y)$ is sub-Gaussian with constant $K$:
        \begin{equation}\label{eq:ccp}
            \Pr\left( \left|f(Y) -\E[f(Y)]\right| \ge t \right) \le 2\exp\left[ -(t/2K)^2\right] \,.
        \end{equation}
    \end{definition}
The CCP was introduced by  
 Talagrand \cite{talagrand1995concentration}, who proved that if $Y$ has independent, uniformly bounded entries then it satisfies the CCP. Subsequent works established weaker conditions under which the CCP holds, see  \cite{adamczak2015note,van2014probability} and references therein.
Another family of distributions satisfying the CCP, which has received attention in machine learning and statistics (e.g. \cite{block2020generative,bakry2013analysis,raginsky2017non}), are the distributions satisfying a Log-Sobolev Inequality.\footnote{Such distributions in fact satisfy a stronger {\it Lipschitz concentration property}: the function $f(\cdot)$ in Definition~\ref{def:CCP-property} does not need to be convex.}

For parts of our analysis, we shall also need the following:
\begin{definition}
\label{def:small-ball}
    $Y$ satisfies the {\bf small ball property (SBP)} with constant $C_0$  if for any $\|u\|=1$ and $a\in \R$, 
    \begin{equation}\label{eq:sbp}
        \Pr \left( \left|u^\T Y - a\right| \le t \right) \le C_0 t \quad \textrm{ for all }t\ge 0 \,.
    \end{equation}
\end{definition}
The SBP is an \emph{anti-concentration} property: it states that the law of $u^\T Y$ cannot put 
large mass around any particular value $a\in\R$.
For our purposes, the SBP will be {especially important for bounding} the smallest eigenvalue of the sample covariance matrix, which is a key step in 
{several of our proofs.}
Note that if the density of $u^\T Y$ is bounded, then (\ref{eq:sbp}) holds for some appropriate $C_0$. The following remarkable result, due to Rudelson and Vershynin, states that if $Y$ has independent entries, each with bounded density, then it satisfies the SBP \cite[Theorem 1.2]{rudelson2015small}:
\begin{lemma}
    \label{lem:sbp-indep}
    Let $Y=(Y_1,\ldots,Y_p)$ have independent entries, 
    with univariate densities all uniformly bounded by $C$.
    Then $Y$ satisfies the SBP with constant $2\sqrt{2}C$.
\end{lemma}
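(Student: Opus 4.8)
The plan is to reduce the multivariate small-ball estimate for $u^\T Y = \sum_{j=1}^p u_j Y_j$ to a statement about the L\'evy concentration function of a sum of independent random variables, and then invoke a Berry--Esseen-type or characteristic-function bound. First I would recall the notion of the concentration function $\mathcal{Q}(S,t) = \sup_{a\in\R}\Pr(|S-a|\le t)$ for a random variable $S$, so that the claim is exactly $\mathcal{Q}(u^\T Y, t)\le 2\sqrt{2}\,C\,t$ for every unit vector $u$ and every $t\ge 0$. Since $\|u\|=1$, there is at least one coordinate, say $j_0$, with $u_{j_0}^2 \ge 1/p$; more usefully, I would not single out one coordinate but instead control the whole sum. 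The key input is a classical tensorization inequality for concentration functions of sums of independent summands: for independent $Z_1,\dots,Z_p$,
\[
\mathcal{Q}\Bigl(\sum_{j} Z_j,\,t\Bigr) \;\le\; \frac{A\,t}{\bigl(\sum_{j} (1-\mathcal{Q}(Z_j,t))\bigr)^{1/2}} \vee \text{(something)},
\]
but I would avoid reconstructing the sharp constant by hand and instead appeal directly to the statement of \cite[Theorem~1.2]{rudelson2015small} as quoted, which is precisely this lemma. In other words, the ``proof'' at this level is a citation: the lemma is stated in the excerpt as a known result of Rudelson and Vershynin, so the intended argument is simply to verify the hypotheses (independent coordinates, univariate densities bounded by $C$) and read off the conclusion with the explicit constant $2\sqrt{2}C$.

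If a self-contained argument were wanted, here is the route I would take. Each $Y_j$ has density bounded by $C$, hence for any $a$ and $t$, $\Pr(|u_j Y_j - a|\le t) \le 2Ct/|u_j|$ when $u_j\ne 0$, which is only useful when $|u_j|$ is not too small; so a single-coordinate bound is insufficient and one genuinely needs to combine many coordinates. The standard device is Ess\'een's inequality via characteristic functions: for a random variable $S$ with characteristic function $\varphi_S$,
\[
\mathcal{Q}(S,t) \;\le\; C_1\, t \int_{-1/t}^{1/t} |\varphi_S(\xi)|\, d\xi,
\]
and by independence $|\varphi_{u^\T Y}(\xi)| = \prod_{j=1}^p |\varphi_{Y_j}(u_j \xi)|$. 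The bounded-density hypothesis gives a quantitative decay $|\varphi_{Y_j}(s)| \le 1 - c\min(1, s^2/C^2)$-type bound near the origin (or rather, after the isotropic normalization $\E Y_j^2 = 1$, one gets $|\varphi_{Y_j}(s)|\le \exp(-c s^2)$ for $|s|$ in a range controlled by $C$), so that $\prod_j |\varphi_{Y_j}(u_j\xi)| \le \exp(-c\xi^2 \sum_j u_j^2) = \exp(-c\xi^2)$ on the relevant frequency window, uniformly in $u$ because $\sum_j u_j^2 = 1$. Plugging this into Ess\'een's inequality and integrating yields $\mathcal{Q}(u^\T Y,t) \lesssim Ct$, with the constant traceable to $2\sqrt{2}$ after optimizing; this is exactly the content and the constant of \cite[Theorem~1.2]{rudelson2015small}.

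The main obstacle, and the reason Rudelson--Vershynin's theorem is nontrivial, is making the constant \emph{dimension-free} and uniform over all unit vectors $u$, including highly sparse or highly spread-out ones: a naive argument that looks at the largest coordinate degrades as $p$ grows, while the characteristic-function argument must handle the regime where many $|u_j\xi|$ are small (so each $|\varphi_{Y_j}(u_j\xi)|$ is close to $1$) and the product decay comes only from aggregating them. Since the lemma is cited verbatim from \cite{rudelson2015small} in the excerpt, I would simply invoke it; the only thing to check is that our $Y$ meets the hypotheses, which is immediate under \AssumIndep~with the density bound $C_0$, giving the SBP constant $2\sqrt{2}\,C_0$.
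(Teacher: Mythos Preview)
Your proposal is correct and matches the paper's approach exactly: the paper does not prove this lemma but simply cites \cite[Theorem~1.2]{rudelson2015small}, just as you do. Your optional self-contained sketch via Ess\'een's inequality is a reasonable outline of how that result is proved, though note that bounded density alone does not yield the Gaussian-type bound $|\varphi_{Y_j}(s)|\le \exp(-cs^2)$ you wrote (and the lemma does not assume $\E Y_j^2=1$); the actual Rudelson--Vershynin argument is more delicate, which is precisely why the paper defers to the citation.
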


\paragraph*{Log-concave distributions}
We next consider the family of log-concave distributions on $\R^p$. 
This rich family has found multiple applications, for example, in
statistics \cite{bagnoli2005log}, pure mathematics \cite{brazitikos2014geometry,stanley1989log}, computer science \cite{balcan2013active,lovasz2007geometry} and economics \cite{an1997log}. 
Particular members in this family include
the Gaussian, exponential, uniform over convex bodies, logistic, Gamma, Laplace, Weibull, Chi and Chi-Squared, Beta distributions and more.

\begin{definition}\label{def:logconcave}
        $Y$ is \textbf{log-concave} if it has a density $p_{Y}(y) = e^{-V(y)}$ 
        for $V:\R^p \to \R\cup \{\infty\}$ convex.
\end{definition}

\Revision{It is known that log-concave random vectors are sub-exponential with a universal constant, see e.g. \cite{brazitikos2014geometry}.}
The following 
is implied by a recent breakthrough result of 
Klartag and Lehec \cite[Theorem 1.1]{klartag2022bourgain}: 
\begin{lemma}\label{lem:log-concave-lip-concentration}
    There exists a universal $c>0$ and some
    $\Cheeger$ satisfying
    \begin{equation}
        \label{eq:cheeger-bound-main-paper}
         (\log p)^{-5}
        \le \Cheeger \le 1  
    \end{equation}
    such that for every
     isotropic log-concave $Y\in \R^p$ and $1$-Lipschitz function $f(\cdot)$,
    \begin{equation}
        \Pr\left( \left| f(Y)-\E[f(Y)]\right| \ge t \right) \le \exp\left[-c\Cheeger t\right] \,.
    \end{equation}
\end{lemma}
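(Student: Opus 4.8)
The plan is to derive this concentration inequality from the recent polylogarithmic bound on the Kannan--Lov\'asz--Simonovits (KLS) isoperimetric constant, together with the classical self-improvement of a Cheeger-type isoperimetric inequality to exponential concentration of Lipschitz functions. For a Borel set $A\subseteq\R^p$ write $A_s=\{x:\mathrm{dist}(x,A)\le s\}$ for its $s$-enlargement, and for a probability measure $\mu$ on $\R^p$ let $\mu^+(A)=\liminf_{s\to 0^+} s^{-1}\bigl(\mu(A_s)-\mu(A)\bigr)$ be the associated boundary measure; the Cheeger constant $h(\mu)$ is the largest $h\ge 0$ such that $\mu^+(A)\ge h\min\{\mu(A),1-\mu(A)\}$ for every $A$. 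Theorem~1.1 of Klartag--Lehec \cite{klartag2022bourgain} states that there is a universal $c_0>0$ with $h(\mu)\ge c_0(\log p)^{-5}$ for every isotropic log-concave $\mu$ on $\R^p$; since, conversely, the standard Gaussian measure on $\R^p$ is isotropic, log-concave and has Cheeger constant bounded by a universal constant, the infimum of $h(\mu)$ over such $\mu$ is also $O(1)$. We therefore set $\Cheeger:=\min\{1,\,c_0(\log p)^{-5}\}$, which obeys \eqref{eq:cheeger-bound-main-paper} (after adjusting $c_0$), and by construction the law of every isotropic log-concave $Y$ satisfies the Cheeger inequality with constant $\Cheeger$.

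Next I would carry out the standard argument turning isoperimetry into concentration. Fix a $1$-Lipschitz $f$, let $m$ be a median of $f(Y)$, let $\mu$ denote the law of $Y$, and put $A=\{x:f(x)\le m\}$, so $\mu(A)\ge\tfrac12$. Since $f$ is $1$-Lipschitz, $A_s\subseteq\{x:f(x)\le m+s\}$, hence $\beta(s):=1-\mu(A_s)\ge\Pr\bigl(f(Y)>m+s\bigr)$. The function $\beta$ is non-increasing with $\beta(0)\le\tfrac12$, and applying the Cheeger inequality to $A_s$ (for which $\mu(A_s)\ge\mu(A)\ge\tfrac12$, so the minimum in the Cheeger inequality equals $\beta(s)$) gives $\liminf_{\delta\to0^+}\delta^{-1}\bigl(\beta(s)-\beta(s+\delta)\bigr)=\mu^+(A_s)\ge\Cheeger\,\beta(s)$; integrating this differential inequality yields $\beta(s)\le\beta(0)e^{-\Cheeger s}\le\tfrac12 e^{-\Cheeger s}$, hence $\Pr\bigl(f(Y)>m+t\bigr)\le\tfrac12 e^{-\Cheeger t}$. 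Running the same argument for the set $\{x:f(x)\ge m\}$ bounds the lower tail, and together they give $\Pr\bigl(|f(Y)-m|\ge t\bigr)\le e^{-\Cheeger t}$.

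Finally I would pass from the median to the mean. Since $\bigl|\E[f(Y)]-m\bigr|\le\E\,|f(Y)-m|=\int_0^\infty\Pr\bigl(|f(Y)-m|\ge t\bigr)\,dt\le 1/\Cheeger$, the event $\{|f(Y)-\E[f(Y)]|\ge t\}$ is contained in $\{|f(Y)-m|\ge t-1/\Cheeger\}$, which has probability at most $e\cdot e^{-\Cheeger t}$; after adjusting the universal constant (and using also the trivial bound $\Pr(\cdot)\le 1$ for small $t$) this yields $\Pr\bigl(|f(Y)-\E[f(Y)]|\ge t\bigr)\le e^{-c\Cheeger t}$ for some universal $c>0$, as claimed. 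The only genuinely deep ingredient is the first step, i.e.\ the Klartag--Lehec polylogarithmic bound on the KLS constant, which we invoke as a black box; the passage from isoperimetry to exponential concentration is classical, going back to work of Gromov--Milman and Maz'ya, and is essentially self-contained.
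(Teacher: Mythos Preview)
Your argument is correct and is precisely the standard route: invoke the Klartag--Lehec polylogarithmic bound on the KLS/Cheeger constant, then apply the Gromov--Milman mechanism to upgrade isoperimetry to exponential concentration of Lipschitz functions, and finally shift from median to mean at the cost of a universal constant. The paper itself does not give a proof of this lemma at all; it simply states that the lemma ``is implied by a recent breakthrough result of Klartag and Lehec [Theorem~1.1]'' and moves on, so your write-up supplies exactly the details the paper leaves implicit. One cosmetic point: your choice $\Cheeger=\min\{1,c_0(\log p)^{-5}\}$ only satisfies the lower bound in \eqref{eq:cheeger-bound-main-paper} literally when $c_0\ge 1$; the clean fix is to take $\Cheeger=\min\{1,(\log p)^{-5}\}$ and absorb $c_0$ into the universal constant $c$ in the exponent, which is what your parenthetical ``after adjusting $c_0$'' presumably intends.
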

The quantity $\Cheeger$ is (up to a universal constant) the \emph{Cheeger constant} corresponding to the family of log-concave distributions on $\R^p$. 
It is conjectured \cite{kannan1995isoperimetric} that in fact $\Cheeger=\Theta(1)$ (the KLS conjecture).
For background on the KLS conjecture and its consequences, see \cite{lee2018stochastic,chen2021almost}.
Lastly, 
\cite[Lemma 5.5]{lovasz2007geometry} implies the following:
\begin{lemma}\label{lem:logconcave-smallball}
    There is a universal $0<C_0\le 2$ so that every isotropic log-concave $Y$ satisfies the SBP with constant $C_0$.
\end{lemma}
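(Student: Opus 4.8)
The plan is to reduce the multivariate statement to a one-dimensional fact about log-concave densities. Fix a direction $u\in\R^p$ with $\|u\|=1$ and consider the marginal $Z=u^\T Y$. Since $Y$ has a log-concave density on $\R^p$, an orthogonal change of coordinates carrying $u$ to the first standard basis vector followed by integrating out the remaining $p-1$ coordinates shows, via Pr\'ekopa's theorem (linear marginals of log-concave densities are log-concave), that $Z$ has a log-concave density $g$ on $\R$. By isotropy of $Y$ we have $\E[Z]=u^\T\E[Y]=0$ and $\mathrm{Var}(Z)=u^\T\Id\, u=1$; in particular $Z$ is non-degenerate, and as a marginal of an absolutely continuous law it has a bona fide density.

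The key step is the one-dimensional estimate that a log-concave density $g$ on $\R$ with unit variance is uniformly bounded, $\|g\|_\infty\le 1$, which is exactly what \cite[Lemma 5.5]{lovasz2007geometry} provides; the extremal case is the one-sided exponential, for which $\|g\|_\infty\,\mathrm{Var}(g)^{1/2}=1$. Granting this, for every $a\in\R$ and $t\ge 0$,
\[
    \Pr\bigl(|Z-a|\le t\bigr)=\int_{a-t}^{a+t} g(s)\,\mathrm{d}s\le 2t\,\|g\|_\infty\le 2t \,,
\]
which is precisely the SBP of Definition~\ref{def:small-ball} with constant $C_0=2$. Since the direction $u$ was arbitrary and the argument used nothing about $Y$ beyond log-concavity and isotropy, the constant $C_0$ is universal, as claimed.

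The only genuinely nontrivial ingredient I expect to matter is the sharp sup-norm bound for a variance-normalized one-dimensional log-concave density; the main thing to be careful about is to invoke \cite[Lemma 5.5]{lovasz2007geometry} with the correct normalization so that the resulting constant is indeed $\le 2$ rather than some larger universal number. Everything else — closure of log-concavity under linear marginalization and the two moment identities coming from isotropy — is routine.
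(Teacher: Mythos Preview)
Your proposal is correct and is exactly the argument the paper has in mind: the paper simply states that the lemma ``is implied by \cite[Lemma 5.5]{lovasz2007geometry}'' without spelling out the reduction, and you have filled in the routine steps (Pr\'ekopa for the one-dimensional marginal, isotropy giving unit variance, then the sup-norm bound from \cite{lovasz2007geometry}) to arrive at $C_0=2$.
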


\section{Applications to Robust Sparse Covariance and Inverse Covariance Estimation}
\label{sec:app-covariance-est}


As mentioned in the introduction, 
robust estimation of the covariance and inverse covariance matrices, given possibly heavy tailed data are important tasks in statistics. In high dimensional settings, these matrices are often assumed to be sparse, allowing their estimation from a limited number of samples. 
A common model for multivariate heavy tailed data, under which various estimators were derived and analyzed is to assume that the samples are elliptically distributed \cite{kelker1970distribution,cambanis1981theory,frahm2004generalized,fang2018symmetric}.
Specifically, 
a random vector $\tilde{X}$ follows an elliptical distribution with mean $\bm{\mu}$ and shape matrix $\trueCov \in S^p_{++}$ if it has the form
\begin{equation}
        \label{eq:Elliptical}
        \tilde{X} =  \bm{\mu} + z\trueCov^{1/2}Y \,,
\end{equation}
where $Y \sim \mathrm{Unif}(\Sp)$, 
and $z$ is a strictly positive random variable which is {independent} of $Y$ (but otherwise arbitrary). To make the model (\ref{eq:Elliptical}) identifiable, the scaling $p^{-1}\tr\trueCov = 1$ is assumed.  

The authors of \cite{goes2017robust} considered the problem of shape matrix estimation under a sparsity constraint. They showed
that given (possibly heavy tailed)
elliptical samples, a sparse
$\trueCov$ may nonetheless be estimated at the same rate as if one had sub-Gaussian samples with covariance $\trueCov$. Their estimator is remarkably simple: compute Tyler's M-estimator  corresponding to the $n$ samples, and then threshold its entries as proposed by \cite{bickel2008covariance}. 

In this section, building upon the theorems of Section \ref{sect:main-results}, we show that the above approach yields accurate estimates for
heavy tailed distributions 
beyond the elliptical model (\ref{eq:Elliptical}). 
Specifically, the vector $Y$ in (\ref{eq:Elliptical}) may be replaced by any isotropic random vector satisfying the assumptions of Section~\ref{sect:main-results}. Furthermore, we show a similar result for estimating the inverse shape matrix $\trueCov^{-1}$ assuming it is sparse.
For simplicity, we assume that $\tilde X$ is zero mean with
$\bm{\mu} = \bm{0}$ whereas in Section
\ref{sec:discussion} we discuss how this restriction may be overcome. 

\subsection{Sparse shape matrix estimation}
\label{sec:app-sparse-cov}

Observe that Tyler's M-estimator, Eq. (\ref{eq:def_tyler}),  is invariant to an arbitrary scaling of the samples. Consequently, Tyler's estimator computed from the elliptically-distributed samples $\tilde{\x}_i=z_i\trueCov^{1/2}\y_i$, call it $\SigmaTyl$, is \emph{exactly the same} as the estimator computed from the rescaled samples $\x_i=\trueCov^{1/2}\y_i$. We emphasize that the rescaled vectors $\x_1,\ldots,\x_n$ are \emph{not}  available to the estimator.
Denote $S=n^{-1}\sum_{i=1}^n \x_i\x_i^\T$, and $\SigmaTyl = n^{-1}\sum_{i=1}^n \wTyl_i \x_i\x_i^\T$. For a matrix $M$, denote the norms,
\[
\|M\|_{\max} = \max_{i,j}|M_{ij}|,\quad \|M\|_{1}=\sum_{i,j}|M_{ij}| \,.    
\]
We start with the following important lemma, which asserts that $\SigmaTyl$ is close to $\trueCov$ entrywise:
\begin{lemma}\label{lem:cov-estimation-application}
    Assume the setting of Theorem~\ref{thm:main_tyler}, recalling the normalization $\trCov=1$. There is $C>0$, that depends on the distribution of $Y$ and on $\gamma$, such that w.p. $1-o(1)$, the following holds. 
    \begin{enumerate}
        \item 
        Assume that $Y$ satisfies~\AssumLC. Then
        $\|\SigmaTyl-\trueCov\|_{\max} \le C\|\trueCov\|\frac{\log p}{\Cheeger\sqrt{n}}$.
        \item 
        Assume that $Y$ satisfies either~\AssumIndep~or~\AssumCCPSB. Then
        $\|\SigmaTyl-\trueCov\|_{\max} \le C\|\trueCov\|\sqrt{\frac{\log p}{n}}$.
    \end{enumerate}
\end{lemma}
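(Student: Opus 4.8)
The plan is to combine the weight-concentration bound of Theorem~\ref{thm:main_tyler} with a standard entrywise concentration bound for the sample covariance matrix $S$. Write $\SigmaTyl - \trueCov = (\SigmaTyl - S) + (S - \trueCov)$ and control the two terms separately in the $\|\cdot\|_{\max}$ norm. For the second term, recall that $S = n^{-1}\sum_i \trueCov^{1/2}\y_i\y_i^\T\trueCov^{1/2}$ with $\y_i$ isotropic and satisfying one of the three distributional assumptions; under either \AssumIndep~or~\AssumCCPSB~ one gets the sub-Gaussian-type rate $\|S-\trueCov\|_{\max}\le C\|\trueCov\|\sqrt{(\log p)/n}$ with probability $1-o(1)$ (a union bound over the $p^2$ entries, each a sum of i.i.d.\ sub-exponential or CCP-controlled terms), while under \AssumLC~ the heavier tails of log-concave vectors give the rate $C\|\trueCov\|\,(\log p)/(\Cheeger\sqrt{n})$ — the extra $(\log p)/\Cheeger$ factor (versus $\sqrt{\log p}$) being exactly the degradation coming from Lemma~\ref{lem:log-concave-lip-concentration}. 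These are routine and should be cited or relegated to the appendix.

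For the first term, the key observation is that
\[
\SigmaTyl - S = \frac{1}{n}\sum_{i=1}^n \big(\wTyl_i - 1\big)\,\x_i\x_i^\T
= \frac{1}{n}\sum_{i=1}^n \big(\trCov\wTyl_i - 1\big)\,\x_i\x_i^\T,
\]
using the normalization $\trCov=1$. On the event $\Ec_{TE}(\epsilon)$ of Theorem~\ref{thm:main_tyler} we have $\max_i|\trCov\wTyl_i-1|\le\epsilon$, so entrywise
\[
\big|(\SigmaTyl-S)_{jk}\big| \le \epsilon\cdot\frac{1}{n}\sum_{i=1}^n |\x_{ij}\x_{ik}| \le \epsilon\cdot\Big(\frac1n\sum_i \x_{ij}^2\Big)^{1/2}\Big(\frac1n\sum_i \x_{ik}^2\Big)^{1/2} = \epsilon\sqrt{S_{jj}S_{kk}},
\]
and since $\max_j S_{jj}\le \|\trueCov\|_{\max} + \|S-\trueCov\|_{\max} = O(\|\trueCov\|)$ on the same good event, we get $\|\SigmaTyl-S\|_{\max}\le C\epsilon\|\trueCov\|$. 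Now choose $\epsilon$ to match the target rate: $\epsilon \asymp (\log p)/(\Cheeger\sqrt n)$ in the log-concave case and $\epsilon\asymp\sqrt{(\log p)/n}$ otherwise. With this choice, Theorem~\ref{thm:main_tyler} gives $\Pr(\Ec_{TE}(\epsilon)^c)\le Cn^2 e^{-c\Cheeger\min\{\sqrt n\epsilon, n^{1/4}\}}$ (resp.\ $Cn^2e^{-c\min\{n\epsilon^2,n^{1/2}\}}$); substituting $\sqrt n\epsilon \asymp \log p$ (resp.\ $n\epsilon^2\asymp\log p$) and noting $\log p = o(n^{1/4})$, $\log p = o(n^{1/2})$ in the regime of interest makes this probability $o(1)$, provided the leading constant $C$ is taken large enough. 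A small technical point is that $\epsilon$ must stay below the threshold $\epsilon_0$ of Theorem~\ref{thm:main_tyler}; this holds for $p$ (hence $n$) large since $\epsilon\to 0$.

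The main obstacle is really bookkeeping rather than a deep difficulty: one must make sure the ``good events'' for the two terms — the weight bound $\Ec_{TE}(\epsilon)$ and the entrywise bound on $S-\trueCov$ — hold simultaneously with probability $1-o(1)$ (a union bound), and one must verify that the threshold calibration of $\epsilon$ against $\log p$ genuinely produces $o(1)$ failure probability in each of the three distributional regimes, paying attention to the $\min\{\cdot,\cdot\}$ in the exponents and to whether $\log p$ is dominated by $n^{1/4}$ or $n^{1/2}$. The only genuinely substantive input is Theorem~\ref{thm:main_tyler} itself, which is assumed; everything here is assembling it with a classical sample-covariance concentration estimate.
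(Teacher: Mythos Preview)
Your proposal is correct and follows essentially the same approach as the paper: the same decomposition $\SigmaTyl-\trueCov=(\SigmaTyl-S)+(S-\trueCov)$, the same entrywise concentration for $S-\trueCov$, and the same use of Theorem~\ref{thm:main_tyler} to control the weights. The only cosmetic difference is in bounding $\|\SigmaTyl-S\|_{\max}$: the paper passes through the operator norm, $\|\SigmaTyl-S\|_{\max}\le \|\SigmaTyl-S\|\le \max_i|\wTyl_i-1|\cdot\|S\|$, and then invokes the high-probability bound $\|S\|\lesssim\|\trueCov\|$ (Lemma~\ref{lem:main-smax}), whereas you bound the entries directly via Cauchy--Schwarz and the diagonals $S_{jj}$; both routes give the same $\epsilon\cdot O(\|\trueCov\|)$ bound.
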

\begin{proof}
    By the triangle inequality, $\|\SigmaTyl-\trueCov\|_{\max} \le \|\SigmaTyl-S\|_{\max} + \|S-\trueCov\|_{\max}$. We bound the second term, by  Lemma~\ref{lem:sc-entrywise} in the Appendix, which bounds the deviations of the entries of $S$ about their expectation $\trueCov$. For the first term, 
    \[
    \Revision{ \|\SigmaTyl-S\|_{\max} \le   } 
    \|\SigmaTyl-S\| = \left\| \frac1n\sum_{i=1}^n (\wTyl_i-1)\x_i\x_i^\T  \right\| \le \max_{1\le i \le n} |\wTyl_i-1|\cdot \|S\| \,.  
    \]
    By  Lemma~\ref{lem:main-smax} in the Appendix, $\|S\| \lesssim \|\trueCov\|$ w.h.p., whereas $\max_{1\le i \le n} |\wTyl_i-1|$ may be bounded by Theorem~\ref{thm:main_tyler}.
\end{proof}
Following Bickel and Levina \cite{bickel2008covariance}, consider the class of approximately sparse covariance matrices, $q\in [0,1)$,
\begin{equation}
    \label{eq:sparse-cov-family}
    \mathcal{U}_p(q,s_q(p)) = \left\{ \Sigma\in \mathcal{S}_{++}^{p}\;:\;\,\sum_{j=1}^p |\Sigma_{ij}|^q \le s_q(p),\,1\le i\le p \right\}     \,.
\end{equation}
Let $\mathcal{T}_t$ be the entry-wise hard-thresholding operator $\mathcal{T}_t(M)_{ij} = M_{ij}\indic{|M_{ij}|\ge t}$. 
The authors of \cite{bickel2008covariance} showed that to accurately estimate a matrix $\trueCov\in \mathcal{U}_p(q,s_q(p))$ with respect to operator norm, it suffices to construct a matrix $A$ which is close to $\trueCov$ entrywise, and then threshold it. We cite the following form of their result, as stated in \cite[Lemma 6]{goes2017robust}:

\begin{lemma}\label{lem:bickel}
    Let $\trueCov\in \mathcal{U}_p(q,s_q(p))$ and $A$ such that $\|A-\trueCov\|_{\max}\le \varepsilon$. There is a threshold $t=c_1\varepsilon$ so that for some $c_2=c_2(q)$, $\|\mathcal{T}_t(A)-\trueCov\|\le c_2s_q(p)\varepsilon^{1-q}$.
\end{lemma}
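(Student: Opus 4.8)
The plan is to run the classical thresholding argument of Bickel and Levina; write $\Sigma=\trueCov$ for brevity, so that the hypotheses read $|A_{ij}-\Sigma_{ij}|\le\varepsilon$ for all $i,j$ and $\sum_{j=1}^p|\Sigma_{ij}|^q\le s_q(p)$ for every fixed $i$. The first step is to reduce the operator norm to a row-wise $\ell_1$ quantity: since $\mathcal{T}_t(A)-\Sigma$ is symmetric, $\|\mathcal{T}_t(A)-\Sigma\|\le\max_{1\le i\le p}\sum_{j=1}^p|\mathcal{T}_t(A)_{ij}-\Sigma_{ij}|$, so it suffices to bound each row sum by $c_2 s_q(p)\varepsilon^{1-q}$. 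Fix the row index $i$ and set the threshold $t=c_1\varepsilon$, where the constant $c_1>1$ will be pinned down at the end.

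Partition $\{1,\dots,p\}$ by the size of the true entry: call $j$ \emph{large} if $|\Sigma_{ij}|\ge t+\varepsilon$ and \emph{small} otherwise. For a large index the triangle inequality gives $|A_{ij}|\ge|\Sigma_{ij}|-\varepsilon\ge t$, so $\mathcal{T}_t(A)_{ij}=A_{ij}$ is kept and the error is $\le\varepsilon$; and the number of large indices is at most $s_q(p)(t+\varepsilon)^{-q}$ (if $N$ entries satisfy $|\Sigma_{ij}|\ge\delta$ then $N\delta^q\le\sum_j|\Sigma_{ij}|^q\le s_q(p)$), so this block contributes at most $s_q(p)(t+\varepsilon)^{-q}\varepsilon$ to the row sum. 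For small indices there are two sub-cases. If $j$ is killed ($|A_{ij}|<t$), then $\mathcal{T}_t(A)_{ij}=0$ and the error is $|\Sigma_{ij}|\le|\Sigma_{ij}|^q(t+\varepsilon)^{1-q}$; summing over such $j$ gives at most $(t+\varepsilon)^{1-q}s_q(p)$. If $j$ is kept ($|A_{ij}|\ge t$), then $|\Sigma_{ij}|\ge t-\varepsilon>0$, so the number of such indices is at most $s_q(p)(t-\varepsilon)^{-q}$, each contributing error $\le\varepsilon$, for a total of at most $s_q(p)(t-\varepsilon)^{-q}\varepsilon$.

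Summing the three contributions and recalling $t=c_1\varepsilon$ with $c_1>1$ — so that $t$, $t+\varepsilon$ and $t-\varepsilon$ are all of order $\varepsilon$, with proportionality constants depending only on $c_1$ — the $i$-th row sum is at most $c_2(q)\,s_q(p)\,\varepsilon^{1-q}$. Taking the maximum over $i$ and using the $\ell_1$-to-operator-norm bound yields $\|\mathcal{T}_t(A)-\Sigma\|\le c_2(q)\,s_q(p)\,\varepsilon^{1-q}$, as claimed.

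The argument is entirely elementary; the only real care is in the case split, and in choosing $t$ strictly above $\varepsilon$. That choice does two things simultaneously: it guarantees that a genuinely large true entry is never accidentally thresholded away (so that block incurs only the $O(\varepsilon)$ estimation error per surviving entry), and it forces any \emph{small} index that nonetheless survives thresholding to satisfy $|\Sigma_{ij}|\ge t-\varepsilon>0$, which is precisely what makes the sparsity bound $\sum_j|\Sigma_{ij}|^q\le s_q(p)$ applicable for counting those survivors. This is also the reason neither $c_1$ nor $c_2$ can be taken equal to $1$.
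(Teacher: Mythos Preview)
Your proof is correct and is precisely the classical Bickel--Levina thresholding argument. The paper itself does not prove this lemma: it is quoted as a known result, citing \cite[Lemma 6]{goes2017robust} (which in turn records the argument of \cite{bickel2008covariance}). So there is nothing to compare against beyond noting that what you wrote is the standard proof underlying that citation.

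One small remark: you invoke symmetry of $\mathcal{T}_t(A)-\trueCov$ to pass from the operator norm to the maximal row-$\ell_1$ sum, but the lemma as stated does not assume $A$ is symmetric. In the paper's application $A=\SigmaTyl$ is symmetric, so this is harmless in context; and in any case your row-sum bound applies verbatim to column sums by interchanging the roles of $i$ and $j$, after which $\|M\|\le\sqrt{\|M\|_{1,1}\|M\|_{\infty,\infty}}$ recovers the conclusion without the symmetry assumption.
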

Combining Lemmas~\ref{lem:cov-estimation-application} and \ref{lem:bickel} yields the following generalization of \cite[Theorem 1]{goes2017robust}, \Revision{which proved a similar result under the more restrictive assumption that $Y$ has an elliptical distribution:}
\begin{corollary}
    There are $c,C>0$, that may depend on the distribution of $Y$ and on $\gamma$, such that the following holds. For an appropriately chosen threshold $t$, 
    if $\trueCov\in \mathcal{U}_p(q,s_q(p))$, then w.p. $1-o(1)$,
    \begin{enumerate}
        \item 
        Assume that $Y$ satisfies~\AssumLC. Then
        $\|\trueCov-\mathcal{T}_t(\SigmaTyl)\| \le Cs_q(p)\|\trueCov\|^{1-q}\left( \frac{(\log p)^2}{\Cheeger^2 n} \right)^{(1-q)/2}$. 
        \item 
        Assume that $Y$ satisfies either~\AssumIndep~or~\AssumCCPSB. Then
        $\|\trueCov-\mathcal{T}_t(\SigmaTyl)\| \le Cs_q(p)\|\trueCov\|^{1-q}\left( {\frac{\log p}{n}} \right)^{(1-q)/2}$.
    \end{enumerate}
\end{corollary}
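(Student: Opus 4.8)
The statement follows by directly composing the two preceding lemmas, so no new probabilistic input is needed. First I would apply Lemma~\ref{lem:cov-estimation-application} to obtain, with probability $1-o(1)$, an entrywise control of the form $\|\SigmaTyl-\trueCov\|_{\max}\le \varepsilon_n$, where $\varepsilon_n = C\|\trueCov\|\frac{\log p}{\Cheeger\sqrt n}$ in the log-concave case \AssumLC, and $\varepsilon_n = C\|\trueCov\|\sqrt{\tfrac{\log p}{n}}$ in the cases \AssumIndep~or \AssumCCPSB. Here $C$ depends only on the distribution of $Y$ and on $\gamma$, exactly as in Lemma~\ref{lem:cov-estimation-application}. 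Note this already uses the normalization $\trCov=1$ that is in force in that lemma.

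Next I would invoke Lemma~\ref{lem:bickel} with $A=\SigmaTyl$ and $\varepsilon=\varepsilon_n$: since $\trueCov\in\mathcal{U}_p(q,s_q(p))$ by hypothesis, choosing the hard-thresholding level $t=c_1\varepsilon_n$ yields $\|\mathcal{T}_t(\SigmaTyl)-\trueCov\|\le c_2 s_q(p)\,\varepsilon_n^{1-q}$ for a constant $c_2=c_2(q)$. It remains to substitute the two expressions for $\varepsilon_n$ and simplify. In the \AssumLC~case, $\varepsilon_n^{1-q}= C^{1-q}\|\trueCov\|^{1-q}\bigl(\tfrac{\log p}{\Cheeger\sqrt n}\bigr)^{1-q}= C^{1-q}\|\trueCov\|^{1-q}\bigl(\tfrac{(\log p)^2}{\Cheeger^2 n}\bigr)^{(1-q)/2}$, and likewise in the \AssumIndep/\AssumCCPSB~case one gets $\|\trueCov\|^{1-q}\bigl(\tfrac{\log p}{n}\bigr)^{(1-q)/2}$ up to constants. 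Absorbing $c_1,c_2$ and $C^{1-q}$ into a single constant (still depending only on the distribution of $Y$, on $\gamma$, and on $q$) gives precisely the two claimed bounds, each holding on the same $1-o(1)$ event as in Lemma~\ref{lem:cov-estimation-application}.

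The only point requiring a word of care — and it is a standard one in the Bickel--Levina thresholding framework — is that the threshold $t=c_1\varepsilon_n$ formally depends on $\|\trueCov\|$ through $\varepsilon_n$. This is unproblematic here: either one treats the (unknown) scalar $\|\trueCov\|$ as a fixed quantity, consistent with the rate being stated with an explicit $\|\trueCov\|^{1-q}$ factor, or one replaces it by the data-driven proxy $\|\SigmaTyl\|$, which by Lemma~\ref{lem:main-smax} is comparable to $\|\trueCov\|$ with probability $1-o(1)$, changing only the constants. I do not expect any real obstacle in this proof: it is essentially bookkeeping of rates, and the substantive content is entirely in Lemma~\ref{lem:cov-estimation-application} (hence in Theorem~\ref{thm:main_tyler}) and in the cited Lemma~\ref{lem:bickel}.
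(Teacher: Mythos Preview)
Your proposal is correct and matches the paper's own approach exactly: the corollary is stated immediately after Lemmas~\ref{lem:cov-estimation-application} and~\ref{lem:bickel}, and the paper simply says it follows by combining them. Your additional remark about the threshold depending on $\|\trueCov\|$ is a valid clarification that the paper leaves implicit.
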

Under \AssumIndep~and \AssumCCPSB, the attained rate is minimax optimal in $n,p$ \cite{cai2012optimal}.

\subsection{Sparse inverse shape matrix estimation}
\label{sec:app-sparse-inv}

We now consider the problem of estimating $\trueCov^{-1}$, assuming that it is sparse: $\trueCov^{-1}\in \mathcal{U}_p(q,s_q(p))$. 
Cai {\it et. al.}  proposed the CLIME estimator \cite{cai2011constrained}, which solves a linear program of the form:
\begin{equation}\label{eq:CLIME}
    \min_{\Omega} \|\Omega\|_1 \quad\textrm{subject to}\quad \|\hat{S}\Omega-\Id\|_{\max}\le \lambda \,,
\end{equation}
where $\lambda$ is a tuning parameter and $\hat{S}$ is a proxy for $\trueCov$ (\cite{cai2011constrained} propose to use the data sample covariance matrix). Having solved (\ref{eq:CLIME}), a symmetrization step is applied to get the final estimator. 
\cite[Theorem 6]{cai2011constrained} states that if $\hat{S}$ is close entrywise to $\trueCov$, then the estimator $\hat{\Omega}=\hat{\Omega}(\hat{S})$ is close in operator norm to $\trueCov^{-1}$:
\begin{lemma}\label{lem:cai}
    Suppose that $\trueCov^{-1}\in \mathcal{U}_p(q,s_q(p))$ and $\hat{S}$ satisfies $\|\trueCov-\hat{S}\|_{\max}\le \varepsilon$. For any $\lambda\ge \|\trueCov^{-1}\|_1\varepsilon$, the estimator obtained by solving (\ref{eq:CLIME}) and applying symmetrization satisfies $\|\hat{\Omega}-\trueCov^{-1}\| \le Cs_q(p)\|\trueCov^{-1}\|_1^{1-q}\lambda^{1-q}$. 
\end{lemma}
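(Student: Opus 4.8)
Since the statement is a restatement of \cite[Theorem 6]{cai2011constrained}, the plan is to reproduce their (entirely deterministic, given $\|\trueCov-\hat S\|_{\max}\le\varepsilon$) argument. The starting point is that the program \eqref{eq:CLIME} decouples across columns: writing $\mathbf{e}_i$ for the $i$-th standard basis vector and $\beta_i=\trueCov^{-1}\mathbf{e}_i$ for the $i$-th column of the true precision matrix, the $i$-th column $\omega_i$ of the pre-symmetrization solution $\hat\Omega_1$ is the minimizer of $\|\omega\|_1$ subject to $\|\hat S\omega-\mathbf{e}_i\|_\infty\le\lambda$. I would then proceed in four steps: (i) feasibility of $\beta_i$ and the resulting $\ell_1$-comparison; (ii) an entrywise error bound on $\hat\Omega_1-\trueCov^{-1}$; (iii) the symmetrization step; (iv) conversion to an operator-norm bound via an $\ell_1$/approximate-sparsity interpolation, in the spirit of Lemma~\ref{lem:bickel}.

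For (i), $\hat S\beta_i-\mathbf{e}_i=(\hat S-\trueCov)\beta_i$, hence $\|\hat S\beta_i-\mathbf{e}_i\|_\infty\le\|\hat S-\trueCov\|_{\max}\|\beta_i\|_1\le\varepsilon\|\trueCov^{-1}\|_1\le\lambda$; so $\beta_i$ is feasible and by optimality $\|\omega_i\|_1\le\|\beta_i\|_1$. For (ii), set $\Delta_i=\omega_i-\beta_i$. Both $\hat S\omega_i-\mathbf{e}_i$ and $\hat S\beta_i-\mathbf{e}_i$ have sup-norm $\le\lambda$, so $\|\hat S\Delta_i\|_\infty\le2\lambda$, while $\|\Delta_i\|_1\le\|\omega_i\|_1+\|\beta_i\|_1\le2\|\trueCov^{-1}\|_1$. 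Using the identity $\Delta_i=\trueCov^{-1}\hat S\Delta_i-\trueCov^{-1}(\hat S-\trueCov)\Delta_i$ together with the elementary bounds $\|\trueCov^{-1}\mathbf{v}\|_\infty\le\|\trueCov^{-1}\|_1\|\mathbf{v}\|_\infty$ and $\|(\hat S-\trueCov)\Delta_i\|_\infty\le\varepsilon\|\Delta_i\|_1$,
\[
\|\Delta_i\|_\infty \le \|\trueCov^{-1}\|_1\bigl(2\lambda+\varepsilon\|\Delta_i\|_1\bigr) \le \|\trueCov^{-1}\|_1\bigl(2\lambda+2\varepsilon\|\trueCov^{-1}\|_1\bigr) \le 4\|\trueCov^{-1}\|_1\lambda =: \delta ,
\]
using $\varepsilon\|\trueCov^{-1}\|_1\le\lambda$; thus $\|\hat\Omega_1-\trueCov^{-1}\|_{\max}\le\delta$.

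For (iii), the symmetrized estimator sets $\hat\Omega_{ij}$ to whichever of $(\hat\Omega_1)_{ij},(\hat\Omega_1)_{ji}$ has smaller modulus. Since $\trueCov^{-1}$ is symmetric, a two-case check gives $|\hat\Omega_{ij}-\trueCov^{-1}_{ij}|\le\delta$ still, and moreover $|\hat\Omega_{ji}|\le|(\hat\Omega_1)_{ji}|$ for all $j$, so the columns of the symmetric matrix $\hat\Omega$ inherit $\|\hat\Omega_{\cdot i}\|_1\le\|\omega_i\|_1\le\|\trueCov^{-1}_{\cdot i}\|_1$. For (iv), I would invoke the deterministic interpolation lemma: if $\|\mathbf{a}-\mathbf{b}\|_\infty\le\delta$, $\|\mathbf{a}\|_1\le\|\mathbf{b}\|_1$ and $\sum_j|b_j|^q\le s$, then $\|\mathbf{a}-\mathbf{b}\|_1\le 4s\delta^{1-q}$ — proved by splitting the index set according to $|b_j|>\delta$, bounding the large part by its cardinality $\le s\delta^{-q}$ and the small part with the $\ell_1$-comparison. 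Applying this with $\mathbf{a}=\hat\Omega_{\cdot i}$, $\mathbf{b}=\trueCov^{-1}_{\cdot i}$ and $s=s_q(p)$ (from $\trueCov^{-1}\in\mathcal{U}_p(q,s_q(p))$) yields $\|\hat\Omega_{\cdot i}-\trueCov^{-1}_{\cdot i}\|_1\le 4s_q(p)\delta^{1-q}$ for each $i$. Finally, $\hat\Omega-\trueCov^{-1}$ is symmetric, so its operator norm is at most its maximal absolute column sum:
\[
\|\hat\Omega-\trueCov^{-1}\| \le \max_{1\le i\le p}\|\hat\Omega_{\cdot i}-\trueCov^{-1}_{\cdot i}\|_1 \le 4s_q(p)\delta^{1-q} = C\,s_q(p)\,\|\trueCov^{-1}\|_1^{1-q}\lambda^{1-q}.
\]

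I do not expect a genuine obstacle: the whole argument is deterministic once $\|\trueCov-\hat S\|_{\max}\le\varepsilon$ is assumed. The only point needing care is step (iii) — the symmetrization must be arranged so that both the entrywise bound and the columnwise $\ell_1$-minimality are preserved — and the short interpolation lemma in step (iv); both are standard and I would simply cite \cite{cai2011constrained} rather than track constants.
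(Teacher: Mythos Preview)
Your proposal is correct: the paper does not prove this lemma at all but simply cites it as \cite[Theorem~6]{cai2011constrained}, and you have accurately reproduced the deterministic argument from that reference. Since you yourself note at the end that you would ``simply cite \cite{cai2011constrained} rather than track constants,'' your approach and the paper's coincide.
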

Setting $\hat{S}=\SigmaTyl$ in (\ref{eq:CLIME}) and using Lemma~\ref{lem:cov-estimation-application}, yields:
\begin{corollary}\label{cor:precision}
    There are $c,C>0$, that may depend on the distribution of $Y$ and on $\gamma$, such that the following holds. For an appropriately chosen $\lambda$, 
    if $\trueCov^{-1}\in \mathcal{U}_p(q,s_q(p))$, then w.p. $1-o(1)$,
    \begin{enumerate}
        \item 
        Assume that $Y$ satisfies~\AssumLC. Then $\|\trueCov^{-1}-\hat{\Omega}(\SigmaTyl)\| \le Cs_q(p)\|\trueCov^{-1}\|_1^{2-q}\|\trueCov\|^{1-q}\left( \frac{(\log p)^2}{\Cheeger^2 n} \right)^{(1-q)/2}$. 
        \item 
        Assume that $Y$ satisfies either~\AssumIndep~or~\AssumCCPSB. Then
        $\|\trueCov^{-1}-\hat{\Omega}(\SigmaTyl)\| \le Cs_q(p)\|\trueCov^{-1}\|_1^{2-q}\|\trueCov\|^{1-q}\left( {\frac{\log p}{n}} \right)^{(1-q)/2}$.
    \end{enumerate}
\end{corollary}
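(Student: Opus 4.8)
The plan is to combine the entrywise closeness of Tyler's M-estimator to $\trueCov$ from Lemma~\ref{lem:cov-estimation-application} with the CLIME estimation guarantee of Lemma~\ref{lem:cai}, just as the sparse covariance corollary combined Lemma~\ref{lem:cov-estimation-application} with Lemma~\ref{lem:bickel}. First I would invoke Lemma~\ref{lem:cov-estimation-application} with $\hat S=\SigmaTyl$ to obtain, on an event of probability $1-o(1)$, the entrywise bound $\|\trueCov-\SigmaTyl\|_{\max}\le \varepsilon$, where $\varepsilon = C\|\trueCov\|\frac{\log p}{\Cheeger\sqrt n}$ under \AssumLC~and $\varepsilon = C\|\trueCov\|\sqrt{\frac{\log p}{n}}$ under \AssumIndep~or~\AssumCCPSB. (Here I am using the normalization $\trCov=1$ assumed throughout this subsection.)

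Next I would apply Lemma~\ref{lem:cai} with this $\hat S=\SigmaTyl$ and with the tuning parameter chosen as $\lambda = \|\trueCov^{-1}\|_1\varepsilon$, which is the smallest value allowed by the lemma (and hence gives the sharpest bound, since the conclusion is increasing in $\lambda$). The lemma then yields, on the same $1-o(1)$ event,
\begin{equation*}
    \|\hat\Omega(\SigmaTyl)-\trueCov^{-1}\| \le Cs_q(p)\|\trueCov^{-1}\|_1^{1-q}\lambda^{1-q} = Cs_q(p)\|\trueCov^{-1}\|_1^{1-q}\bigl(\|\trueCov^{-1}\|_1\varepsilon\bigr)^{1-q} = Cs_q(p)\|\trueCov^{-1}\|_1^{2-q}\varepsilon^{1-q}\,.
\end{equation*}
Substituting the two expressions for $\varepsilon$ and absorbing the fixed power of $C$ into a new constant gives exactly the two displayed bounds: $Cs_q(p)\|\trueCov^{-1}\|_1^{2-q}\|\trueCov\|^{1-q}\bigl(\frac{(\log p)^2}{\Cheeger^2 n}\bigr)^{(1-q)/2}$ under \AssumLC, and $Cs_q(p)\|\trueCov^{-1}\|_1^{2-q}\|\trueCov\|^{1-q}\bigl(\frac{\log p}{n}\bigr)^{(1-q)/2}$ in the other two cases, where the factor $\|\trueCov\|^{1-q}$ comes from the $\varepsilon^{1-q}$ term.

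There is essentially no hard step here — the corollary is a direct chaining of two black-box lemmas that have already been established (Lemma~\ref{lem:cov-estimation-application}, itself a consequence of Theorem~\ref{thm:main_tyler}, and Lemma~\ref{lem:cai} from \cite{cai2011constrained}). The only mild points of care are: (i) checking that the choice $\lambda=\|\trueCov^{-1}\|_1\varepsilon$ is admissible and is the value that produces the stated rate; (ii) intersecting the two probability-$1-o(1)$ events (the one on which the entrywise bound holds and, implicitly, the one on which $\SigmaTyl$ exists uniquely), which remains $1-o(1)$; and (iii) tracking that the dependence of the final constant $C$ on the distribution of $Y$ and on $\gamma$ is inherited entirely from Lemma~\ref{lem:cov-estimation-application}, while the $q$-dependence is inherited from Lemma~\ref{lem:cai}. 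The constant $c$ in the statement plays no role beyond possibly appearing in the implicit $o(1)$ and can simply be carried along.
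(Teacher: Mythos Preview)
Your approach is exactly the paper's: the corollary is obtained by setting $\hat S=\SigmaTyl$ in Lemma~\ref{lem:cai} and feeding in the entrywise bound from Lemma~\ref{lem:cov-estimation-application}, just as you describe. One small arithmetic slip: $\|\trueCov^{-1}\|_1^{1-q}\bigl(\|\trueCov^{-1}\|_1\varepsilon\bigr)^{1-q}=\|\trueCov^{-1}\|_1^{2-2q}\varepsilon^{1-q}$, not $\|\trueCov^{-1}\|_1^{2-q}\varepsilon^{1-q}$; the exponent $2-q$ in the corollary as stated appears to be a typo inherited from the paper rather than a flaw in your argument.
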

Lastly, we remark that CLIME is known to have a sub-optimal rate for i.i.d. sub-Gaussian data. In \cite{cai2016estimating}, the minimax rate is computed and a rate-optimal adaptive estimator ACLIME is proposed. 
While beyond the scope of the present paper, we conjecture that 
a similar result, as Corollary~\ref{cor:precision}, may be derived for ACLIME as well. 

\section{Proofs}
\label{sec:proofs}

Recall that the input data consists of $n$ samples $\x_i=\trueCov^{1/2}\y_i$ with $\y_i$ isotropic. Denote by $S$ (resp. $T$) the sample covariance corresponding to the $\x_i$-s (resp. $\y_i$-s),
\[
S = \frac{1}{n}\sum_{i=1}^{n} \x_i \x_i^\top = \trueCov^{1/2} T \trueCov^{1/2} \,,\quad T = \frac1n \sum_{i=1}^n \y_i \y_i^\top .
\]
For $1\le j \le n$, denote by $S_{-j}$ (similarly $T_{-j}$) the sample covariance of $(n-1)$ samples excluding $\x_j$,
\[
S_{-j} = S - \frac1n \x_i\x_i^\T = \frac1n \sum_{i=1,i\ne j}^n \x_i \x_i^\top \,.
\]
     
The following lemma, proven in \ref{sec:proof-lem:main-technical}, is key to the analysis
of Maronna's and Tyler's estimators. 
  
\begin{lemma}\label{lem:main-technical}
    Assume  $\gamma<1$. There are 
        $c,C,\epsilon_0>0$, that depend on the distribution of $Y$ and on $\gamma$, so that for all $\epsilon<\epsilon_0$: 
        
        \begin{enumerate}
                
                \item 
                Assume \AssumLC. Then
                $\Pr(\max_{1\le i\le n}|p^{-1}\x_i^\T S^{-1}\x_i - 1|\ge \epsilon) \le Cn^2e^{-c(\Cheeger\sqrt{n}) \epsilon}$.

                \item 
                Assume \AssumIndep~ or \AssumCCPSB. Then 
                $\Pr(\max_{1\le i\le n}|p^{-1}\x_i^\T S^{-1}\x_i - 1|\ge \epsilon) \le C n^2 e^{-cn\epsilon^2}$.
        \end{enumerate}
\end{lemma}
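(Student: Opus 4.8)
The plan is to replace $\tfrac1p\x_i^\T S^{-1}\x_i$ by a quantity that is independent of the $i$-th sample and then run a self-consistency argument to pin down its value. Since $\x_i=\trueCov^{1/2}\y_i$ and $S=\trueCov^{1/2}T\trueCov^{1/2}$, one first checks the identity $\tfrac1p\x_i^\T S^{-1}\x_i=\tfrac1p\y_i^\T T^{-1}\y_i$, so that $\trueCov$ plays no role and we may work with $\y_1,\dots,\y_n$ and $T$ directly. Writing $T=T_{-i}+\tfrac1n\y_i\y_i^\T$ and applying the Sherman--Morrison formula gives
\[
\tfrac1p\x_i^\T S^{-1}\x_i=\frac{\beta_i}{1+\gamma\beta_i}\,,\qquad \beta_i:=\tfrac1p\y_i^\T T_{-i}^{-1}\y_i\,,
\]
where crucially $\y_i$ is independent of $T_{-i}$. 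Taking the trace in $T^{-1}T=\Id$ and applying the same rank-one identity termwise yields the exact relation $\tfrac1n\sum_{i=1}^n\frac{\beta_i}{1+\gamma\beta_i}=1$; the target value $1$ will be recovered from this relation rather than from an explicit Mar\v{c}enko--Pastur computation.

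Next I would establish uniform two-sided spectral control of the leave-one-out matrices: there are constants $0<c\le C$, depending on $\gamma<1$ and on the distribution of $Y$, such that with the probability asserted in the lemma one has $c\le\lambda_{\min}(T_{-i})\le\lambda_{\max}(T_{-i})\le C$ for all $i$ simultaneously. The upper bound is the sample-covariance operator-norm bound under each of \AssumIndep, \AssumCCPSB, \AssumLC{} (the analogue of Lemma~\ref{lem:main-smax}; for log-concave rows this is the Adamczak--Litvak--Pajor--Tomczak-Jaegermann estimate). The lower bound is the only place anti-concentration is used: each of the three families satisfies the SBP (Lemma~\ref{lem:sbp-indep} under \AssumIndep, Lemma~\ref{lem:logconcave-smallball} under \AssumLC, and by hypothesis under \AssumCCPSB), and the small-ball method for the smallest singular value then gives $\lambda_{\min}(T_{-i})\gtrsim 1$ provided $n-1\ge(1+\delta)p$, which holds for some $\delta=\delta(\gamma)>0$ exactly because $\gamma<1$. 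On this good event $\beta_i$ and $\tfrac1p\tr T_{-i}^{-1}$ both lie in a fixed interval $[c',C']$.

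Conditionally on $T_{-i}$, I would then prove concentration of the quadratic form around its mean. Write $\beta_i=g_i(\y_i)^2$ with $g_i(y):=\tfrac1{\sqrt p}\|T_{-i}^{-1/2}y\|$; this $g_i$ is a seminorm, hence convex, and it is $L$-Lipschitz with $L=\tfrac1{\sqrt p}\|T_{-i}^{-1/2}\|=(p\,\lambda_{\min}(T_{-i}))^{-1/2}\asymp p^{-1/2}$ on the good spectral event. Since $\E[g_i(\y_i)^2\mid T_{-i}]=\tfrac1p\tr T_{-i}^{-1}$, a Lipschitz concentration bound for $g_i(\y_i)$ --- from Lemma~\ref{lem:log-concave-lip-concentration} under \AssumLC, from the CCP (using convexity of $g_i$) under \AssumCCPSB, and from the Hanson--Wright inequality under \AssumIndep{} --- combined with $|x^2-m^2|\le|x-m|(x+m)$ and the boundedness of $g_i,\E[g_i\mid T_{-i}]$, gives after a union bound over $i$ that $\Pr(\max_i|\beta_i-\tfrac1p\tr T_{-i}^{-1}|\ge\epsilon)$ is at most $Cn^2e^{-c(\Cheeger\sqrt n)\epsilon}$ under \AssumLC{} and at most $Cn^2e^{-cn\epsilon^2}$ under \AssumIndep{} or \AssumCCPSB{} (the extra polynomial factor absorbs the union bounds over samples, and over pairs of samples in the next step).

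Finally comes the self-consistency step. Comparing $T_{-i}$ and $T_{-j}$ via the matrix omitting both samples, the rank-one trace-perturbation identity together with the spectral bounds gives $|\tfrac1p\tr T_{-i}^{-1}-\tfrac1p\tr T_{-j}^{-1}|=O(1/p)$, so on the good event all the $\beta_i$ lie within $\epsilon+O(1/p)$ of a single value $b$. Substituting $\beta_i=b+O(\epsilon)$ into $\tfrac1n\sum_i\frac{\beta_i}{1+\gamma\beta_i}=1$ and using that $x\mapsto\frac{x}{1+\gamma x}$ is bi-Lipschitz on $[c',C']$ forces $|\tfrac{b}{1+\gamma b}-1|=O(\epsilon)$, hence $|b-\tfrac1{1-\gamma}|=O(\epsilon)$ and thus $|\beta_i-\tfrac1{1-\gamma}|=O(\epsilon)$ for every $i$; applying the map once more gives $|\tfrac1p\x_i^\T S^{-1}\x_i-1|=|\tfrac{\beta_i}{1+\gamma\beta_i}-1|=O(\epsilon)$, which is the claim after rescaling $\epsilon$ by a constant. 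I expect the main obstacle to be the spectral lower bound of the previous paragraph: it is the genuine use of the small-ball hypothesis, and it must be made uniform over the $n$ leave-one-out matrices and valid for all three families with a failure probability small enough not to degrade the stated rates (in particular the $\Cheeger$-dependent rate). A secondary nuisance is tracking the $O(1/p)$ and conditional-variance corrections so that they are dominated by $\epsilon$ in the non-trivial range of $\epsilon$.
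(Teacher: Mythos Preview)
Your proposal is correct and takes a genuinely different route from the paper for the key step of pinning down the limiting value $\tfrac{1}{1-\gamma}$. Both arguments begin identically: reduce to $\y_i^\T T^{-1}\y_i$ via Sherman--Morrison, decouple $\y_i$ from $T_{-i}$, and use quadratic-form concentration (Lemma~\ref{lem:main-quadratic}) together with the small-ball-based bound on $\lambda_{\min}(T_{-i})$ (Lemma~\ref{lem:main-smin}) to show $\beta_i\approx p^{-1}\tr T_{-i}^{-1}$. The divergence is in how one shows $p^{-1}\tr T_{-i}^{-1}\approx\tfrac{1}{1-\gamma}$. The paper regularizes to $\widehat m(\epsilon)=p^{-1}\tr(T_{-i}+\epsilon\Id)^{-1}$, runs a Stieltjes-transform/leave-one-out computation that produces a quadratic equation for $\widehat m(\epsilon)$ with one bounded root near $(1-\gamma)^{-1}$, and then argues the correct root is selected using $\lambda_{\min}(T_{-i})\gtrsim 1$. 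You instead exploit the exact algebraic identity $\tfrac1n\sum_i \tfrac{\beta_i}{1+\gamma\beta_i}=p^{-1}\tr(T^{-1}T)=1$, combine it with the rank-one trace perturbation $|p^{-1}\tr T_{-i}^{-1}-p^{-1}\tr T_{-j}^{-1}|=O(1/p)$ (valid on the spectral good event), and invert the bi-Lipschitz map $x\mapsto \tfrac{x}{1+\gamma x}$.

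Your route is more elementary: it bypasses the regularization step $\Delta_2$, the second-order resolvent expansion (Eqs.~(\ref{eq:main-technical-helper2-1})--(\ref{eq:main-technical-helper2-3})), and the root-selection argument of Lemma~\ref{lem:main-technical-helper2}. The paper's route, while heavier, is closer to standard RMT machinery and would generalize more readily to computing $p^{-1}\tr C T_{-i}^{-1}$ for non-identity $C$ (relevant for the regularized estimators), where your trace identity is not available. One small point to watch in your write-up: when you pass from concentration of the convex Lipschitz seminorm $g_i(\y_i)$ to concentration of $g_i(\y_i)^2$ around $\E[g_i^2\mid T_{-i}]$, there is a bias $\Var(g_i\mid T_{-i})=O(L^2)=O(1/p)$ (or $O(1/(\Cheeger^2 p))$ under \AssumLC) between $(\E g_i)^2$ and $\E g_i^2$; this is harmless since it is dominated by the $\epsilon$-range where the tail bound is informative, but should be accounted for explicitly.
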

Note that $\x_i^\T S^{-1}\x_i = \y_i^\T \trueCov^{1/2}(\trueCov^{1/2}T\trueCov^{1/2})^{-1}\trueCov^{1/2}\y_i=\y_i^\T T^{-1}\y_i$, so the quadratic form $p^{-1}\x_i^\T S^{-1}\x_i$ does not depend on $\trueCov$. 
Assuming that $\x_i$-s are Gaussian, a similar result 
was derived 
in \cite{zhang2016marvcenko}.
Their proof relies on the orthogonal invariance of the isotropic Gaussian distribution, and does not generalize to other distributions.
Lemma~\ref{lem:main-technical} implies
that 
$\max_{1\le i\le n}|\x_i^\top S^{-1} \x_i-1|\to 1$ w.p. $1$ in the asymptotic limit $n,p\to\infty$ with $\frac{p}{n}\to \gamma$.
Assuming $Y$ has independent entries with finite fourth moment, this asymptotic result was proven in \cite{couillet2014robust}.

\subsection{Proof of Theorem \ref{thm:main_maronna} (Maronna's M-Estimator)}
\label{sec:proof-thm:main_maronna}


\paragraph{Existence} 
We first prove that a solution to the fixed point equation (\ref{eq:def_maronna}) indeed exists. 
This proof follows that of  \cite{couillet2014robust}. We briefly describe it, for the sake of completeness.  
Denote  $\mathbf{d}=(d_1,\ldots,d_n)$. 
Consider the function $h:\R_+^n \to \R_+^n$,
\begin{equation}
        \label{eq:h_j}
h_j(\mathbf{d})=\frac1p \x_j^\top \left(\frac1n \sum_{i=1}^n u(d_i)\x_i\x_i^\top \right)^{-1}\x_j ,\quad 1\le j \le n \,.
\end{equation}
Since by assumption $X$ has a density and $n>p$,
$h$ is well-defined w.p. 1.
Moreover, 
a vector $\bm{d}$ yields a Maronna's estimator with weights $w_j=u(d_j)$ if and only if $d_j=h_{j}(\mathbf{d})$ for all $j$. Hence, to establish the existence of Maronna's estimator, we need to show that $h$ 
has a fixed point. 
To this end, as proven in  \cite{couillet2014robust}, the function  $h(\cdot)$ satisfies the following three properties. 1) Positivity: $h(\mathbf{d})> \mathbf 0$ for any vector $\mathbf{d}\geq \mathbf0$, namely with $d_j\geq 0$ 
for all $j$; 2) Monotonicity: if $\mathbf{d}\geq \mathbf{d}'\geq \mathbf0$, then $h(\mathbf{d})\ge h(\mathbf{d}')$; 3) Scalability: for any $\alpha>1$, $\alpha h(\mathbf{d}) \ge h(\alpha \mathbf{d})$. 
 A function $h(\cdot)$ 
satisfying these properties
 is (almost) a standard interference function, in the sense of \cite{yates1995framework}. By \cite[Theorem 1]{yates1995framework}, 
 if there exists some $\mathbf{d}$ with $\mathbf{d}\geq h(\mathbf{d})$, then $h(\cdot)$ has a fixed point.
 The following lemma shows that w.h.p., such a vector ${\bf d}$ exists, which in turn implies the existence of Maronna's estimator.

\begin{lemma}
        \label{lem:maronna-existence}
        There are $c,C>0$, that depend on the distribution of $Y$ and on $\gamma$, such that
        \begin{enumerate}
                \item Assume \AssumLC. 
 $\Pr\left(\exists \bm{d}>\bm{0}\;:\;\bm{d}\ge h(\bm{d})\right) \ge 1-Cn^2e^{-c(\Cheeger\sqrt{n})} $.
                \item Assume \AssumIndep~ or \AssumCCPSB. Then      
            $\Pr\left(\exists \bm{d}>\bm{0}\;:\;\bm{d}\ge h(\bm{d})\right) \ge 1-Cn^2e^{-cn} $.
        \end{enumerate}
\end{lemma}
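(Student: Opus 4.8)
The plan is to exhibit the required witness $\bm{d}$ as a \emph{constant} vector $d_0\oneVec$, and to verify $\bm{d}\ge h(\bm{d})$ on the high-probability event furnished by Lemma~\ref{lem:main-technical}. The point is that for a constant argument all the weights coincide, $u(d_i)=u(d_0)$, so that $\frac1n\sum_{i=1}^n u(d_i)\x_i\x_i^\T = u(d_0)\,S$; since $\gamma<1$ and $X$ has a density, $S\succ0$ almost surely, and hence
\[
h_j(d_0\oneVec)=\frac{1}{u(d_0)}\cdot\frac1p\x_j^\T S^{-1}\x_j ,\qquad 1\le j\le n .
\]

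The steps, in order. (1) Choose the scale. Since $\phi$ is non-decreasing and bounded, $\phi_\infty=\lim_{x\to\infty}\phi(x)=\sup_{x\ge0}\phi(x)$, which exceeds $1$ by hypothesis~(i). Fix a constant $\epsilon_1\in(0,\epsilon_0)$ — with $\epsilon_0$ the threshold of Lemma~\ref{lem:main-technical} — small enough that $1+\epsilon_1<\phi_\infty$, and pick a finite $d_0>0$ with $\phi(d_0)>1+\epsilon_1$; such $d_0$ exists by the definition of the supremum, and $\epsilon_1$ depends only on $\phi_\infty$ and $\epsilon_0$, hence only on the distribution of $Y$ and on $\gamma$. (2) Invoke Lemma~\ref{lem:main-technical} with $\epsilon=\epsilon_1$: on the complementary (good) event, $p^{-1}\x_j^\T S^{-1}\x_j<1+\epsilon_1$ for all $j$, so
\[
h_j(d_0\oneVec)<\frac{1+\epsilon_1}{u(d_0)}=(1+\epsilon_1)\frac{d_0}{\phi(d_0)}<d_0 ,\qquad 1\le j\le n .
\]
Thus on this event $\bm{d}=d_0\oneVec>\bm{0}$ satisfies $\bm{d}\ge h(\bm{d})$, in fact strictly in every coordinate. (3) Conclude: the failure probability equals the bound of Lemma~\ref{lem:main-technical} at $\epsilon=\epsilon_1$, i.e. $Cn^2e^{-c(\Cheeger\sqrt n)\epsilon_1}$ under \AssumLC~and $Cn^2e^{-cn\epsilon_1^2}$ under \AssumIndep~or \AssumCCPSB; absorbing the fixed constant $\epsilon_1$ into $c$ gives precisely the stated bounds.

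I do not anticipate a genuine obstacle: the probabilistic content is entirely in Lemma~\ref{lem:main-technical}, and the deterministic verification is a one-line computation. The only place that needs care is the existence of a \emph{finite} scale $d_0$ with $\phi(d_0)>1+\epsilon_1$, and this is exactly where the hypothesis $\phi_\infty>1$ is essential (if $\phi_\infty\le1$ no constant witness — and indeed no Maronna estimator — need exist). Note that the monotonicity and scalability properties of $h$ are not used here; they enter only afterwards, in combination with the witness built above, to deduce existence of a fixed point from \cite[Theorem 1]{yates1995framework}.
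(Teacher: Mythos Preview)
Your proposal is correct and follows essentially the same approach as the paper: choose a constant witness $\bm{d}=d_0\oneVec$ with $\phi(d_0)>1$, reduce $h_j(d_0\oneVec)$ to $\frac{d_0}{\phi(d_0)}\cdot p^{-1}\x_j^\T S^{-1}\x_j$, and invoke Lemma~\ref{lem:main-technical}. Your version is slightly more explicit about selecting $\epsilon_1$ and absorbing it into the constants, but the argument is the same.
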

\begin{proof}
        By assumption (i) of Theorem \ref{thm:main_maronna}, $\phi_{\infty}>1$.         Hence, we may take some $d_0>0$ such that $\phi(d_0)>1$. 
        Setting $\bm{d}_0=d_0\bm{1}=(d_0,\ldots,d_0)$ in (\ref{eq:h_j}) and using $u(d)=\phi(d)/d$, 
        one may verify that $h_j(\bm{d}_0)=\frac{d_0}{\phi(d_0)}\frac1p\x_j^\top S^{-1}\x_j$. By Lemma~\ref{lem:main-technical}, w.h.p., $\max_{1\le j\le n}\frac1p\x_j^\top S^{-1}\x_j\le \phi(d_0)$, and so $h(\bm{d}_0)\le \bm{d}_0$.
        \end{proof}

\paragraph{Uniqueness and concentration}
Let $\mathbf{d}=(d_1,\ldots,d_n)$ be a vector satisfying $h(\mathbf{d})=\mathbf{d}$, which yields a valid Maronna's estimator. Next, we prove that only one such $\bm{d}$ exists. To this end,
denote $j_{\min} = \argmin_{1\le j \le n} d_j$, $j_{\max} = \argmax_{1\le j \le n} d_j$.
Since $h(\cdot)$ is non-decreasing, $d_j=h_j(\bm{d})\le h_j(d_{j_{\max}}\bm{1}) = (u(d_{j_{\max}}))^{-1}p^{-1}\x_j^\T S^{-1}\x_j$. Setting $j=j_{\max}$ and multiplying by $u(d_{j_{\max}})$ gives $\phi(d_{j_{\max}}) \le p^{-1}\x_{j_{\max}}^\T S^{-1}\x_{j_{\max}}$. 
Similarly, $h(\bm{d})\ge h(d_{j_{\min}}\bm{1})$, and thus $\phi(d_{j_{\min}}) \ge p^{-1}\x_{j_{\min}}^\T S^{-1}\x_{j_{\min}}$. Finally, since $\phi$ is non-decreasing, we deduce that for all $j$, $p^{-1}\x_{j_{\min}}^\T S^{-1}\x_{j_{\min}}\le \phi(d_j) \le p^{-1}\x_{j_{\max}}^\T S^{-1}\x_{j_{\max}}$. 
Consequently,
\begin{equation}\label{eq:maronna-1}
    \max_{1\le j \le n}|\phi(d_j)-1| \le \max_{1\le j \le n}|p^{-1}\x_j^\T S^{-1}\x_j-1| \,.
\end{equation}
The next lemma shows that w.h.p., $h(\cdot)$ cannot have any fixed points whose entries are far from the constant $\phi^{-1}(1)$:

\begin{lemma}
        \label{lem:maronna-conc}
For $\epsilon>0$, let
$\mathcal{F}_{\mathrm{bad}}(\epsilon)$ the set of ``bad'' fixed points of $h(\cdot)$, namely, such that  
${\|\bm{d}-\phi^{-1}(1)\bm{1}\|_{\infty}\ge \epsilon}$. 
There $c,C,\epsilon_0>0$,  that depend on the distribution of $Y$ and on $\gamma$, such that for all  $\epsilon<\epsilon_0$,
        \begin{enumerate}
                \item Assume \AssumLC. Then
                $
                \Pr\left(\mathcal{F}_{\mathrm{bad}}(\epsilon)\ne \emptyset\right) \le Cn^2e^{-c(\Cheeger\sqrt{n})\epsilon}.
                $
                \item Assume \AssumIndep~ or \AssumCCPSB. Then
                $
                \Pr\left(\mathcal{F}_{\mathrm{bad}}(\epsilon)\ne \emptyset \right) \le Cn^2e^{-cn\epsilon^2}.
                $
        \end{enumerate}
\end{lemma}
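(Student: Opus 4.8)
The plan is to reduce the statement about fixed points of $h(\cdot)$ to the concentration statement of Lemma~\ref{lem:main-technical}, combined with the local regularity of $\phi^{-1}$ and the structure already extracted in~\eqref{eq:maronna-1}. First I would recall that on the event $\Ec := \{ \max_{1\le j \le n}|p^{-1}\x_j^\T S^{-1}\x_j - 1| \le \delta \}$, inequality~\eqref{eq:maronna-1} gives, for \emph{any} fixed point $\bm{d}$ of $h$, that $\max_{1\le j \le n}|\phi(d_j) - 1| \le \delta$. By assumption (ii) of Theorem~\ref{thm:main_maronna}, $\phi$ is strictly increasing at $\phi^{-1}(1)$ and $\phi^{-1}$ is locally Lipschitz at $1$: there are $\epsilon_0 > 0$ and $L > 0$ such that $|\phi(d) - 1| \le \delta$ with $\delta$ small enough forces $|d - \phi^{-1}(1)| \le L\delta$. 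Hence, on $\Ec$ with $\delta \le \epsilon/L$ (and $\epsilon$ small enough that $L\delta < \epsilon_0$), every fixed point satisfies $\|\bm{d} - \phi^{-1}(1)\bm{1}\|_\infty \le \epsilon$, i.e. $\mathcal{F}_{\mathrm{bad}}(\epsilon) = \emptyset$. Contrapositively, $\{\mathcal{F}_{\mathrm{bad}}(\epsilon) \ne \emptyset\} \subseteq \Ec^c = \{\max_j |p^{-1}\x_j^\T S^{-1}\x_j - 1| > \epsilon/L\}$.

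The second step is then a direct application of Lemma~\ref{lem:main-technical} with $\epsilon$ replaced by $\epsilon/L$. Under \AssumLC, this gives $\Pr(\mathcal{F}_{\mathrm{bad}}(\epsilon) \ne \emptyset) \le \Pr(\Ec^c) \le C n^2 e^{-c(\Cheeger\sqrt{n})\epsilon/L}$; absorbing $L$ into the constant $c$ yields the claimed $C n^2 e^{-c\Cheeger\sqrt{n}\,\epsilon}$. Under \AssumIndep~or \AssumCCPSB, the same substitution gives $C n^2 e^{-cn(\epsilon/L)^2} = C n^2 e^{-cn\epsilon^2}$ after rescaling $c$. The constraint $\epsilon < \epsilon_0$ in the statement comes from needing $L\delta$ to lie in the neighborhood where the local Lipschitz bound for $\phi^{-1}$ is valid; $\epsilon_0$ is taken as the minimum of that radius and the $\epsilon_0$ furnished by Lemma~\ref{lem:main-technical}.

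I expect the only genuine subtlety — and the place to be careful — is that~\eqref{eq:maronna-1} bounds $\max_j |\phi(d_j) - 1|$ but \emph{not} separately the endpoints $d_{j_{\min}}, d_{j_{\max}}$, so one must make sure the argument uses only the two-sided bound on $\phi(d_j)$ and the local behavior of $\phi$ at the single point $\phi^{-1}(1)$. In particular, assumption (ii) guarantees $d_0 = \phi^{-1}(1)$ is the \emph{unique} preimage of $1$, so there is no competing branch: once $\phi(d_j)$ is close to $1$, $d_j$ is close to $\phi^{-1}(1)$ and nowhere else. A minor point is to confirm that the existence event from Lemma~\ref{lem:maronna-existence} is not needed here — the statement of Lemma~\ref{lem:maronna-conc} is purely about \emph{whether} bad fixed points exist, and is vacuously compatible with there being no fixed points at all; existence and uniqueness are then combined with this lemma separately to conclude the proof of Theorem~\ref{thm:main_maronna}.
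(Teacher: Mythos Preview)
Your proposal is correct and follows exactly the same approach as the paper, which simply states that the lemma ``follows immediately by Lemma~\ref{lem:main-technical}, Eq.~(\ref{eq:maronna-1}) and Assumption (ii) of Theorem~\ref{thm:main_maronna}.'' You have merely unpacked the details of this one-line proof, including the correct handling of the local Lipschitz constant $L$ and the observation that uniqueness of $\phi^{-1}(1)$ rules out competing branches.
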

\begin{proof}
Follows immediately by Lemma~\ref{lem:main-technical}, Eq. (\ref{eq:maronna-1}) and Assumption (ii) of Theorem~\ref{thm:main_maronna}, which states that $\phi$ is invertible in a neighborhood of $\phi^{-1}(1)$ and that the inverse $\phi^{-1}$ is locally Lipshitz at $1$.
\end{proof}

Lastly, we prove that Maronna's estimator exists uniquely w.h.p:
\begin{lemma}
        \label{lem:maronna-unique}
        There are $c,C>0$, that depend on the distribution of $Y$ and on $\gamma$, such that 
        \begin{enumerate}
                \item Assume~\AssumLC. Then
               $\Pr\left(\textrm{ME exists uniquely}\right) \ge 1- Cn^2e^{-c(\Cheeger\sqrt{n})}$.
                \item Assume~\AssumIndep~ or \AssumCCPSB. Then
                $\Pr\left(\textrm{ME exists uniquely}\right) \ge 1- Cn^2e^{-cn}$.
        \end{enumerate}
\end{lemma}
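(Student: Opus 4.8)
The plan is to assemble Lemma~\ref{lem:maronna-unique} from the two preceding lemmas together with one additional ingredient: on the small neighborhood of $\phi^{-1}(1)\bm{1}$ where all fixed points of $h$ are known to live, the map $h$ is not merely an ``almost'' standard interference function but a genuinely \emph{strict} one, and strictness forces the fixed point to be unique. Concretely, I would fix once and for all a small $\epsilon_0>0$ and work on the event $\mathcal{E}$ obtained by intersecting: (a) the event $\{\exists \bm{d}>\bm{0}:\bm{d}\ge h(\bm{d})\}$ of Lemma~\ref{lem:maronna-existence}, which via \cite[Theorem 1]{yates1995framework} guarantees that $h$ has at least one fixed point; (b) the event $\{\mathcal{F}_{\mathrm{bad}}(\epsilon_0)=\emptyset\}$ of Lemma~\ref{lem:maronna-conc}, which guarantees that \emph{every} fixed point $\bm{d}$ has all coordinates in the interval $I_{\epsilon_0}:=[\phi^{-1}(1)-\epsilon_0,\phi^{-1}(1)+\epsilon_0]$; and (c) the probability-one event that $\x_1,\ldots,\x_n$ span $\R^p$ (valid since $n>p$ and $X$ has a density, so $h$ and $S^{-1}$ are well defined). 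A union bound gives $\Pr(\mathcal{E})\ge 1-Cn^2e^{-c(\Cheeger\sqrt n)}$ under \AssumLC~ and $\ge 1-Cn^2e^{-cn}$ under \AssumIndep~or~\AssumCCPSB, matching the two claimed rates.

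The next step is to establish strict scalability of $h$ on $I_{\epsilon_0}^n$. Using $\phi(x)=xu(x)$, one has for $\alpha>1$ and $d>0$ the elementary equivalence $u(d)/\alpha<u(\alpha d)\iff\phi(d)<\phi(\alpha d)$. By assumption (ii) of Theorem~\ref{thm:main_maronna}, $\phi$ is strictly increasing on some neighborhood of $\phi^{-1}(1)$; I would shrink $\epsilon_0$ so that, with $\alpha_0:=(\phi^{-1}(1)+\epsilon_0)/(\phi^{-1}(1)-\epsilon_0)\to 1$, the whole interval $[\phi^{-1}(1)-\epsilon_0,\,\alpha_0(\phi^{-1}(1)+\epsilon_0)]$ lies in that neighborhood. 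Then for every $\bm{d}\in I_{\epsilon_0}^n$ and every $\alpha\in(1,\alpha_0]$ the coefficients strictly increase under scaling, $u(d_i)/\alpha<u(\alpha d_i)$ for all $i$, so $\frac1n\sum_i \tfrac{u(d_i)}{\alpha}\x_i\x_i^\top\prec\frac1n\sum_i u(\alpha d_i)\x_i\x_i^\top$; since the $\x_i$ span $\R^p$ both matrices are positive definite, and inverting and evaluating the quadratic form at $\x_j$ gives $\alpha h_j(\bm{d})>h_j(\alpha\bm{d})$ for all $j$. With strict scalability in hand, uniqueness is the usual argument: if $\bm{d},\bm{d}'\in I_{\epsilon_0}^n$ are both fixed points, set $\alpha=\max_j d_j/d_j'\in[1,\alpha_0]$, pick $j^\ast$ attaining it, and if $\alpha>1$ derive the contradiction $d_{j^\ast}=h_{j^\ast}(\bm{d})\le h_{j^\ast}(\alpha\bm{d}')<\alpha h_{j^\ast}(\bm{d}')=\alpha d_{j^\ast}'=d_{j^\ast}$ using monotonicity of $h$ and the previous step; hence $\alpha=1$, i.e. $\bm{d}\le\bm{d}'$, and by symmetry $\bm{d}=\bm{d}'$. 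On $\mathcal{E}$ this shows ME exists and is unique, which gives the lemma.

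I expect the main obstacle to be the careful calibration in the strict-scalability step: after multiplying a fixed point by $\alpha\le\alpha_0$, the rescaled coordinates must still lie in the (possibly small) window on which $\phi$ is \emph{strictly} increasing, for otherwise the matrix inequality degenerates to an equality on a subspace and the argument collapses. This is exactly the place where assumption (ii) — strict monotonicity of $\phi$ at $\phi^{-1}(1)$ — is indispensable and where the a priori localization provided by Lemma~\ref{lem:maronna-conc} does the real work; the remaining manipulations (the monotonicity/scalability bookkeeping and the union bound over the failure events at the fixed value $\epsilon_0$) are routine.
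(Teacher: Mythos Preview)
Your proposal is correct and follows essentially the same approach as the paper: both combine Lemma~\ref{lem:maronna-existence} for existence with Lemma~\ref{lem:maronna-conc} for localization, then use the strict monotonicity of $\phi$ near $\phi^{-1}(1)$ to obtain the strict scalability inequality $h(\alpha\bm{d}')<\alpha h(\bm{d}')$ and derive the standard $\alpha=\max_j d_j/d_j'$ contradiction. Your treatment is slightly more explicit about calibrating $\epsilon_0$ so that the rescaled coordinates $\alpha d_j'$ remain in the strict-monotonicity window, a point the paper leaves implicit.
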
  
\begin{proof}
        Existence, w.h.p., is guaranteed by Lemma~\ref{lem:maronna-existence}. Assume by contradiction that $h$ has two different fixed points, $\mathbf{d} \ne \mathbf{d}'$. Take $\alpha = \max_{j} (d_j/d_j')$, and let $l$ be a coordinate where the maximum is attained. Assume w.l.o.g. that $\alpha>1$ 
(otherwise replace  $\mathbf{d}$ with $\mathbf{d}'$), and note that by its definition, $\mathbf{d}\leq \alpha\mathbf{d}'$.       


        Let $\eta>0$ be so that $\phi(\cdot)$ is strictly increasing in $[\phi^{-1}(1)-\eta,\phi^{-1}(1)+\eta]$. 
By Lemma~\ref{lem:maronna-conc}, w.h.p. both $\mathbf{d},\mathbf{d}' \in [\phi^{-1}(1)-\eta/2,\phi^{-1}(1)+\eta/2]^n$. In addition, since $\alpha>1$, then
$\phi(\alpha d_j') > \phi(d_j')$ for all $j$. This, in turn, implies that
\[
u(\alpha d_j')=\frac{\phi(\alpha d_j')}{\alpha d_j'} >\frac{\phi(d_j')}{\alpha d_j'} = \frac1\alpha u(d_j')\,.
\]
Plugging this into (\ref{eq:h_j}) gives $h(\alpha \bm{d}')<\alpha h(\bm{d}')$.  
  Since $\mathbf{d}\leq \alpha\mathbf{d}'$ and $h$ is non-decreasing, we deduce $h(\bm{d}) < \alpha h(\bm{d}')$. 
  But, since $\mathbf{d},\mathbf{d}'$ are fixed points of $h(\cdot)$, this yields a contradiction:
        $
        d_l = h_l({\bf d}) < \alpha h_l(\mathbf{d}') = \alpha d_l' = d_l \,.
        $
\end{proof}

\paragraph{Proof of Theorem~\ref{thm:main_maronna}} Existence and uniqueness follow from Lemma~\ref{lem:maronna-unique}.
By Lemma \ref{lem:maronna-conc}, the coordinates of the fixed point ${\bf d}$ concentrate around $\phi^{-1}(1)$. Hence, the weights of Maronna's estimator, $\wMar_i=u(d_i)$, concentrate around $u(\phi^{-1}(1))=\phi(\phi^{-1}(1))/\phi^{-1}(1)=1/\phi^{-1}(1)$.
\qed

\subsection{Proof of Theorem \ref{thm:main_tyler} (Tyler's M-Estimator)}
\label{sec:proof-Tyler}

Our proof combines the strategy of Zhang {\it et. al.} \cite{zhang2016marvcenko} with 
Lemma~\ref{lem:main-technical}.
Since $X$ has a density, by \cite[Theorems 1 and 2]{kent1988maximum} Tyler's estimator exists uniquely w.p. $1$. By \cite[Lemma 2.1]{zhang2016marvcenko}, its weights 
are
\begin{equation}\label{eq:what-to-wTyl}
    \wTyl_i = \frac{p\cdot \wh_i}{\Tr \left( \frac1n \sum_{i=1}^n \wh_i \x_i\x_i^\T  \right)}\,,
\end{equation}
where $\bwh = (\wh_1,\ldots,\wh_n)$ is the unique minimizer of 
\begin{equation}\label{eq:Tyl-what-objective}
    F(\w) = -\sum_{i=1}^n \log w_i + \frac{n}{p}\log \det \left(\sum_{i=1}^n w_i \x_i\x_i^\T \right),\quad \textrm{subject to } \w>\bm{0}\textrm{ and }\sum_{i=1}^n \w_i=n \,.
\end{equation}
As in \cite{zhang2016marvcenko}, the proof proceeds in two steps: 
(I) Show that $\wh_1,\ldots,\wh_n$ all concentrate around $1$; (II) Using (\ref{eq:what-to-wTyl}), deduce concentration for the weights $\wTyl_1,\ldots,\wTyl_n$. 

We start with the weights $\bwh$. 
The argument of \cite[Section 3.2]{zhang2016marvcenko} starts with the following observation:

\begin{lemma}\label{lem:Tyl-obs}
    Let $\beta>0$ be arbitrary. Then $\bwh$ is the unique stationary point of the following function $G_\beta:\R^n\to \R$:
    \begin{equation}\label{eq:Tyl-def-G}
        G_\beta(\w) = F(\w) + \frac{\beta}{2} \left( \sum_{i=1}^n w_i - n \right)^2 = -\sum_{i=1}^n \log w_i + \frac{n}{p}\log \det \left(\sum_{i=1}^n w_i \x_i\x_i^\T \right) + \frac{\beta}{2} \left( \sum_{i=1}^n w_i - n \right)^2 \,.
    \end{equation}
\end{lemma}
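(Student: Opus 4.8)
The plan is to leverage the \emph{scale invariance} of the Tyler objective $F$ from \eqref{eq:Tyl-what-objective}. A one-line computation with the two logarithmic terms shows $F(c\w)=F(\w)$ for every $c>0$ and $\w>\bm 0$ (the $-n\log c$ coming from $-\sum_i\log w_i$ exactly cancels the $\tfrac{n}{p}\cdot p\log c$ coming from $\tfrac{n}{p}\log\det(c\sum_i w_i\x_i\x_i^\T)$). Hence $F$ is positively homogeneous of degree $0$ on the open set $\{\w>\bm 0\}$, on which it is smooth and $M(\w):=\sum_{i=1}^n w_i\x_i\x_i^\T$ is invertible w.p.\ $1$ (since $n\ge p$ and $X$ has a density), and Euler's identity for homogeneous functions gives $\langle \nabla F(\w),\w\rangle=0$ for all $\w>\bm 0$.

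First I would characterize the stationary points of $G_\beta$. Since $\nabla G_\beta(\w)=\nabla F(\w)+\beta\bigl(\sum_i w_i-n\bigr)\oneVec$, taking the inner product of $\nabla G_\beta(\w^\star)=\bm0$ with $\w^\star$ and using $\langle\nabla F(\w^\star),\w^\star\rangle=0$ yields $\beta\bigl(\sum_i w_i^\star-n\bigr)\bigl(\sum_i w_i^\star\bigr)=0$; as $\beta>0$ and $\sum_i w_i^\star>0$, this forces $\sum_i w_i^\star=n$, and substituting back, $\nabla F(\w^\star)=0$. The converse is immediate, so the stationary points of $G_\beta$ are exactly the $\w>\bm0$ with $\sum_i w_i=n$ and $\nabla F(\w)=0$. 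Next I would check that $\bwh$ is one of these: by definition it minimizes $F$ over the relatively open set $\{\w>\bm0:\sum_i w_i=n\}$, so the Lagrange condition gives $\nabla F(\bwh)=\lambda\oneVec$ for some $\lambda\in\R$; pairing with $\bwh$ and using Euler once more, $0=\langle\nabla F(\bwh),\bwh\rangle=\lambda\sum_i\wh_i=\lambda n$, so $\lambda=0$ and $\nabla F(\bwh)=0$.

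It remains to prove uniqueness. Writing $\nabla F(\w)=0$ coordinatewise, $-1/w_j+\tfrac np\,\x_j^\T M(\w)^{-1}\x_j=0$; putting $\Sigma:=\tfrac1n M(\w)=\tfrac1n\sum_i w_i\x_i\x_i^\T$, this is precisely $w_j=\bigl(\tfrac1p\x_j^\T\Sigma^{-1}\x_j\bigr)^{-1}$, i.e.\ $(\Sigma,\{w_j\}_j)$ solves Tyler's fixed-point relation \eqref{eq:def_tyler} without the normalization $\Tr(\Sigma)=p$. Since $X$ has a density and $n>p$, Kent and Tyler \cite{kent1988maximum} guarantee that this relation has a solution which is unique up to a positive scalar; as rescaling $\Sigma\mapsto c\Sigma$ rescales the weights $w_j\mapsto cw_j$, the normalization $\sum_i w_i=n$ singles out a unique solution. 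Combined with the previous step, this shows $\bwh$ is the unique stationary point of $G_\beta$.

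The argument is essentially bookkeeping once the scale invariance is in hand; the only substantive external input is the Kent--Tyler uniqueness (up to scaling) of the Tyler fixed point. The point to handle with care is that the degree-zero homogeneity of $F$ does double duty: it both pins every stationary point of $G_\beta$ onto the slice $\{\sum_i w_i=n\}$ \emph{and} annihilates the Lagrange multiplier at $\bwh$, so that "stationary point of $G_\beta$" and "scale-normalized solution of Tyler's equation" become synonymous.
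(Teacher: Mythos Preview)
Your argument is correct. The scale invariance of $F$ indeed gives $\langle\nabla F(\w),\w\rangle=0$ on $\{\w>\bm 0\}$, and from there your two-way characterization of the stationary points of $G_\beta$ as precisely the $\w>\bm 0$ with $\sum_i w_i=n$ and $\nabla F(\w)=0$ goes through cleanly. The identification of $\nabla F(\w)=0$ with the unnormalized Tyler fixed-point equation is correct, and invoking Kent--Tyler for uniqueness up to scale, then fixing the scale via $\sum_i w_i=n$, closes the argument.

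As for comparison: the paper does not actually prove Lemma~\ref{lem:Tyl-obs}; it presents the statement as an observation taken from \cite[Section~3.2]{zhang2016marvcenko}. So there is no in-paper proof to compare against. Your proof is a self-contained replacement for that citation, and the route you take---Euler's identity from degree-zero homogeneity, then reduction to Kent--Tyler uniqueness---is essentially the natural one. The one cosmetic point worth tidying is the domain: the lemma writes $G_\beta:\R^n\to\R$, but of course $G_\beta$ is only defined (and smooth) on the open cone $\{\w>\bm 0\}$, where $\sum_i w_i\x_i\x_i^\T$ is automatically invertible w.p.~$1$; you already work there implicitly, so this is just a matter of stating it once at the outset.
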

Next, note that $\w=\oneVec=(1,\ldots,1)$ is ``almost'' a zero of $g_\beta(\cdot)=\nabla G_\beta(\cdot)$ (for any $\beta$), in the sense that w.h.p. $g_\beta(\oneVec)$ is small. Indeed, a straightforward calculation, Appendix Eq. (\ref{eq:Tyl-grad-G}), gives
\begin{equation}
    \label{eq:Tyler-value-at-one}
    \left(g_\beta(\oneVec)\right)_\ell = -1 + \frac{1}{p}\x_\ell^\T \left( \frac{1}{n}\sum_{i=1}^n\x_i\x_i^\T \right)^{-1}\x_\ell = -1 + \frac{1}{p}\x_\ell^\T S^{-1} \x_\ell \,\quad \textrm{for all } 1\le \ell \le n\,.
\end{equation}
By Lemma~\ref{lem:main-technical},
the right-hand-side of (\ref{eq:Tyler-value-at-one}) concentrates tightly around $0$. That is, the following deviation bound holds:
\begin{lemma}\label{lem:Tyl-almost-zero}
        There are $c,C,\epsilon_0>0$, that depend on the distribution of $Y$ and on  $\gamma$, so that for all $\epsilon<\epsilon_0$ and all $\beta$
        \begin{enumerate}
                
                \item 
                Assume \AssumLC. Then $\Pr\left(\|g_{\beta}(\oneVec)\|_\infty \geq \epsilon\right) \leq  Cn^2e^{-c(\Cheeger\sqrt{n}) \epsilon}$.

                \item 
                Assume either \AssumIndep~ or \AssumCCPSB. Then $\Pr\left(\|g_{\beta}(\oneVec)\|_\infty \geq \epsilon\right)  \leq  C n^2 e^{-cn\epsilon^2}$.
        \end{enumerate}
\end{lemma}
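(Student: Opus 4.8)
The plan is to reduce Lemma~\ref{lem:Tyl-almost-zero} directly to Lemma~\ref{lem:main-technical} via the explicit formula \eqref{eq:Tyler-value-at-one}. First I would record the gradient computation: since $G_\beta(\w)=F(\w)+\tfrac{\beta}{2}(\sum_i w_i-n)^2$ and the constraint term has gradient $\beta(\sum_i w_i - n)\oneVec$, which vanishes identically at $\w=\oneVec$, we have $g_\beta(\oneVec)=\nabla F(\oneVec)$ for every $\beta$. Differentiating $F(\w)=-\sum_i\log w_i + \tfrac{n}{p}\log\det(\sum_i w_i\x_i\x_i^\T)$ gives $(\nabla F(\w))_\ell = -1/w_\ell + \tfrac{n}{p}\x_\ell^\T(\sum_i w_i\x_i\x_i^\T)^{-1}\x_\ell$; evaluating at $\w=\oneVec$ yields $(g_\beta(\oneVec))_\ell = -1 + \tfrac1p\x_\ell^\T S^{-1}\x_\ell$, exactly \eqref{eq:Tyler-value-at-one}. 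I would present this calculation (or cite the appendix equation \eqref{eq:Tyl-grad-G}) and note that the bound is uniform in $\beta$ precisely because the $\beta$-dependent term drops out.

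Next, I would observe that $\|g_\beta(\oneVec)\|_\infty = \max_{1\le \ell\le n}\bigl|\tfrac1p\x_\ell^\T S^{-1}\x_\ell - 1\bigr|$, so the event $\{\|g_\beta(\oneVec)\|_\infty \ge \epsilon\}$ is literally the event bounded in Lemma~\ref{lem:main-technical}. Applying part (1) of Lemma~\ref{lem:main-technical} under \AssumLC~ gives $\Pr(\|g_\beta(\oneVec)\|_\infty\ge\epsilon) \le Cn^2 e^{-c(\Cheeger\sqrt n)\epsilon}$, and applying part (2) under \AssumIndep~ or \AssumCCPSB~ gives $\Pr(\|g_\beta(\oneVec)\|_\infty\ge\epsilon)\le Cn^2 e^{-cn\epsilon^2}$, with the same constants $c,C,\epsilon_0$ as in that lemma. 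This is the entire argument.

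Honestly, there is no real obstacle here — the lemma is a packaging step. The only thing requiring any care is the gradient computation of $\log\det$, namely that $\partial_{w_\ell}\log\det(\sum_i w_i\x_i\x_i^\T) = \tr\bigl((\sum_i w_i\x_i\x_i^\T)^{-1}\x_\ell\x_\ell^\T\bigr) = \x_\ell^\T(\sum_i w_i\x_i\x_i^\T)^{-1}\x_\ell$, together with keeping track of the factor $n/p$ and the fact that $\sum_i w_i\x_i\x_i^\T = nS$ at $\w=\oneVec$ so the $n$'s cancel to leave $\tfrac1p\x_\ell^\T S^{-1}\x_\ell$. I would flag that $S$ is invertible with probability one here because $X$ has a density and $n>p$ (so the quadratic forms are well-defined), which is already assumed in the surrounding analysis of Tyler's estimator. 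With these pieces in place the proof is a one-paragraph citation of Lemma~\ref{lem:main-technical}.
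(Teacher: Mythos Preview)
Your proposal is correct and is exactly the paper's approach: the lemma is simply a restatement of Lemma~\ref{lem:main-technical} via the gradient formula \eqref{eq:Tyler-value-at-one}, and the paper treats it as such (the sentence preceding the lemma statement is essentially the entire proof). Your additional remarks on the $\log\det$ derivative and the vanishing of the $\beta$-term at $\w=\oneVec$ are accurate and make the packaging explicit.
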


Next, we carry out a perturbation argument: we show that $\|g_\beta(\bm{1})\|_\infty$ being small implies that the unique root $g_\beta(\bwh)=\bm{0}$ is close to $\bm{1}$.
To this end, we use the following result,
 \cite[Lemma 3.1]{zhang2016marvcenko}. Below, for a matrix $M \in \R^{n\times n}$, we denote by $\|M\|_{\infty,\infty}=\max_{1\le j \le n} \sum_{\ell=1}^n |M_{j\ell}|$ its $\ell_\infty$-to-$\ell_\infty$ operator norm.
\begin{lemma}\label{lem:Tyl-perturb}
    Let $h:\R^p \to \R^p$ be differentiable, $\w_0\in \R^p$. Suppose that for
    some $L,R>0$ and $0<\epsilon \le \min\{R,L^{-1}\}$: 
\begin{enumerate}[label=(\Roman*)]
    \item ($\w_0$ is ``almost'' a zero). $\|h(\w_0)\|_\infty \le \frac12\epsilon$.
    \item (Non-degeneracy at $\w_0$). $\nabla h(\w_0) = \Id$, namely $\left(\nabla h\right)_{j,\ell}=\frac{\partial h_j}{\partial w_\ell} = \delta_{j,\ell}$. 
    \item (Smoothness around $\w_0$). $\|\nabla h(\w)-\nabla h(\w_0)\|_{\infty,\infty} \le L\|\w-\w_0\|_\infty$ for all $\w$ in ${\|\w-\w_0\|_\infty\le R}$.
\end{enumerate}
Then $h(\cdot)$ has a zero $\bwh$ close to $\w_0$ such that $\|\bwh-\w_0\|_\infty \le \epsilon$.  
\end{lemma}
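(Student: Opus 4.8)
The plan is to recast the problem as a fixed-point equation and apply Brouwer's fixed-point theorem on the closed $\ell_\infty$-ball $B_\epsilon = \{\w : \|\w - \w_0\|_\infty \le \epsilon\}$ around $\w_0$. Set $\Phi(\w) = \w - h(\w)$, so that zeros of $h$ coincide with fixed points of $\Phi$. Hypothesis (II) says $\nabla h(\w_0) = \Id$, so $\nabla \Phi(\w) = \Id - \nabla h(\w) = \nabla h(\w_0) - \nabla h(\w)$; combined with hypothesis (III), this yields the pointwise estimate $\|\nabla \Phi(\w)\|_{\infty,\infty} \le L\|\w - \w_0\|_\infty$ for every $\w$ with $\|\w - \w_0\|_\infty \le R$. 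Since $\epsilon \le R$, this bound is in force throughout $B_\epsilon$, and on $B_\epsilon$ the map $\Phi$ is differentiable (as $h$ is) with derivative bounded by $L\epsilon$, hence $\Phi|_{B_\epsilon}$ is Lipschitz, in particular continuous.

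First I would check that $\Phi$ maps $B_\epsilon$ into itself. Fix $\w \in B_\epsilon$ and integrate $\nabla\Phi$ along the segment from $\w_0$ to $\w$ (legitimate since the one-dimensional restriction of $\Phi$ to this segment is differentiable with bounded derivative):
\[
\Phi(\w) - \Phi(\w_0) = \int_0^1 \nabla\Phi\big(\w_0 + t(\w - \w_0)\big)\,(\w - \w_0)\,dt \,.
\]
Bounding the integrand by $\|\nabla\Phi(\w_0 + t(\w - \w_0))\|_{\infty,\infty}\,\|\w - \w_0\|_\infty \le L\,t\,\|\w - \w_0\|_\infty^2$ and integrating gives the \emph{quadratic} estimate $\|\Phi(\w) - \Phi(\w_0)\|_\infty \le \tfrac{L}{2}\|\w - \w_0\|_\infty^2 \le \tfrac{L}{2}\epsilon^2$. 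Since $\Phi(\w_0) - \w_0 = -h(\w_0)$, hypothesis (I) gives $\|\Phi(\w_0) - \w_0\|_\infty \le \tfrac12\epsilon$, whence
\[
\|\Phi(\w) - \w_0\|_\infty \le \tfrac{L}{2}\epsilon^2 + \tfrac12\epsilon = \tfrac{\epsilon}{2}\,(L\epsilon + 1) \le \epsilon \,,
\]
using $\epsilon \le L^{-1}$. Hence $\Phi(B_\epsilon) \subseteq B_\epsilon$. As $B_\epsilon$ is a nonempty compact convex subset of $\R^p$ and $\Phi|_{B_\epsilon}$ is continuous, Brouwer's fixed-point theorem produces $\bwh \in B_\epsilon$ with $\Phi(\bwh) = \bwh$, i.e. $h(\bwh) = \bm{0}$ and $\|\bwh - \w_0\|_\infty \le \epsilon$, as claimed.

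The step I expect to require the most care is the budgeting in the self-mapping estimate. A naive first-order (mean-value) bound on $\Phi(\w) - \Phi(\w_0)$ only gives $\le L\epsilon\cdot\epsilon$, which together with the $\tfrac12\epsilon$ contribution of hypothesis (I) overshoots the target radius $\epsilon$; it is essential to exploit that $\nabla\Phi$ \emph{vanishes} at $\w_0$ (via hypothesis (II)) to upgrade this to the quadratic bound $\tfrac{L}{2}\epsilon^2$, which is exactly what makes the budget close under the stated constraint $\epsilon \le \min\{R, L^{-1}\}$. A secondary point is that, since $\epsilon \le L^{-1}$ only gives $\|\nabla\Phi\|_{\infty,\infty} \le 1$ on $B_\epsilon$, the map $\Phi$ is merely non-expansive there rather than a strict contraction up to the boundary, so invoking Brouwer (rather than the Banach fixed-point theorem) is the natural route; alternatively one could pass to a slightly smaller ball to recover a genuine contraction, at the cost of not matching the constants in the statement.
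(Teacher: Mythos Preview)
Your proof is correct. The paper does not actually supply its own proof of this lemma: it simply quotes the statement from \cite[Lemma~3.1]{zhang2016marvcenko} and uses it as a black box. Your Brouwer-based argument is a clean, self-contained proof; the key observation---that hypothesis~(II) makes $\nabla\Phi$ vanish at $\w_0$, upgrading the naive linear bound to the quadratic estimate $\tfrac{L}{2}\epsilon^2$ so that the self-mapping budget closes exactly under $\epsilon\le L^{-1}$---is precisely the right one, and your remark about why Brouwer is preferable to Banach here (only non-expansiveness is available at the boundary) is well taken.
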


To prove that $g_\beta$ has a zero near $\w_0=\bm{1}$ via Lemma~\ref{lem:Tyl-perturb}, we consider
$h_\beta(\w)=\left(\nabla g_\beta(\oneVec)\right)^{-1} g_\beta(\w)$. Clearly, $g_\beta$ and $h_\beta$ have the same zeros. Also, by its definition, $h_\beta$ satisfies condition (II) above. 
We next show that w.h.p. $h_\beta$ satisfies condition (I). Since $\|h_\beta(\bm{1})\|_\infty \le \left\|\left(\nabla g_\beta(\oneVec)\right)^{-1}\right\|_{\infty,\infty}\|g_\beta(\oneVec)\|_\infty$, 
it suffices to bound the matrix norm $\left\|\left(\nabla g_\beta(\oneVec)\right)^{-1}\right\|_{\infty,\infty}$:
\begin{lemma}\label{lem:Tyl-inv-bound}
        There are $c,C,B,\beta_0>0$, that depend the distribution of $Y$ and on $\gamma$, so that setting $\beta=\beta_0/n$,
        \begin{enumerate}
             \item Assume ~\AssumLC. Then $\Pr\left( \|\left(\nabla g_\beta(\oneVec)\right)^{-1}\|_{\infty,\infty} \ge B \right) \le Cn^2 e^{-c\Cheeger n^{1/4}}$.
            \item Assume~\AssumIndep~or~\AssumCCPSB. Then $\Pr\left( \|\left(\nabla g_\beta(\oneVec)\right)^{-1}\|_{\infty,\infty} \ge B \right) \le Cn^2 e^{-cn^{1/2}}$.
        \end{enumerate}
\end{lemma}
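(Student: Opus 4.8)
The plan is to compute the Hessian $\nabla g_\beta(\oneVec)=\nabla^2 G_\beta(\oneVec)$ explicitly, observe that it is a small \emph{entrywise} perturbation of an explicit, well-conditioned ``diagonal $+$ rank-one'' matrix, and then invert it through a Neumann-series argument carried out directly in the $\|\cdot\|_{\infty,\infty}$ norm. Differentiating \eqref{eq:Tyl-def-G} twice and using $\sum_i\x_i\x_i^\T=nS$ gives
\[
    \nabla g_\beta(\oneVec)\;=\;\Idn{n}\;-\;\frac{1}{np}A\;+\;\beta\,\oneVec\oneVec^\T,\qquad A_{j\ell}=\bigl(\x_j^\T S^{-1}\x_\ell\bigr)^2 .
\]
Set $X=[\x_1,\dots,\x_n]\in\R^{p\times n}$ and let $P=X^\T(XX^\T)^{-1}X$ be the orthogonal projection in $\R^n$ onto the row space of $X$ (rank $p$ a.s.), so that $\x_j^\T S^{-1}\x_\ell=nP_{j\ell}$ and $\tfrac1{np}A=\tfrac1\gamma(P\circ P)$, with $\circ$ the Hadamard product. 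Two structural facts drive everything. First, since $P^2=P$, the $j$-th row sum of $\tfrac1\gamma(P\circ P)$ is $\tfrac1\gamma(P^2)_{jj}=\tfrac1\gamma P_{jj}$; by Lemma~\ref{lem:main-technical}, applied with a tolerance $\epsilon_n$ to be fixed (we will need $\epsilon_n\asymp n^{-1/4}$), w.h.p.\ $\max_j|P_{jj}-\gamma|\le\gamma\epsilon_n$, hence $\bigl(\Idn{n}-\tfrac1{np}A\bigr)\oneVec=\oneVec-\tfrac1\gamma\,\mathrm{diag}(P)$ has $\ell_\infty$-norm $\le\epsilon_n$ — i.e.\ $\oneVec$ is almost a null vector of $\Idn{n}-\tfrac1{np}A$, which is exactly why the stabilizing rank-one term $\beta\oneVec\oneVec^\T$ with $\beta=\beta_0/n$ is present. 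Second, $\tfrac1{np}A$ is entrywise close to the ``complete-graph'' matrix $\gamma\Idn{n}+\tfrac{1-\gamma}{n}(\oneVec\oneVec^\T-\Idn{n})$.

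Guided by this, I would fix $\beta_0>0$ to be a suitable constant (e.g.\ $\beta_0=1-\gamma$) and take as reference the matrix $N_0:=(1-\gamma)\Idn{n}+\tfrac{\beta_0-(1-\gamma)}{n}\oneVec\oneVec^\T$, which is positive definite with eigenvalues $1-\gamma$ (multiplicity $n-1$) and $\beta_0$; a one-line Sherman--Morrison computation yields $\|N_0^{-1}\|_{\infty,\infty}=M_\gamma$ for an explicit constant $M_\gamma$ (equal to $\tfrac1{1-\gamma}$ when $\beta_0=1-\gamma$). Write $\nabla g_\beta(\oneVec)=N_0+\Delta$; then by construction $\Delta_{j\ell}=\tfrac{1-\gamma}{n}-\tfrac1\gamma P_{j\ell}^2$ for $j\ne\ell$, and $\Delta_{jj}=\gamma+\tfrac{1-\gamma}{n}-\tfrac1\gamma P_{jj}^2$, which is $O(\gamma\epsilon_n)+O(1/n)$ on the event above. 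Since $\|\cdot\|_{\infty,\infty}$ is sub-multiplicative, it then suffices to show that w.h.p.\ $\max_j\sum_{\ell\ne j}\bigl|\tfrac1\gamma P_{j\ell}^2-\tfrac{1-\gamma}{n}\bigr|\le(1-\gamma)(1-\delta_0)$ for some constant $\delta_0=\delta_0(\gamma)>0$: for $\epsilon_n$ small this forces $\|N_0^{-1}\Delta\|_{\infty,\infty}\le M_\gamma\|\Delta\|_{\infty,\infty}<1$, the Neumann series for $(\Idn{n}+N_0^{-1}\Delta)^{-1}$ converges, $\nabla g_\beta(\oneVec)$ is invertible on this event (as needed for Lemma~\ref{lem:Tyl-perturb}), and $\|(\nabla g_\beta(\oneVec))^{-1}\|_{\infty,\infty}\le M_\gamma/(1-M_\gamma\|\Delta\|_{\infty,\infty})\le B$.

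It remains to establish this row-sum bound — the crux of the proof, and precisely the step where the argument of \cite[Lemma 3.3]{zhang2016marvcenko} is incomplete. Using $|a-b|=a+b-2\min(a,b)$ together with the \emph{exact} Pythagorean identity $\sum_{\ell\ne j}\tfrac1\gamma P_{j\ell}^2=\tfrac1\gamma\bigl((P^2)_{jj}-P_{jj}^2\bigr)=\tfrac1\gamma P_{jj}(1-P_{jj})$, the row sum in question equals $\tfrac1\gamma P_{jj}(1-P_{jj})+\tfrac{n-1}{n}(1-\gamma)-2\sum_{\ell\ne j}\min\!\bigl(\tfrac1\gamma P_{j\ell}^2,\tfrac{1-\gamma}{n}\bigr)$, and since the first two summands are each $\le(1-\gamma)+C\epsilon_n$ by the diagonal control, the task reduces to proving $\sum_{\ell\ne j}\min\!\bigl(\tfrac1\gamma P_{j\ell}^2,\tfrac{1-\gamma}{n}\bigr)\ge\tfrac12(1-\gamma)+\delta_0'$ for all $j$. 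Note $\tfrac{1-\gamma}{n}$ is, up to $O(\epsilon_n)$, exactly the empirical mean of the $n-1$ numbers $\{\tfrac1\gamma P_{j\ell}^2:\ell\ne j\}$, so this is equivalent to saying their empirical mean absolute deviation stays bounded away from their mean — that is, the off-diagonal entries of the $j$-th row of $P$ are sufficiently \emph{spread out} rather than concentrated on a few coordinates (for a genuinely delocalized/Gaussian-like row one expects the ratio $\approx\E|\chi^2_1-1|\approx0.97<1$). This is a quantitative anti-concentration/delocalization statement for the rows of the random projection $P$, which I would establish by a leave-one-out reduction (Sherman--Morrison to express $P_{j\ell}$ through $S_{-j}^{-1}$) combined with the concentration properties of $Y$ under \AssumLC,~\AssumIndep, or \AssumCCPSB~and with Lemma~\ref{lem:main-technical}; carrying this out with tolerance $\epsilon_n\asymp n^{-1/4}$ (the margin $\delta_0'$ being only a small constant is what forces the vanishing tolerance) and union-bounding over the $\le n^2$ coordinates/pairs yields the stated failure probabilities $Cn^2e^{-c\Cheeger n^{1/4}}$ under \AssumLC~and $Cn^2e^{-cn^{1/2}}$ under \AssumIndep~or~\AssumCCPSB; one then sets $\beta=\beta_0/n$. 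The main obstacle throughout is this last estimate: because $\nabla g_\beta(\oneVec)$ is near-singular along $\oneVec$, a spectral bound on its inverse — which would in any case lose a factor $\sqrt n$ in passing to $\|\cdot\|_{\infty,\infty}$ — is useless, and one must control the full entrywise/row-wise fluctuation of the Hadamard square $P\circ P$.
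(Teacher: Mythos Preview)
Your overall strategy---compute the Hessian, write it as a perturbation of an explicit matrix, and invert via a Neumann series in $\|\cdot\|_{\infty,\infty}$---coincides with the paper's, and you correctly isolate the crux as an anti-concentration statement for the off-diagonal entries of $P\circ P$. But there is a genuine gap at that crux, and your setup makes it harder than necessary.

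The paper centers the Neumann series at $\Id$, not at your $N_0$: it writes $\nabla g_\beta(\oneVec)=\Id-(A-\beta\oneVec\oneVec^\T)$ with $A_{j\ell}=\tfrac1{np}(\x_j^\T S^{-1}\x_\ell)^2$ and chooses $\beta_0$ \emph{small}. The condition $\|A-\beta\oneVec\oneVec^\T\|_{\infty,\infty}<1$ then reduces (via $\sum_\ell A_{j\ell}=p^{-1}\y_j^\T T^{-1}\y_j\approx 1$) to: for each row $j$, at least $0.6n$ entries satisfy $A_{j\ell}\ge\beta_0/n$. This is a weak lower-tail statement provable with a large margin. By contrast, your choice $\beta_0=1-\gamma$ places the threshold exactly at the (asymptotic) mean of the off-diagonal entries, so your requirement $\sum_{\ell\ne j}\min(\tfrac1\gamma P_{j\ell}^2,\tfrac{1-\gamma}{n})>\tfrac12(1-\gamma)+\delta_0'$ is, as you yourself compute, tight to within $\approx 0.016$; controlling this uniformly in $j$ with such a tiny margin would need much more than SBP---essentially full concentration for truncated row sums---which you have not supplied.

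Even for the easier small-$\beta_0$ version, your sketch (``leave-one-out reduction combined with concentration and Lemma~\ref{lem:main-technical}'') does not close the gap: the off-diagonal entries $\{A_{j\ell}\}_{\ell\ne j}$ in a fixed row are mutually dependent through $T^{-1}$, and this is precisely where Zhang \emph{et al.} erred. The paper's fix is a block-decoupling argument you do not mention: partition $[n]$ into $\asymp n/k$ blocks of size $k$; for $\ell$ in a block $\mathcal I$, use the Woodbury identity to replace $T^{-1}$ by $T_{-j,-\mathcal I}^{-1}$, showing the error term is $O(k/n)$; the resulting surrogates $\xi_{j,\ell}=p^{-1/2}\langle T_{-j,-\mathcal I}^{-1}\y_j/\|T_{-j,-\mathcal I}^{-1}\y_j\|,\,\y_\ell\rangle$ are conditionally i.i.d.\ across $\ell\in\mathcal I$, so SBP gives $\Pr(|\xi_{j,\ell}|\ge\alpha_*)\ge 0.8$ for a small $\alpha_*$, and Hoeffding within each block (plus union bounds) yields the count. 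Choosing $k\asymp n^{1/2}$ balances the $O(k/n)$ Woodbury error against the $e^{-ck}$ (or $e^{-c\Cheeger\sqrt{k}}$) Hoeffding tail, which is the true origin of the $n^{1/2}$ and $n^{1/4}$ exponents in the statement---not the tolerance in Lemma~\ref{lem:main-technical} as you suggest. This decoupling step is the missing idea.
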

We prove Lemma~\ref{lem:Tyl-inv-bound} in 
\ref{sec:proof:lem:Tyl-inv-bound}.
Our proof follows 
Zhang {\it et. al.} \cite[Lemma 3.3]{zhang2016marvcenko}. Their argument, however, contains a mathematical error that we correct. 

Lastly, we address condition (III) of Lemma~\ref{lem:Tyl-perturb}. We prove 
the following in
\ref{sec:proof:lem:Tyl-smooth}:
\begin{lemma}\label{lem:Tyl-smooth}
        Let $L_g$ be the $\ell_\infty$ Lipschitz constant of $\nabla g_\beta$ on an $\ell_\infty$ ball of radius $\frac12$ around $\oneVec$:
        \[
        L_g = \max_{\w\,:\,\|\w-\oneVec\|_\infty\le \frac12} \frac{\|\nabla g_\beta(\w)-\nabla g_\beta(\oneVec)\|_{\infty,\infty}}{\|\w-\oneVec\|_\infty}\,.
        \]

        There are $c,C,L>0$, that depend on the distribution of $Y$ and on $\gamma$, so that
        \begin{enumerate}
            \item Assume \AssumLC. Then $\Pr(L_g\ge L) \le Cn^2e^{-c\Cheeger\sqrt{n}}$.
            \item Assume \AssumIndep~ or \AssumCCPSB. Then $\Pr(L_g\ge L) \le Cn^2e^{-cn}$.
        \end{enumerate}
\end{lemma}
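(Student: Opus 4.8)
The strategy is to write $\nabla g_\beta=\nabla^2 G_\beta$ in closed form and bound its oscillation over the ball $\{\w:\|\w-\oneVec\|_\infty\le\tfrac12\}$ in an essentially deterministic manner, the sole probabilistic input being the quadratic-form estimate of Lemma~\ref{lem:main-technical}. Setting $M(\w)=\sum_{i=1}^n w_i\x_i\x_i^\top$ and $a_{j\ell}(\w)=\x_j^\top M(\w)^{-1}\x_\ell$, differentiating (\ref{eq:Tyl-def-G}) twice yields
\[
\bigl(\nabla g_\beta(\w)\bigr)_{j\ell}=\frac{\delta_{j\ell}}{w_\ell^2}-\frac{n}{p}\,a_{j\ell}(\w)^2+\beta\,.
\]
Two observations make this convenient. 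First, $\beta$ appears only additively, so it cancels in $\nabla g_\beta(\w)-\nabla g_\beta(\oneVec)$ — in particular $L_g$ will not depend on $\beta$. Second, $M(\oneVec)=\sum_i\x_i\x_i^\top=nS$, so the reference point is controlled by $S$. It then remains to show that, uniformly over $\w$ in the ball, $\max_j\bigl[\,|w_j^{-2}-1|+\tfrac np\sum_{\ell=1}^n|a_{j\ell}(\w)^2-a_{j\ell}(\oneVec)^2|\,\bigr]\le L\|\w-\oneVec\|_\infty$.

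The diagonal term is trivial: on the ball $w_j\in[\tfrac12,\tfrac32]$, so $|w_j^{-2}-1|\le 10\,|w_j-1|$, deterministically. For the quadratic term I would factor $a_{j\ell}(\w)^2-a_{j\ell}(\oneVec)^2=(a_{j\ell}(\w)-a_{j\ell}(\oneVec))(a_{j\ell}(\w)+a_{j\ell}(\oneVec))$ and apply Cauchy--Schwarz \emph{in the index $\ell$}, bounding the row sum by $(\sum_\ell(a_{j\ell}(\w)-a_{j\ell}(\oneVec))^2)^{1/2}$ times $(\sum_\ell(a_{j\ell}(\w)+a_{j\ell}(\oneVec))^2)^{1/2}$. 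The point of squaring first is that every such $\ell$-sum collapses to a scalar quadratic form in $\x_j$ via $\sum_\ell\x_\ell\x_\ell^\top=nS=M(\oneVec)$: e.g.\ $\sum_\ell a_{j\ell}(\w)^2=\x_j^\top M(\w)^{-1}M(\oneVec)M(\w)^{-1}\x_j$, and, using the resolvent identity $M(\w)^{-1}-M(\oneVec)^{-1}=M(\w)^{-1}\bigl(M(\oneVec)-M(\w)\bigr)M(\oneVec)^{-1}$ with $M(\oneVec)-M(\w)=\sum_i(1-w_i)\x_i\x_i^\top$, the difference sum $\sum_\ell(a_{j\ell}(\w)-a_{j\ell}(\oneVec))^2$ picks up an explicit $\|\w-\oneVec\|_\infty^2$. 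On the ball one has the deterministic sandwich $\tfrac12 M(\oneVec)\preceq M(\w)\preceq\tfrac32 M(\oneVec)$ (since $w_i\ge\tfrac12$), and after pre/post-multiplying by $M(\oneVec)^{-1/2}$ the matrix $M(\oneVec)-M(\w)$ becomes a perturbation of the identity of operator norm $\le\|\w-\oneVec\|_\infty$. Chasing these PSD comparisons, each $\ell$-sum is at most a universal constant times $\tfrac1n\x_j^\top S^{-1}\x_j$; combined with the prefactor $\tfrac np$ this produces exactly $\tfrac1p\x_j^\top S^{-1}\x_j$, so the whole quadratic term is $\lesssim\|\w-\oneVec\|_\infty\cdot\max_j\tfrac1p\x_j^\top S^{-1}\x_j$.

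Only now is randomness used: by Lemma~\ref{lem:main-technical} with any fixed $\epsilon<\epsilon_0$, $\max_{1\le j\le n}\tfrac1p\x_j^\top S^{-1}\x_j\le 2$ with probability at least $1-Cn^2e^{-c(\Cheeger\sqrt n)}$ under \AssumLC\ and at least $1-Cn^2e^{-cn}$ under \AssumIndep\ or \AssumCCPSB. On that event $L_g\le L$ for an explicit constant $L$, which is the assertion (the factors in the failure probability match the stated ones). The step I expect to be the main obstacle is the collapse of the $\ell$-summation: bounding more crudely — for instance treating $a_{j\ell}^2$ as a Hadamard square and using $\|A\circ B\|_{\infty,\infty}\le\|A\|_{\max}\|B\|_{\infty,\infty}$ — loses a factor $\sqrt p$, since the matrix $(a_{j\ell}(\oneVec))_{j,\ell}$ is a rank-$p$ orthogonal projection (onto the row space of the data matrix) and hence has $\ell_\infty\!\to\!\ell_\infty$ norm of order $\sqrt p$. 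It is therefore essential to sum the squares $a_{j\ell}^2$ \emph{before} taking any norm, so that the identity $\sum_\ell\x_\ell\x_\ell^\top=nS$ can be invoked; the remainder is routine linear algebra with no further probabilistic content.
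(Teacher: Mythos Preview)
Your argument is correct and follows the same skeleton as the paper's proof: compute the Hessian explicitly, note that $\beta$ cancels, bound the diagonal term trivially, factor $a_{j\ell}(\w)^2-a_{j\ell}(\oneVec)^2$, apply Cauchy--Schwarz in $\ell$, and collapse the resulting $\ell$-sums via $\sum_\ell \x_\ell\x_\ell^\top = nS$.

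The difference lies in how the collapsed quadratic forms are bounded. The paper simply estimates $\|T^{-1}\pm T^{-1}(\w)\|$ by operator norms, arriving at $(I_1I_2)^{1/2}\lesssim p^{-1}\|\y_j\|^2\|T^{-1}\|^3\|T\|^2\|\w-\oneVec\|_\infty$, and then invokes three separate probabilistic ingredients: Lemma~\ref{lem:main-smax} for $\|T\|$, Lemma~\ref{lem:main-smin} for $\|T^{-1}\|$, and Lemma~\ref{lem:main-quadratic} for $\max_j p^{-1}\|\y_j\|^2$. Your route is tighter: conjugating by $M(\oneVec)^{-1/2}$ and using the sandwich $\tfrac12 I\preceq M(\oneVec)^{-1/2}M(\w)M(\oneVec)^{-1/2}\preceq\tfrac32 I$ keeps the final bound as a single quadratic form $p^{-1}\x_j^\top S^{-1}\x_j$, so that Lemma~\ref{lem:main-technical} alone suffices. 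This is a genuine (if modest) simplification --- it avoids appealing to the smallest-eigenvalue bound and the norm concentration separately, and makes the constant $L$ depend only on the scalar $\max_j p^{-1}\x_j^\top S^{-1}\x_j$ rather than on powers of the condition number of $T$. The paper's version is perhaps more transparent about which spectral quantities drive the bound; yours is more economical.
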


Equipped with the preceding lemmas, we are ready to prove our concentration result for $\bwh$:
\begin{lemma}\label{lem:Tyl-conc-what}
Let $\bwh=(\wh_1,\ldots,\wh_n)>\bm{0}$ be the unique minimizer of (\ref{eq:Tyl-what-objective}).  
There are $c,C,\epsilon_0>0$, that depend on the distribution of $Y$ and on $\gamma$, such that for all $\epsilon<\epsilon_0$,
\begin{enumerate}
    \item Assume~\AssumLC. Then $\Pr\left( \max_{1\le i \le n}\left| \wh_i-1\right| \ge \epsilon \right) \le Cn^2 e^{-c \Cheeger\min\left\{ \sqrt{n}\epsilon,\,  n^{1/4}\right\} } $.
    \item Assume~\AssumIndep~or~\AssumCCPSB. Then $\Pr\left( \max_{1\le i \le n}\left| \wh_i-1\right| \ge \epsilon \right) \le Cn^2 e^{-c \min\left\{ n\epsilon^2,\, n^{1/2}\right\}}$.
\end{enumerate}
\end{lemma}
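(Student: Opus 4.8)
The plan is to combine the deterministic perturbation bound of Lemma~\ref{lem:Tyl-perturb} with the three probabilistic estimates of Lemmas~\ref{lem:Tyl-almost-zero},~\ref{lem:Tyl-inv-bound} and~\ref{lem:Tyl-smooth}, and then invoke the uniqueness statement of Lemma~\ref{lem:Tyl-obs} to identify the zero produced by the perturbation argument with $\bwh$ itself. Throughout I fix $\beta=\beta_0/n$, with $\beta_0$ as in Lemma~\ref{lem:Tyl-inv-bound}, and work with the rescaled gradient map $h_\beta(\w)=\left(\nabla g_\beta(\oneVec)\right)^{-1}g_\beta(\w)$. On the event that $\nabla g_\beta(\oneVec)$ is invertible with $\|\left(\nabla g_\beta(\oneVec)\right)^{-1}\|_{\infty,\infty}\le B$, this map is well defined, has exactly the same zero set as $g_\beta$, and satisfies $\nabla h_\beta(\oneVec)=\Id$ by construction, so condition (II) of Lemma~\ref{lem:Tyl-perturb} holds automatically.

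First I would fix all the constants. Let $B$ be the constant of Lemma~\ref{lem:Tyl-inv-bound}, $L$ that of Lemma~\ref{lem:Tyl-smooth}, and take $\epsilon_0$ smaller than $\min\{1/2,(BL)^{-1}\}$ and than $2B$ times the threshold from Lemma~\ref{lem:Tyl-almost-zero}. For $\epsilon<\epsilon_0$ I would work on the good event $\Ac=\Ac_1\cap\Ac_2\cap\Ac_3$, where $\Ac_1=\{\|g_\beta(\oneVec)\|_\infty\le \epsilon/(2B)\}$, $\Ac_2=\{\|\left(\nabla g_\beta(\oneVec)\right)^{-1}\|_{\infty,\infty}\le B\}$ and $\Ac_3=\{L_g\le L\}$. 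On $\Ac$, writing $\nabla h_\beta(\w)-\nabla h_\beta(\oneVec)=\left(\nabla g_\beta(\oneVec)\right)^{-1}\bigl(\nabla g_\beta(\w)-\nabla g_\beta(\oneVec)\bigr)$, the map $h_\beta$ satisfies hypothesis (I) of Lemma~\ref{lem:Tyl-perturb} with slack $\tfrac12\epsilon$, since $\|h_\beta(\oneVec)\|_\infty\le B\|g_\beta(\oneVec)\|_\infty\le \tfrac12\epsilon$, and hypothesis (III) with constant $L_h:=BL$ on the ball of radius $R:=\tfrac12$ about $\oneVec$; the requirement $\epsilon\le\min\{R,L_h^{-1}\}$ is met by the choice of $\epsilon_0$. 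Lemma~\ref{lem:Tyl-perturb} then yields a zero $\w^\star$ of $h_\beta$ with $\|\w^\star-\oneVec\|_\infty\le\epsilon$. Since the zeros of $h_\beta$ are precisely the stationary points of $G_\beta$, and by Lemma~\ref{lem:Tyl-obs} there is exactly one such point, namely $\bwh$, I conclude $\bwh=\w^\star$ and hence $\max_{1\le i\le n}|\wh_i-1|\le\epsilon$ on $\Ac$.

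The last step is a union bound, $\Pr(\Ac^c)\le\Pr(\Ac_1^c)+\Pr(\Ac_2^c)+\Pr(\Ac_3^c)$. Under~\AssumLC, Lemmas~\ref{lem:Tyl-almost-zero},~\ref{lem:Tyl-inv-bound} and~\ref{lem:Tyl-smooth} bound these by $Cn^2e^{-c\Cheeger\sqrt{n}\epsilon/(2B)}$, $Cn^2e^{-c\Cheeger n^{1/4}}$ and $Cn^2e^{-c\Cheeger\sqrt{n}}$ respectively; absorbing $B$ into $c$ and using $n^{1/4}\le\sqrt{n}$, the sum is at most $Cn^2e^{-c\Cheeger\min\{\sqrt{n}\epsilon,\,n^{1/4}\}}$. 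Under~\AssumIndep~or~\AssumCCPSB, the same three lemmas give $Cn^2e^{-cn\epsilon^2/(4B^2)}$, $Cn^2e^{-cn^{1/2}}$ and $Cn^2e^{-cn}$, whose sum is at most $Cn^2e^{-c\min\{n\epsilon^2,\,n^{1/2}\}}$ since $n^{1/2}\le n$. I expect no genuine obstacle in this argument beyond bookkeeping: choosing $\epsilon_0$ small enough to absorb the factor $B$ lost when passing from $g_\beta$ to $h_\beta$, and checking that the $n^{1/4}$ term coming from Lemma~\ref{lem:Tyl-inv-bound} is what limits the rate, so that the final exponent is the stated minimum. The delicate analysis lives in the component lemmas, in particular the corrected proof of Lemma~\ref{lem:Tyl-inv-bound}, not in this assembly.
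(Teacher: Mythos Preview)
Your proposal is correct and follows essentially the same argument as the paper: fix $\beta=\beta_0/n$, work with $h_\beta=(\nabla g_\beta(\oneVec))^{-1}g_\beta$, verify the hypotheses of Lemma~\ref{lem:Tyl-perturb} on the intersection of the high-probability events from Lemmas~\ref{lem:Tyl-almost-zero},~\ref{lem:Tyl-inv-bound},~\ref{lem:Tyl-smooth}, and identify the resulting zero with $\bwh$ via Lemma~\ref{lem:Tyl-obs}. The only differences are cosmetic bookkeeping in how the constants $B$, $L$ and $\epsilon_0$ are named and absorbed.
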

\begin{proof}
        Choose $\beta=\beta_0/n$ per Lemma~\ref{lem:Tyl-inv-bound}, such that $\|\left( \nabla g_\beta(\oneVec)\right)^{-1}\|_{\infty,\infty}\le B$ holds w.h.p. By Lemma~\ref{lem:Tyl-smooth}, for some $L>0$, w.h.p.
        $\| \nabla g_\beta(\w)- \nabla g_\beta(\oneVec)\|_{\infty,\infty}\le \frac{L}{B} \|\w-\oneVec\|_\infty$ holds uniformly inside the $\ell_\infty$ ball $\|\w-\oneVec\|_\infty\le \frac12$. By Lemma~\ref{lem:Tyl-almost-zero}, $\|g_\beta(\oneVec)\|_\infty < \frac{\epsilon}{2B}$ holds w.h.p.
        Under the intersection of these events, $h_\beta(\w)=\left(\nabla g_\beta(\oneVec)\right)^{-1}  g_\beta(\w)$ satisfies the conditions of Lemma~\ref{lem:Tyl-perturb}, with constants $R=\frac12$ and $L$.  Assuming $\epsilon\le \epsilon_0:=\min\{\frac12,L^{-1}\}$, by Lemma~\ref{lem:Tyl-perturb} $h_\beta(\cdot)$ has a zero $\w^*$, equivalently a stationary point of $G_\beta(\cdot)$, with $\|\w^*-\oneVec\|_\infty\le \epsilon$. By Lemma~\ref{lem:Tyl-obs}, $\w^*=\bwh$. 

\end{proof}

\begin{proof}
    [Of Theorem~\ref{thm:main_tyler}]
    Recall that the weights $\wTyl_i$ are related to $\wh_i$ via (\ref{eq:what-to-wTyl}).
Denote $\trCov=p^{-1}\tr(\trueCov)$.
 Under the high-probability event $\max_{1\le i \le n}|\wh_i-1|\le \epsilon$, we have
 \begin{equation}\label{eq:tyl-proof-last-bound}
    \frac{1-\epsilon}{(1+\epsilon) \cdot p^{-1}\tr(S)/\trCov)} \le  \trCov\wTyl_i = \frac{ \wh_i}{\trCov^{-1} p^{-1}\tr \left( \frac1n \sum_{i=1}^n \wh_i \x_i\x_i^\T  \right)} \le \frac{1+\epsilon}{(1-\epsilon) \cdot p^{-1}\tr(S)/\trCov}\,. 
 \end{equation}
 We next show that the denominator of (\ref{eq:tyl-proof-last-bound}) concentrates tightly around $1$, namely, that w.h.p. $|p^{-1}\tr(S)-\trCov|\le \epsilon \trCov$. To this end,  let $u_1,\ldots,u_p$ be an orthonormal basis of eigenvectors of $\trueCov$, so that $\trueCov u_i = \lambda_i u_i$. Since $S=\trueCov^{1/2}T\trueCov^{1/2}$, 
    \[
    |p^{-1}\tr(S)-\trCov| = \left|p^{-1}\sum_{i=1}^p \lambda_i (u_i^\T T u_i -1)\right| \le \trCov \max_{1\le i \le p} |u_i^\T T u_i-1| \,. 
    \]
    By Lemma~\ref{lem:sc-entrywise} (and a union bound over $1\le i \le p$), under \AssumLC, $\Pr\left( \left| p^{-1} \tr(S)-\trCov\right| \ge \epsilon \, \trCov \right) \le Cpe^{-c(\Cheeger \sqrt{n})\epsilon}$, whereas under \AssumIndep~or \AssumCCPSB, $\Pr\left( \left| p^{-1} \tr(S)-\trCov\right| \ge \epsilon \, \trCov \right) \le Cpe^{-cn\epsilon^2}$. Combining with (\ref{eq:tyl-proof-last-bound}) yields that w.h.p. $\max_{1\le i\le p}|\trCov\wTyl_i-1| \le C\epsilon$, and the theorem follows.
    
\end{proof}

\subsection{Proof of Theorem \ref{thm:main_maronna_reg} (MRE)}
\label{sect:proof-maronna-reg}

By \cite[Theorem 1]{ollila2014regularized}, MRE exists uniquely w.p. $1$.
We proceed similarly to the proof of Theorem~\ref{thm:main_maronna}.
Define 
\begin{equation}
    \bar{h}:\R^n_+ \to \R^n_+,\;:\;\quad \bar{h}_j(\bm{d}) = \frac{1+\alpha}{p} \x_j^\T \left(  \frac1n \sum_{i=1}^n u(d_i)\x_i\x_i^\top + \alpha \Id \right)^{-1}\x_j\,,\quad 1\le j \le n \,. 
\end{equation}
By the definition of MRE, Eq. (\ref{eq:def_maronna_reg}), its weights are $\wMRE_i=u(\hat{d}_i)$ where $\bar{h}(\bm{\hat{d}})=\bm{\hat{d}}$ is a fixed point. 
Accordingly,
we study the fixed points of $\bar{h}$. 
Let $\bm{\hat{d}}>\bm{0}$ be a fixed point, and $j_{\min}=\argmin_{1\le j \le n}\hat{d}_j$, $j_{\max}=\argmax_{1\le j \le n}\hat{d}_j$. Since $u(\cdot)$ is non-increasing, $\bar{h}(\cdot)$ is non-decreasing, and so $\bar{h}(\hat{d}_{j_{\min}}\bm{1}) \le \bar{h}(\bm{\hat{d}}) \le \bar{h}(\hat{d}_{j_{\max}}\bm{1})$. 
Considering coordinates $j\in \{j_{\min},j_{\max}\}$, and bearing in mind that 
$d_j=\bar{h}_j(\hat{\bm{d}})$,
\begin{equation}\label{eq:proof-mre-1}
    \hat{d}_{j_{\min}} \ge (1+\alpha)p^{-1}\x_{j_{\min}}^\T (u(\hat{d}_{j_{\min}})S + \alpha\Id)^{-1} \x_{j_{\min}}^\T,\quad    \hat{d}_{j_{\max}} \le (1+\alpha)p^{-1}\x_{j_{\max}}^\T (u(\hat{d}_{j_{\max}})S + \alpha\Id)^{-1} \x_{j_{\max}}^\T \,.
\end{equation}
Define the following $n$ functions $\hat{F}_j : \R_+ \to \R_+$, $1\le j \le n$, by
\begin{equation}\label{eq:proof-mre-2}
    \hat{F}_j({d}) = (1+\alpha)d^{-1} p^{-1} \x_j^\T (u(d)S+\alpha\Id)^{-1}\x_j = (1+\alpha)p^{-1}\x_j^\T (\phi(d)S+\alpha d \Id)^{-1}\x_j\,.
\end{equation}
By assumption, $\phi(d)=d u(d)$ is non-decreasing hence $\hat{F}_j$ is decreasing. 
Dividing the left and right inequalities in (\ref{eq:proof-mre-1}) by $\hat{d}_{j_{\min}}$ and $\hat{d}_{j_{\max}}$ respectively, gives $\hat{F}_{j_{\min}}(\hat{d}_{j_{\min}})\le 1$, $\hat{F}_{j_{\max}}(\hat{d}_{j_{\max}})\ge 1$. Since $\hat{F}_j$ is decreasing, we deduce that $\hat{d}_{j_{\min}} \ge \hat{F}_{j_{\min}}^{-1}(1)$, $\hat{d}_{j_{\max}} \le \hat{F}_{j_{\max}}^{-1}(1)$ provided that $1$ is indeed in the range of $\hat{F}_{j_{\min}}(\cdot),\hat{F}_{j_{\max}}(\cdot)$. Since $\hat{d}_{j_{\min}},\hat{d}_{j_{\max}}$ are the smallest and largest coordinates of $\bm{\hat{d}}$ respectively, we deduce that for all $1\le j \le n$,
\begin{equation}\label{eq:proof-mre-3}
    \min_{1\le i\le n}\hat{F}_i^{-1}(1) \le \hat{d}_j \le \max_{1\le i\le n}\hat{F}_i^{-1}(1)\,,
\end{equation}
provided that $1$ is in the range of all $\hat{F}_i$-s. We shall soon see that this is indeed the case, and moreover, that the (data-dependent) quantities $\hat{F}_i^{-1}(1)$ all concentrate around a particular deterministic quantity. 

We now analyze $\hat{F}_i$, defined in (\ref{eq:proof-mre-2}). Decomposing $S=S_{-i} + n^{-1}\x_i\x_i^\T$, by the Sherman-Morrison formula, 
\begin{equation}
    \hat{F}_i(d) = (1+\alpha) \frac{\hat{Q}_i(d)}{1+\gamma \phi(d) \hat{Q}_i(d)},\quad\textrm{where}\quad \hat{Q}_i(d) = p^{-1} \x_i^\T ( \phi(d)S_{-i}+\alpha d \Id)^{-1} \x_i \,.
\end{equation}
Next we consider a deterministic analog of $\hat{F}_i$, where $\hat{Q}_i$ is replaced by its expectation $Q$. 
Define
\begin{equation}\label{eq:proof-mre-4}
    Q(d) = \E Q_i(d) = p^{-1}\E \tr\trueCov(\phi(d)S_{-i}+\alpha d \Id)^{-1} \,,\quad F(d) = (1+\alpha)\frac{Q(d)}{1+\gamma\phi(d) Q(d)} \,.
\end{equation}
\begin{lemma}\label{lem:MRE-1}
    Let $d_0>0$ be given. There are $c,C,\epsilon_0>0$, that depend on the distribution of $Y$, $\gamma$, $s_{\max}$, $\alpha$ and $d_0$, such that the following holds. For all $d\ge d_0$ and $\epsilon\le \epsilon_0$,
    \begin{enumerate}
        \item Assume~\AssumLC. Then $\Pr(\max_{1\le i \le n}|\hat{Q}_i(d)-Q(d)|\ge \epsilon) \le Cne^{-c\Cheeger \sqrt{n}\epsilon}$.
        \item Assume~\AssumIndep~or~\AssumCCPSB. Then $\Pr(\max_{1\le i \le n}|\hat{Q}_i(d)-Q(d)|\ge \epsilon) \le Cne^{-cn\epsilon^2}$. 
    \end{enumerate}
\end{lemma}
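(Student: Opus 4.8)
The plan is to condition on everything except $\y_i$, and to split $\hat{Q}_i(d)-Q(d)$ into a ``quadratic-form'' fluctuation around a conditional mean plus a ``trace'' fluctuation of that conditional mean around its expectation. Let $\mathcal{F}_{-i}=\sigma(\x_j:j\ne i)$. Since $\x_i=\trueCov^{1/2}\y_i$ with $\y_i$ independent of $\mathcal{F}_{-i}$, we have $\hat{Q}_i(d)=p^{-1}\y_i^{\T}M_i(d)\y_i$ for the $\mathcal{F}_{-i}$-measurable PSD matrix $M_i(d)=\trueCov^{1/2}(\phi(d)S_{-i}+\alpha d\,\Id)^{-1}\trueCov^{1/2}$; because $S_{-i}\succeq 0$ and $d\ge d_0$, the resolvent obeys $(\phi(d)S_{-i}+\alpha d\,\Id)^{-1}\preceq(\alpha d_0)^{-1}\Id$, so by \eqref{eq:cond-smax}
\[
\norm{M_i(d)}\le s_{\max}/(\alpha d_0)=:\kappa
\]
deterministically, and $\Tr M_i(d)\le p\kappa$. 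Moreover $\E[\hat{Q}_i(d)\mid\mathcal{F}_{-i}]=p^{-1}\Tr M_i(d)=:\bar{Q}_i(d)=p^{-1}\Tr\trueCov(\phi(d)S_{-i}+\alpha d\,\Id)^{-1}$, whose expectation is $Q(d)$ by \eqref{eq:proof-mre-4}. The proof bounds $\abs{\hat{Q}_i(d)-\bar{Q}_i(d)}$ and $\abs{\bar{Q}_i(d)-Q(d)}$ by $\epsilon/2$ each, with the stated probabilities, and concludes by a union bound over $1\le i\le n$ (the source of the $Cn$ prefactor). Note the statement is pointwise in $d$, so no net over $d$ is needed.

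For the quadratic-form term I work conditionally on $\mathcal{F}_{-i}$, with $M_i(d)$ fixed, $\norm{M_i(d)}\le\kappa$, and hence $\norm{M_i(d)}_F\le\sqrt{p}\,\kappa$. Under \AssumIndep~the Hanson--Wright inequality gives $\Pr(\abs{\hat{Q}_i(d)-\bar{Q}_i(d)}\ge\epsilon\mid\mathcal{F}_{-i})\le 2\exp(-c\min\{p\epsilon^2,p\epsilon\})\le 2\exp(-c'n\epsilon^2)$ for $\epsilon\le 1$, using $p=\gamma n$. Under \AssumCCPSB~or \AssumLC~I instead exploit that $Z:=\norm{M_i(d)^{1/2}\y_i}$ is a convex, resp.\ $1$-Lipschitz, function of $\y_i$ with Lipschitz constant $\norm{M_i(d)}^{1/2}\le\sqrt{\kappa}$: by \eqref{eq:ccp}, resp.\ Lemma~\ref{lem:log-concave-lip-concentration}, $Z$ concentrates around its conditional mean $\mu$ at the sub-Gaussian scale $K\sqrt{\kappa}$, resp.\ the exponential scale $\sqrt{\kappa}/\Cheeger$, which also controls $\mathrm{Var}(Z\mid\mathcal{F}_{-i})$. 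Since $\hat{Q}_i(d)=p^{-1}Z^2$, $\bar{Q}_i(d)=p^{-1}(\mu^2+\mathrm{Var}(Z\mid\mathcal{F}_{-i}))$ and $\mu\le\sqrt{\Tr M_i(d)}\le\sqrt{p\kappa}$, on $\{\abs{Z-\mu}\le s\}$ one gets $\abs{\hat{Q}_i(d)-\bar{Q}_i(d)}\lesssim s\sqrt{\kappa/p}+s^2/p+p^{-1}\mathrm{Var}(Z\mid\mathcal{F}_{-i})$; the last term is $O(1/(p\Cheeger^2))$, resp.\ $O(1/p)$, which lies below $\epsilon$ throughout the range where the asserted bound is nontrivial (recall $\Cheeger\ge(\log p)^{-5}$, $p\asymp n$), and taking $s\asymp\sqrt{p}\,\epsilon$ makes the remaining terms $\le\epsilon/2$ at failure probability $\exp(-c\Cheeger\sqrt{n}\,\epsilon)$, resp.\ $2\exp(-cn\epsilon^2)$. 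Averaging over $\mathcal{F}_{-i}$ removes the conditioning.

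For the trace term I use only that the $\x_j$, $j\ne i$, are i.i.d. On the event $\mathcal{E}$ that $\norm{\y_j}\le R\sqrt{p}$ for all $j\ne i$ (with $R$ a large constant; $\Pr(\mathcal{E}^c)$ is exponentially small and dominated by the claimed bound for $\epsilon<\epsilon_0$, via norm concentration for each family applied to $\y\mapsto\norm{\y}$), $\bar{Q}_i(d)$ equals the ``clipped'' functional $\tilde{Q}_i(d)$ obtained by replacing each $\y_j$ with its projection onto the ball of radius $R\sqrt{p}$. Now $\tilde{Q}_i(d)$ depends only on the independent $(\y_j)_{j\ne i}$, and the Sherman--Morrison identity together with $\phi(d)\le\phi_\infty$, the resolvent bound, $\norm{\trueCov}\le s_{\max}$ and $\norm{\x_j}^2\lesssim p$ shows that replacing a single $\y_j$ changes the rank-one contribution to $\Tr\trueCov(\cdots)^{-1}$ by $O(p/n)$, hence changes $\tilde{Q}_i(d)$ by $O(1/n)$ after the $p^{-1}$ prefactor; McDiarmid's inequality then yields $\Pr(\abs{\tilde{Q}_i(d)-\E\tilde{Q}_i(d)}\ge\epsilon/2)\le 2\exp(-cn\epsilon^2)$, and $\abs{\E\tilde{Q}_i(d)-Q(d)}\le 2\kappa\,\Pr(\mathcal{E}^c)\le\epsilon/2$ in the nontrivial range. (Alternatively, under \AssumLC~one may apply Lemma~\ref{lem:log-concave-lip-concentration} to the $O(1/n)$-Lipschitz $\tilde{Q}_i(d)$ on the product log-concave space, whose Cheeger constant is $\gtrsim\Cheeger$ since $\log(p(n-1))\asymp\log p$; this gives an even stronger rate.) Assembling the two steps and union-bounding over $i$ yields the lemma in all three cases.

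I expect the two delicate points to be: first, that no clean Hanson--Wright inequality is available under \AssumLC, which forces the detour through $Z=\norm{M_i(d)^{1/2}\y_i}$ and the accompanying variance correction, one must check this correction is negligible in the relevant range of $\epsilon$; and second, the unboundedness of $\norm{\y_j}$ in the trace step, which is handled by the clipping-plus-McDiarmid device on the high-probability event $\mathcal{E}$. Everything else — the deterministic resolvent/operator-norm bound, the union bounds, and the bookkeeping matching the exponents against $\Cheeger\sqrt{n}\epsilon$ versus $n\epsilon^2$ — is routine, and the constant $d_0$ enters only through the single estimate $(\phi(d)S_{-i}+\alpha d\,\Id)^{-1}\preceq(\alpha d_0)^{-1}\Id$.
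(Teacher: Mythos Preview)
Your decomposition $\hat{Q}_i(d)-Q(d)=(\hat{Q}_i(d)-\bar{Q}_i(d))+(\bar{Q}_i(d)-Q(d))$ is exactly the one the paper uses (their $\Delta_1+\Delta_2$), and your treatment of the quadratic-form term is essentially the same in content: the paper packages all three distributional cases into a single Hanson--Wright-type statement (Lemma~\ref{lem:main-quadratic}), citing \cite{rudelson2013hanson} for \AssumIndep, \cite{adamczak2015note} for \AssumCCPSB, and proving the \AssumLC~case via the same $Z=\|M_i(d)^{1/2}\y_i\|$ detour you describe (see \ref{sec:proof:lem:main-quadratic}). So no disagreement there.

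The trace term is where the routes differ. You clip $\|\y_j\|\le R\sqrt{p}$, argue that replacing a single clipped sample changes the trace by $O(1/n)$ via Sherman--Morrison, and then invoke McDiarmid. The paper avoids the clipping entirely by using a \emph{rank-based} resolvent perturbation bound (Lemma~\ref{lem:low_rank_perturb}): for PSD $A,B$ and $z>0$, $|\Tr C(zI+A)^{-1}-\Tr C(zI+B)^{-1}|\le \mathrm{rank}(A-B)\cdot \|C\|/z$, which here gives a \emph{deterministic} bounded difference of order $\|\trueCov\|/(\alpha d_0 p)$ regardless of $\|\y_j\|$. Azuma--Hoeffding on the Doob martingale (Lemma~\ref{lem:resolvent-conc}) then immediately yields the sub-Gaussian tail $\exp(-cn\epsilon^2)$ for $\Delta_2$ in all three cases, without any truncation event to track. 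Your argument is correct, but the paper's is shorter and sidesteps both the clipping and the $\E\tilde{Q}_i(d)$-vs-$Q(d)$ bookkeeping; conversely, your McDiarmid route is perhaps more transparent if one has not seen the rank trick before.
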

We prove Lemma~\ref{lem:MRE-1} in 
\ref{sec:proof-lem:MRE-1}.
By Lemma~\ref{lem:MRE-1}, the functions $\hat{F}_i$ concentrate pointwise around the deterministic function $F$. To proceed, we  show that $1\in \mathrm{Range}(F)$ and  study the local behavior of $F$ around this point. 
Before stating our next result, we remark that up to this point, the analysis in this section applies both to MRE and TRE, the latter corresponding to $u(x)=x^{-1}$. The proof of the next lemma, however, relies on the boundedness of the function $u$, which is always assumed for MRE, but does not hold for TRE.
\begin{lemma}\label{lem:MRE-2}
    There 
    is a unique root
    $F(d^*)=1$. Moreover, there exist constants $0<\underline{d}<\overline{d}$, and $\eta>0$, depending on the distributions of $Y$, $\gamma$, $s_{\max}$, $\trLB$ and $\alpha$, so that: 1) $d^*\in (\underline{d},\overline{d})$; 2) For every $d_1,d_2\in (\underline{d},\overline{d})$, $|F(d_1)-F(d_2)|\ge \eta|d_1-d_2|$.
\end{lemma}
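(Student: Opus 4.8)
The plan is to work entirely with the deterministic function $F$ and, crucially, with its reciprocal, which decomposes into monotone pieces. Write $q_i(d)=\frac1p\tr\trueCov M_i(d)^{-1}$ with $M_i(d)=\phi(d)S_{-i}+\alpha d\Id$, so that $Q(d)=\E q_i(d)$. Since $\phi(x)=xu(x)$ is non-decreasing while $\alpha d$ is strictly increasing, the matrix $M_i(d)$ is strictly increasing in $d$ in the PSD order; as $\trueCov\succ0$, this makes each $q_i$, hence $Q$, continuous and strictly decreasing on $(0,\infty)$. From $F(d)=(1+\alpha)Q(d)/\big(1+\gamma\phi(d)Q(d)\big)$ one obtains the key identity
\[
\frac{1}{F(d)}=\frac{1}{1+\alpha}\Big(\frac{1}{Q(d)}+\gamma\phi(d)\Big),
\]
which exhibits $1/F$ as the sum of a strictly increasing function ($\tfrac1{(1+\alpha)Q}$) and a non-decreasing one ($\tfrac{\gamma}{1+\alpha}\phi$). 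Hence $1/F$ is continuous and strictly increasing, so $F$ is continuous and strictly decreasing, and therefore $F(d^*)=1$ has at most one solution.

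For existence and the localization $d^*\in(\underline{d},\overline{d})$, I would first bound $Q$ two-sidedly on $(0,\infty)$ using only $\|\trueCov\|\le s_{\max}$ and $\trCov\ge\trLB$. From $M_i(d)\succeq\alpha d\Id$ we get $Q(d)\le\trCov/(\alpha d)\le s_{\max}/(\alpha d)$. For a lower bound, convexity of $N\mapsto\tr(\trueCov N^{-1})$ and Jensen (with $\E S_{-i}=\tfrac{n-1}{n}\trueCov\preceq\trueCov$) give $Q(d)\ge\frac1p\tr\trueCov(\phi(d)\trueCov+\alpha d\Id)^{-1}\ge\trLB/(\phi(d)s_{\max}+\alpha d)$, i.e. $1/Q(d)\le(\phi(d)s_{\max}+\alpha d)/\trLB$. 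Substituting into the identity and using $\phi(d)\le u(0)d$ — this is exactly where boundedness of $u$, equivalently $\phi(0)=0$, is used, and where the argument fails for TRE ($\phi\equiv1$) — one finds $1/F(d)\le C_0 d$ for an explicit $C_0$ depending on $u,\gamma,\alpha,s_{\max},\trLB$, so $F(d)>1$ whenever $d\le\underline{d}$ for a suitable $\underline{d}>0$. Since $Q(d)\le s_{\max}/(\alpha d)\to0$ forces $F(d)\to0$ as $d\to\infty$, the intermediate value theorem produces $d^*$, and $d^*>\underline{d}$. For the upper bound, evaluating the identity at $d^*$ gives $1/Q(d^*)=1+\alpha-\gamma\phi(d^*)\le1+\alpha$, so $Q(d^*)\ge\tfrac1{1+\alpha}$; combined with $Q(d^*)\le s_{\max}/(\alpha d^*)$ this forces $d^*\le(1+\alpha)s_{\max}/\alpha$, and one takes $\overline{d}$ slightly larger to make the inclusion strict.

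The remaining and hardest point is the lower Lipschitz bound on $(\underline{d},\overline{d})$. Because $\gamma\phi$ is non-decreasing, by the identity it suffices to show that $\frac{d}{dd}\tfrac1{Q(d)}=-Q'(d)/Q(d)^2$ is bounded below by a positive constant a.e. on $(\underline{d},\overline{d})$ and then integrate ($\phi$, hence $Q$, is Lipschitz on this compact interval by the hypothesis on $u$). Differentiating under the expectation, $-q_i'(d)=\frac1p\tr\trueCov M_i^{-1}(\phi'(d)S_{-i}+\alpha\Id)M_i^{-1}\ge\frac{\alpha}{p}\tr\trueCov M_i^{-2}\ge\frac{\alpha}{\|M_i\|}q_i(d)$, where the last inequality uses $\tr(\trueCov M^{-2})\ge\|M\|^{-1}\tr(\trueCov M^{-1})$. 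Together with $q_i(d)\ge\trLB/(\phi_\infty\|S_{-i}\|+\alpha\overline{d})$ and $\|M_i\|\le\phi_\infty\|S_{-i}\|+\alpha\overline{d}$ on $[\underline{d},\overline{d}]$, this gives $-q_i'(d)\ge\alpha\trLB/(\phi_\infty\|S_{-i}\|+\alpha\overline{d})^2$. The only randomness left is $\|S_{-i}\|$: by the concentration of the sample-covariance operator norm available under each of \AssumIndep, \AssumCCPSB, \AssumLC\ (Lemma~\ref{lem:main-smax}), there is a constant $C_1$, depending on the distribution of $Y$, $\gamma$ and $s_{\max}$, with $\Pr(\|S_{-i}\|\le C_1)\ge\tfrac12$, whence $-Q'(d)=\E[-q_i'(d)]\ge\tfrac{\alpha\trLB}{2(\phi_\infty C_1+\alpha\overline{d})^2}=:\eta_1>0$. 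Combined with $Q(d)\le s_{\max}/(\alpha\underline{d})$, we get $-Q'(d)/Q(d)^2\ge\eta_1\alpha^2\underline{d}^2/s_{\max}^2$ and hence $(1/F)'(d)\ge\eta'>0$ on $(\underline{d},\overline{d})$. Finally, $F$ is bounded below on $[\underline{d},\overline{d}]$ by $F(\overline{d})>0$ (a positive constant, from the two-sided bounds on $Q$), so for $d_1,d_2\in(\underline{d},\overline{d})$, $|F(d_1)-F(d_2)|=F(d_1)F(d_2)\,\big|\tfrac1{F(d_1)}-\tfrac1{F(d_2)}\big|\ge F(\overline{d})^2\eta'|d_1-d_2|$, giving $\eta=F(\overline{d})^2\eta'$.

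I expect the main obstacle to be precisely this last step: obtaining a lower bound on $-Q'$ that is uniform over the whole distributional class, which forces one to control the random and possibly heavy-tailed quantity $\|S_{-i}\|$ — handled by invoking the sample-covariance concentration valid under all three assumptions — together with the bookkeeping of which of $u,\phi_\infty,\gamma,\alpha,s_{\max},\trLB$ each constant $\underline{d},\overline{d},\eta,C_1$ depends on.
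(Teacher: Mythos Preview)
Your proof is correct and takes a genuinely different route from the paper's. The key organizational difference is your use of the reciprocal identity $1/F(d)=\tfrac{1}{1+\alpha}\bigl(1/Q(d)+\gamma\phi(d)\bigr)$, which makes the strict monotonicity of $F$ immediate and reduces the lower-Lipschitz bound on $F$ to a lower bound on $(1/Q)'$; the paper instead manipulates $F(d_1)-F(d_2)$ directly, dropping the non-negative $\gamma(\phi(d_2)-\phi(d_1))Q(d_1)Q(d_2)$ term and then bounding $Q(d_1)-Q(d_2)$ via the resolvent identity (a finite-difference argument that avoids differentiating through $\phi$). A second, more substantive difference: for the lower bound on $Q$ (hence on $d^*$), you invoke Jensen's inequality on the convex map $N\mapsto\tr(\trueCov N^{-1})$ with $\E S_{-i}\preceq\trueCov$, obtaining $Q(d)\ge\trLB/(\phi(d)s_{\max}+\alpha d)$ purely deterministically; the paper instead bounds $S_{-i}\preceq\|S_{-i}\|\Id$ and uses Lemma~\ref{lem:main-smax} to get $\|S_{-i}\|\le C_0 s_{\max}$ with probability $\ge\tfrac12$. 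Your Jensen argument is cleaner here and removes the dependence of $\underline{d}$ on the distribution of $Y$. For the Lipschitz step both proofs ultimately appeal to Lemma~\ref{lem:main-smax} to control $\|S_{-i}\|$; the paper's finite-difference approach is marginally more robust in that it does not require justifying differentiation under the expectation or a.e.\ differentiability of $\phi$, but your argument via $-q_i'(d)\ge\alpha q_i(d)/\|M_i(d)\|$ is valid once one notes that $\phi$ is locally Lipschitz (from the hypothesis on $u$) and that the derivative is dominated.
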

We prove Lemma~\ref{lem:MRE-2} in 
\ref{sec:proof-lem:MRE-2}. 
We are ready to conclude the proof of Theorem~\ref{thm:main_maronna_reg}. 
Fix a small enough $\epsilon>0$ so that $d_1=d^*-\epsilon$, $d_2=d^*+\epsilon$ satisfy $[d_1,d_2]\subseteq (\underline{d},\overline{d})$. 
Let $\eta>0$ be the constant from Lemma~\ref{lem:MRE-2}. By Lemma~\ref{lem:MRE-1}, w.h.p. $|\hat{F}_i(d_\ell)-F(d_\ell)|\le \eta\epsilon/2$ for all $1\le i \le n$ and $\ell=1,2$. Under this event, in particular, $\hat{F}_i(d_2)\le F(d_2)+\eta\epsilon/2 \le F(d^*)-\eta\epsilon+ \eta\epsilon/2 = 1-\eta\epsilon/2$. Similarly, $\hat{F}_i(d_1)\ge 1+\eta\epsilon/2$. Since the functions $\hat{F}_i$ are decreasing and continuous, it follows that $\hat{F}_i^{-1}(1) \in (d_1,d_2)=(d^*-\epsilon,d^*+\epsilon)$ for all $i$. Thus, by (\ref{eq:proof-mre-3}), $\hat{d}_j\in (d^*-\epsilon,d^*+\epsilon)$ for all $1\le j\le n$. Finally, recalling that the weights of MRE are $\wMRE_j=u(\hat{d}_j)$, we conclude that $|\wMRE-u(d^*)|\le L\epsilon$ where $L$ is the Lipschitz constant of $u$.

\qed

\subsection{Proof of Theorem~\ref{thm:main_tyler_reg} (TRE)}

Since the samples $\x_i$ are assumed to have a density, they are in general position w.p. $1$.
Consequently, since by assumption $\alpha > \max\{0,\gamma-1\}$, \cite[Theorem 3]{ollila2014regularized} implies that TRE exists uniquely w.p. $1$.

Recall that TRE has the same form as MRE, with a crucial difference that the function $u(x)=1/x$ is not bounded. 
We follow the proof of Theorem~\ref{thm:main_maronna_reg} from Section~\ref{sect:proof-maronna-reg} above. The argument carries over, verbatim, with the exception of Lemma~\ref{lem:MRE-2}. Thus,  Theorem~\ref{thm:main_tyler_reg}
follows from
Lemma~\ref{lem:TRE-1}, stated below and proven   in \ref{sec:proof-lem:TRE-1}.
\qed

\begin{lemma}
    \label{lem:TRE-1}
    Let $F,Q$ be as in (\ref{eq:proof-mre-4}) with $u(x)=1/x$, and suppose that $\alpha>\max\{0,p/n-1\}$.
    There 
    is a unique root
    $F(d^*)=1$. Moreover, there exist constants $0<\underline{d}<\overline{d}$, and $\eta>0$, depending on the distributions of $Y$, $\gamma$, $s_{\max}$, $\trLB$, $\trInvUB$ and $\alpha$, so that: 1) $d^*\in (\underline{d},\overline{d})$; 2) For every $d_1,d_2\in (\underline{d},\overline{d})$, $|F(d_1)-F(d_2)|\ge \eta|d_1-d_2|$.
\end{lemma}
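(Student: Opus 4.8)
The plan is to analyze the function $F(d)$ for Tyler's case, where $u(x)=1/x$ and $\phi(x)\equiv 1$, for which the functions in \eqref{eq:proof-mre-4} simplify to
\[
Q(d) = \frac{1}{p}\E\tr\trueCov\left(S_{-i}+\alpha d\,\Id\right)^{-1}\,,\qquad
F(d) = (1+\alpha)\frac{Q(d)}{1+\gamma\,Q(d)}\,.
\]
First I would record the monotonicity structure: since $d\mapsto S_{-i}+\alpha d\,\Id$ is increasing in the positive-definite order, $Q(d)$ is strictly decreasing, and since $q\mapsto q/(1+\gamma q)$ is strictly increasing on $\R_+$, the composition $F(d)$ is strictly decreasing and continuous on $(0,\infty)$. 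Hence a root of $F(d^*)=1$, if it exists, is automatically unique. Existence then reduces to a two-sided bound: showing $F(d)>1$ for $d$ small and $F(d)<1$ for $d$ large, together with continuity and the intermediate value theorem.

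The crux is thus to produce explicit upper and lower bounds on $Q(d)$, which in turn requires controlling the spectrum of $S_{-i}$. Here is where the distributional assumptions (and the auxiliary constraints on $\trueCov$) enter: by the operator-norm bounds on the sample covariance (Lemma~\ref{lem:main-smax} and its sibling for the smallest eigenvalue, valid since $\gamma<1$ up to adjusting $\alpha$ appropriately — though note TRE also permits $\gamma>1$, in which case one relies on $\alpha>\gamma-1$ to keep $\phi(d)S_{-i}+\alpha d\,\Id$ well-conditioned), one has $\underline{s}\,\Id \preceq S_{-i} \preceq \overline{s}\,\Id$ in expectation-type bounds, with $\underline{s},\overline{s}$ depending on $\gamma$ and the distribution of $Y$. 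Combined with $\trLB\,p \le \tr\trueCov$, $\|\trueCov\|\le s_{\max}$, and crucially $\frac1p\tr\trueCov^{-1}\le\trInvUB$ (which gives a lower bound on $Q(d)$ even as $d\to 0$, since $Q(d)\ge \frac{1}{p}\tr\trueCov(\overline{s}+\alpha d)^{-1}\Id = \frac{\trCov}{\overline{s}+\alpha d}$ — actually the $\trInvUB$ assumption is what prevents $Q$ from blowing up too fast and is needed for the \emph{upper} bound near $d=0$ after using $\trueCov(S_{-i}+\alpha d\Id)^{-1}\preceq \trueCov(\underline{s})^{-1}$ but one must be careful, so I would instead use $\tr\trueCov M^{-1}\le \|\trueCov M^{-1}\|\cdot p$ type estimates combined with $\tr\trueCov^{-1}$ controls), I can sandwich $Q(d)\in[q_-(d),q_+(d)]$ for explicit rational functions $q_\pm$. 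Pushing these through the increasing map $q\mapsto (1+\alpha)q/(1+\gamma q)$ yields $d_1<d_2$ (depending only on the allowed parameters) with $F(d_1)>1>F(d_2)$; one then sets $\underline{d}=d_1$, $\overline{d}=d_2$, and $d^*\in(\underline{d},\overline{d})$.

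For the quantitative lower bound on the modulus of monotonicity — claim (2), $|F(d_1)-F(d_2)|\ge\eta|d_1-d_2|$ on $(\underline{d},\overline{d})$ — I would differentiate. We have $F'(d) = (1+\alpha)\frac{Q'(d)}{(1+\gamma Q(d))^2}$, and $Q'(d) = -\alpha\cdot\frac{1}{p}\E\tr\trueCov(S_{-i}+\alpha d\,\Id)^{-2}$. On the compact interval $[\underline{d},\overline{d}]$, using $S_{-i}+\alpha d\,\Id\preceq(\overline{s}+\alpha\overline{d})\Id$ gives $\frac1p\tr\trueCov(S_{-i}+\alpha d\Id)^{-2}\ge \frac{1}{(\overline{s}+\alpha\overline{d})^2}\cdot\frac1p\tr\trueCov \ge \frac{\trLB}{(\overline{s}+\alpha\overline{d})^2}$, so $-Q'(d)$ is bounded below by a positive constant; and $1+\gamma Q(d)$ is bounded above by $1+\gamma q_+(\underline{d})$, a constant. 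Hence $|F'(d)|\ge\eta>0$ uniformly on $[\underline{d},\overline{d}]$, and the mean value theorem gives (2). The main obstacle I anticipate is not any single estimate but the bookkeeping: ensuring the lower and upper bounds on $Q$ near $d\to 0$ are genuinely finite and useful, which is precisely why the extra hypothesis $\frac1p\tr\trueCov^{-1}\le\trInvUB$ is imposed for TRE and absent for MRE — without boundedness of $u$, the analog of Lemma~\ref{lem:MRE-2}'s argument (which used $u_\infty<\infty$ to bound $F$ near $0$) fails, and one must instead exploit that $\trueCov$ is well-conditioned in trace to rescue the bound. I would pay closest attention to making that step airtight, and to the case $\gamma\ge 1$ where $S_{-i}$ is singular and only $\alpha d\,\Id$ regularizes the inverse.
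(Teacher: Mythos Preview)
Your monotonicity discussion and your treatment of Item~(2) via differentiation are both fine (the paper handles Item~(2) by the identical difference argument from Lemma~\ref{lem:MRE-2}). The genuine gap is in Item~(1), specifically in the lower bound on $Q$ near $d=0$.

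Note first that $F(d^*)=1$ is equivalent to $Q(d^*)=\frac{1}{1+\alpha-\gamma}$, so existence requires $\lim_{d\to 0}Q(d)>\frac{1}{1+\alpha-\gamma}$. Your proposed lower bound is $Q(d)\ge \trCov/(\overline{s}+\alpha d)$, giving $Q(0^+)\ge \trLB/\overline{s}$. But $\overline{s}$ is only a crude operator-norm bound on $S_{-i}$ (from Lemma~\ref{lem:main-smax}), typically of size $C(\gamma)s_{\max}$; there is no reason for $\trLB/\overline{s}$ to exceed $\frac{1}{1+\alpha-\gamma}$ when $\alpha$ is small. Indeed the paper's Remark~\ref{remark:goes-alpha} records that this crude bound only works when $\alpha>\gamma-1+\overline{C}s_{\max}/\trLB$, strictly stronger than the hypothesis $\alpha>\max\{0,\gamma-1\}$. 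So your argument does not establish existence of $d^*$ in the claimed range.

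The missing idea is a \emph{sharp} value for $Q(0^+)$. Since $\trueCov S_{-i}^{-1}$ is similar to $T_{-i}^{-1}$, one has $p^{-1}\tr\trueCov S_{-i}^{-1}=p^{-1}\tr T_{-i}^{-1}$, which concentrates around $\frac{1}{1-\gamma}$ by Lemma~\ref{lem:main-technical-minus-i}; and $\frac{1}{1-\gamma}>\frac{1}{1+\alpha-\gamma}$ for every $\alpha>0$. The role of $\trInvUB$ is not what you guessed: it does not control an upper bound on $Q$ (that is trivial, $Q(d)\le s_{\max}/(\alpha d)$), but rather the slope of $Q$ near $0$, via the expansion $Q(d)\gtrsim \frac{1}{1-\gamma}-\alpha\,\trInvUB\, d/C_*^2$, which is what produces an explicit $\underline{d}$. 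For $\gamma\ge 1$, $S_{-i}$ is singular and this computation does not apply directly; the paper handles it by adding $m$ auxiliary samples so that $p/(n+m)<1$, then relating $Q_n(d)$ to $Q_{n+m}(\frac{n}{n+m}d)$ and reusing the $\gamma<1$ argument. Your proposal flags this case as a concern but does not supply a mechanism.
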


\section{Conclusion and Further Discussion}
\label{sec:discussion}

This paper presented a non-asymptotic analysis of Tyler's and Maronna's M-estimators, as well as their regularized variants, under a substantially broader class of distributions than those considered in previous works. Specifically, we assumed a data distribution of the form $X=\trueCov^{1/2}Y$, where $Y$ is isotropic and satisfies one of several abstract concentration properties. Some of these distributions allow for  
the coordinates of $Y$ to be statistically dependent. 

\paragraph{Results for non-centered distributions}
In our analysis, we  
assumed that $X$ has zero mean.
This is often not the case in real-world applications. A more reasonable model is $X=\bm{\mu}+\trueCov^{1/2}Y$, where  $\mathbb{E}[X]=\bm{\mu}$ is in general not zero. Note that the standard method of coping with a non-zero mean, namely subtracting the sample mean, creates a statistical inter-dependency between the modified samples, so that the results of Section~\ref{sect:main-results} do not immediately apply. 

To overcome this difficulty, \cite{dumbgen1998tyler} suggested the following ``symmetrization'' procedure. Given a data set of $2n$ samples $\x_1,\ldots,\x_{2n}$, 
construct a symmetrized set of $n$ samples: 
\[
\x^{\mathrm{sym}}_i = 2^{-1/2}(\x_i-\x_{i+n})=2^{-1/2}\trueCov^{1/2}(\y_i-\y_{i+n}), \quad 1\le i \le n.
\]
 Clearly, $\E[\x^{\mathrm{sym}}_i]=\bm{0}$ and $\mathrm{Cov}(\x^{\mathrm{sym}}_i)=\trueCov$. 
 To apply our main results, the isotropic random vector ${Y}^{\mathrm{sym}}=2^{-1/2}(Y-Y')$,
 where $Y'$ is an independent copy of $Y$, has to satisfy the same properties as $Y$. Indeed: 
 \begin{enumerate}
     \item under \AssumIndep, ${Y}^{\mathrm{sym}}$ has independent entries, with sub-Gaussian constants $\lesssim K$. In addition, by Lemma~\ref{lem:sbp-indep}, the density of each entry is bounded by $\lesssim C_0$; 
     
     \item under \AssumLC, ${Y}^{\mathrm{sym}}$ is a log-concave random vector, since the log-concave family is closed under convolution of the densities (e.g. \cite{saumard2014log});
     
     \item under \AssumCCPSB:
    By separately conditioning on $Y,Y'$, one may verify that ${Y}^{\mathrm{sym}}$ satisfies both the CCP and and the SBP, possibly with a larger constant.
 \end{enumerate}
 
 In \ref{sec:appendix-zero-mean} we discuss some further details regarding the symmetrization procedure under the model  (\ref{eq:Elliptical}).

\paragraph{Projection Pursuit in high dimension}
Both \cite{bickel2018projection} and recently \cite{Montanari2022OverparametrizedLD} (who extended the results of \cite{bickel2018projection}) 
studied
some fundamental limitations 
on the ability to detect structure by projection pursuit in the high-dimensional setting, assuming multivariate Gaussian observations. 
Specifically, asymptotically as $p/n \to \gamma \in (1,\infty)$, they proved that with high probability, for any i.i.d. observations $X_1,\ldots,X_n \sim \mathcal{N}(0,\Id)$ and a given distribution $G$ with mean zero and variance bounded by $\gamma -1$, one can find a sequence of (data-dependent) projections $u_{p,G} \in \mathbb{S}^{p-1}$ such that the sequence of empirical distributions
$F_{n,u} := \sum_{i=1}^{n}\mathbbm{1}_{X^{\top}_i \cdot u_{p,G}}(t)$
converges to $G$ in 
Kolmogorov-Smirnov distance:
$
        \lim_{n \to\infty} \|F_{n,u} - G\|_{\infty} = 0
$.
Informally speaking, this result implies that in the high dimensional regime, one can find structure in the data that does not exist in its underlying distribution, as all its marginals are $\mathcal{N}(0,1)$. This is in contrast to the results of \cite{Diaconis_Freedman}, whereby in the classical regime where $p/n \to 0$, all the empirical marginals converge to their population counterparts $\mathcal{N}(0,1)$, namely  
$
        \lim_{n,p \to\infty}\sup_{u \in \mathbb{S}^{p-1}} \|F_{n,u} - \mathcal{N}(0,1)\|_{\infty} = 0.
$

The mathematical analysis 
in the present paper can be used to generalize the results of \cite{bickel2018projection}. Specifically, most of its results 
on projection pursuit continue to hold for any $X =(X_1,\ldots,X_p)$ where each entry is a  zero mean and variance one independent sub-Gaussian with a uniform constant $K$ (these entries need not be identically distributed). 

\Revision{
\subsection*{Acknowledgements}

We are grateful to Mark Rudelson for help regarding the convex concentration property; and to the anonymous reviewers, whose comments helped improve this manuscript considerably.
}

\appendix

\section{Auxiliary Technical Lemmas}

Let $\y_1,\ldots,\y_n$ be i.i.d. realizations of an isotropic random vector $Y\in \R^p$, with sample covariance $T=\frac1n \sum_{i=1}^n \y_i\y_i^\T$. Recall that $\gamma=\frac{p}{n}$. 

\subsection{Eigenvalue bounds for sample covariance matrices}

The next two lemmas present well-known bounds on the largest and smallest eigenvalues of $T$:

\begin{lemma}\label{lem:main-smax}
    There are $c_1,c_2>0$, that depend on the distribution of $Y$ and on $\gamma$, such that:
    \begin{enumerate}
    \item Assume \AssumLC. Then $\Pr\left( \|T\| \ge c_1 \right) \le e^{-c_2 \sqrt{n}}$.
    \item Assume \AssumIndep~ or \AssumCCPSB \footnote{In fact, this bound does not require the small-ball property.}. Then $\Pr\left( \|T\| \ge c_1 \right) \le e^{-c_2 n}$.
    \end{enumerate}
\end{lemma}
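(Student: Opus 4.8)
The plan is to pass from the largest eigenvalue of $T$ to the largest singular value of the $n\times p$ data matrix $\A$ whose $i$-th row is $\y_i^\T$, using $\A^\T\A=nT$ and hence $\|T\|=\sigma_{\max}(\A)^2/n$, and then to treat the two distributional regimes separately. In both cases the constants will be allowed to depend on $\gamma$ (and on the distributional parameters).

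For the case \AssumIndep~or \AssumCCPSB~I would run the standard $\epsilon$-net argument. Fix a unit vector $u\in\Sp$. Under \AssumIndep, $u^\T Y=\sum_j u_jY_j$ is a linear combination of independent centered sub-Gaussians, hence sub-Gaussian with constant $\lesssim K$; under \AssumCCPSB, $y\mapsto u^\T y$ is linear, so convex and $1$-Lipschitz, and $u^\T Y$ is sub-Gaussian with constant $\lesssim K$ directly from the CCP. In either case $(u^\T\y_i)^2-1$ is a centered sub-exponential variable (recall $\E(u^\T\y_i)^2=1$ by isotropy), and Bernstein's inequality gives $\Pr(u^\T Tu\ge 1+s)\le 2e^{-cn\min\{s,s^2\}}$ with $c=c(K)$. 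Choosing a $1/4$-net $\mathcal N$ of $\Sp$ with $|\mathcal N|\le 9^p$ and using the elementary bound $\|T\|\le 2\max_{u\in\mathcal N}u^\T Tu$, a union bound yields $\Pr(\|T\|\ge 2(1+s))\le 2\cdot 9^p e^{-cns}$ for $s\ge1$. Since $p=\gamma n$, taking $s$ to be a large enough constant (depending on $\gamma,K$) makes the net factor $9^p=e^{(\log9)\gamma n}$ negligible against $e^{-cns}$, giving $\Pr(\|T\|\ge c_1)\le e^{-c_2n}$. Only sub-Gaussian concentration of one-dimensional marginals is used here, no anti-concentration, which is the content of the footnote.

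For \AssumLC~the same scheme breaks down: a one-dimensional marginal of an isotropic log-concave vector is only sub-exponential (with a universal constant), so $(u^\T\y_i)^2$ is merely ``squared sub-exponential'', with the corresponding Bernstein tail $\exp(-c\min\{ns^2,\sqrt{ns}\})$ decaying, at a constant scale $s$, only like $e^{-c\sqrt n}$, which is too slow to beat the net cardinality $9^p=e^{\Theta(n)}$. I would therefore invoke the known sharp bounds on the spectrum of sample covariance matrices in log-concave ensembles (Adamczak--Litvak--Pajor--Tomczak-Jaegermann and subsequent refinements; see, e.g., \cite{adamczak2011sharp}), which deliver precisely $\Pr(\|T\|\ge c_1(\gamma))\le e^{-c_2(\gamma)\sqrt n}$. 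A softer alternative, more in the spirit of the present paper, is to note that $\A\mapsto\sigma_{\max}(\A)$ is $1$-Lipschitz in the Frobenius norm on $\R^{np}$ and that $(\y_1,\ldots,\y_n)$ is isotropic log-concave in $\R^{np}$, so that Lemma~\ref{lem:log-concave-lip-concentration} applied in dimension $np$ makes $\sigma_{\max}(\A)$ concentrate about its mean at rate $\exp(-c\Psi_{np}t)$; combined with the classical in-expectation estimate $\E\|T\|\le C(\gamma)$ (itself obtainable by a truncated net argument) this yields a deviation bound of the form $e^{-c(\gamma)\sqrt n/\mathrm{polylog}(n)}$, matching the statement exactly under the KLS conjecture.

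The only genuinely non-routine point is the log-concave case, and the reason is exactly the mismatch between the sub-exponential (rather than sub-Gaussian) marginal tails and the exponential-in-$p$ size of an $\epsilon$-net; this is what forces both the weaker probability $e^{-c_2\sqrt n}$ and the reliance on a more delicate truncation/chaining argument rather than a plain net bound. The \AssumIndep~and \AssumCCPSB~cases, by contrast, are entirely standard.
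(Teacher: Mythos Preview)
Your approach aligns with the paper's: it does not give a self-contained proof but simply cites \cite[Theorem 5.39]{Vershynin} for \AssumIndep~and \AssumCCPSB, and \cite[Theorem 1]{adamczak2011sharp} for \AssumLC. Your $\epsilon$-net argument is precisely the proof underlying Vershynin's theorem (and your observation that CCP yields sub-Gaussian marginals, via linearity being convex and $1$-Lipschitz, is the right bridge), while for the log-concave case you correctly identify and invoke the same Adamczak--Litvak--Pajor--Tomczak-Jaegermann result.

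Your ``softer alternative'' for \AssumLC~via Lipschitz concentration of $\sigma_{\max}$ on $\R^{np}$ is a nice observation, but as you note it only gives $e^{-c\sqrt{n}/\mathrm{polylog}(n)}$ unconditionally (since $\Psi_{np}$ carries the polylog loss), so it does not recover the stated bound without assuming KLS; the paper avoids this by appealing directly to \cite{adamczak2011sharp}, whose proof uses a more refined truncation and chaining specific to log-concave measures.
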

Under \AssumIndep~ and \AssumCCPSB, Lemma~\ref{lem:main-smax} follows from \cite[Theorem 5.39]{Vershynin}. Under \AssumLC, it follows from \cite[Theorem 1]{adamczak2011sharp}. 

\begin{lemma}\label{lem:main-smin}
    Suppose that $\gamma < 1$. There are $c_1,c_2>0$, that depend on the distribution of $Y$ and on $\gamma$, such that:
    \begin{enumerate}
        \item Assume \AssumLC. Then $\Pr\left( \lambda_{\min}(T) \le c_1 \right) \le e^{-c_2 \sqrt{n}}$.
        \item Assume \AssumIndep~ or \AssumCCPSB. Then $\Pr\left( \lambda_{\min}(T) \le c_1 \right) \le e^{-c_2 n}$.
    \end{enumerate}
\end{lemma}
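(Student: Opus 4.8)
The plan is to write $\lambda_{\min}(T)=\inf_{u\in\Sp}\tfrac1n\sum_{i=1}^n(\y_i^\T u)^2=\tfrac1n\,s_{\min}(A)^2$, where $A$ is the $n\times p$ matrix with rows $\y_i^\T$, and to obtain a lower bound on the smallest singular value of a random matrix with i.i.d. isotropic rows; since $\gamma<1$, i.e. $n-p=\Omega(n)$, such a bound produces a positive constant $c_1=c_1(\gamma,\cdot)$. Two ingredients are common to all three cases. First, Lemma~\ref{lem:main-smax} gives $\|T\|\le c_1'$ on an event of the stated probability, which is what lets one pass from control of $u^\T Tu$ on a finite net of $\Sp$ to control over all of $\Sp$. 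Second, each of \AssumIndep,~\AssumCCPSB~and~\AssumLC~entails the small-ball property for $Y$ — directly under \AssumCCPSB, through Lemma~\ref{lem:sbp-indep} under \AssumIndep, and through Lemma~\ref{lem:logconcave-smallball} under \AssumLC~— and this is the anti-concentration input that allows $\sum_i(\y_i^\T u)^2$ to be bounded from below pointwise in $u$.

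Under \AssumIndep, I would simply invoke the classical sharp smallest-singular-value estimate for matrices with independent sub-Gaussian entries in the regime $p\le n$ (see \cite{Vershynin} and the references therein), which gives $s_{\min}(A)\ge c\,(\sqrt n-\sqrt{p})$ with probability $1-e^{-c_2 n}$, hence $\lambda_{\min}(T)\ge c_1(\gamma)>0$. Under \AssumLC~the corresponding tool is the analogous sharp estimate for log-concave ensembles from the circle of results of Adamczak and coauthors, cf. \cite{adamczak2011sharp}; here the best attainable deviation rate is $e^{-c_2\sqrt n}$, reflecting the merely sub-exponential tails of $\|\y_i\|$, and matching the rate of Lemma~\ref{lem:main-smax}. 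A more self-contained derivation in the log-concave (and CCP) case proceeds via Mendelson's small-ball method: fixing a scale $\delta$ and a $(1/\delta)$-Lipschitz function $\phi_\delta$ with $\mathbbm{1}\{|x|\ge 2\delta\}\le\phi_\delta(x)\le\mathbbm{1}\{|x|\ge\delta\}$, one has $\tfrac1n\sum_i(\y_i^\T u)^2\ge\delta^2\,\tfrac1n\sum_i\phi_\delta(\y_i^\T u)$; the SBP gives $\tfrac1n\sum_i\E\phi_\delta(\y_i^\T u)\ge 1-C_0\delta$ uniformly in $u$, and the empirical process $\sup_{u\in\Sp}\big|\tfrac1n\sum_i\bigl(\phi_\delta(\y_i^\T u)-\E\phi_\delta(\y_i^\T u)\bigr)\big|$ concentrates around its mean — with rate $e^{-c\Cheeger\sqrt n}$ under \AssumLC~by Lemma~\ref{lem:log-concave-lip-concentration}, and rate $e^{-c n}$ under \AssumCCPSB~— while that mean is controlled, after symmetrization and contraction, by a Rademacher complexity of $\Sp$; choosing $\delta=\delta(\gamma)$ then yields $\inf_{u\in\Sp}u^\T Tu\ge c_1$.

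Under \AssumCCPSB, the concentration of the various convex $1$-Lipschitz statistics of the data that enter the argument — the operator norm $\|A\|$ used in the net step, and, after a standard convexification, the smoothed empirical counts above — follows from the convex concentration property applied block-by-block to $(\y_1,\ldots,\y_n)$ (so a $1$-Lipschitz convex function deviates from its mean at rate $e^{-cn}$), while the SBP again supplies the pointwise lower bound. I expect the main obstacle to be that $\lambda_{\min}(T)$, being a minimum of convex functions, is neither convex nor lower-Lipschitz-stable, so the available concentration inequalities cannot be applied to it directly; and that a plain $\epsilon$-net union bound over $\Sp$ carries a factor $e^{\gamma n\log(C/\epsilon)}$ that overwhelms the per-point deviation estimate when $\gamma$ approaches $1$. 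Obtaining a strictly positive lower bound uniformly over all $\gamma<1$, rather than only for $\gamma$ small, is exactly the point at which one must lean on the chaining/small-ball refinements behind the sharp estimates cited above.
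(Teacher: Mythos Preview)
Your proposal is largely correct but takes a more elaborate route than the paper, and your closing worry is misplaced. The paper gives a single, unified argument for all three assumptions, based only on the small-ball property (which, as you note, holds in every case) together with a \emph{plain} $\epsilon$-net. The key per-point estimate is the ``tensorization of small ball'' trick from \cite{rudelson2013lecture}: if $\|\Y v\|^2\le tn$ then at least $(1-s)n$ coordinates satisfy $|(\Y v)_i|\le\sqrt{t/s}$, so by SBP and a union bound over subsets,
\[
\Pr\bigl(\|\Y v\|^2\le tn\bigr)\;\le\;\binom{n}{(1-s)n}\bigl(C_0\sqrt{t/s}\bigr)^{(1-s)n}\;\le\;\bigl(C\sqrt{t}\bigr)^{(1-s)n}.
\]
Setting $t=(2C_1\epsilon_0)^2$ (with $C_1$ the high-probability bound on $\|T\|^{1/2}$ from Lemma~\ref{lem:main-smax}) and multiplying by the net size $(3/\epsilon_0)^{\gamma n}$ gives a total bound $\bigl(C'\epsilon_0^{\,1-s-\gamma}\bigr)^n$. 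Choosing any $s\in(0,1-\gamma)$ makes the exponent of $\epsilon_0$ positive, so for $\epsilon_0$ small enough the bound is $\le e^{-cn}$ --- uniformly over all $\gamma<1$. The only place the rate degrades to $e^{-c\sqrt n}$ under \AssumLC~is Lemma~\ref{lem:main-smax}, used in the passage $\sigma_{\min}(\Y)\ge\min_{v^*\in\Nc}\|\Y v^*\|-\epsilon_0\sigma_{\max}(\Y)$.

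So the chaining/Mendelson machinery you invoke, while certainly valid, is unnecessary here: the plain net does \emph{not} get overwhelmed, because the per-point small-ball bound scales like $\epsilon_0^{(1-s)n}$ rather than $e^{-cn}$ independent of $\epsilon_0$. Your route buys sharper constants and connects to a broader framework; the paper's route is shorter, elementary, and case-independent.
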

For $Y$ with i.i.d. sub-Gaussian entries, Lemma~\ref{lem:main-smin} follows from the work of Rudelson and Vershynin \cite{rudelson2009smallest}, see also 
\cite[Theorem 5.38]{Vershynin}. Their non-asymptotic bound remarkably captures the exact ``true'' location of $\lambda_{\min}(T)$; to wit, $c_1$ may be taken up to the Mar\v{c}enko-Pastur lower edge $(1-\sqrt{\gamma})^2$. To our knowledge, for $Y\in \R^p$ with dependent (but uncorrelated) entries, similarly strong results are not currently available. Moreover, existing results which bound the two-sided deviation $\|T-\Id\|$ typically fail (barring the i.i.d. case) to produce a positive bound on $\lambda_{\min}(T)$ in the entire range $\gamma\in (0,1)$. As observed by \cite{koltchinskii2015bounding}, if $Y$ satisfies the SBP then this difficulty can be overcome, albeit with non-sharp $c_1$; this will suffice for our purposes. 
For completeness, we give a proof of Lemma~\ref{lem:main-smin} in 
\ref{sec:proof:lem:main-smin}.

\subsection{Concentration of quadratic forms}

Note that for any fixed matrix $A\in \R^{p\times p}$, $\E [Y^\T A Y] = \tr(A)$. We cite a tail bound for the deviation $|Y^\T A Y - \tr(A)|$:  

\begin{lemma}
    \label{lem:main-quadratic}
    There is $c>0$ that depends on the distribution of $Y$, and universal $C>0$, such that for any fixed matrix $A\in \R^{p\times p}$ and $\varepsilon\in (0,1)$:
    \begin{enumerate}
        \item Assume \AssumIndep~ or \AssumCCPSB. Then $\Pr\left( \left| p^{-1}Y^\T A Y - p^{-1}\tr(A)\right| \ge \varepsilon\|A\| \right) \le C e^{-cp\varepsilon^2}$.
        \item Assume \AssumLC. Then $\Pr\left( \left| p^{-1}Y^\T A Y - p^{-1}\tr(A)\right| \ge \varepsilon\|A\| \right) \le C e^{-c(\Cheeger\sqrt{p})\varepsilon}$.
    \end{enumerate}
\end{lemma}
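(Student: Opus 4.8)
The plan is to treat the three families separately after a common reduction. First I would pass to a positive semidefinite $A$: replacing $A$ by $\tfrac12(A+A^\T)$ changes neither $Y^\T A Y$ nor $\tr(A)$ and does not increase $\|A\|$, so we may assume $A$ symmetric; writing $A=A_+-A_-$ with $A_\pm\succeq 0$ and $\|A_\pm\|\le\|A\|$, the triangle inequality together with a union bound then reduces the claim (with $\epsilon/2$ in place of $\epsilon$, which only affects constants) to the case $A\succeq 0$. I would use throughout that $\E[Y^\T A Y]=\tr(A)$ by isotropy, and also the harmless observation that both asserted bounds are vacuous (right-hand side $\gtrsim 1$) unless $\epsilon$ exceeds a threshold of order $p^{-1/2}$, respectively $(\Cheeger\sqrt p)^{-1}$; hence $\epsilon$ may be assumed at least of that size whenever this is convenient for disposing of lower-order terms.

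For \AssumIndep~this is the Hanson--Wright inequality (see e.g. \cite{Vershynin}): for independent mean-zero sub-Gaussian coordinates with sub-Gaussian constants at most $K$,
\[
\Pr\bigl(|Y^\T A Y-\tr(A)|\ge t\bigr)\le 2\exp\Bigl(-c\min\bigl\{t^2/(K^4\|A\|_F^2),\ t/(K^2\|A\|)\bigr\}\Bigr).
\]
Taking $t=p\epsilon\|A\|$ and using the crude bound $\|A\|_F^2\le p\|A\|^2$, the first term in the minimum is $\ge p\epsilon^2/K^4$ and the second is $p\epsilon/K^2$; since $\epsilon<1$ and $K\ge 1$ without loss of generality, the minimum equals the first term up to a constant, which gives $Ce^{-cp\epsilon^2}$.

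For \AssumCCPSB~and \AssumLC~I would instead exploit that, for $A\succeq 0$, the map $g(y)=\|A^{1/2}y\|=\sqrt{y^\T A y}$ is convex and $\sqrt{\|A\|}$-Lipschitz, so $g(Y)/\sqrt{\|A\|}$ is a convex $1$-Lipschitz function of $Y$. Under the CCP (Definition~\ref{def:CCP-property}) this makes $g(Y)$ sub-Gaussian about its mean with parameter $\lesssim K\sqrt{\|A\|}$, while under \AssumLC, Lemma~\ref{lem:log-concave-lip-concentration} gives $\Pr(|g(Y)-\E g(Y)|\ge t)\le\exp(-c\Cheeger t/\sqrt{\|A\|})$; in either case the same inequality bounds $\mathrm{Var}(g(Y))$ by $CK^2\|A\|$ (CCP) or $C\|A\|/\Cheeger^2$ (log-concave). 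Writing $Z=g(Y)^2=Y^\T A Y$ and $m=\E g(Y)$, so that $|\tr(A)-m^2|=\mathrm{Var}(g(Y))$, for $\epsilon$ above the threshold above this variance is at most $\tfrac12 p\epsilon\|A\|$, hence $\{|Z-\tr(A)|\ge p\epsilon\|A\|\}\subseteq\{|Z-m^2|\ge \tfrac12 p\epsilon\|A\|\}$; and since $m\le\sqrt{\tr(A)}\le\sqrt p\,\sqrt{\|A\|}$, elementary manipulation of $|Z-m^2|=|g(Y)-m|\cdot|g(Y)+m|$ shows this last event forces $|g(Y)-m|\ge c\sqrt p\,\epsilon\,\sqrt{\|A\|}$. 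Feeding this back into the concentration inequality for $g(Y)$ yields $Ce^{-cp\epsilon^2}$ under \AssumCCPSB~and $Ce^{-c\Cheeger\sqrt p\,\epsilon}$ under \AssumLC.

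The one delicate point is exactly this last step: $\sqrt{Y^\T A Y}$ concentrates around its own mean $m$ rather than around the target $\sqrt{\tr(A)}$, and bridging the gap $\sqrt{\tr(A)}-m=(\tr(A)-m^2)/(\sqrt{\tr(A)}+m)$ is what the variance estimate is for; this is also where one must recall that the claim carries no content for $\epsilon$ below the stated threshold, which is precisely the regime in which the variance correction is lower order (the complementary regime being absorbed into $C$). I do not anticipate any other serious difficulty. For \AssumCCPSB~one could alternatively invoke a Hanson--Wright inequality valid under a convex concentration assumption \cite{adamczak2015note} and skip the detour through $g$, but the argument above has the advantage of dispatching the log-concave case at the same time.
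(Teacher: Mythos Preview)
Your argument is correct. For \AssumIndep\ you and the paper both defer to Hanson--Wright; for \AssumCCPSB\ the paper simply cites \cite{adamczak2015note}, while you give a self-contained alternative. The interesting divergence is in the log-concave case. The paper's proof truncates $Y$ to the ball $\Bc=\{\|y\|\le 2\sqrt p\}$, observes that the restricted vector $Y_\Bc$ remains log-concave, and then applies Lemma~\ref{lem:log-concave-lip-concentration} directly to the quadratic form $y\mapsto p^{-1}y^\T Ay$, which is $O(\|A\|p^{-1/2})$-Lipschitz \emph{on} $\Bc$; the recentering error $\varepsilon_0=\|A\|^{-1}p^{-1}|\E Y_\Bc^\T AY_\Bc-\tr A|$ is then shown to be $O((\Cheeger\sqrt p)^{-1})$ by a tail-integration calculation. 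Your route instead reduces to $A\succeq 0$ and exploits that $g(y)=\|A^{1/2}y\|$ is \emph{globally} $\sqrt{\|A\|}$-Lipschitz (and convex), so Lemma~\ref{lem:log-concave-lip-concentration} applies to $g$ without any truncation; the recentering is then a variance correction $|\tr A-m^2|=\mathrm{Var}(g(Y))=O(\|A\|/\Cheeger^2)$, which is lower order above the same threshold. Your approach is arguably cleaner---no conditioning, no need to verify that $Y_\Bc$ stays log-concave with bounded covariance---and has the bonus of dispatching \AssumCCPSB\ in the same stroke (where convexity of $g$ is genuinely needed), at the cost of the PSD reduction and the ``square and uncenter'' step. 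The paper's truncation avoids the PSD reduction and treats $A$ of arbitrary sign directly, but requires the tail estimate for $\E[\|Y\|^2\indic{\|Y\|>2\sqrt p}]$.
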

Under \AssumIndep, Lemma~\ref{lem:main-quadratic} follows from \cite{rudelson2013hanson}. Under \AssumCCPSB, it follows from \cite{adamczak2015note}. Both of these are extensions of the Hanson-Wright bound \cite{hanson1971bound}. A proof under \AssumLC,  appears in \ref{sec:proof:lem:main-quadratic}.
We remark that for large $\varepsilon$, a tighter tail bound can be derived in the log-concave case (without $\Cheeger$), see for example \cite{lee2017eldan}. 
However, to the best of our knowledge, for small 
$\varepsilon$ no sharper bound is currently known.


\subsection{Entrywise concentration for the sample covariance}

\begin{lemma}\label{lem:sc-entrywise}
    There are $C,c>0$, that depend on the distribution of $Y$, such that for all unit vectors $u,v$ and $\varepsilon\in(0,1)$,
\begin{enumerate}
    \item Assume \AssumIndep~ or \AssumCCPSB. Then $\Pr\left( \left| u^\T Tv - u^\T v \right|\ge \varepsilon\right) \le Ce^{-cn\varepsilon^2}$.
    \item Assume \AssumLC. Then $\Pr\left( \left| u^\T Tv - u^\T v \right|\ge \varepsilon\right) \le Ce^{-c(\Psi_n\sqrt{n})\varepsilon}$.
\end{enumerate}
\end{lemma}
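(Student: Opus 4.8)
The plan is to reduce the bilinear deviation $|u^\top T v - u^\top v|$ to deviations of the \emph{diagonal} quadratic forms $w^\top T w$ via polarization, and then argue separately in the three cases. Writing $w_\pm := u\pm v$ (so $\|w_\pm\|\le 2$ and $\E[w_\pm^\top T w_\pm] = \|w_\pm\|^2$), one has
\[
u^\top T v - u^\top v = \tfrac14\Big[\big(w_+^\top T w_+ - \|w_+\|^2\big) - \big(w_-^\top T w_- - \|w_-\|^2\big)\Big],
\]
so by a union bound it suffices to prove, for every fixed $w$ with $\|w\|\le 2$, a tail bound of the claimed order for $w^\top T w - \|w\|^2 = \frac1n\sum_{i=1}^n\big((w^\top\y_i)^2 - \|w\|^2\big)$.

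Under \AssumIndep~or \AssumCCPSB, the scalar $w^\top\y_i$ is sub-Gaussian with a constant depending only on $K$: under \AssumIndep~because a linear combination of independent sub-Gaussian coordinates is sub-Gaussian, and under \AssumCCPSB~because $y\mapsto w^\top y$ is a $\|w\|$-Lipschitz convex (and concave) function, so Definition~\ref{def:CCP-property} applies. Hence $(w^\top\y_1)^2,\ldots,(w^\top\y_n)^2$ are i.i.d.\ sub-exponential with parameter $\lesssim K^2$, and Bernstein's inequality for sums of independent sub-exponential variables (e.g.\ \cite{Vershynin}) gives $\Pr(|w^\top T w - \|w\|^2|\ge\varepsilon)\le 2\exp(-cn\min\{\varepsilon^2,\varepsilon\}) = 2\exp(-cn\varepsilon^2)$ for $\varepsilon\in(0,1)$, which is the first bound. (One could equally skip polarization here and apply Bernstein directly to the products $(u^\top\y_i)(v^\top\y_i)$, which are sub-exponential since $\|XY\|_{\psi_1}\le\|X\|_{\psi_2}\|Y\|_{\psi_2}$.)

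Under \AssumLC~the marginals $w^\top\y_i$ are only sub-exponential \cite{brazitikos2014geometry}, so the Bernstein route yields a weaker, misshapen bound; instead I would use that the concatenation $\mathbf{y}:=(\y_1^\top,\ldots,\y_n^\top)^\top\in\R^{np}$ is isotropic log-concave (its density is $\exp(-\sum_i V(\y_i))$ with convex argument, and $\E[\mathbf{y}\mathbf{y}^\top]=\Idn{np}$). Consider $g_w(\mathbf{y}) := (w^\top T w)^{1/2} = \tfrac1{\sqrt n}\|B_w\mathbf{y}\|$, where $B_w:\R^{np}\to\R^n$ is the block-diagonal map sending a concatenated vector to the list of its blockwise inner products with $w$; since $B_wB_w^\top=\|w\|^2\Idn{n}$, the map $g_w$ is $(\|w\|/\sqrt n)$-Lipschitz on $\R^{np}$. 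Applying Lemma~\ref{lem:log-concave-lip-concentration} in dimension $np$ gives $\Pr(|g_w-\E g_w|\ge t)\le \exp(-c\Psi_n\sqrt n\,t/\|w\|)$ (the Cheeger constant of dimension $np$ agrees with $\Psi_n$ up to polylogarithmic factors, which we do not track). Since $\E g_w^2=\|w\|^2$, Jensen gives $\E g_w\le\|w\|$, and integrating the tail bound shows $\|w\|-\E g_w$ is negligible compared with $\varepsilon$ in the regime where the asserted bound is nontrivial — here one uses that on the event $\{|w^\top Tw-\|w\|^2|\ge\varepsilon\}$, together with $\|T\|=O(1)$ (Lemma~\ref{lem:main-smax}), necessarily $\|w\|\gtrsim\sqrt\varepsilon$. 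Combining this with the elementary implication $|g_w-\|w\||\le\delta\le\|w\|\Rightarrow|w^\top Tw-\|w\|^2|\le 5\delta$ yields $\Pr(|w^\top Tw-\|w\|^2|\ge\varepsilon)\le\exp(-c'\Psi_n\sqrt n\,\varepsilon)$, and polarization plus a union bound gives the second bound.

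The only genuine obstacle is the log-concave case. It is tempting to invoke the quadratic-form bound of Lemma~\ref{lem:main-quadratic} for $\mathbf{y}$ with the block-diagonal matrix $A=\Idn{n}\otimes\tfrac12(uv^\top+vu^\top)$, but that matrix has operator norm $O(1)$ and rank only $O(n)$, i.e.\ it is far from ``spread out'' relative to the ambient dimension $np$, and Lemma~\ref{lem:main-quadratic} then loses the factor $\sqrt n$, producing $\exp(-c\Psi_n\varepsilon)$ instead of $\exp(-c\Psi_n\sqrt n\,\varepsilon)$. Passing through the rescaled functional $g_w=\sqrt{w^\top Tw}$, whose Lipschitz constant is $O(1/\sqrt n)$ precisely because the operator $B_w$ has dimension-independent norm, is what recovers the correct rate; the remaining points (the estimate of $\E g_w$, and matching $\Psi_{np}$ with $\Psi_n$) are routine.
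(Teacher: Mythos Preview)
Your polarization step and the treatment under \AssumIndep/\AssumCCPSB\ match the paper's proof essentially verbatim: the paper also reduces to $u=v$ via $u^\T Tv=\tfrac14[(u+v)^\T T(u+v)-(u-v)^\T T(u-v)]$, then observes that the scalar sequence $W_i=u^\T\y_i$ is i.i.d.\ sub-Gaussian and applies the Hanson--Wright/Bernstein machinery (packaged as Lemma~\ref{lem:main-quadratic}).

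In the log-concave case your route diverges from the paper's, and the paper's is considerably cleaner. Rather than lifting to the concatenated vector $\mathbf{y}\in\R^{np}$ and analyzing the Lipschitz functional $g_w=\sqrt{w^\T Tw}$, the paper simply notes that the vector $W=(u^\T\y_1,\ldots,u^\T\y_n)\in\R^n$ is itself \emph{isotropic log-concave}: each coordinate is a one-dimensional marginal of a log-concave law (hence log-concave), the coordinates are independent (hence the joint law is log-concave), and $\E W_i=0$, $\E W_i^2=1$. Since $u^\T Tu=\frac1n\|W\|^2$, one applies Lemma~\ref{lem:main-quadratic} directly in dimension $n$ with $A=\Idn{n}$, which yields the bound with $\Psi_n$ on the nose. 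This bypasses entirely the issues you flag: there is no mismatch between $\Psi_{np}$ and $\Psi_n$, no need to control $\|w\|-\E g_w$, and no appeal to $\|T\|=O(1)$ to handle small $\|w\|$. Your argument can be made to work, but two points would need tightening: (i) after polarization you should normalize $w_\pm$ (writing $w_\pm^\T Tw_\pm-\|w_\pm\|^2=\|w_\pm\|^2(\hat w_\pm^\T T\hat w_\pm-1)$ with $\|w_\pm\|\le2$), which removes the awkward small-$\|w\|$ case rather than handling it via Lemma~\ref{lem:main-smax}; and (ii) the replacement of $\Psi_{np}$ by $\Psi_n$ is not literally justified by the paper's definitions --- it happens to be harmless since both quantities lie in $[1/\mathrm{polylog},1]$ and constants are not tracked, but the paper's approach avoids the issue altogether.
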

\begin{proof}
    Since $u^\T T v =
    \frac14[
    (u+v)^\T T (u+v) - (u-v)^\T  T (u-v)]$, it suffices to prove the lemma for $u=v$. Consider the random vector $W\in \R^n$ with entries $W_i=\y_i^\T u$. Observe that $W$ is centered, isotropic, and: 1) under  \AssumIndep~ or \AssumCCPSB, $W$ has i.i.d. sub-Gaussian entries; 2) under \AssumLC, $W$ is log-concave. Since $u^\T T u = \frac{1}{n}W^\T W$, the
    result follows from Lemma~\ref{lem:main-quadratic} with $A=I$. 
\end{proof}

\subsection{Additional lemmas}


The following is well-known, see for example \cite[Lemma 4]{couillet2014robust}:
\begin{lemma}
        \label{lem:low_rank_perturb}
        Let $A,B,C \succeq 0$ be non-negative matrices and $z>0$ a positive number. Then
        \begin{equation}
        \abs{ \Tr C\left( zI+A \right)^{-1} - \Tr C\left( zI+B \right)^{-1}   } \le \mathrm{rank}(A-B)\frac{\norm{C}}{z} \,.
        \end{equation} 
\end{lemma}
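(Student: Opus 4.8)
The plan is to reduce the claimed bound to $r:=\mathrm{rank}(A-B)$ rank-one perturbations, each handled by the Sherman--Morrison formula, while routing the perturbation through positive semidefinite intermediate matrices so that all resolvents involved have norm at most $1/z$.

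First I would set up the telescoping. Since $A,B$ are symmetric, $M:=B-A$ is symmetric of rank $r$; write its spectral decomposition $M=\sum_{k=1}^{r}\lambda_k u_k u_k^\top$ with $\{u_k\}$ orthonormal and $\lambda_k\neq 0$, ordered so that $\lambda_1,\dots,\lambda_{r_1}>0$ and $\lambda_{r_1+1},\dots,\lambda_r<0$. Put $X_0=A$ and $X_j=X_{j-1}+\lambda_j u_j u_j^\top$, so $X_r=B$ and consecutive $X_j$ differ by one rank-one term. The key observation is that every $X_j\succeq 0$: for $j\le r_1$, $X_j\succeq X_{j-1}\succeq\cdots\succeq X_0=A\succeq 0$; for $j>r_1$, $X_j=B+\sum_{k>j}\abs{\lambda_k}\,u_k u_k^\top\succeq B\succeq 0$. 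Hence $0\preceq(zI+X_j)^{-1}\preceq z^{-1}I$ for all $j$.

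Next I would bound each step. Fix $j$, let $X$ be the smaller and $X'=X+\mu vv^\top$ the larger of $X_{j-1},X_j$ (so $\mu>0$, $\norm{v}=1$), and set $P=(zI+X)^{-1}$, $a=v^\top P v\in[0,z^{-1}]$, $w=Pv$. Sherman--Morrison gives $(zI+X)^{-1}-(zI+X')^{-1}=\frac{\mu}{1+\mu a}\,ww^\top$, so
\[
\abs{\,\Tr C(zI+X_{j-1})^{-1}-\Tr C(zI+X_j)^{-1}\,}=\frac{\mu}{1+\mu a}\,w^\top C w .
\]
Using $0\preceq C\preceq\norm{C}I$ and $P^2\preceq z^{-1}P$,
\[
0\le w^\top C w\le\norm{C}\,\norm{w}^2=\norm{C}\,v^\top P^2 v\le z^{-1}\norm{C}\,a ,
\]
whence the step contributes at most $\frac{\mu a}{1+\mu a}\cdot\frac{\norm{C}}{z}\le\frac{\norm{C}}{z}$. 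Summing the $r$ steps and applying the triangle inequality gives $\abs{\Tr C(zI+A)^{-1}-\Tr C(zI+B)^{-1}}\le r\norm{C}/z$.

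The only delicate point — and the main obstacle — is the routing. Because $A-B$ need not be sign-definite, a naive rank-one-at-a-time telescoping could pass through indefinite or even singular $zI+X_j$, where the bound $\norm{(zI+X_j)^{-1}}\le 1/z$ (which drives the whole estimate) fails; adding the positive-eigenvalue directions of $M$ first and the negative ones last is exactly what keeps every intermediate positive semidefinite. Everything else is a routine application of Sherman--Morrison and operator monotonicity.
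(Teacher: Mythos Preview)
Your proof is correct. The paper does not actually prove this lemma; it simply cites \cite[Lemma~4]{couillet2014robust} as a reference, and your argument---Sherman--Morrison applied to a rank-one telescoping, with the inequality $P^2\preceq z^{-1}P$ and $\frac{\mu a}{1+\mu a}\le 1$ giving the $\norm{C}/z$ bound per step---is precisely the standard approach behind that result.

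One brief remark: the routing trick you identify (adding the positive-eigenvalue directions of $B-A$ first so every intermediate $X_j$ dominates either $A$ or $B$ and hence stays positive semidefinite) is a genuine refinement over how this lemma is typically \emph{used} in the random matrix literature. In practice, including in this paper and in \cite{couillet2014robust}, the matrices $A,B$ that arise are sample covariance matrices differing by the addition or removal of rank-one terms $x_ix_i^\top$, so $A-B$ is automatically semidefinite and the routing issue never appears. Your version correctly handles the lemma in the full generality in which it is stated.
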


The following is a standard concentration inequality for ``resolvent-like'' expressions:
\begin{lemma}
        \label{lem:resolvent-conc}
        Let $C \succeq 0$ and $A\succ 0$ be fixed matrices, and let $X_1,\ldots,X_n$ be independent, non-negative random matrices, such that for all $i$, $\mathrm{rank}(X_i)=1$ with probability $1$. Denote $S_n = \sum_{i=1}^{n}X_i$ and $R_n =  \Tr C\left( A + S_n\right)^{-1}$.
        There is universal $c>0$ such that for all $t\ge 0$, 
        \begin{equation}
                \Pr \left(  \abs{R_n-\E(R_n)}>t \right) \le 2\exp\left(-c\frac{t^2}{n\norm{CA^{-1}}^2}\right) \,.
        \end{equation}
\end{lemma}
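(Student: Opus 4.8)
The plan is to treat $R_n=R_n(X_1,\dots,X_n)$ as a function of the $n$ \emph{independent} random matrices and apply McDiarmid's bounded-differences inequality. The first point to note is that $R_n$ is always well defined and finite: since $A\succ 0$ and $S_n\succeq 0$ one has $A+S_n\succ 0$, so the resolvent exists for every realization. Hence the whole argument reduces to a deterministic sensitivity estimate, namely that replacing a single $X_i$ by any other rank-one non-negative matrix $X_i'$ changes $R_n$ by at most $2\norm{CA^{-1}}$.

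To obtain that sensitivity bound I would first conjugate out $A$. Writing $A+S_n=A^{1/2}(I+\tilde S_n)A^{1/2}$ with $\tilde S_n:=A^{-1/2}S_nA^{-1/2}=\sum_{i=1}^n \tilde X_i$ and $\tilde X_i:=A^{-1/2}X_iA^{-1/2}\succeq 0$ of rank one, one gets
\[
R_n=\Tr \tilde C\,(I+\tilde S_n)^{-1},\qquad \tilde C:=A^{-1/2}CA^{-1/2}\succeq 0 .
\]
Swapping $X_i$ for $X_i'$ replaces $\tilde S_n$ by $\tilde S_n'=\tilde S_n-\tilde X_i+\tilde X_i'$, a perturbation of rank at most $2$, so Lemma~\ref{lem:low_rank_perturb} (applied with $z=1$ and the non-negative matrices $\tilde S_n,\tilde S_n',\tilde C$) yields $|R_n-R_n'|\le 2\norm{\tilde C}$. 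I would then observe that $CA^{-1}=A^{1/2}\tilde C A^{-1/2}$ is similar to the symmetric positive semidefinite matrix $\tilde C$, hence the two have the same eigenvalues; consequently $\norm{\tilde C}$ equals the spectral radius of $CA^{-1}$, which is at most $\norm{CA^{-1}}$. This gives the desired bounded-differences constant $2\norm{CA^{-1}}$ in each of the $n$ coordinates.

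With these constants in hand, McDiarmid's inequality immediately gives
\[
\Pr\!\left(|R_n-\E R_n|>t\right)\le 2\exp\!\left(-\frac{2t^2}{n\,(2\norm{CA^{-1}})^2}\right)=2\exp\!\left(-\frac{t^2}{2n\norm{CA^{-1}}^2}\right),
\]
i.e.\ the claim with the universal constant $c=1/2$. I do not expect a real obstacle here; the only step requiring a little care is to express the one-coordinate perturbation bound in terms of $\norm{CA^{-1}}$ rather than the cruder $\norm{C}\,\norm{A^{-1}}$, which is precisely what the conjugation-and-similarity reduction accomplishes. (Alternatively, one could bypass McDiarmid and run the Doob martingale of $R_n$ with respect to the filtration generated by $X_1,\dots,X_n$, bounding the increments in exactly the same way and invoking Azuma–Hoeffding, but McDiarmid packages this more cleanly.)
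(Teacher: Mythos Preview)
Your proposal is correct and follows essentially the same route as the paper: the paper runs the Doob martingale with respect to the filtration $\sigma(X_1,\dots,X_k)$, bounds the increments by $2\norm{CA^{-1}}$ via Lemma~\ref{lem:low_rank_perturb}, and invokes Azuma--Hoeffding---exactly the argument you packaged as McDiarmid. Your conjugation-and-similarity step makes explicit how Lemma~\ref{lem:low_rank_perturb} delivers the bound in terms of $\norm{CA^{-1}}$ rather than $\norm{C}\norm{A^{-1}}$, a detail the paper leaves to the reader.
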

\begin{proof}
    Consider the filtration $\mathcal{F}_k=\sigma(X_1,\ldots,X_k)$ and the martingale $M_k=\E[R_n|\mathcal{F}_k]$. We have $M_n=R_n$, $M_0=\E R_n$, and by Lemma~\ref{lem:low_rank_perturb}, $|M_{k+1}-M_k| \le 2\|CA^{-1}\|$. 
    The lemma follows from the Azuma-Hoeffding inequality for martingales with bounded increments.
\end{proof}

\section{Proof of Lemma~\ref{lem:main-technical}}
\label{sec:proof-lem:main-technical}

Proving Lemma~\ref{lem:main-technical} by a direct analysis of the quadratic form $\x_i^\top S^{-1}\x_i$ is difficult, since the sample covariance $S$ depends on $\x_i$.
To disentangle this dependency, as in \cite{couillet2014robust}, we apply the Sherman-Morrison formula,
\begin{equation}
\begin{aligned}S^{-1}
= \left(\frac{1}{n}\sum_{j\ne i}^{n}
\x_j\x_j^{\top}+\frac{1}{n}\x_{i}\x_{i}^{\top}\right)^{-1}
= S_{-i}^{-1}-\frac{\frac{1}{n}S_{-i}^{-1}\x_{i}\x_{i}^{\top}S_{-i}^{-1}}{1+\frac1n \x_{i}^{\top}S_{-i}^{-1}\x_{i}}\,.
\end{aligned}
        \nonumber
\end{equation}
Importantly, $\x_i$ and $S_{-i}$ are  statistically independent. 
Furthermore, 
\begin{align}
\label{eq:relation_sherman_morrison}
\frac{1}{p}\x_i^{\top}S^{-1}\x_i = 
\frac{1}{p}\x_i^\top\left[S_{-i}^{-1}-\frac{\frac{1}{n}S_{-i}^{-1}\x_{i}\x_{i}^{\top}S_{-i}^{-1}}{1+\frac1n \x_{i}^{\top}S_{-i}^{-1}\x_{i}} \right]\x_i
= \frac{\frac{1}{p}\x^{\top}_iS_{-i}^{-1}\x_i}{1+\gamma\cdot \frac1p \x_{i}^{\top}S_{-i}^{-1}\x_{i}}\,,
\end{align}
and so,
\begin{align}\label{eq:lem:main-technical-to-main-technical-minus}
        \abs{ \frac{1}{p}\x_i^{\top}S^{-1}\x_i - 1}
        = \frac{ \abs{(1-\gamma)\cdot \frac{1}{p}\x^{\top}_iS_{-i}^{-1}\x_i - 1}}{1+\gamma\cdot \frac1p \x_{i}^{\top}S_{-i}^{-1}\x_{i}}  
        \le (1-\gamma)\abs{\frac{1}{p}\x^{\top}_iS_{-i}^{-1}\x_i - \frac{1}{1-\gamma}} \,.
\end{align}

The following lemma, proven below, shows that the r.h.s. of (\ref{eq:lem:main-technical-to-main-technical-minus}) is small w.h.p.
\begin{lemma}\label{lem:main-technical-minus-i}   
            Assume  $\gamma<1$. There are 
        $c,C,\epsilon_0>0$, that depend on the distribution of $Y$ and on $\gamma$, so that for all $\epsilon<\epsilon_0$:
        \begin{enumerate}
                
                \item 
                Assume \AssumLC. Then 
                $\Pr(\max_{1\le i \le n}|p^{-1}\x_i^\T S_{-i}^{-1}\x_i - \frac{1}{1-\gamma}|\ge \epsilon) \le Cn^2e^{-c(\Cheeger\sqrt{n}) \epsilon}$.

                \item     
                Assume \AssumIndep~ or \AssumCCPSB. Then 
                $\Pr(\max_{1\le i \le n}|p^{-1}\x_i^\T S_{-i}^{-1}\x_i - \frac{1}{1-\gamma}|\ge \epsilon) \le C n^2 e^{-cn\epsilon^2}$.
        \end{enumerate}
\end{lemma}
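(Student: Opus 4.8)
\textbf{Proof plan for Lemma~\ref{lem:main-technical-minus-i}.}
Since $\x_i$ and $S_{-i}$ are independent, the plan is to condition on $S_{-i}$ and treat $p^{-1}\x_i^\T S_{-i}^{-1}\x_i$ as a quadratic form in $\y_i$ with the (random but, conditionally, fixed) matrix $A = A_{-i} := \trueCov^{1/2} S_{-i}^{-1} \trueCov^{1/2}$, noting that $\x_i^\T S_{-i}^{-1}\x_i = \y_i^\T A_{-i} \y_i$ where $A_{-i} = (T_{-i})^{-1}$ after the cancellation $\trueCov^{1/2}(\trueCov^{1/2}T_{-i}\trueCov^{1/2})^{-1}\trueCov^{1/2}$; so in fact the problem reduces entirely to the $\y$-side and $\trueCov$ plays no role. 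I would first bound $p^{-1}\tr(T_{-i}^{-1})$ and show it concentrates around $\frac{1}{1-\gamma}$: this is a deterministic-ish fact about the sample covariance of $n-1$ isotropic vectors in dimension $p=\gamma n$, and one can get it from the eigenvalue bounds (Lemmas~\ref{lem:main-smax} and \ref{lem:main-smin}, which give $\lambda_{\min}(T_{-i})\ge c_1>0$ w.h.p.\ since $\gamma<1$, so $T_{-i}^{-1}$ is well-conditioned) together with Lemma~\ref{lem:resolvent-conc} applied to $R = \tr(\varepsilon_0\Id + T_{-i})^{-1}$ for a tiny regularizer, or more simply by invoking the known concentration of the Stieltjes transform / trace of the resolvent at $0$; the value $\frac{1}{1-\gamma}$ is the Mar\v{c}enko--Pastur prediction $\int \lambda^{-1} d\mathrm{MP}_\gamma(\lambda)$.

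Next, conditionally on the (high-probability) event that $\lambda_{\min}(T_{-i})\ge c_1$ and $\|T_{-i}\|\le c_1'$, I would apply the Hanson--Wright-type quadratic form bound, Lemma~\ref{lem:main-quadratic}, to $p^{-1}\y_i^\T A_{-i}\y_i - p^{-1}\tr(A_{-i})$ with $A_{-i}=T_{-i}^{-1}$: here $\|A_{-i}\| = \lambda_{\min}(T_{-i})^{-1} \le 1/c_1$ is bounded, so for \AssumIndep~or~\AssumCCPSB~this gives a tail $Ce^{-cp\epsilon^2} = Ce^{-c\gamma n\epsilon^2}$, and under \AssumLC~a tail $Ce^{-c(\Cheeger\sqrt p)\epsilon}$. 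Combining this conditional bound with the concentration of $p^{-1}\tr(T_{-i}^{-1})$ around $\frac{1}{1-\gamma}$ from the previous step, and absorbing the (exponentially small) probability of the bad eigenvalue event, yields for each fixed $i$ a bound of the form $\Pr(|p^{-1}\x_i^\T S_{-i}^{-1}\x_i - \frac{1}{1-\gamma}| \ge \epsilon) \le Cne^{-cn\epsilon^2}$ (resp.\ $Cne^{-c\Cheeger\sqrt n\epsilon}$); the extra factor $n$ comes from the union bound used to control $p^{-1}\tr(T_{-i}^{-1})$ uniformly (or from splitting $\epsilon$ into the quadratic-form part and the trace part). Finally, a union bound over $1\le i \le n$ produces the stated $Cn^2 e^{-cn\epsilon^2}$ (resp.\ $Cn^2 e^{-c(\Cheeger\sqrt n)\epsilon}$).

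The main obstacle I anticipate is the control of $p^{-1}\tr(T_{-i}^{-1})$ and, underlying it, the quantitative lower bound on $\lambda_{\min}(T_{-i})$ in the dependent-coordinate cases: for i.i.d.\ sub-Gaussian entries this is Rudelson--Vershynin, but under \AssumCCPSB~or~\AssumLC~the available smallest-eigenvalue bounds (Lemma~\ref{lem:main-smin}, via \cite{koltchinskii2015bounding} using the SBP, and \cite{adamczak2011sharp} in the log-concave case) are not sharp at the Mar\v{c}enko--Pastur edge. This is not actually a problem for the present lemma, since we only need \emph{some} fixed positive lower bound $c_1$ on $\lambda_{\min}$ to make $\|T_{-i}^{-1}\|$ bounded and the quadratic-form bound effective; the value $\frac{1}{1-\gamma}$ itself is then pinned down by the concentration (Lemma~\ref{lem:resolvent-conc}) around the expectation $\E[p^{-1}\tr(T_{-i}^{-1})]$, whose convergence to $\frac{1}{1-\gamma}$ as $n\to\infty$ is classical and whose finite-$n$ value can be absorbed into $\epsilon_0$ and the constants (which, as the paper stresses, are allowed to depend on $\gamma$). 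The one point requiring a little care is that Lemma~\ref{lem:resolvent-conc} as stated needs a strictly positive matrix $A\succ 0$ added to $S_n$; I would apply it with $A=c_1\Id/2$ restricted to the event $\lambda_{\min}(T_{-i})\ge c_1$, on which $(c_1\Id/2 + T_{-i})^{-1}$ and $T_{-i}^{-1}$ have traces differing by at most a constant times $p$ divided by $c_1^2$ — no, more precisely, one regularizes and uses $\|T_{-i}^{-1} - (c_1\Id/2+T_{-i})^{-1}\|$ small — and then transfers back, which is routine.
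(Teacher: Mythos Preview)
Your decomposition into a quadratic-form part ($\Delta_1 = p^{-1}\y_i^\T T_{-i}^{-1}\y_i - p^{-1}\tr T_{-i}^{-1}$, handled by Lemma~\ref{lem:main-quadratic} on the event $\lambda_{\min}(T_{-i})\ge c_1$) and a trace part matches the paper exactly. The gap is in how you propose to handle the trace part, i.e.\ showing $|p^{-1}\tr T_{-i}^{-1}-(1-\gamma)^{-1}|\le C\epsilon$ with the stated tail.

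Your plan is: concentrate $p^{-1}\tr(T_{-i}+\delta\Id)^{-1}$ around its expectation via Lemma~\ref{lem:resolvent-conc}, then invoke Mar\v{c}enko--Pastur convergence of the expectation to $(1-\gamma)^{-1}$ and ``absorb the finite-$n$ value into $\epsilon_0$ and the constants.'' This does not close. The constants $c,C,\epsilon_0$ may depend on $\gamma$ and the law of $Y$ but not on $n$, so a finite-$n$ bias cannot be absorbed unless you have a quantitative rate for it --- which is precisely what you are trying to prove. Moreover, the regularizer creates a rate mismatch: to make $|p^{-1}\tr T_{-i}^{-1}-p^{-1}\tr(T_{-i}+\delta\Id)^{-1}|\le \delta/c_1^2$ be $O(\epsilon)$ you need $\delta\asymp\epsilon$, but then Lemma~\ref{lem:resolvent-conc} with $A=\delta\Id$ has $\|CA^{-1}\|=1/\delta\asymp 1/\epsilon$, giving a tail $e^{-cn\epsilon^2\delta^2}=e^{-c'n\epsilon^4}$ rather than $e^{-cn\epsilon^2}$. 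Taking $\delta$ constant instead avoids this but leaves an $O(\delta)$ bias from $m_\gamma(-\delta)\ne m_\gamma(0)=(1-\gamma)^{-1}$ that again cannot be absorbed.

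The paper circumvents this by \emph{not} passing through the expectation. It sets $\mh(\epsilon)=p^{-1}\tr(T_{-i}+\epsilon\Id)^{-1}$ and derives a \emph{self-consistency} (leave-one-out) equation: writing the diagonal entries of $(\K\K^\T+\epsilon I)^{-1}$ via the Schur complement yields $(\K\K^\T+\epsilon I)^{-1}_{jj}=\epsilon^{-1}(1+\gamma\mh(\epsilon)+\gamma\eta_j)^{-1}$, where the residuals $\eta_j$ are controlled by another quadratic-form bound plus a rank-one perturbation (Lemma~\ref{lem:low_rank_perturb}). Summing over $j$ produces a quadratic equation for $\mh(\epsilon)$ whose coefficients are $O(\epsilon)$-perturbations of $(1-\gamma)m-1=0$; identifying the correct root (using the high-probability bound $\mh(\epsilon)\le 1/\lambda_{\min}(T_{-i})\le C$) pins $\mh(\epsilon)$ within $O(\epsilon)$ of $(1-\gamma)^{-1}$ directly. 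This RMT-style self-consistency step is the missing idea in your plan.
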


\begin{proof}[ Proof of Lemma \ref{lem:main-technical}]
Immediate from Lemma~\ref{lem:main-technical-minus-i} combined with Eq. (\ref{eq:lem:main-technical-to-main-technical-minus}).
 \end{proof}

\subsection{Proof of Lemma~\ref{lem:main-technical-minus-i}}
\label{appendix:proof-unregularized-main-lemma}

As previously mentioned, $\x_i^\top S^{-1}_{-i}\x_i = \y_i^\T T^{-1}_{-i}\y_i$
does not depend on the population covariance $\trueCov$. 
Hence we bound the deviations 
of $\y_i^\top T_{-i}^{-1}
\y_i$ from $1/(1-\gamma)$.
Lemma~\ref{lem:main-technical-minus-i} then follows by a union bound over $1\le i \le n$. 
The analysis proceeds as follows. First,  
we show that $p^{-1} \y_i^\T T_{-i}^{-1}\y_i$ concentrates around $\E[p^{-1}\y_i^\T T_{-i}^{-1}\y_i|T_{-i}] = p^{-1}\tr(T_{-i}^{-1})$.
Next, we prove that $p^{-1}\tr(T_{-i}^{-1})$
concentrates around $1/(1-\gamma)$. 
Proving this directly is difficult, since
the smallest eigenvalue of $T_{-i}$ can take very small value, though with overwhelming small probability. To circumvent this, we consider a regularized variant $\hat{m}(\epsilon)=p^{-1}\tr(T_{-i}+\epsilon\Id)^{-1}$.
        We then show that $|\hat{m}(\epsilon)-p^{-1}\tr(T_{-i}^{-1})|\lesssim \epsilon$,
        and finally
        that $\hat{m}(\epsilon)$ concentrates around $1/(1-\gamma)$.

Note that  $\hat m(\epsilon)$ is the Stieltjes transform of the empirical spectral distribution (ESD) of $T_{-i}$, evaluated at $-\epsilon$. Since $T_{-i}$ is a sample covariance matrix of i.i.d. isotropic samples, its ESD converges to a Mar\v{c}enko-Pastur law with shape parameter $\gamma$. This reveals the reason for the value $(1-\gamma)^{-1}$: it is the value of the Stieltjes transform of the Mar\v{c}enko-Pastur law, evaluated at $0$, cf. \cite{bai2010spectral,couillet2014robust}. 

In light of the above roadmap, write $p^{-1}\y_iT_{-i}^{-1}\y_i - (1-\gamma)^{-1} = \Delta_1 + \Delta_2 + \Delta_3$,
where
\begin{equation}
    \begin{split}
        \Delta_1 = p^{-1}\y_i^\T T_{-i}\y_i &- p^{-1}\tr(T_{-i}^{-1}),\quad \Delta_2 = p^{-1}\tr(T_{-i}^{-1}) - \mh(\epsilon),\quad \Delta_3=\mh(\epsilon)-(1-\gamma)^{-1},\\
        &\mh(\epsilon) = p^{-1}\tr(T_{-i} + \epsilon\Id)^{-1} \,.
    \end{split}
\end{equation}
It suffices to show that w.h.p., $|\Delta_\ell|\lesssim \epsilon$ for $\ell=1,2,3$. We start with a high-probability bound on $\Delta_1,\Delta_2$:
\begin{lemma}\label{lem:main-technical-helper1}
    Assume the conditions of Lemma~\ref{lem:main-technical-minus-i}.
    There are $c_1,c_2,C_1,\epsilon_0>0$, that may depend on the distribution of $Y$ and on $\gamma$, such that for all $\epsilon\in (0,\epsilon_0)$,
    \begin{enumerate}
        \item Assume \AssumLC. Then $\Pr(|\Delta_1|\ge \epsilon) \le c_1e^{-c_2(\Cheeger\sqrt{p})\epsilon}$ and $\Pr(|\Delta_2|\ge C_1\epsilon) \le c_1e^{-c_2(\Cheeger\sqrt{p})\epsilon}$.
        \item Assume \AssumIndep~or \AssumCCPSB. Then $\Pr(|\Delta_1|\ge \epsilon) \le c_1e^{-c_2p\epsilon^2}$ and $\Pr(|\Delta_2|\ge C_1\epsilon) \le c_1e^{-c_2p\epsilon^2}$.
    \end{enumerate}
\end{lemma}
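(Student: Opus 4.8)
The plan is to reduce both bounds to a single high-probability event on which the spectrum of $T_{-i}$ is well-behaved. Since $T_{-i}$ is the sample covariance of $n-1$ i.i.d.\ isotropic copies of $Y$, with ratio $p/(n-1)<1$ for $n$ large, Lemmas~\ref{lem:main-smax} and \ref{lem:main-smin} supply constants $0<c_1\le c_1'$ such that the event $\mathcal{A}:=\{c_1\le\lambda_{\min}(T_{-i}),\ \|T_{-i}\|\le c_1'\}$ has complement probability at most $e^{-c_2\sqrt n}$ under \AssumLC, and at most $e^{-c_2 n}$ under \AssumIndep~or \AssumCCPSB; moreover $\mathcal{A}$ is $\sigma(T_{-i})$-measurable and on it $1/c_1'\le\|T_{-i}^{-1}\|\le 1/c_1$. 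For $\Delta_2$ this already suffices: writing $\lambda_1,\dots,\lambda_p$ for the eigenvalues of $T_{-i}$,
\[
\Delta_2=\frac1p\sum_{j=1}^p\Big(\frac1{\lambda_j}-\frac1{\lambda_j+\epsilon}\Big)=\frac1p\sum_{j=1}^p\frac{\epsilon}{\lambda_j(\lambda_j+\epsilon)}\in[0,\epsilon/c_1^2]\quad\text{on }\mathcal{A},
\]
so with $C_1:=1/c_1^2$ the event $\{|\Delta_2|\ge C_1\epsilon\}$ is contained in $\mathcal{A}^c$, whose probability is bounded above.

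For $\Delta_1$ I would condition on $T_{-i}$. As $\y_i$ is independent of $T_{-i}$ and $\E[\y_i\y_i^\T]=\Id$, we have $\E[p^{-1}\y_i^\T T_{-i}^{-1}\y_i\mid T_{-i}]=p^{-1}\tr(T_{-i}^{-1})$, so $\Delta_1$ is exactly a centered quadratic form in $\y_i$ with the (conditionally deterministic) matrix $A=T_{-i}^{-1}$. On $\mathcal{A}$, $\|A\|\le 1/c_1$ and $1/\|A\|\le c_1'$, so choosing $\varepsilon=\epsilon/\|A\|$ — which lies in $(0,1)$ once $\epsilon<\epsilon_0:=1/c_1'$ and satisfies $\varepsilon\ge c_1\epsilon$ — and applying Lemma~\ref{lem:main-quadratic} gives, on $\mathcal{A}$,
\[
\Pr\big(|\Delta_1|\ge\epsilon\,\big|\,T_{-i}\big)\le C e^{-c\,c_1^2\,p\epsilon^2}\ \ (\AssumIndep,\AssumCCPSB),\qquad \le C e^{-c\,c_1(\Cheeger\sqrt p)\epsilon}\ \ (\AssumLC).
\]
Integrating over $T_{-i}$ and adding $\Pr(\mathcal{A}^c)$ produces the unconditional bound on $\Delta_1$.

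It remains to tidy up the two tails. For $\epsilon<\epsilon_0$ the eigenvalue tail $\Pr(\mathcal{A}^c)$, which scales like $e^{-c_2 n}$ (resp.\ $e^{-c_2\sqrt n}$), is dominated by the Hanson--Wright tails above because $p\epsilon^2\le\epsilon_0^2 p$ with $p=\gamma n$; absorbing $c_1,c_1',\gamma$ into the constants and rewriting $p$ as $\gamma n$ in the exponents yields the stated inequalities for both $\Delta_1$ and $\Delta_2$. There is no deep obstacle here; the one point requiring care is that Lemma~\ref{lem:main-quadratic} is stated for a \emph{deterministic} matrix, which forces the two-part strategy — first use the independence of $\y_i$ and $T_{-i}$ to pass to a conditional statement, then restrict to $\mathcal{A}$ so that the random operator norm $\|T_{-i}^{-1}\|$ is pinned between two absolute constants. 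This is precisely the step where the eigenvalue estimates of Lemmas~\ref{lem:main-smax}--\ref{lem:main-smin}, and hence the small-ball (or log-concavity) hypotheses underlying Lemma~\ref{lem:main-smin}, enter.
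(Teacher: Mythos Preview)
Your proposal is correct and follows essentially the same approach as the paper: restrict to a high-probability event on which $\lambda_{\min}(T_{-i})$ is bounded below (via Lemma~\ref{lem:main-smin}), then handle $\Delta_1$ by applying Lemma~\ref{lem:main-quadratic} conditionally on $T_{-i}$, and $\Delta_2$ by the deterministic bound $|\Delta_2|\le \epsilon/\lambda_{\min}(T_{-i})^2$. The only minor addition is your use of Lemma~\ref{lem:main-smax} to pin down an upper bound on $\lambda_{\min}(T_{-i})$ so that the hypothesis $\varepsilon\in(0,1)$ of Lemma~\ref{lem:main-quadratic} is verified; the paper leaves this check implicit.
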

\begin{proof}
    Let $C>0$ be such that the event $\Ec=\{\lambda_{\min}(T_{-i})\ge C\}$ holds w.h.p., per Lemma~\ref{lem:main-smin}. Of course, $\Pr(|\Delta_\ell|\ge \epsilon) \le \Pr(|\Delta_\ell|\ge \epsilon\,|\,\Ec) + \Pr(\Ec^c)$. Starting with $\ell=1$,
    since $\y_i$ and $T_{-i}^{-1}$ are independent, 
    $\Pr(|\Delta_1|\ge \epsilon\,|\,\Ec)$ may be bounded using Lemma~\ref{lem:main-quadratic}, a concentration inequality for the
    quadratic form $\y_i \mapsto p^{-1}\y_iT_{-i}^{-1}\y_i$, applied conditionally on $T_{-i}^{-1}$. Importantly, note that 
    under $\Ec$, we have $\|T_{-i}^{-1}\|=1/\lambda_{\min}(T_{-i})\le 1/C$. As for $\ell=2$, under $\Ec$,
    \[
    |\Delta_2| = p^{-1}\tr\left[ T_{-i}^{-1}-(T_{-i}+\epsilon\Id)^{-1}  \right] = \epsilon \cdot p^{-1}   \tr\left[ (T_{-i}+\epsilon\Id)^{-1}T_{-i}^{-1}  \right] \le \frac{\epsilon}{(\lambda_{\min}(T_{-i}))^2} \le C^{-2}\epsilon \,,
    \]
    and so the claimed result follows.
\end{proof}

Finally, Lemma~\ref{lem:main-technical-helper2}, proven in \ref{sec:proof-lem:main-technical-helper2},  provides a high-probability bound for $|\Delta_3|$:

\begin{lemma}\label{lem:main-technical-helper2}
    Assume the conditions of Lemma~\ref{lem:main-technical-minus-i}.
    There are $c_1,c_2,C_1,\epsilon_0>0$, that may depend on the distribution of $Y$ and on $\gamma$, such that for all $\epsilon\in (0,\epsilon_0)$,
    \begin{enumerate}
        \item Assume \AssumLC. Then $\Pr(|\Delta_3|\ge C_1\epsilon) \le c_1ne^{-c_2(\Cheeger\sqrt{p})\epsilon}$.
        \item Assume \AssumIndep~or \AssumCCPSB. Then $\Pr(|\Delta_3|\ge C_1\epsilon) \le c_1ne^{-c_2p\epsilon^2}$.
    \end{enumerate}
\end{lemma}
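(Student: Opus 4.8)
To prove Lemma~\ref{lem:main-technical-helper2} the plan is to show that the regularized Stieltjes transform $\mh(\epsilon)=p^{-1}\tr(T_{-i}+\epsilon\Id)^{-1}$ \emph{approximately} solves a fixed-point equation whose exact solution is $(1-\gamma)^{-1}$, and then to invert that equation by a stability argument. Throughout I write $R=(T_{-i}+\epsilon\Id)^{-1}$ and, for $j\ne i$, $R_{-j}=(T_{-i,-j}+\epsilon\Id)^{-1}$, where $T_{-i,-j}=T_{-i}-\tfrac1n\y_j\y_j^\T$ is the leave-two-out sample covariance; the key point is that $R_{-j}$ is independent of $\y_j$. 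I would first note that it suffices to treat the regime $1/(p\epsilon)\le\epsilon$ (equivalently $\epsilon\gtrsim p^{-1/2}$): for smaller $\epsilon$ the right-hand side of the asserted bound already exceeds $1$ (since $c_1ne^{-c_2p\epsilon^2}\ge1$ once $\epsilon\lesssim\sqrt{\log n/p}$, and likewise in the log-concave case), so the claim is vacuous there.

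The first step is to condition on an eigenvalue-control event $\Ec$. Applying Lemma~\ref{lem:main-smin} to $T_{-i}$ and to each $T_{-i,-j}$ (sample covariances of $n-1$, resp. $n-2$, i.i.d. isotropic vectors, with aspect ratio still below $1$), Lemma~\ref{lem:main-smax} to $T_{-i}$, and a union bound over the $O(n)$ matrices involved, one gets $\Pr(\Ec^c)\le n^2e^{-c\sqrt n}$ under \AssumLC~and $\Pr(\Ec^c)\le n^2e^{-cn}$ under \AssumIndep~or \AssumCCPSB, where on $\Ec$ one has $\lambda_{\min}(T_{-i})\ge c$, $\lambda_{\min}(T_{-i,-j})\ge c$ for all $j$, and $\|T_{-i}\|\le C$. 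On $\Ec$ every resolvent above satisfies $\tfrac1{C+1}\Id\preceq R,R_{-j}\preceq\tfrac1c\Id$; in particular $\mh(\epsilon)$ lies in a fixed bounded interval $[\tfrac1{C+1},\tfrac1c]$ depending only on $\gamma$ and the distribution of $Y$.

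The second step is the self-consistent equation. From $T_{-i}R=\Id-\epsilon R$ and $T_{-i}=\tfrac1n\sum_{j\ne i}\y_j\y_j^\T$, taking the trace and dividing by $p$ gives $\tfrac1{np}\sum_{j\ne i}\y_j^\T R\,\y_j=1-\epsilon\,\mh(\epsilon)$. Peeling off the rank-one term $\tfrac1n\y_j\y_j^\T$ from $R$ by Sherman--Morrison yields $\y_j^\T R\,\y_j=\y_j^\T R_{-j}\y_j/(1+\tfrac1n\y_j^\T R_{-j}\y_j)$, so with $\xi_j:=p^{-1}\y_j^\T R_{-j}\y_j$ the equation reads
\[
1-\epsilon\,\mh(\epsilon)=\frac1n\sum_{j\ne i}\frac{\xi_j}{1+\gamma\,\xi_j}\,.
\]
I would then show $\xi_j\approx\mh(\epsilon)$ uniformly in $j$: since $\y_j\perp R_{-j}$ and $\|R_{-j}\|\le\tfrac1c$ on $\Ec$, Lemma~\ref{lem:main-quadratic} applied conditionally on $R_{-j}$ gives $|\xi_j-p^{-1}\tr R_{-j}|\le\epsilon$ with conditional failure probability $\le Ce^{-cp\epsilon^2}$ under \AssumIndep~or \AssumCCPSB~and $\le Ce^{-c(\Cheeger\sqrt p)\epsilon}$ under \AssumLC~(the bound $\|R_{-j}\|\le\tfrac1c$ being absorbed into the constants), while Lemma~\ref{lem:low_rank_perturb} (with $C=\Id$, $z=\epsilon$, rank one) gives $|p^{-1}\tr R_{-j}-\mh(\epsilon)|\le1/(p\epsilon)\le\epsilon$ deterministically. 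A union bound over $j\ne i$ then produces an event of the asserted probability on which $\max_{j\ne i}|\xi_j-\mh(\epsilon)|\le2\epsilon$.

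The final step is to solve the equation. Set $G(m)=m/(1+\gamma m)$; it is increasing, bi-Lipschitz on any bounded interval, and satisfies $G((1-\gamma)^{-1})=1$. Plugging $\xi_j=\mh(\epsilon)+O(\epsilon)$ into the self-consistent equation (using that $x\mapsto x/(1+\gamma x)$ is Lipschitz on the bounded range where the $\xi_j$ live) gives $1-\epsilon\,\mh(\epsilon)=(1-\tfrac1n)G(\mh(\epsilon))+O(\epsilon)$, and since $\mh(\epsilon),G(\mh(\epsilon))=O(1)$ and $1/n=o(\epsilon)$, this reduces to $G(\mh(\epsilon))=1+O(\epsilon)$. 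Because $\mh(\epsilon)$ and $(1-\gamma)^{-1}$ both lie in a fixed bounded interval on which $G'$ is bounded below by a positive constant, inverting $G$ yields $|\Delta_3|=|\mh(\epsilon)-(1-\gamma)^{-1}|\lesssim|G(\mh(\epsilon))-1|\lesssim\epsilon$; combining $\Pr(\Ec^c)$ with the $n$ quadratic-form bounds gives the two stated tails, and the log-concave case is identical with the corresponding branches of Lemmas~\ref{lem:main-smin}, \ref{lem:main-smax} and \ref{lem:main-quadratic}. I expect the main obstacle to be the bookkeeping of error terms: making sure each of the quadratic-form fluctuation $\xi_j-p^{-1}\tr R_{-j}$, the rank-one trace perturbation $p^{-1}\tr R_{-j}-\mh(\epsilon)$, the $\epsilon\,\mh(\epsilon)$ correction, and the $1/n$ from using $n-1$ rather than $n$ samples is genuinely $O(\epsilon)$ (or $o(\epsilon)$) in the non-vacuous regime, so that the inversion of $G$ --- itself routine once the bounded range for $\mh(\epsilon)$ is in hand --- cleanly returns the $O(\epsilon)$ bound on $\Delta_3$.
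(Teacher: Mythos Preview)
Your proposal is correct and shares the paper's high-level strategy---derive an approximate self-consistent equation for $\mh(\epsilon)$ by a leave-one-out decoupling, control the residuals with quadratic-form concentration (Lemma~\ref{lem:main-quadratic}) plus a rank-one trace perturbation (Lemma~\ref{lem:low_rank_perturb}), then invert---but the mechanics differ in a way worth noting. The paper passes to the $\bar n\times\bar n$ Gram matrix $\K\K^\T$, expresses each diagonal resolvent entry via block inversion, and arrives at a genuine \emph{quadratic} equation for $\mh(\epsilon)$, Eq.~\eqref{eq:main-technical-helper2-4}; it must then invoke the eigenvalue bound $\mh(\epsilon)\le 1/\lambda_{\min}(T_{-i})$ to rule out the spurious root $m_2\gtrsim 1/\epsilon$. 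You instead stay with $T_{-i}$, start from the trace identity $\tr(T_{-i}R)=p-\epsilon\,\tr R$, and land directly on the already-linearized equation $G(\mh(\epsilon))=1+O(\epsilon)$ with $G(m)=m/(1+\gamma m)$; because you have already confined $\mh(\epsilon)$ to a bounded interval on the event $\Ec$, you can simply invert $G$ by its lower-bounded derivative and sidestep the root-selection argument entirely. Both routes use the same auxiliary lemmas and the same reduction to $\epsilon\gtrsim p^{-1/2}$; yours is marginally more direct, at the cost of needing the explicit a priori two-sided bound on $\mh(\epsilon)$ (from Lemmas~\ref{lem:main-smax} and~\ref{lem:main-smin}) before the fixed-point step rather than only after it.
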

\begin{proof}
    (Of Lemma~\ref{lem:main-technical-minus-i}). Recall that $|p^{-1}\y_iT_{-i}^{-1}\y_i - (1-\gamma)^{-1}| \le |\Delta_1| + |\Delta_2| + |\Delta_3|$, and so the result follows from Lemma~\ref{lem:main-technical-helper1}, Lemma~\ref{lem:main-technical-helper2}, and a union bound over $1\le i \le n$.
\end{proof}

\subsection{Proof of Lemma~\ref{lem:main-technical-helper2}}
\label{sec:proof-lem:main-technical-helper2}

To simplify notation, assume w.l.o.g. that $i=n$. Set $\nb=n-1$, and let $\K\in \R^{\nb\times p}$ be the matrix whose rows are $\w_1^\T,\ldots,\w_{\nb}^\T$, with $\w_j=n^{-1/2}\y_j$. Note that $T_{-n}= \K^\T \K$ and so $\mh(\epsilon)=p^{-1}\Tr(\K^\T\K + \epsilon\Id)^{-1}$. For $1\le j \le \nb$, let $\K_j\in \R^{(\nb-1)\times p}$ be obtained by removing the $j$-th row from $\K$.  

The proof relies on several algebraic ``tricks'' which are classical in random matrix theory, see \cite{bai2010spectral}.
Recall that for any matrix $A\in \R^{d_1\times d_2}$, the spectra of $AA^\top$ and $A^\top A$ are identical up to  $\abs{d_1-d_2}$ zeros. 
Thus,
\begin{equation}\label{eq:main-technical-helper2-1}
    \mh(\epsilon)=p^{-1}\Tr(\K^\T\K  +\epsilon\Id )^{-1} = p^{-1} \Tr(\K\K^\T  +\epsilon\Idn{\nb} )^{-1} + \epsilon^{-1} (1-p^{-1}\nb) \,.
\end{equation}
Note that $\K\K^\T$ is the Gram matrix of the vectors $\w_i$. We write the $j$-th diagonal entry of $(\K\K^\T  +\epsilon\Idn{\nb} )^{-1}$ as:
\begin{eqnarray}
    (\K\K^\T  +\epsilon\Idn{\nb} )^{-1}_{jj} 
        &\overset{(i)}{=}& \left( \epsilon + \|\w_j\|^2 - \w_j^\T \K_j^\T \left(\K_j\K_j^\T + \epsilon\Idn{(\nb-1)} \right)^{-1}\K_j\w_j \right)^{-1} \nonumber\\
        &\overset{(ii)}{=}& \left( \epsilon + \w_j^\T \left[ \Id - \left(\K_j^\T \K_j + \epsilon\Id \right)^{-1}\K_j^\T \K_j \right]\w_j \right)^{-1}  \nonumber \\
        &\overset{(iii)}{=}& \left( \epsilon + \epsilon \w_j^\T \left(\K_j^\T \K_j + \epsilon\Id \right)^{-1}\w_j \right)^{-1} \,,
    \label{eq:main-technical-helper2-2}
\end{eqnarray}
where: (i) follows from the block matrix inversion formula; (ii) follows since for any $f(\cdot)$ and matrix $A$, $A f(A^\T A) A^\T = f(AA^\T)AA^\T$, where for a symmetric matrix $P$, $f(P)$ is the matrix obtained by applying $f(\cdot)$ on the eigenvalues of $P$ (the spectral calculus for symmetric matrices); this may be verified readily by considering the SVD of $A$; and (iii) Follows by straightforward algebraic manipulation.  

Consider
the quadratic form $\w_j^\T (\K_j^\T \K_j + \epsilon\Id)^{-1}\w_j = \gamma p^{-1}\y_j^\T (\K_j^\T \K_j + \epsilon\Id)^{-1}\y_j$. 
We claim that $p^{-1}\y_j^\T (\K_j^\T \K_j + \epsilon\Id)^{-1}\y_j$ is very close (w.h.p.) to $\mh(\epsilon)$, the quantity of interest. To wit, define the residual
\begin{equation}
    \eta_j = p^{-1}\y_j^\T (\K_j^\T \K_j + \epsilon\Id)^{-1}\y_j - \mh(\epsilon) \,,
\end{equation}
so that (\ref{eq:main-technical-helper2-2}) reads
\begin{equation}\label{eq:main-technical-helper2-3}
    (\K\K^\T  +\epsilon\Idn{\nb} )^{-1}_{jj} = \epsilon^{-1}\left(1 + \gamma \mh(\epsilon) + \gamma\eta_j \right)^{-1} \,.
\end{equation}
\begin{lemma}\label{lem:main-technical-helper3}
    Assume the conditions of Lemma~\ref{lem:main-technical-minus-i}.
    There are $c_1,c_2,C_1,\epsilon_0>0$, that may depend on the distribution of $Y$ and on $\gamma$, such that for all $\epsilon\in (0,\epsilon_0)$,
    \begin{enumerate}
        \item Assume \AssumLC. Then $\Pr(\max_{1\le j\le \nb}|\eta_j|\ge C_1\epsilon + (p\epsilon)^{-1}) \le c_1ne^{-c_2(\Cheeger\sqrt{p})\epsilon}$.
        \item Assume \AssumIndep~or \AssumCCPSB. Then $\Pr(\max_{1\le j\le \nb}|\eta_j|\ge C_1\epsilon + (p\epsilon)^{-1}) \le c_1ne^{-c_2p\epsilon^2}$.
    \end{enumerate}
\end{lemma}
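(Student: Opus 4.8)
The plan is to separate $\eta_j$ into a genuinely random part, governed by concentration of a quadratic form, and a deterministic rank-one remainder that accounts precisely for the $(p\epsilon)^{-1}$ term in the statement. Since $\K_j$ is obtained from $\K$ by deleting the row $\w_j^\T = n^{-1/2}\y_j^\T$, we have $\K_j^\T\K_j = \K^\T\K - \w_j\w_j^\T$, and I would write $\eta_j = A_j + B_j$ with $A_j = p^{-1}\y_j^\T(\K_j^\T\K_j+\epsilon\Id)^{-1}\y_j - p^{-1}\Tr(\K_j^\T\K_j+\epsilon\Id)^{-1}$ and $B_j = p^{-1}\Tr(\K_j^\T\K_j+\epsilon\Id)^{-1} - \mh(\epsilon)$. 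Because $\K^\T\K = \K_j^\T\K_j + \w_j\w_j^\T$ is a rank-one update of $\K_j^\T\K_j$, Lemma~\ref{lem:low_rank_perturb} (applied with $C = \Id$ and $z = \epsilon$) gives the deterministic bound $0 \le B_j \le (p\epsilon)^{-1}$. Consequently $\{\max_j |\eta_j| \ge C_1\epsilon + (p\epsilon)^{-1}\} \subseteq \{\max_j |A_j| \ge C_1\epsilon\}$, so it remains only to bound the probability that some $|A_j|$ exceeds $C_1\epsilon$.

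For a fixed $j$, I would first condition on $\K_j$, which is independent of $\y_j$. Then $A_j$ is exactly the centred quadratic form $p^{-1}\y_j^\T M_j \y_j - p^{-1}\Tr(M_j)$ with $M_j = (\K_j^\T\K_j + \epsilon\Id)^{-1}$ fixed, and its conditional mean is zero by isotropy of $Y$. Lemma~\ref{lem:main-quadratic}, applied conditionally on $\K_j$, then bounds $\Pr(|A_j| \ge \delta\|M_j\| \mid \K_j)$ by $Ce^{-cp\delta^2}$ under \AssumIndep~or~\AssumCCPSB, and by $Ce^{-c\Cheeger\sqrt{p}\delta}$ under \AssumLC. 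The last ingredient is an almost-sure upper bound on $\|M_j\| = (\lambda_{\min}(\K_j^\T\K_j) + \epsilon)^{-1} \le 1/\lambda_{\min}(\K_j^\T\K_j)$. Observe that $\K_j^\T\K_j = \tfrac{n-2}{n}\widetilde T_j$, where $\widetilde T_j$ is the sample covariance of the $n-2$ i.i.d. isotropic vectors $\{\y_k : 1\le k\le \nb,\, k\ne j\}$; since $\gamma < 1$ forces $p/(n-2) < 1$ for $n$ large, Lemma~\ref{lem:main-smin} yields $\lambda_{\min}(\K_j^\T\K_j) \ge c_1$ off an event of probability at most $e^{-c_2\sqrt n}$ (resp. $e^{-c_2 n}$). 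On the complementary event $\|M_j\| \le 1/c_1$, so taking $\delta = c_1 C_1\epsilon$ and adding back the small-eigenvalue failure probability gives $\Pr(|A_j| \ge C_1\epsilon) \le Ce^{-c'p\epsilon^2} + e^{-c_2\sqrt n}$ (resp. the log-concave analogue). A union bound over $1 \le j \le \nb \le n$ finishes the argument, once the term $ne^{-c_2\sqrt n}$ (resp. $ne^{-c_2 n}$) is absorbed into $c_1 n e^{-c_2 \Cheeger\sqrt p\,\epsilon}$ (resp. $c_1 n e^{-c_2 p\epsilon^2}$); this is legitimate for $\epsilon < \epsilon_0$ with $\epsilon_0$ small enough, since then $p\epsilon^2 \le \gamma n \epsilon_0^2 \ll n$.

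I do not expect a genuine analytic obstacle here: once the decomposition $\eta_j = A_j + B_j$ is in place, the $B_j$-piece is a one-line deterministic estimate and the $A_j$-piece is a Hanson--Wright-type bound conditioned on an independent, well-conditioned Gram matrix. The only point requiring care is bookkeeping --- ensuring that the eigenvalue lower bound $c_1$ and the resulting constant $C_1$ are uniform over $j$ and over $\epsilon \in (0,\epsilon_0)$ (they are, since all $\widetilde T_j$ share the same law and adding $\epsilon\Id$ only helps), and checking that the exceptional small-eigenvalue events do not dominate the stated tail in the regime $\epsilon<\epsilon_0$. Worth emphasizing is the conceptual role of the $(p\epsilon)^{-1}$ term: it is exactly the cost, at resolvent scale $\epsilon$, of the rank-one deletion that decouples $\y_j$ from the Gram matrix, and it is unavoidable by this route.
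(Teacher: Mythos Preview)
Your proposal is correct and follows essentially the same approach as the paper: the paper decomposes $\eta_j=\eta_{j,1}+\eta_{j,2}$ with $\eta_{j,1}=A_j$ and $\eta_{j,2}=B_j$ in your notation, bounds $|\eta_{j,2}|\le (p\epsilon)^{-1}$ deterministically via Lemma~\ref{lem:low_rank_perturb}, and handles $\eta_{j,1}$ by the quadratic-form concentration of Lemma~\ref{lem:main-quadratic} conditioned on $\K_j$ together with the smallest-eigenvalue bound of Lemma~\ref{lem:main-smin} (the paper simply says ``similarly to Lemma~\ref{lem:main-technical-helper1}; we omit the details''). Your write-up is in fact more explicit than the paper's on the $A_j$ part, including the absorption of the small-eigenvalue failure probability into the stated tail.
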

\begin{proof}
    Decompose $\eta_j=\eta_{j,1}+\eta_{j,2}$ where
    \begin{align*}
        &\eta_{j,1} =  p^{-1}\y_j^\T (\K_j^\T \K_j + \epsilon\Id)^{-1}\y_j - p^{-1}\Tr(\K_j^\T \K_j + \epsilon\Id)^{-1},
        \quad
        \eta_{j,2} = p^{-1}\Tr(\K_j^\T \K_j + \epsilon\Id)^{-1} - p^{-1}\Tr(\K^\T \K + \epsilon\Id)^{-1} \,.
    \end{align*}
    Since $\K_j$ and $\y_j$ are independent,
    we can bound $|\eta_{j,1}|$ similarly to Lemma~\ref{lem:main-technical-helper1}; we omit the details. As for $\eta_{j,2}$, note that $\K^\T \K-\K_j^\T \K_j=\w_j\w_j^\T$ is rank $1$. Thus, by Lemma~\ref{lem:low_rank_perturb}, $|\eta_{j,2}|\le (p\epsilon)^{-1}$ w.p. $1$.
\end{proof}
Now, considering (\ref{eq:main-technical-helper2-3}), $\left|\left(1 + \gamma \mh(\epsilon) + \gamma\eta_j \right)^{-1} - \left(1 + \gamma \mh(\epsilon) \right)^{-1}\right| \le 2\gamma|\eta_j|$ holds whenever $\gamma|\eta_j|\le \frac12$.
Accordingly, $\left|\frac1{\nb} \sum_{j=1}^{\nb} \left(1 + \gamma \mh(\epsilon) + \gamma\eta_j \right)^{-1}-\left(1 + \gamma \mh(\epsilon) \right)^{-1}\right| \le \max_{1\le j \le \nb}2\gamma|\eta_j|$ holds whenever ${\max_{1\le j\le \nb}\gamma|\eta_j|\le \frac12}$.
Using Eqs.
(\ref{eq:main-technical-helper2-1}), (\ref{eq:main-technical-helper2-3}),  also $p^{-1}\nb=p^{-1}n-p^{-1}=\gamma^{-1}-p^{-1}$, write
\begin{align*}
    \mh(\epsilon) 
    &= p^{-1}\sum_{j=1}^{\nb} \epsilon^{-1}\left(1 + \gamma \mh(\epsilon) + \gamma\eta_j \right)^{-1} + \epsilon^{-1} (1-p^{-1}\nb) 
    = \epsilon^{-1}(p^{-1}\nb)\frac{1}{\nb}\sum_{j=1}^{\nb}\left(1 + \gamma \mh(\epsilon) + \gamma\eta_j \right)^{-1} + \epsilon^{-1} (1-p^{-1}\nb) \\
    &= \epsilon^{-1}\left[ \gamma^{-1}\left(1 + \gamma \mh(\epsilon) \right)^{-1} + 1-\gamma^{-1} + \xi \right]\,,
\end{align*}
where $|\xi|\le 2p^{-1} + \max_{1\le j \le \nb}{2\gamma|\eta_j|}$ whenever ${\max_{1\le j\le \nb}\gamma|\eta_j|\le 1/2}$. Rearranging terms, we deduce that $\mh(\epsilon)$ satisfies the quadratic equation
\begin{equation}\label{eq:main-technical-helper2-4}
    \gamma\epsilon\mh(\epsilon) + (\epsilon+1-\gamma-\gamma\xi)\mh(\epsilon) - (1+\xi) = 0 \,.
\end{equation}
Now, assume w.l.o.g. that $\epsilon\ge p^{-1/2}$; we can do this since the bound of Lemma~\ref{lem:main-technical-helper2} is vacuous for $\epsilon <p^{-1/2}$, provided that $c_1,c_2$ are chosen appropriately. Under the high-probability event of Lemma~\ref{lem:main-technical-helper3}, $|\xi|\le  C_1\epsilon$. Thus, under this event, $\mh(\epsilon)$ satisfies a quadratic equation, whose coefficients are $O(\epsilon)$-close to the coefficients of the linear equation $(1-\gamma)m-1=0$; note that the unique root of the linear equation is $m=(1+\gamma)^{-1}$. 
Let $m_1\le m_2$ be the two roots of (\ref{eq:main-technical-helper2-4}). 
One may readily verify the following: there are $C_2,\epsilon_0>0$, that depend on $C_1$, such that for all $\epsilon\le \epsilon_0$, and $\xi$ satisfying $|\xi|\le C_1\epsilon$, we have $|m_1-(1-\gamma)^{-1}|\le C_2\epsilon$ whereas $m_2> 1/(C_2\epsilon)$. 
It remains to argue that, w.h.p., necessarily $\mh(\epsilon)=m_1$.
Indeed, recall that $\mh(\epsilon)\le 1/\lambda_{\min}(T_{-i})$. By Lemma~\ref{lem:main-smin}, we can find some $C_3>0$ such that $\mh(\epsilon)\le C_3$ holds w.h.p. Consequently, for all $\epsilon<\min\{\epsilon_0,1/(C_2 C_3)\}$, the high-probability event $\{|\xi|\le C_1\epsilon\}\cap \{\mh(\epsilon)\le C_3\}$ implies that necessarily $\mh(\epsilon)=m_1$, and so $|\mh(\epsilon)-(1-\gamma)^{-1}|\le C_2\epsilon$. Thus, the proof of Lemma~\ref{lem:main-technical-helper2} is concluded.
\qed

\section{Proof of Lemmas for Theorem~\ref{thm:main_tyler} (TE)}

\subsection{Proof of Lemma~\ref{lem:Tyl-inv-bound}}
\label{sec:proof:lem:Tyl-inv-bound}

Starting from the definition of $G_\beta$, Eq. (\ref{eq:Tyl-def-G}), one may readily calculate,
\begin{equation}\label{eq:Tyl-grad-G}
\begin{split}
    (g_\beta(\w))_\ell=\left(\nabla G_\beta(\w)\right)_\ell 
    &= \nabla F(\w)_\ell + \beta\left(\sum_{i=1}^n w_i-n\right) 
    = -\frac{1}{w_\ell} + \frac{n}{p}\x_\ell^\T \left(\sum_{i=1}^n w_i \x_i\x_i^\T \right)^{-1}\x_\ell + \beta\left(\sum_{i=1}^n w_i-n\right) \,.
\end{split}
\end{equation}
Taking the second derivative,
\begin{equation}\label{eq:Tyl-hessian}
\begin{split}
    \left(\nabla g_\beta(\w)\right)_{j,\ell} 
    &= \frac{1}{w_\ell^2}\indic{j=\ell} - \gamma \left(\frac1p \x_j^\T \left( \frac1n \sum_{i=1}^n w_i\x_i\x_i^\T \right)^{-1} \x_\ell\right)^2 + \beta \,.
\end{split}
\end{equation}\label{eq:Tyl-hessian-at1}
In particular, setting $\w=\oneVec$,
\begin{equation}\label{eq:Tyl-Zhang-1}
    \nabla g_\beta(\oneVec) = \Id - ( A - \beta\oneVec\oneVec^\T),\quad \textrm{where }\; A_{j,\ell} = \gamma \left(p^{-1}\y_j^\T T^{-1}\y_\ell \right)^2\,.
\end{equation}

Following 
\cite[Lemma 3.3]{zhang2016marvcenko}, 
$\left(\nabla g_\beta(\oneVec)\right)^{-1}=\left( \Id - (A - \beta\oneVec\oneVec^\T)\right)^{-1} = \sum_{k=0}^\infty \left( A - \beta\oneVec\oneVec^\T\right)^k$ provided that the sum converges. Since $\|\left( A - \beta\oneVec\oneVec^\T\right)^k\|_{\infty,\infty} \le \| A - \beta\oneVec\oneVec^\T\|_{\infty,\infty}^k$, the sum clearly converges when $\| A - \beta\oneVec\oneVec^\T\|_{\infty,\infty}<1$, and then
\[
\left\| \left(\nabla g_\beta(\oneVec)\right)^{-1} \right\|_{\infty,\infty} \le \sum_{k=0}^\infty \| A - \beta\oneVec\oneVec^\T\|_{\infty,\infty}^k \le \frac{1}{1 - \| A - \beta\oneVec\oneVec^\T\|_{\infty,\infty}} \,.
\]
Thus, to prove Lemma~\ref{lem:Tyl-inv-bound}, it suffices to show that w.h.p., $\|A - \beta\oneVec\oneVec^\T\|_{\infty,\infty} \le 1-c$ for some constant $c>0$.

By definition,
\begin{equation}\label{eq:A-minus-beta}
    \| A - \beta\oneVec\oneVec^\T\|_{\infty,\infty} = \max_{1\le j \le n} \sum_{\ell=1}^n | A_{j,\ell}-\beta| \,.
\end{equation}
It is instructive to consider (\ref{eq:A-minus-beta}) with $\beta=0$. Observe that $\sum_{\ell=1}^n | A_{j,\ell}| = \frac{1}{np}\sum_{\ell=1}^n \y_j^\T T^{-1}\y_\ell\y_\ell^\T T^{-1}\y_j = \frac{1}{p}\y_j^\T T^{-1}\y_j$, and recall that by Lemma~\ref{lem:main-technical}, this quantity concentrates tightly around $1$.
Accordingly, when $\beta=0$, the norm (\ref{eq:A-minus-beta}) concentrates around $1$. Our goal, then, is to find some $\beta$ so to consistently bias (\ref{eq:A-minus-beta}) away from $1$. To this end, we proceed along 
the argument of Zhang {\it et. al.} \cite[Lemma 3.3]{zhang2016marvcenko}. 
Parameterize $\beta=\beta_0/n$, for constant $\beta_0$, so that $\beta$ has the same scale as the off-diagonal entries $A_{j,\ell}$ in (\ref{eq:Tyl-Zhang-1}). 
Let $N_j[\beta_0]$ be the number of entries $A_{j,\ell}$, in the $j$-th row ($1\le \ell \le n$), such that $A_{j,\ell}\ge \beta_0/n$. 
For $ A_{j,\ell}\ge \beta_0/n$  we have
$|  A_{j,\ell}-\beta_0/n| =  A_{j,\ell}-\beta_0/n $ whereas if 
$ A_{j,\ell}< \beta_0/n$, we may bound
$|  A_{j,\ell}-\beta_0/n|\le  A_{j,\ell}+\beta_0/n$.
Hence, 
\begin{equation}\label{eq:Tyl-Zhang-2}
\begin{split}
    \| A - \beta\oneVec\oneVec^\T\|_{\infty,\infty} 
    &\le  \max_{1\le j \le n} \left\{ \sum_{\ell=1}^n  A_{j,\ell} - \frac{\beta_0}{n} N_j[\beta_0] + (n-N_j[\beta_0])\frac{\beta_0}{n} \right\} 
    \le \max_{1\le j \le n}p^{-1}\y_j^\T T^{-1}\y_j -  2{\beta_0} \min_{1\le j \le n}\left( \frac{N_j[\beta_0]}{n}-\frac{1}{2} \right) \,.
\end{split}
\end{equation}
Thus, to conclude the proof of the lemma, we need 
to find some constant $\beta_0$ so that w.h.p., ${\min_{1\le j \le n} {N_j[\beta_0]}/{n} \ge \frac12+c}$ (say, $c=0.1$); that is, such that at least $(\frac12+c)n$ of the entries of $A$ in every row are consistently larger than $\beta_0$. 
Observe that for any row $1\le j \le n$, the off-diagonal entries $A_{j,\ell}$, $\ell\ne j$, are identically distributed. A source of difficulty is that they are not independent, and this dependence is manifested in two ways: 1) They all depend on $\y_j$; 2) The sample covariance $T$ depends on all $\y_\ell$-s. The first dependence is easy to overcome (by conditioning on $\y_j$), but the second one is more involved. To deal with the latter, we shall lower bound $A_{j,\ell}$ by a different set of random variables, which are easier to analyze.

\paragraph{The mathematical error in \cite{zhang2016marvcenko}}
In their attempt to
lower bound $A_{j,\ell}$, in the proof
of their Lemma 3.4, \cite{zhang2016marvcenko}  used the following inequality (top of page 122 in their paper), 
\begin{equation}\label{eq:Zhang-Error}
    A_{j,\ell} = \frac{1}{np}(\y_j^\T T^{-1}\y_\ell)^2 \ge \frac1{np} \cdot  \frac{(\y_j^\T \y_\ell)^2}{ \lambda_{\max}(T)^2 }\,.
\end{equation}
They next analyzed the
simpler expressions for the numerator and denominator above.  
Unfortunately,  (\ref{eq:Zhang-Error}) is \emph{false}, as $|u^\T B^{-1} v| \ge |u^\T v|/\lambda_{\max}(B)$ is not generally true for positive matrices $B$.

\paragraph{A corrected argument.}
Let $k=o(n)$ be an integer, to be chosen later. Partition $[n]=\{1,\ldots,n\}$ into $M=\lceil n/k\rceil$ subsets $\Ic_1,\ldots,\Ic_M$ of size $k/2\le |\Ic_i|\le k$ each. 
The idea is to approximate $\{\y_j^\T T^{-1}\y_\ell\}_{\ell \ne j}$ by a different set of random variables, 
so that variables within the same class $\ell\in \Ic_i$ are independent of one another (conditioned on $\y_j$). For an index $\ell$, denote by $\Ic(\ell)$ the unique class such that $\ell\in \Ic(\ell)$.

For a set $\Ic\subseteq [n]$, denote by $\Y_{\Ic}\in \R^{|\Ic|\times p}$ the matrix whose rows are $\{\y_\ell\,:\,\ell \in \Ic\}$.
Given indices $j \ne \ell$, we decompose $T=T_{-\ell} + n^{-1}\y_\ell\y_\ell^\T = T_{-\ell,-j} + n^{-1}\y_\ell\y_\ell^\T + n^{-1}\y_j\y_j^\T$.
By the Sherman-Morrison formula, 
\begin{equation}
    \label{eq:Tyl-inv-appendix-1}
    \frac{1}{p}\y_j^\T T^{-1}\y_\ell = \frac{\frac{1}{p}\y_j^\T T_{-\ell}^{-1} \y_\ell}{1 + \gamma\cdot \frac{1}{p}\y_\ell^\T T_{-\ell}^{-1}\y_\ell}
= \frac{\frac{1}{p}\y_j^\T T_{-\ell,-j}^{-1} \y_\ell}{\left(1 + \gamma\cdot \frac{1}{p}\y_\ell^\T T_{-\ell}^{-1}\y_\ell\right)\left(1 + \gamma\cdot \frac{1}{p}\y_j^\T T_{-\ell,-j}^{-1}\y_j\right)}\,.
\end{equation}
We next simplify $T_{-\ell,-j}^{-1}$. Decompose $T_{-\ell,-j} = T_{-j,-\Ic(\ell)} +  n^{-1}\Y_{\Ic(\ell)\setminus\{j,\ell\}}^\T \Y_{\Ic(\ell)\setminus\{j,\ell\}}$. Recall that by the Woodbury formula, 
for invertible matrices $A,C$, one has $\left(A+UCV\right)^{-1}  = A^{-1} - A^{-1}U\left(C^{-1} + VA^{-1}U\right)^{-1}VA^{-1}$. Applying this with $A=T_{-\ell,-j}^{-1}$, $C=I$, $U=V^\T=n^{-1/2}\Y_{\Ic(\ell)\setminus\{j,\ell\}}^\T$, gives
\begin{equation}
    \begin{split}
        \label{eq:Tyl-inv-appendix-2}
         T_{-\ell,-j}^{-1} =  T_{-j,-\Ic(\ell)}^{-1} - n^{-1}T_{-j,-\Ic(\ell)}^{-1} \Y_{\Ic(\ell)\setminus\{j,\ell\}}^\T  \left(I + n^{-1}\Y_{\Ic(\ell)\setminus\{j,\ell\}} T_{-j,-\Ic(\ell)}^{-1} \Y_{\Ic(\ell)\setminus\{j,\ell\}}^\T  \right)^{-1} \Y_{\Ic(\ell)\setminus\{j,\ell\}} T_{-j,-\Ic(\ell)}^{-1}\,.
    \end{split}
\end{equation}
Let $\underline{\Omega}$ be the following upper bound on the denominator of (\ref{eq:Tyl-inv-appendix-1}):
\begin{equation}\label{eq:Tyl-Omegas-1}
\begin{split}
        \underline{\Omega} &= \max\left\{ \left(1 + \gamma\cdot \frac{1}{p}\y_\ell^\T T_{-\ell}^{-1}\y_\ell\right)\left(1 + \gamma\cdot \frac{1}{p}\y_j^\T T_{-\ell,-j}^{-1}\y_j\right)\;:\;j\in [n],\ell\in[n]\setminus\{j\} \right\} \,.
\end{split}
\end{equation}
Similarly, define
\begin{equation}\label{eq:Tyl-Omegas-2}
    \overline{\Omega} = \max \left\{ (np)^{-1}\left\| \Y_{\Ic(\ell)\setminus\{j,\ell\}}T_{-j,-\Ic(\ell)}^{-1}\y_\ell \right\|\left\| \Y_{\Ic(\ell)\setminus\{j,\ell\}}T_{-j,-\Ic(\ell)}^{-1}\y_j \right\| \;:\;j\in [n],\ell\in [n]\setminus\{j\}\right\}\,.
\end{equation}
Observe that per (\ref{eq:Tyl-inv-appendix-2}), $|\frac{1}{p}\y_j^\T T_{-\ell,-j}^{-1} \y_\ell|\ge |\frac{1}{p}\y_j^\T T_{-j,-\Ic(\ell)}^{-1} \y_\ell| - \overline{\Omega}$.
Finally, denote
\begin{equation}\label{eq:Tyl-xi-nu}
    \xi_{j,\ell} = p^{-1/2} \left\langle \frac{T_{-j,-\Ic(\ell)}^{-1}\y_j}{\left\|T_{-j,-\Ic(\ell)}^{-1}\y_j\right\|}, \y_\ell \right\rangle\,,\quad\textrm{ and } \nu = \min_{j,\ell} \left\|T_{-j,-\Ic(\ell)}^{-1}\y_j\right\| \,.
\end{equation}
Importantly, observe that the random variables $\xi_{j,\ell}$ within the same class $\ell\in \Ic_i$ are statistically independent of one another, conditioned on $\y_j$. 
Combining (\ref{eq:Tyl-inv-appendix-1})-(\ref{eq:Tyl-inv-appendix-3}) yields the following lower bound,
\begin{equation}\label{eq:Tyl-inv-appendix-3}
    \left|\frac{1}{p}\y_j^\T T^{-1}\y_\ell\right|  \ge p^{-1/2}|\xi_{j,\ell}|\frac{\nu}{\underline{\Omega}} - \frac{\overline{\Omega}}{\underline{\Omega}} \,.
\end{equation}
Next we derive high-probability bounds on $\overline{\Omega},\underline{\Omega},\nu$.
\begin{lemma}\label{lem:Tyl-inv-appendix-A}
    For a number $C_1$, let $\Ec_{\mathrm{Lem.}\ref*{lem:Tyl-inv-appendix-A}}$ be the event that: 1) $\underline{\Omega}\le C_1$; 2) $\nu\ge 1/C_1$; 3) $\overline{\Omega}\le C_1\frac{k}{n}$. 
    
    Assume $k\le \frac{1-\gamma}{1+\gamma}n-1$. There are $c,C,C_1>0$, that depend on the distribution of $Y$ and on $\gamma$, so that 
        \begin{enumerate}
        \item Assume \AssumLC. Then $\Pr(\Ec_{\mathrm{Lem.}\ref*{lem:Tyl-inv-appendix-A}}^c)\le Cn^2e^{-c\Psi_p\sqrt{k}}$.
        \item Assume \AssumIndep~or \AssumCCPSB. Then $\Pr(\Ec_{\mathrm{Lem.}\ref*{lem:Tyl-inv-appendix-A}}^c)\le Cn^2e^{-ck}$.
    \end{enumerate}
\end{lemma}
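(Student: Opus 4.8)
The plan is to exhibit a single ``regularity event'' $\Ec_{\mathrm{reg}}$ of probability at least $1-Cn^2e^{-c\Psi_p\sqrt{n}}$ under \AssumLC~(resp.\ $1-Cn^2e^{-cn}$ under \AssumIndep~or \AssumCCPSB), on which all three bounds defining $\Ec_{\mathrm{Lem.}\ref*{lem:Tyl-inv-appendix-A}}$ reduce to elementary estimates plus at most one further layer of randomness; since $k\le n$ and $\Psi_p\le 1$, the failure of $\Ec_{\mathrm{reg}}$ is dominated by the target bound. The event $\Ec_{\mathrm{reg}}$ asks, simultaneously over all $j\in[n]$ and $\ell\in[n]\setminus\{j\}$: (a) $\lambda_{\min}(T_{-j,-\Ic(\ell)})\ge c_1$, $\lambda_{\min}(T_{-\ell})\ge c_1$ and $\lambda_{\min}(T_{-\ell,-j})\ge c_1$; (b) $\|T_{-j,-\Ic(\ell)}\|\le c_2$; (c) $\tfrac12\sqrt{p}\le\|\y_j\|\le 2\sqrt{p}$. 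The crucial point for (a) is that each of the leave-out sample covariances appearing above is built from at least $n-1-k$ of the $\y_m$'s, and the hypothesis $k\le\frac{1-\gamma}{1+\gamma}n-1$ forces $p/(n-1-k)\le\frac{1+\gamma}{2}<1$; Lemma~\ref{lem:main-smin}, applied to the corresponding rescaled sample covariance with aspect ratio bounded away from $1$, then gives the claim with per-pair failure probability $e^{-c\sqrt{n}}$ (\AssumLC) or $e^{-cn}$ (\AssumIndep/\AssumCCPSB). Parts (b),(c) follow from Lemma~\ref{lem:main-smax} and from concentration of the Euclidean norm $\y\mapsto\|\y\|$ (via \AssumCCPSB~for the convex $1$-Lipschitz map $\|\cdot\|$, via Bernstein's inequality under \AssumIndep, and via Lemma~\ref{lem:log-concave-lip-concentration} under \AssumLC), at the same scale of failure probability; a union bound over the $O(n^2)$ pairs gives the stated $\Pr(\Ec_{\mathrm{reg}}^c)$.

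On $\Ec_{\mathrm{reg}}$ the bound $\underline{\Omega}\le C_1$ is essentially immediate: every factor in \eqref{eq:Tyl-Omegas-1} has the form $1+\gamma\cdot p^{-1}\y^\T B^{-1}\y$ with $B\in\{T_{-\ell},T_{-\ell,-j}\}$ \emph{independent} of the $\y$ occurring in it and, on $\Ec_{\mathrm{reg}}$, satisfying $\|B^{-1}\|\le 1/c_1$ and $p^{-1}\tr(B^{-1})\le 1/c_1$; conditioning on $B$ and invoking the Hanson--Wright-type Lemma~\ref{lem:main-quadratic} with a fixed small $\varepsilon$ bounds $p^{-1}\y^\T B^{-1}\y$ by a constant off an event of per-pair probability $e^{-cp}$ (\AssumIndep/\AssumCCPSB) or $e^{-c\Psi_p\sqrt{p}}$ (\AssumLC), and a union bound yields $\underline{\Omega}\le C_1$; here $p=\gamma n\ge\gamma k$, so this too is dominated by the target. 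The bound $\nu\ge 1/C_1$ is purely deterministic on $\Ec_{\mathrm{reg}}$: by \eqref{eq:Tyl-xi-nu}, $\|T_{-j,-\Ic(\ell)}^{-1}\y_j\|\ge\|\y_j\|/\|T_{-j,-\Ic(\ell)}\|\ge\tfrac12\sqrt{p}/c_2\ge 1/C_1$ after possibly enlarging $C_1$.

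The estimate $\overline{\Omega}\le C_1 k/n$ is the technical heart, and is where the scale $k$ (rather than $n$) enters the exponent. Fix $j\ne\ell$ and condition on $\y_j,\y_\ell$ and $T_{-j,-\Ic(\ell)}$; by the very construction of the partition $\Ic_1,\dots,\Ic_M$, the rows of $\Y_{\Ic(\ell)\setminus\{j,\ell\}}$ are then independent of all three. Writing $v:=T_{-j,-\Ic(\ell)}^{-1}\y_j$ (and likewise with $\y_\ell$), on $\Ec_{\mathrm{reg}}$ we have $\|v\|\le 2\sqrt{p}/c_1$, and $\|\Y_{\Ic(\ell)\setminus\{j,\ell\}}v\|^2=\sum_{m\in\Ic(\ell)\setminus\{j,\ell\}}(\y_m^\T v)^2$ is, conditionally, the squared Euclidean norm of an isotropic (after rescaling by $\|v\|$) random vector in $\R^{|\Ic(\ell)\setminus\{j,\ell\}|}$; this vector has i.i.d.\ sub-Gaussian coordinates under \AssumIndep/\AssumCCPSB~and is log-concave under \AssumLC, so Lemma~\ref{lem:main-quadratic} applied in this dimension $\asymp k$, with a fixed $\varepsilon$, gives $\|\Y_{\Ic(\ell)\setminus\{j,\ell\}}v\|^2\le 2|\Ic(\ell)\setminus\{j,\ell\}|\,\|v\|^2\lesssim kp$ with failure probability $e^{-ck}$ (\AssumIndep/\AssumCCPSB) or $e^{-c\Psi_p\sqrt{k}}$ (\AssumLC) --- here one uses that the Cheeger constant is polylogarithmic and varies slowly with the dimension, so $\Psi_{|\Ic(\ell)\setminus\{j,\ell\}|}$ may be replaced by $\Psi_p$ at the cost of harmless constants. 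Plugging this bound for both $v_j$ and $v_\ell$ into \eqref{eq:Tyl-Omegas-2} gives $(np)^{-1}\|\Y_{\Ic(\ell)\setminus\{j,\ell\}}v_\ell\|\,\|\Y_{\Ic(\ell)\setminus\{j,\ell\}}v_j\|\lesssim (np)^{-1}kp=k/n$, and a union bound over the $O(n^2)$ pairs yields $\overline{\Omega}\le C_1 k/n$ off an event of probability $Cn^2e^{-ck}$ (resp.\ $Cn^2e^{-c\Psi_p\sqrt{k}}$). Intersecting the four events and taking $C_1$ to be the largest constant produced completes the proof, with the $\overline{\Omega}$-bound governing the overall failure probability. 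The main obstacle, beyond this bookkeeping, is precisely the conditional-independence structure that motivates the partition into classes $\Ic_i$: one cannot control $p^{-1}\y_j^\T T^{-1}\y_\ell$ by the naive (and false) inequality \eqref{eq:Zhang-Error}, and the chained Sherman--Morrison/Woodbury reductions \eqref{eq:Tyl-inv-appendix-1}--\eqref{eq:Tyl-inv-appendix-3} are arranged so that the ``error'' quantities $\underline{\Omega},\overline{\Omega},\nu$ admit the crude-but-sufficient bounds above, while the ``main'' quantity $\xi_{j,\ell}$ depends, within a class, on $\y_\ell$ alone through a (conditionally) fixed direction.
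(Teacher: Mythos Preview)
Your proposal is correct and follows essentially the same strategy as the paper's proof: establish a regularity event controlling the extreme eigenvalues of the leave-out sample covariances $T_{-j,-\Ic(\ell)}$ (using that $k\le\frac{1-\gamma}{1+\gamma}n-1$ keeps the effective aspect ratio below $\frac{1+\gamma}{2}<1$) together with $\|\y_j\|\asymp\sqrt p$, and then treat $\overline{\Omega}$ via concentration of a quadratic form in $\asymp k$ independent samples, which is exactly what produces the $e^{-ck}$ (resp.\ $e^{-c\Psi_p\sqrt{k}}$) scale. The only cosmetic difference is that for $\underline{\Omega}$ the paper uses the crude deterministic estimate $p^{-1}\y^\T B^{-1}\y\le p^{-1}\|\y\|^2\|B^{-1}\|$ on the regularity event---so no additional Hanson--Wright step is needed there---whereas you invoke Lemma~\ref{lem:main-quadratic}; both are valid, and the paper's version is slightly simpler.
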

\begin{proof}
    We start by bounding $\underline{\Omega}$ and $\nu$. Considering their definitions, in (\ref{eq:Tyl-Omegas-1}) and (\ref{eq:Tyl-xi-nu}), it suffices to show that the following are all high-probability events (for some constant $C>0$): 
    (I) $\max_{j,\ell}\lambda_{\max}(T_{-j,\Ic(\ell)}) \le C$; (II) $\min_{j,\ell}\lambda_{\max}(T_{-j,\Ic(\ell)}) \ge 1/C$; (III) $\max_{\ell}|p^{-1}\|\y_\ell\|^2-1|\le \frac12$. Observe that $T_{-j,\Ic(\ell)}$ is, 
    up to the normalization, the sample covariance of at least $n-k-1\ge \frac{2\gamma}{1+\gamma}n=\frac{1}{1/2+\gamma/2}p$ i.i.d. measurements. Thus, 
    conditions (I) and (II) can be verified using Lemmas~\ref{lem:main-smax} and~\ref{lem:main-smin} respectively.
    Condition (III) can be verified using Lemma~\ref{lem:main-quadratic}. We omit the details.
    
    We next consider $\overline{\Omega}$, defined in (\ref{eq:Tyl-Omegas-2}). Let us bound, $\omega_{j,\ell}:=(np)^{-1}\left\| \Y_{\Ic(\ell)\setminus\{j,\ell\}}T_{-j,-\Ic(\ell)}^{-1}\y_\ell \right\|^2$, so that a high-probability bound on $\overline{\Omega}$ may be attained by union bound over $\ell,j$. Denote $u_{j,\ell}=T_{-j,-\Ic(\ell)}^{-1}\y_\ell$, $\hat{u}_{j,\ell}={u}_{j,\ell}/\|{u}_{j,\ell}\|$ so $\omega_{j,\ell}=\frac{k}{n} \cdot p^{-1}\|{u}_{j,\ell}\|^2 \cdot \hat{u}_{j,\ell}^\T \left(\frac1k\Y_{\Ic(\ell)\setminus\{j,\ell\}}^\T \Y_{\Ic(\ell)\setminus\{j,\ell\}}\right) \hat{u}_{j,\ell} $. The terms $p^{-1}\|u_{j,\ell}\|^2$ may be treated upper-bounded similarly to the previous paragraph. As for the term $\hat{u}_{j,\ell}^\T \left(\frac1k\Y_{\Ic(\ell)\setminus\{j,\ell\}}^\T \Y_{\Ic(\ell)\setminus\{j,\ell\}}\right) \hat{u}_{j,\ell}$, observe that $\frac1k\Y_{\Ic(\ell)\setminus\{j,\ell\}}^\T \Y_{\Ic(\ell)\setminus\{j,\ell\}}$ is a sample covariance matrix consisting of (at most) $k$ samples, and $\hat{u}_{j,\ell}$ is a unit vector which is statistically independent of $\Y_{\Ic(\ell)\setminus\{j,\ell\}}$. By Lemma~\ref{lem:sc-entrywise}, this quadratic form is bounded by a constant w.h.p.
    
\end{proof}
Next, we show that w.h.p., there are many large $|\xi_{j,\ell}|$-s. 
For a number $\alpha$, let $\tilde{N}_j[\alpha]$ be the number of variables $\xi_{j,\ell}$ ($1\le \ell \le n$) in row $j$, such that $|\xi_{j,\ell}|\ge \alpha$.
\begin{lemma}\label{lem:Tyl-inv-appendix-3}
    There are $c,C,\alpha_*>0$, that depend on the distribution of $Y$, so that ${\Pr(\min_{1\le j\le n} \tilde{N}_j[\alpha_*]\le 0.6n)\le Cn^2e^{-ck}}$.
    \begin{proof}
        For $1\le i \le M$, let $\tilde{N}_j^i[\alpha] = \sum_{\ell\in \Ic_i} \indic{|\xi_{j,\ell}|\ge \alpha}$, so that $\tilde{N}_j[\alpha]=\sum_{i=1}^M \tilde{N}_j^i[\alpha]$. 
        Let $\mathcal{F}_{j,i}$ be the $\sigma$-algebra generated by $\{\y_j\}\cup \{\y_\ell\}_{\ell\notin \Ic_i }$. Conditioned on $\mathcal{F}_{j,i}$, $\{\xi_{j,\ell}\}_{\ell\in\Ic_i\setminus\{j\}}$ are i.i.d. 
        Since $Y$ satisfies the SBP, Definition~\ref{def:small-ball}, there is some $\alpha_*$ such that $\Pr(|\xi_{j,\ell}|\ge \alpha_*|\mathcal{F}_{j,i})\ge 0.8$. By Hoeffding's inequality, $\Pr(\tilde{N}_j^i[\alpha_*]\le 0.7|\Ic_i\setminus\{j\}|\;|\mathcal{F}_{j,i}) \le 2e^{-c_1|\Ic_i\setminus\{j\}|} \le 2e^{-c_2k}$. Taking a union bound over $i\in [M]$, w.p. $\ge 1-2ne^{-c_2k}$ it holds that $\tilde{N}_j^i[\alpha_*]\ge 0.7|\Ic_i\setminus\{j\}|$ simultaneously for all $i$, and in particular $\sum_{i=1}^M \tilde{N}_j^i[\alpha_*] \ge 0.7(n-1)$. To finish the proof of the Lemma, take a union bound over all $1\le j \le n$.
    \end{proof}
\end{lemma}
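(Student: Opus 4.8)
The plan is to exploit the decoupling that the blocking scheme was built to provide: within a single block $\Ic_i$ all the surrogates $\xi_{j,\ell}$ with $\ell\in\Ic_i$ use one and the same direction vector, and this vector is independent of the $\y_\ell$ in that block, so conditionally on the complementary randomness they are i.i.d.\ linear functionals of fresh copies of $Y$; the small-ball property then forces each of them to be non-negligible with constant probability, and a binomial tail estimate together with union bounds over blocks and over $j$ promotes this to the desired uniform statement.

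First, fix $j\in[n]$ and a block index $i$, and let $\mathcal{F}_{j,i}$ be the $\sigma$-algebra generated by $\y_j$ together with $\{\y_m:m\notin\Ic_i\}$. Because $\Ic(\ell)=\Ic_i$ for every $\ell\in\Ic_i$, the vector entering $\xi_{j,\ell}$ is the \emph{same} for all such $\ell$, namely the unit vector $\hat{u}_{j,i}:=T_{-j,-\Ic_i}^{-1}\y_j/\|T_{-j,-\Ic_i}^{-1}\y_j\|$, and it is $\mathcal{F}_{j,i}$-measurable, being a function of $\y_j$ and $\{\y_m:m\notin\Ic_i\}$ only (it is well defined a.s.: since $\gamma<1$ and $k\le n(1-\gamma)-1$, the matrix $T_{-j,-\Ic_i}$ is the sample covariance of at least $p$ continuous i.i.d.\ vectors, hence a.s.\ invertible). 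On the other hand, $\{\y_\ell:\ell\in\Ic_i\setminus\{j\}\}$ are i.i.d.\ copies of $Y$, independent of $\mathcal{F}_{j,i}$. Consequently, conditionally on $\mathcal{F}_{j,i}$, the variables $\{\xi_{j,\ell}=p^{-1/2}\langle\hat{u}_{j,i},\y_\ell\rangle:\ell\in\Ic_i\setminus\{j\}\}$ are i.i.d., each distributed as $p^{-1/2}\langle w,Y\rangle$ for the fixed unit vector $w=\hat{u}_{j,i}$.

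Next, invoke the small-ball property (Definition~\ref{def:small-ball}), which holds under each of the three hypotheses: directly under \AssumCCPSB, via Lemma~\ref{lem:sbp-indep} under \AssumIndep, and via Lemma~\ref{lem:logconcave-smallball} under \AssumLC. Thus $\Pr(|\langle w,Y\rangle|\le s)\le C_0 s$; taking $s=0.2/C_0$ and setting $\alpha_*:=0.2/(C_0\sqrt{p})$ gives $\Pr(|\xi_{j,\ell}|\ge\alpha_*\mid\mathcal{F}_{j,i})\ge 0.8$ for every $\ell\in\Ic_i\setminus\{j\}$. Hence $\sum_{\ell\in\Ic_i\setminus\{j\}}\indic{|\xi_{j,\ell}|\ge\alpha_*}$ conditionally stochastically dominates a $\mathrm{Binomial}(|\Ic_i\setminus\{j\}|,0.8)$ random variable, and Hoeffding's inequality yields $\Pr(\sum_{\ell\in\Ic_i\setminus\{j\}}\indic{|\xi_{j,\ell}|\ge\alpha_*}<0.7\,|\Ic_i\setminus\{j\}|\mid\mathcal{F}_{j,i})\le e^{-c_0|\Ic_i\setminus\{j\}|}$ for a universal $c_0>0$; since $|\Ic_i|\ge k/2$, this is $\le e^{-c_1 k}$ after taking expectations. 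A union bound over the $M=\lceil n/k\rceil\le n$ blocks shows that with probability at least $1-ne^{-c_1 k}$ this lower bound holds for all $i$ simultaneously, and then, summing over $i$ and using $\sum_i|\Ic_i\setminus\{j\}|=n-1$, $\tilde N_j[\alpha_*]\ge 0.7(n-1)\ge 0.6n$ (for $n\ge 7$; for smaller $n$ the assertion is trivial on enlarging $C$). A final union bound over $1\le j\le n$ gives $\Pr(\min_{1\le j\le n}\tilde N_j[\alpha_*]\le 0.6n)\le n^2 e^{-c_1 k}$, which is of the claimed form $Cn^2e^{-ck}$ (with $\alpha_*$ of order $p^{-1/2}$, the prefactor depending only on the small-ball constant $C_0$ of $Y$).

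The one genuinely delicate point—indeed the whole reason the blocking is introduced—is that one must excise from the sample covariance not merely $\y_j$ but the \emph{entire} class $\Ic(\ell)$, so that the direction vector is both common to that class and independent of it, which is exactly what makes the within-block indicators conditionally i.i.d. The naive quantities $\gamma(p^{-1}\y_j^\T T^{-1}\y_\ell)^2$ are globally coupled through $T$ and admit no such clean tail bound; mishandling this coupling is precisely the gap in \cite[Lemmas 3.3--3.4]{zhang2016marvcenko}. Everything else—the small-ball lower bound, the conditional binomial tail, and the two union bounds—is routine.
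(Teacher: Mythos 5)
Your proof is correct and follows essentially the same route as the paper's: partition $[n]$ into blocks, condition on $\mathcal{F}_{j,i}$ so that the within-block $\xi_{j,\ell}$ become i.i.d., invoke the small-ball property to get conditional success probability $0.8$, then apply Hoeffding and union bounds over blocks and over $j$. The extra detail you supply --- that all $\ell\in\Ic_i$ share the single $\mathcal{F}_{j,i}$-measurable direction $\hat u_{j,i}=T_{-j,-\Ic_i}^{-1}\y_j/\|T_{-j,-\Ic_i}^{-1}\y_j\|$, which is what makes the indicators conditionally i.i.d. --- is precisely the point the paper leaves implicit. One remark: your explicit choice $\alpha_*=0.2/(C_0\sqrt{p})$ is the honest consequence of the $p^{-1/2}$ prefactor in the definition (\ref{eq:Tyl-xi-nu}) of $\xi_{j,\ell}$, whereas the lemma statement presents $\alpha_*$ as depending only on the law of $Y$; this is a normalization quirk of the paper (compensated downstream by the fact that $\nu$ in (\ref{eq:Tyl-xi-nu}) is genuinely of order $\sqrt{p}$, not merely $\ge 1/C_1$ as recorded in Lemma~\ref{lem:Tyl-inv-appendix-A}), and it does not affect the validity of your argument.
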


We are ready to conclude the proof of Lemma~\ref{lem:Tyl-inv-bound}. Set $k=c_K n^{1/2}$, for a small constant $c_K>0$, to be chosen momentarily. By Lemma~\ref{lem:Tyl-inv-appendix-A} and Eq. (\ref{eq:Tyl-inv-appendix-3}), there are $C_1,C_2,\alpha_*$ such that w.h.p.,
\[
p^{-1}|\y_j T^{-1}\y_\ell| \ge n^{-1/2}(C_1|\xi_{j,\ell}|-C_2c_K)    \quad \textrm{for all}\; j,l\in [n],\,\ell \ne j 
\]
and 
\[
    \tilde{N}_j[\alpha_*]\ge 0.6n \quad \textrm{ for all } j\in[n]    \,.
\]
Accordingly, choose $c_K$ so that $C_2c_K\le 0.5C_1\alpha_*$.
Recall, by Eq. (\ref{eq:Tyl-Zhang-1}), that $A_{j,\ell}=\gamma \left(p^{-1}|\y_j T^{-1}\y_\ell|\right)^2$. Taking $\beta_0=\gamma(0.5C_1\alpha_*)^2$, 
observe that the high-probability event above implies that $\min_{1\le j \le n}N_j[\beta_0]\ge 0.6n$, that is,
each row $j$ of $A$ contains at least $0.6n$ entries $A_{j,\ell}$ satisfying $A_{j,\ell}\ge \beta_0/n$. As explained in the beginning of this section, this establishes the proof of the Lemma.
\qed

\subsection{Proof of Lemma~\ref{lem:Tyl-smooth}}
\label{sec:proof:lem:Tyl-smooth}

Denote $S(\w)=\frac1n \sum_{i=1}^n w_i \x_i\x_i $, $T(\w)=\frac1n \sum_{i=1}^n w_i\y_i\y_i^\T$, so that $\x_j^\T S^{-1}(\w)\x_\ell = \y_j^\T T^{-1}(\w)\y_\ell$.
By (\ref{eq:Tyl-hessian}), 
\begin{align*}
    \left| \left(\nabla g_\beta(\w)-\nabla g_\beta(\oneVec)\right)_{j,\ell} \right| 
= \left| \frac{1}{w_\ell^2}-1\right|\indic{j=\ell} + \frac{1}{np}\left| (\y_j^\T T^{-1}(\oneVec)\x_\ell)^2-(\y_j^\T T^{-1}(\w)\y_\ell)^2\right| \,.
\end{align*}
If $\|\w-\oneVec\|_\infty\le 1/2$ then $\left| \frac{1}{w_\ell^2}-1\right| = \left| \frac{1}{w_\ell^2}+\frac{1}{w_\ell}\right| |1-w_\ell|\le 6\|\w-\oneVec\|_\infty$. And so, for all $1\le j \le n$,
\[
\sum_{\ell=1}^n \left| \left(\nabla g_\beta(\w)-\nabla g_\beta(\oneVec)\right)_{j,\ell} \right| \le 6\|\w-\oneVec\|_\infty + \frac{1}{np} \sum_{\ell=1}^n \left| (\y_j^\T T^{-1}(\oneVec)\y_\ell)^2-(\y_j^\T T^{-1}(\w)\y_\ell)^2\right| \,.
\]
Write 
\begin{align*}
    (\y_j^\T T^{-1}(\oneVec)\y_\ell)^2-(\y_j^\T T^{-1}(\w)\y_\ell)^2 
    &= \left((\y_j^\T T^{-1}(\oneVec)\y_\ell)+(\y_j^\T T^{-1}(\w)\y_\ell)\right) \left( (\y_j^\T T^{-1}(\oneVec)\y_\ell)-(\y_j^\T T^{-1}(\w)\y_\ell)\right) \\
    &= \y_j^\T \left( T^{-1}+T^{-1}(\w) \right) \y_l \cdot \y_j^\T \left( T^{-1}-T^{-1}(\w) \right) \y_l \,.
\end{align*}
By
Cauchy-Schwartz, $\frac{1}{np}\sum_{\ell=1}^n \left| (\y_j^\T T^{-1}(\oneVec)\y_\ell)^2-(\y_j^\T T^{-1}(\w)\y_\ell)^2\right|\le  (I_1 I_2)^{1/2}$, where
\begin{equation*}
    \begin{split}
        &I_1 = \frac{1}{np}\sum_{\ell=1}^n \left( \y_j^\T \left( T^{-1}+T^{-1}(\w) \right) \y_l \right)^2 = p^{-1}\y_j^\T \left( (T^{-1}+T^{-1}(\w))T(T^{-1}+T^{-1}(\w)) \right) \y_j\,,\\
        &I_2 = \frac{1}{np}\sum_{\ell=1}^n \left( \y_j^\T \left( T^{-1}-T^{-1}(\w) \right) \y_l \right)^2 = p^{-1}\y_j^\T \left( (T^{-1}-T^{-1}(\w))T(T^{-1}-T^{-1}(\w)) \right) \y_j\,.
    \end{split}
\end{equation*}
$\|\w-\oneVec\|_\infty\le 1/2$ implies $\|T^{-1}+T^{-1}(\w)\|\le 3\|T^{-1}\|$, and $\|T^{-1}-T^{-1}(\w)\|= \|T^{-1}(T(\w)-T)(T^{-1}(\w))\|\le (3/2)\|T^{-1}\|^2\|T\|\|\w-\oneVec\|_\infty$. Thus, for numerical $c>0$, ${(I_1I_2)^{1/2}\le cp^{-1}\|\y_{j}\|^2\|T^{-1}\|^3\|T\|^2\|\w-\oneVec\|_\infty}$. We get
\begin{align*}
    \|\nabla g_\beta(\w)-\nabla g_\beta(\oneVec)\|_{\infty,\infty}
    &=\max_{1\le j \le n} \sum_{\ell=1}^n \left| \left(\nabla g_\beta(\w)-\nabla g_\beta(\oneVec)\right)_{j,\ell} \right| 
    \le \left(3 + c\|T^{-1}\|^3\|T\|^2\max_{1\le j \le n}p^{-1}\|\y_j\|^2\right)\|\w-\oneVec\|_\infty \,.
\end{align*}
For conclude the proof, recall  that w.h.p.: 1) $\|T\|\le C_1$ (by Lemma~\ref{lem:main-smax}); 2) $\|T^{-1}\|\le C_2$ (by Lemma~\ref{lem:main-smin}); and 3) $\max_{1\le j \le p} p^{-1}\|\y_j\|^2\le C_3$ (by Lemma~\ref{lem:main-quadratic} and a union bound over $1\le j \le n$).
\qed

\section{Proof of Lemmas for Theorems~\ref{thm:main_maronna_reg} (MRE) and \ref{thm:main_tyler_reg} (TRE)}

\subsection{Proof of Lemma~\ref{lem:MRE-1}}
\label{sec:proof-lem:MRE-1}

Write $\hat{Q}_i(d)-Q(d)=\Delta_1+\Delta_2$ where
\begin{align*}
    &\Delta_1 = p^{-1}\x_i^\T(\phi(d)S_{-i}+\alpha d \Id)^{-1}\x_i-p^{-1}\tr \trueCov(\phi(d)S_{-i}+\alpha d \Id)^{-1} \,, \\ 
    &\Delta_2 = p^{-1}\tr \trueCov(\phi(d)S_{-i}+\alpha d \Id)^{-1} - p^{-1}\E\tr \trueCov(\phi(d)S_{-i}+\alpha d \Id)^{-1} \,. 
\end{align*}

To bound $|\Delta_1|$, use Lemma~\ref{lem:main-quadratic}, applied to the quadratic form $p^{-1}\y_i^\T B \y_i$, where $B=\trueCov^{1/2}(\phi(d)S_{-i}+\alpha d \Id)^{-1}\trueCov^{1/2}$. Since $\|\trueCov\|\le s_{\max}$ and $d\ge d_0$ then $\|B\|\le s_{\max}(\alpha d_0)^{-1}$. 
To bound $|\Delta_2|$, use Lemma~\ref{lem:resolvent-conc} with $S_n=S_{-i}$, $C=\trueCov$, 
$A=\alpha d \Id$. 

\qed

\subsection{Proof of Lemma~\ref{lem:MRE-2}}
\label{sec:proof-lem:MRE-2}

We first show that $d^*$ indeed exists. By (\ref{eq:proof-mre-4}), the functions $Q,F$ are continuous and strictly decreasing. 
Since $\phi(d)=du(d)$ and $u$ is bounded, then $\lim_{d\to0}\phi(d)=0$. This, in turn, implies
$\lim_{d\to 0}Q(d)=\infty$ and $\lim_{d\to 0}F(d)=\infty$. Moreover, $\lim_{d\to \infty}Q(d)=0$, and so $ \lim_{d\to \infty}F(d)=0$. 
Consequently, $d^*=F^{-1}(1)$ exists uniquely.

Next, we bound $d^*$. To this end, we first upper bound $F$. By Eq. (\ref{eq:proof-mre-4}), 
$Q(d) \le p^{-1}\E \tr\trueCov(\alpha d \Id)^{-1} = \trCov/(\alpha d)$, where $\trCov=p^{-1}\tr\trueCov \le s_{\max}$. Since $Q(d)\ge 0$, clearly $F(d)\le (1+\alpha)Q(d)\le (s_{\max}\frac{1+\alpha}{\alpha})d^{-1}$. Setting $d=d^*$, $F(d^*)=1$, we conclude $d^* \le \overline{d}=\frac{1+\alpha}{\alpha}s_{\max}$.

To lower bound $d^*$, we need a lower bound on $Q$.
Clearly, $S_{-i}\preceq \|S_{-i}\|\Id $, 
so
\begin{equation}\label{eq:Q-lb}
    Q(d) \ge
    p^{-1}\tr \trueCov \E \left(\phi(d)\|S_{-i}\|\Id+\alpha d \Id\right)^{-1}  \ge \trLB\cdot \E\frac{1}{\phi(d)\|S_{-i}\|+\alpha d} \,.
\end{equation}
By Lemma~\ref{lem:main-smax}, there is some $C_0>0$ such that $\|S_{-i}\|\le s_{\max} C_0$ holds w.p. $\ge \frac12$. Moreover, $\phi(d)=u(d)d\le u(0)d$, since $u$ is decreasing. Combining this with (\ref{eq:Q-lb}) yields $Q(d)\ge \frac12 \frac{\trLB}{u(d)dC_0 s_{\max}+\alpha d} = C_1/d$.
Plugging  this bound and the previously derived upper bound $Q(d)\le C_2/d$ into (\ref{eq:proof-mre-4}) yields $F(d)\ge  (1+\alpha)\frac{C_1/d}{1+\gamma \phi(d)(C_2/d)} \ge (1+\alpha) \frac{C_1/d}{1+\gamma u(0)C_2}$, 
from which a lower bound on $d^*$ follows.

Finally, it remains to show that for some $\eta$, $|F(d_1)-F(d_2)|\ge \eta|d_1-d_2|$ inside the interval $[\underline{d},\overline{d}]$. Let $d_1\le d_2$. 
Then,
\begin{align*}
    (1+\alpha)^{-1}(F(d_1)-F(d_2)) 
    &= \frac{Q(d_1)}{1+\gamma \phi(d_1)Q(d_1)} - \frac{Q(d_2)}{1+\gamma \phi(d_2)Q(d_2)} 
    = \frac{Q(d_1)-Q(d_2)+\gamma (\phi(d_2)-\phi(d_1))Q(d_1)Q(d_2)}{(1+\gamma \phi(d_1)Q(d_1))(1+\gamma \phi(d_2)Q(d_2))} \\
    &\overset{(\star)}{\ge} \frac{Q(d_1)-Q(d_2)}{(1+\gamma \phi(d_1)Q(d_1))(1+\gamma \phi(d_2)Q(d_2))}\,,
\end{align*}
where in $(\star)$ we used $\phi(d_2)-\phi(d_1)\ge 0$, since $\phi$ is non-decreasing. The denominator is upper bounded by a constant inside the interval,
and the numerator is non-negative. Thus, it suffices to show that $Q(d_1)-Q(d_2)\ge \eta_0(d_1-d_2)$ for some $\eta_0>0$. Since $\phi$ is non-decreasing,
$
Q(d_1) = p^{-1}\E\tr\trueCov(\phi(d_1)S_{-i}+\alpha d_1\Id)^{-1} \ge p^{-1}\E \tr\trueCov(\phi(d_2)S_{-i}+\alpha d_1\Id)^{-1}
$,
and so
\[
Q(d_1)-Q(d_2) \ge p^{-1}\E\tr\trueCov (\phi(d_2)S_{-i}+\alpha d_2\Id)^{-1})( (d_1-d_2)\Id ) (\phi(d_2)S_{-i}+\alpha d_1\Id)^{-1} \,.
\]
Consider the matrix $A=(\phi(d_2)S_{-i}+\alpha d_2\Id)^{-1})(\phi(d_2)S_{-i}+\alpha d_1\Id)^{-1}$. It is positive, being the product of {commuting} positive matrices. Consequently, $Q(d_1)-Q(d_2)\ge (d_1-d_2)p^{-1}\E \Tr \trueCov A \ge (d_1-d_2)\trCov \E\lambda_{\min}(A)$. Lastly, $\E\lambda_{\min}(A)\ge \eta_0$ for some $\eta_0$ since, by Lemma~\ref{lem:main-smax}, for some $C$, $\|S_{-i}\|\le Cs_{\max}$ holds w.h.p.

\qed

\subsection{Proof of Lemma~\ref{lem:TRE-1}}
\label{sec:proof-lem:TRE-1}

We focus on Item 1), namely showing the existence and boundedness of $d^*$. The proof of Item 2) is identical to the corresponding part in Lemma~\ref{lem:MRE-2} (MRE).
Using $\phi(d)=1$ in (\ref{eq:proof-mre-4}), $F(d^*)=1$ is equivalent to 
\begin{equation}\label{eq:TRE-proof-1}
    Q(d^*) = \frac{1}{1+\alpha-\gamma} \,,\quad \textrm{ where }\quad Q(d)=p^{-1}\E\Tr \trueCov(S_{-i}+\alpha d \Id)^{-1} \,.
\end{equation}
Since $\alpha>\max\{0,\gamma-1\}$ then $\frac{1}{1+\alpha-\gamma}>0$. 
The function $Q$ is positive, continuous and strictly decreasing, with $\lim_{d\to\infty} Q(d)=0$. 
We have $Q(d)\le s_{\max}/(\alpha d)$, and therefor if a solution $Q(d^*)=(1+\alpha-\gamma)^{-1}$ exists, then necessarily $d^* \le \overline{d} \le \frac{s_{\max}}{\alpha}(1+\alpha-\gamma)$. As for establishing existence, by continuity it suffices to show that $\lim_{d\to0} Q(d)>(1+\alpha-\gamma)^{-1}$; in other words, we need to study the behavior of $Q(d)$ near $d=0$. To this end, we consider separately the regimes $\gamma<1$ and $\gamma\ge 1$, noting that $S_{-i}$ is only invertible (w.p. 1) in the regime $p/n=\gamma<1$. 

\paragraph{The case $\gamma<1$} Note that $(1+x)^{-1}\ge 1-x$ for all $x\ge 0$. Thus, for any non-negative matrix $P$, $(I+P)^{-1}\succeq I-P$. Write $\trueCov^{1/2}(S_{-i}+\alpha d \Id)^{-1}\trueCov^{1/2} 
    = (T_{-i}+\alpha d \trueCov^{-1})^{-1} 
    = T_{-i}^{-1/2}(\Id + \alpha d T_{-i}^{-1/2}\trueCov^{-1}T_{-i}^{-1/2})T_{-i}^{-1/2}$, and so $\trueCov^{1/2}(S_{-i}+\alpha d \Id)^{-1}\trueCov^{1/2} \succeq T_{-i}^{-1} - \alpha d T_{-i}^{-1}\trueCov^{-1} T_{-i}^{-1} 
    \succeq T_{-i}^{-1} - \alpha d \trueCov^{-1}/\lambda_{\min}(T_{-i})^2$. 
For an event $\Ec$, denote for brevity $\E^{\Ec}[\cdot] = \E[\cdot \indic{\Ec}]$. 
By Lemmas~\ref{lem:main-technical-minus-i} and \ref{lem:main-smin}, assuming large enough $n$, there is $C_*=C_*(\gamma,Y,\alpha)$ such that for the event $\Ec=\{\lambda_{\min}(T_{-i})\ge C_*\}$, we have $\E^{\Ec}[p^{-1}\tr T_{-i}^{-1}] \ge \frac12 (\frac{1}{1-\gamma} + \frac{1}{1-\gamma+\alpha})$. Thus,
\[
Q(d)\ge p^{-1}\E^{\Ec}\tr\trueCov^{1/2}(S_{-i}+\alpha d \Id)^{-1}\trueCov^{1/2} 
\ge p^{-1}\E^{\Ec}T_{-i}^{-1} - \alpha d p^{-1}\E^{\Ec}\tr \trueCov^{-1}/\lambda_{\min}(T_{-i})^2
\ge \frac12 \left(\frac{1}{1-\gamma} + \frac{1}{1-\gamma+\alpha}\right) - \frac{\alpha }{C_*^2} \trInvUB \cdot d \,.
\]
This implies that $\lim_{d\to 0}Q(d)>(1-\gamma+\alpha)^{-1}$;  moreover,  
setting $d=d^*$, yields an explicit lower bound on $d^*$.

\begin{remark}\label{remark:goes-alpha}
    When $\alpha$ is sufficiently large, we can obtain a lower bound $\underline{d}$ which does not depend on $p^{-1}\tr\trueCov^{-1}$, similarly to the analysis of \cite{goes2017robust}. 
    They use (\ref{eq:Q-lb}), recalling that by Lemma~\ref{lem:main-smax},  there is $\overline{C}=\overline{C}(\gamma,Y)$ such that $\Pr(\|S_{-i}\|\le \overline{C} s_{\max})=1-o(1)$. Thus, $Q(d)\ge (1-o(1))\trCov (\overline{C}s_{\max} + \alpha d)^{-1}$, which
    yields a positive lower bound on $d^*$ whenever $\frac{1}{1+\alpha-\gamma} < \frac{\trLB}{\overline{C}s_{\max}} $, that is, $\alpha > \gamma - 1 + \overline{C}s_{\max}/\trLB$. The resulting lower bound may be arbitrarily better (larger) than the previously derived lower bound (which depends on $\trInvUB$), since $p^{-1}\tr\trueCov^{-1}$ may be very large when $\trueCov$ has only one eigenvalue close to $0$.
\end{remark}

\paragraph{The case $\gamma\ge 1$} 
We analyze this case
essentially by reduction 
to the case $\gamma<1$. Making explicit the dependence of $Q$ on $n$, denote $Q_n(d)=p^{-1}\E\tr \trueCov(S^{(n)}_{-i}+\alpha d \Id)^{-1}$, where $S^{(n)}=n^{-1}\sum_{j=1}^n \x_j\x_j^\T$ is the sample covariance of $n$ i.i.d. measurements. 
For an integer $m\ge 0$, let $\x_{n+1},\ldots,\x_{n+m}$ be $m$ new i.i.d. samples from $X$. W.p. 1,
\begin{align*}
    \tr \trueCov(S^{(n)}_{-i}+\alpha d \Id)^{-1} 
    &\ge p^{-1}\Tr \trueCov\left( \frac{1}{n}\sum_{j=1}^{n+m-1}\x_j\x_j^\T + \alpha d \Id \right)^{-1} 
    = \frac{n}{n+m} p^{-1}\Tr \trueCov\left( S^{(n+m)}_{-i} + \alpha \frac{n}{n+m} d \Id \right)^{-1}\,.
\end{align*}
Consequently, $Q_n(d)\ge \frac{n}{n+m}Q_{n+m}\left( \frac{n}{n+m}d \right)$. For $n+m-p=\Omega(p)$, Lemma~\ref{lem:main-technical-minus-i} implies, assuming $n$ is large, that $Q_{n+m}(0)\ge 0.99 \frac{1}{1-\frac{p}{n+m}}$, hence $\frac{n}{n+m}Q_{n+m}(0)\ge 0.99\frac{n}{n+m-p}=0.99\frac{1}{1+m/n-\gamma}$. Clearly, we may set $m=c_0 n$ for some $c_0=c_0(\gamma,\alpha)$ such that $0.99\frac{1}{1+c_0-\gamma}$ is larger than $\frac{1}{1-\gamma+\alpha}$ by a constant. We can then lower bound $Q_{n+m}(d)$ following the same argument as in the case $\gamma<1$, noting that $S^{(n+m)}$ is invertible w.p. $1$; we omit the details.
\qed

\section{Proof of Additional Technical Lemmas and Symmetrization Properties}

\subsection{Proof of Lemma~\ref{lem:main-smin}}
\label{sec:proof:lem:main-smin}

Recall that under either assumption on $Y$, \AssumIndep, \AssumCCPSB~or \AssumLC, it satisfies the SBP with some constant $C_0$ (for \AssumIndep~and \AssumLC, see Lemmas~\ref{lem:sbp-indep} and \ref{lem:logconcave-smallball} respectively). Let $\Y \in \R^{n\times p}$ be the matrix whowse rows are $\y_1^\T,\ldots,\y_n^\T$. Since $T=n^{-1}\Y^\T \Y$, our goal is to show that w.h.p.,
$\sigma_{\min}(\Y) = \min_{v\in \Sp} \|\Y v\| = \Omega(\sqrt{n})$.

The following proof is based on \cite[Corollary 4.6]{rudelson2013lecture}.
We first show that for any \emph{fixed} $v\in \Sp$, $\|\Y v\|$ is large w.h.p.; the desired result will then follow by a standard net argument. Let $t>0$ and $s\in (0,1)$; observe that whenever $\|\Y v\|^2\le tn$, then there are at most $sn$ entries of $\Y v$ for which $|(\Y v)_i|^2>t/s$; equivalently, there are (at least) $(1-s)n$ entries for which $|(\Y v)_i|^2 \le t/s$. 
Note that all $n$ rows of $\Y v$ are i.i.d., with the same law as $v^\T Y$. Taking a union bound over all possible subsets $S$ of $[n]$ with size $(1-s)n$, corresponding to ``small'' coordinates in $\Y v$, 
\begin{equation}
    \label{eq:smin-proof-1}
    \Pr\left( \|\Y v\|^2 \le tn \right) \le \binom{n}{(1-s)n}\left( \Pr(|v^\T Y|^2\le t/s) \right)^{(1-s)n} \le \left( C_0 \sqrt{\frac{t}{s}} \frac{e}{(1-s)} \right)^{(1-s)n} \,.
\end{equation}
Above, we used $\binom{n}{k}\le (en/k)^k$ and the small-ball property for $Y$, Definition~\ref{def:small-ball}. 

Let $\epsilon_0\in (0,1)$, to be chosen later, and let $\Nc$ be an $\epsilon_0$-net of $\Sp$ of minimal size. By a standard packing argument \cite[Lemma 5.2]{Vershynin}, $|\Nc|\le (1+\frac{2}{\epsilon_0})^p \le (3/\epsilon_0)^p = (3/\epsilon_0)^{\gamma n}$. Now,
\begin{equation}
    \sigma_{\min}(\Y) = \min_{v\in \Sp} \|\Y v\| \ge \min_{v\in \Sp} \min_{v^*\in \Nc} \left\{ \|\Y v^*\| - \|\Y (v-v^*)\|  \right\} \ge \min_{v^*\in \Nc} \|\Y v^*\| - \sigma_{\max}(\Y)\epsilon_0 \,.
\end{equation}
By Lemma~\ref{lem:main-smax}, there is $C_1$ such that w.p. $\ge 1-e^{-c\sqrt{n}}$, $\sigma_{\max}(\Y)\le C_1\sqrt{n}$. Thus, it suffices to show that for some fixed $\epsilon_0$, w.h.p.,  $\min_{v^*\in \Nc} \|\Y v^*\| > 2C_1 \epsilon_0\sqrt{n}$. Using \eqref{eq:smin-proof-1} with $t=(2C_1\epsilon_0)^2$,
\begin{equation}
\label{eq:smin-proof-3}
    \begin{split}
        \Pr\left( \min_{v^*\in \Nc}\|\Y v^*\| \le 2C_1\epsilon_0 \sqrt{n} \right) 
        &\le |\Nc| \left(  \frac{2e C_0C_1}{\sqrt{s}(1-s)} \epsilon_0 \right)^{(1-s)n} 
        \le \left( C_3 (\sqrt{s}(1-s))^{s-1} \epsilon_0^{1-s-\gamma} \right)^n \,.
    \end{split}
\end{equation}
Recall that $\gamma<1$, and fix any $s\in (0,1-\gamma)$. As $\epsilon_0\to 0$, the RHS of (\ref{eq:smin-proof-3}) tends to zero. Thus, for all small enough (but constant) $\epsilon_0$, the RHS of (\ref{eq:smin-proof-3}) is $\le e^{-C_4 n}$. As discussed above, this concludes the proof of the Lemma.
\qed

\subsection{Proof of Lemma~\ref{lem:main-quadratic}}
\label{sec:proof:lem:main-quadratic}

We prove Lemma~\ref{lem:main-quadratic} assuming $Y$ is an isotropic log-concave random vector.
Denote the ball $\Bc = \left\{ y\in \R^p\,:\,\|y\|\le 2\sqrt{p} \right\}$,
and let $Y_{\Bc}$ be a random vector distributed according to the law of $Y$, conditioned on $Y\in \Bc$. Clearly,
\begin{align*}
    \Pr\left( \left| p^{-1}Y^\T A Y - p^{-1}\tr(A) \right| \ge \varepsilon\|A\| \right) 
    &\le \Pr(Y\notin \Bc) + \Pr\left( \left| p^{-1}Y_\Bc^\T A Y_\Bc - p^{-1}\tr(A) \right| \ge \varepsilon\|A\| \right) \\
    &\le \Pr(Y\notin \Bc) + \Pr\left( \left| p^{-1}Y_\Bc^\T A Y_\Bc - \E\left[ p^{-1}Y_\Bc^\T A Y_\Bc \right] \right| \ge (\varepsilon-\varepsilon_0)\|A\| \right) \,,
\end{align*}
where $\epsilon_0 = \|A\|^{-1} p^{-1}\left| \E Y_\Bc^\T A Y_\Bc - \E Y^\T A Y \right|$.
Since $\E\|Y\|\le \sqrt{p}$, 
Lemma~\ref{lem:log-concave-lip-concentration} implies that $\Pr(Y\notin \Bc)\le e^{-c_1\Cheeger\sqrt{p}}$. Observe that $Y_\Bc$ is a log-concave random vector, being the restriction of $Y$ onto a \emph{convex} set. Moreover, for any $u\in \Sp$,
\[
\mathrm{Var}(u^\T Y_{\Bc}) \le \E(u^\T Y_{\Bc})^2 \le \frac{\E(u^\T Y)^2}{\Pr(Y\in \Bc)} = 1 + O(e^{-c\Cheeger\sqrt{p}})=O(1) \,, 
\]
hence $\|\mathrm{Cov}(Y_\Bc)\| = O(1) $. Since the function $y\mapsto p^{-1}y^\T A y$ is $L=O(\|A\|p^{-1/2})$-Lipschitz on $\Bc$, Lemma~\ref{lem:log-concave-lip-concentration} implies
\begin{align*}
    \Pr\left( \left| p^{-1}Y_\Bc^\T A Y_\Bc - \E\left[ p^{-1}Y_\Bc^\T A Y_\Bc \right] \right| \ge (\varepsilon-\varepsilon_0)\|A\| \right) 
    \le e^{-c \Cheeger \frac{(\varepsilon-\varepsilon_0)\|A\|}{L}} \le e^{-c_2\Cheeger \sqrt{p}(\varepsilon-\varepsilon_0)} \,,
\end{align*}
so that 
\[
\Pr\left( \left| p^{-1}Y^\T A Y - p^{-1}\tr(A) \right| \ge \varepsilon\|A\| \right)  \le e^{-c_1\Cheeger \sqrt{p}} + e^{-c_2\Cheeger \sqrt{p}(\varepsilon-\varepsilon_0)} \le C_3e^{-c_3\Cheeger \sqrt{p}(\varepsilon-\varepsilon_0)} \,.
\]
It remains to show that $\varepsilon_0 = O( (\Cheeger\sqrt{p})^{-1})$. Decompose $\E\left[ Y^\T A Y \right] = \E\left[ Y_\Bc^\T A Y_\Bc \right]\Pr(Y\in \Bc) + \E\left[ Y^\T A Y \cdot \indic{Y\notin \Bc} \right]$, so
\begin{align*}
    \varepsilon_0 
    \le  \|A\|^{-1}{ p^{-1}\left| \E\left[ Y_\Bc^\T A Y_\Bc \right] \right|\Pr(Y\notin \Bc)} + \|A\|^{-1}p^{-1}{\left| \E\left[ Y^\T A Y  \indic{Y\notin \Bc} \right] \right|}
    \le O(e^{-c_1\Cheeger\sqrt{p}}) + p^{-1}\E\left[\|Y\|^2  \indic{\|Y\|^2>4p} \right] \,.
\end{align*}
It remains to bound the second term above. Use
\begin{align*}
    \E\left[\|Y\|^2 \indic{\|Y\|^2>4p} \right] 
    &= \int_{0}^\infty \Pr\left( \|Y\|^2  \indic{\|Y\|^2>4p} \ge t \right)dt  
     \\
    &= 4p \Pr(\|Y\|^2\ge 4p) + \int_{4p}^{\infty} \Pr(\|Y\|^2\ge t)dt 
    = O(pe^{-c_1\Cheeger\sqrt{p}}) + \int_{4p}^{\infty} \Pr(\|Y\|\ge \sqrt{t})dt \,.
\end{align*}
By Lemma~\ref{lem:log-concave-lip-concentration}, 
$\Pr(\|Y\|\ge \sqrt{t})\le e^{-c_4\Cheeger(\sqrt{t}-\sqrt{p})}$. Moreover, when $\sqrt{t}\ge 2\sqrt{p}$, we have $\sqrt{t}-\sqrt{p}\ge \frac12\sqrt{t}$. Thus,
\[
\int_{4p}^{\infty} \Pr(\|Y\|\ge \sqrt{t})dt \le \int_{4p}^{\infty} e^{-(c_4/2)\Cheeger\sqrt{t}} dt \le e^{-(c_4/4)\Cheeger\sqrt{4p}}\int_{4p}^{\infty} e^{-(c_4/4)\Cheeger\sqrt{t}} dt = O(e^{-c_5\Cheeger\sqrt{p}})\,,
\]
and we are done. 

\subsection{Relaxing the zero mean assumption}
\label{sec:appendix-zero-mean}

As described in Section~\ref{sec:discussion}, we considered the symmetrization procedure of \cite{dumbgen1998tyler} to relax the zero mean assumption. We note that under the elliptical model, with $Y$ uniform on the sphere, this procedure is especially appealing, as  
the scaled difference
$(zY-z'Y')/R$ with $R=\sqrt{z^2+z'^2}$
is also uniformly distributed on the sphere. 

Here we show that our main results continue to hold under a data distribution of the form $X=\trueCov^{1/2}Y^{\circ}$, where $Y^{\circ}=\zeta Y + \zeta' Y'$ and $\zeta=z/R$, $\zeta'=-z'/R$.  
By construction, the random vector $Y^{\circ}$ is isotropic; however, since $z,z'$ are arbitrary, $Y^{\circ}$ in general does not inherit the favorable distributional properties of $Y$. Fortunately, our analysis does not require these properties in their full detail. In fact, to carry out the proofs, it suffices to verify that $Y^{\circ}$ satisfies the following:

\begin{itemize}
    \item {\it Small-ball property:} $Y^{\circ}$ satisfies the SBP. To see this, observe that w.p. 1, either $|\zeta|\ge 1/\sqrt{2}$ or $|\zeta'|\ge 1/\sqrt{2}$ (because $\zeta^2+{\zeta'}^2=1$). Condition on $\zeta,\zeta'$ and  assume w.l.o.g. that $|\zeta|\ge 1/\sqrt{2}$. Then $|Y^{\circ}-a|\le t$ implies that $|Y-(a-\zeta'Y')/\zeta|\le t/|\zeta| \le \sqrt{2}t$. Since $(a-\zeta'Y')/\zeta$ is independent of $Y$, $\Pr(|Y^{\circ}-a|\le t|\zeta,\zeta') \le \Pr(|Y-(\zeta'Y'+a)/\zeta|\le  \sqrt{2}t|\zeta,\zeta')\le \sqrt{2}C_0 t$, where $C_0$ is the small-ball constant of $Y$.
    
    \item {\it Eigenvalue bounds for the sample covariance:} Let $S^{\circ}=n^{-1}\sum_{i=1}^n \y_i^{\circ}{\y_i^\circ}^\T$ be the sample covariance matrix of $n$ $Y^{\circ}$-distributed measurements. Also denote $S=n^{-1}\sum_{i=1}^n \y_i\y_i^\T$, $S'=n^{-1}\sum_{i=1}^n \y'_i{\y'}_i^\T$. 
    
    First, we need a high-probability bound on $\lambda_{\max}(S^\circ)$. Observe that for any $u$, by Cauchy-Schwartz, $(u^\T Y^{\circ})^2 = (\zeta u^\T Y + \zeta' u^\T Y)^2 \le (\zeta^2+{\zeta'}^2)( (u^\T Y)^2 + (u^\T Y')^2) = (u^\T Y)^2 + (u^\T Y')^2$. Consequently, $\lambda_{\max}(S^\circ) \le \lambda_{\max}(S) + \lambda_{\max}(S')$, which may be bounded w.h.p. using Lemma~\ref{lem:main-smax}.
    Next, when $\gamma<1$ we need a high-probability lower bound on $\lambda_{\min}(S^\circ)$. To this end, one can follow the proof of Lemma~\ref{lem:main-smin}.  To carry it out, we needed two components: the SBP, and a high-probability upper bound on $\lambda_{\max}(S^\circ)$; as explained, both hold.
    
    \item {\it Concentration for quadratic forms:}
    While complicated functions of $Y^\circ$ should not be expected to concentrate, since $\zeta,\zeta'$ are  arbitrary, concentration of quadratic forms is maintained due to their bilinear nature. 
    We need to prove an analog of Lemma~\ref{lem:main-quadratic}. Note that for \emph{fixed} $\zeta,\zeta'$, the random vector $\zeta Y + \zeta' Y$ inherits the favorable concentration properties of $Y$. Since the conditional expectation of a quadratic form does not depend on $\zeta,\zeta'$, $\E[{Y^\circ}^\T A Y^{\circ}|\zeta,\zeta']=\tr(A)$, we may simply apply Lemma~\ref{lem:main-quadratic} pointwise conditioned on $\zeta,\zeta'$.
    
    
    \item {\it Entrywise concentration for the sample covariance:} We need an analog of Lemma~\ref{lem:sc-entrywise}. We may carry out the proof of Lemma~\ref{lem:sc-entrywise}, essentially verbatim, conditioned on $\{\zeta_{i},\zeta_{i}'\}_{1\le i \le n}$ and noting that $\E[ u^\T S^{\circ} v | \{\zeta_{i},\zeta_{i}'\}_{1\le i \le n}] = u^\T v$ does not depend on $\{\zeta_{i},\zeta_{i}'\}_{1\le i \le n}$. 
\end{itemize}
\bibliographystyle{myjmva} 
\bibliography{refs}       

\end{document}